\documentclass{amsart}

\usepackage{amsaddr}

\usepackage{mathtools}      
\usepackage{graphicx}
\usepackage{color}
\usepackage{epstopdf}
\usepackage{mathptmx}      

\usepackage{constants}
\usepackage{verbatim}
\usepackage{colonequals}
\usepackage{subfigure}

\def\RR{{\mathbb{R}}}
\def\NN{{\mathbb{N}}}

\DeclareMathOperator{\dist}{dist}
\DeclareMathOperator{\tr}{tr}

\DeclareMathOperator*{\argmin}{arg\,min}

\DeclareMathOperator{\Rm}{Rm}
\DeclareMathOperator{\diam}{diam}
\DeclareMathOperator{\inj}{inj}

\newcommand{\Energy}{\mathfrak{J}}
\newcommand{\multind}[1]{\vec{#1}}
\newcommand{\polyp}{\hat{v}}
\newcommand{\gfe}{\mathcal{S}_{h}^{m}}

\theoremstyle{plain}
\newtheorem{theorem}{Theorem}[section]
\newtheorem{lemma}[theorem]{Lemma}
\newtheorem{proposition}[theorem]{Proposition}
\newtheorem{corollary}[theorem]{Corollary}

\theoremstyle{definition}
\newtheorem{definition}[theorem]{Definition}
\newtheorem*{example}{Example}
\newtheorem{condition}{Condition}[section]

\theoremstyle{remark}
\newtheorem{remark}[theorem]{Remark}

\setcounter{tocdepth}{1}
\let\oldtocsection=\tocsection

\let\oldtocsubsection=\tocsubsection

\let\oldtocsubsubsection=\tocsubsubsection

\renewcommand{\tocsection}[2]{\hspace{0em}\oldtocsection{#1}{#2}}
\renewcommand{\tocsubsection}[2]{\hspace{1em}\oldtocsubsection{#1}{#2}}
\renewcommand{\tocsubsubsection}[2]{\hspace{2em}\oldtocsubsubsection{#1}{#2}}

\makeatletter
\providecommand\@dotsep{5}
\makeatother

\begin{document}

\title{$L^{2}$-Discretization Error Bounds for Maps into Riemannian Manifolds}

\author[Hardering]{Hanne Hardering}
\address{Hanne Hardering\\
Technische Universit\"at Dresden\\
Institut f\"ur Numerische Mathematik\\
D-01062 Dresden\\
Germany}
\email{hanne.hardering@tu-dresden.de}
\subjclass[2010]{65N15, 65N30}

\begin{abstract}
We study the approximation of functions that map a Euclidean domain $\Omega\subset \RR^{d}$ into an $n$-dimensional Riemannian manifold $(M,g)$ minimizing an elliptic, semilinear energy in a function set $H\subset W^{1,2}(\Omega,M)$.
The approximation is given by a restriction of the energy minimization problem to 
a family of conforming finite-dimensional approximations $S_{h}\subset H$.
We provide a set of conditions on $S_{h}$ such that we can prove a priori $W^{1,2}$- and $L^{2}$-approximation error estimates comparable to standard Euclidean finite elements.
This is done in an intrinsic framework, independently of embeddings of the manifold or the choice of coordinates.
A special construction of approximations ---geodesic finite elements--- is shown to fulfill the conditions, and in the process extended to maps into the tangential bundle.

\smallskip
\noindent \textbf{\keywordsname.} Geometric finite elements, $L^2$-error bounds, vector field interpolation
\end{abstract}

\maketitle

Energy minimizing maps into and between Riemannian manifolds arise in many contexts, both theoretical and applied.
Existence results for harmonic maps have consequences for curvature and topology \cite{Wood}.
Isoperimetric regions (minimizing the area functional), for example, with large volume center in manifolds asymptotic to Schwarzschild have been explored in the context of general relativity and the ADM mass \cite{eichmair2013}.
In general, the influence of an ambient geometry has been of growing interest in the context of geometric flows like mean curvature flow, Ricci flow and Willmore flow.
More applied examples are the modelling of oriented materials in Cosserat theory \cite{neff2007geometrically}, liquid crystals \cite{alouges1997}, and micromagnetics \cite{Melcher}. Manifold valued harmonic map heat-flow has also been introduced as a regularization in image processing \cite{vese2002numerical}.

The research interest extends beyond energy minimization in ambient Riemannian manifolds.
In ambient spacetimes, spacelike hypersurfaces with vanishing mean curvature are maximizers of the area functional and play a role in general relativity as initial data for solving the Einstein equations.
When considering limits of smooth manifolds or problems in optimal transport, the manifold structure of the ambient space needs to be replaced by the general framework of metric spaces. 
While these prospects are certainly interesting, we will limit our attention to smooth Riemannian target manifolds in the hope that later adaptations can provide a more general theory.
\bigskip

Finite dimensional approximations are a useful tool in the study of continuous problems. In the context of gradient flows a time discretization is often used to prove existence results \cite{Ambrosio}. Space discretization has classically been employed to show interior regularity for solutions of elliptic partial differential equations \cite{Evans}. From an applied point of view the usefulness of having numerically implementable discrete schemes is beyond controversy.

Discrete approximations of maps from non-Euclidean domains, e.g., surfaces or more generally Riemannian manifolds, to a linear space, 
have been studied extensively in \cite{deckelnick2005computation, dziuk2007finite, dziuk2012fully, dziuk2013L2}.
Methods for manifold codomains are often ad hoc constructions specific to a particular energy and manifold \cite{bartels_prohl:2007,bartels2005, wriggers_gruttmann:1993, muench:2007, muench_wagner_neff:2009, muench_neff_wagner:2011, simo_vu-quoc:1986, simo_fox_rifai:1990}.
\cite{bartels_prohl:2007,bartels2005} prove weak convergence of an iteration method yielding a nonconforming finite element approximation of a harmonic map into a sphere, where the constraint is only satisfied in the nodal values.
In general, however, convergence and approximation error estimates for methods used for manifold codomains are rarely addressed in the literature, even for simple energies.\\
Recently, geodesic finite elements have been developed and experimentally studied, providing a method of arbitrary order \cite{sander10,sander12,sander13, SanderNeff}. The definition of geodesic finite elements is completely intrinsic and invariant under isometries of the target manifold.
In \cite{grohsSanderH} a systematic approach to generalize standard Euclidean techniques led to $W^{1,2}$-error estimates for the minimization of $W^{1,2}$-elliptic energies to functions with Euclidean domain and Riemannian manifold codomain, in particular for the discretization by geodesic finite elements.

We believe that both the actual discretization and its theory should be intrinsic, i.e., independent of coordinates or (Nash-) embeddings into ambient flat spaces.
In this paper, we follow the intrinsic approach. This conforms to the general paradigm of discretizations obeying the symmetries of their continuous counterparts (in this case, the diffeomorphism group), and appears to be necessary for future extensions of the theory to nonsmooth metric spaces.
\bigskip

In this work, we move further towards an intrinsic discretization error theory for energy minimization problems of the form
\begin{align*}
u:\Omega\to M,\qquad u=\argmin_{v\in H} \Energy (v),
\end{align*}
where $\Omega\subset\RR^{d}$, $M$ is a smooth Riemannian manifold, and $H\subset W^{1,2}(\Omega,M)$.
We restrict our focus to energy functionals $\Energy: H\to \RR$ that are elliptic and semilinear.
Ellipticity is here defined as pathwise convexity and quadratic boundedness with respect to an intrinsic $W^{1,2}$-error measure, that is locally equivalent to the $W^{1,2}$-norm in an embedding. Semilinearity is formulated as a bound on the third variation of $\Energy$. Both properties are shown for the harmonic energy under suitable curvature restrictions on the manifold $M$.

We consider a family of conforming finite-dimensional approximations $S_{h}^{m}\subset H$, that fulfill generalizations of the usual approximability conditions and inverse estimates \cite{ciarlet}.
The parameter $h$ comes from a tesselation of the domain $\Omega$ with elements of diameter proportional to $h$,  while $m$ corresponds to an approximation order.
By restricting $\Energy$ to $S_{h}^{m}$, we obtain abstract discrete problems
\begin{align*}
u_{h}=\argmin_{v_{h}\in S_{h}^{m}} \Energy(v_{h}).
\end{align*}
The main result of this article is an a priori estimate of the form
\begin{align*}
d_{L^{2}}(u,u_{h})\leq C\;h^{m+1}.
\end{align*}
The constant depends on the smoothness of the manifold $M$ as well as the exact solution $u$. In particular, our theory is applicable only in the regular case, i.e., if exact solutions are sufficiently smooth.
This seriously limits the applicability of the theory, as in higher dimensions even the model problem for harmonic maps may have an everywhere discontiuous solution \cite{Riviere}.
Nevertheless, it is a step towards an intrinsic approximation error theory and complements the analogous $W^{1,2}$-estimate from~\cite{grohsSanderH}
\begin{align*}
d_{W^{1,2}}(u,u_{h})\leq C\;h^{m}.
\end{align*}
The proof follows closely the one for the linear case by generalizing the Aubin--Nitsche-Trick \cite{ciarlet}. 
In particular, we define a dual problem that lives on the tangent bundle along the solution $u$. The main ingredients for the estimate are a generalized form of Galerkin orthogonality, $H^{2}$-regularity of the dual problem, and the semilinearity of the energy. The major difference to the linear setting is that all estimates need to be preserved during transport between the different pullbacks of the tangent bundle along $u$ and $u_{h}$, which is ensured by a generalization of a uniformity estimate in~\cite{grohsSanderH}, and appropriate bounds on the continuous and discrete solutions.

We will show that geodesic finite elements fulfill the approximation qualities needed to apply the theory.
In order to do this, we also extend the method including interpolation error estimates to the approximation of vector fields by choice of a particular (pseudo)-metric on the tangent bundle.
Numerical experiments confirming the a priori bounds for geodesic finite elements have been published in \cite{sander12, sander13}.

\tableofcontents
%%%%%%%%%
\section{Sobolev Spaces with Riemannian Manifold Targets}\label{sec:ch1}
%%%%%%%%%%%%%%%%%%%% Manifold valued Sobolev maps %%%%%%%%%%%%%%%%%%%%%%%%%%
%
%
\subsection{Manifold-Valued Sobolev Maps}
In the context of the approximation of variational problems,
the concept of weak solutions and Sobolev spaces arises naturally. In the course of this work we will discuss energy functionals whose arguments are mappings from some open subset $\Omega\in \RR^{d}$ with piecewise Lip\-schitz boundary $\partial\Omega$ into a smooth Riemannian manifold $(M,g)$ without boundary.
\subsubsection{Manifold-Valued $L^{p}$-Maps}
The definition of $L^{p}$-functions can be generalized in a straightforward fashion
to maps taking their values in a complete length space $(X,\dist)$ (see, e.g., \cite{Jost11}).

\begin{definition}\label{def:Lp}
	Let $(X,d)$ be a complete length space and $1\leq p<\infty$.  We define
	\begin{multline*}
	L^{p}(\Omega, X)\colonequals
	\left\{u:\Omega\to M\;|\; u\; \textrm{measurable},\; u(\Omega)\;\textrm{separable}\;, \vphantom{\int_{\Omega}}\right.\\
	\left.
	\int_{\Omega}d^{p}(u(x),Q)\;dx <\infty\; \textrm{for some } Q\in X \right\}.
	\end{multline*}
	We further define a distance map $d_{L^{p}}$ on $L^{p}(\Omega,X)$ by
	\begin{align*}
	d^{p}_{L^{p}}(u,v)\colonequals \int_{\Omega}d^{p}(u(x),v(x))\;dx.
	\end{align*}
\end{definition}

\begin{remark}
	It is easy to see that $d_{L^{p}}$ is indeed well defined for $u,v\in L^{p}(\Omega,M)$, 
	as $x\mapsto (u(x),v(x))$ is a measurable map to $M\times M$.
	Further, the definition of $L^{p}(\Omega, M)$ is independent of the point $Q\in M$ in the definition.
\end{remark}

\subsubsection{Manifold-Valued Sobolev Maps}
In this work we focus on the case where the codomain is a complete Riemannian manifold $(M,g)$. As this is a special case of a complete length space, the definition of $L^{p}(\Omega,M)$ is immediate. This is not true for the Sobolev space $W^{k,p}(\Omega,M)$. The most common definition (see, e.g., \cite{Wood, Hajlasz, Helein, struwe1985}) for Sobolev spaces with Riemannian manifold codomains uses the Nash embedding theorem \cite{Nash}.
\begin{definition}\label{def:NashSobolev}
	Let $(M,g)$ be a $n$-dimensional Riemannian manifold of class $C^{k}$. Let $\iota :M\to \RR^{N}$ be an isometric embedding into Euclidean space.
	We then define
	\begin{align*}
	W_{\iota}^{k,p}(\Omega, M) \colonequals \{ v \in W^{k,p}(\Omega, \RR^{N})\;|\; v(x)\in \iota(M) \; \text{a.e.}\}.
	\end{align*}
\end{definition}
\begin{remark}\label{R:indepIota}
	If the closures of $\Omega$
	and $M$ are compact, then $W_{\iota}^{k,p}(\Omega, M)$ is independent of $\iota$ (see, e.g., \cite{Wood}).
\end{remark}
An important question is whether Sobolev maps can be approximated by smooth maps.
For $p>d$, the result follows by the Sobolev embedding theorem.
For $k=1$, $p=d$ the positive answer is due to Schoen and Uhlenbeck \cite{Uhlenbeck}.
If $p<d$, the corresponding result is no longer true in general. For a survey of these and other known results see \cite{Wood}.

%%%%%%%%%%%%%%%%%%%
For smooth maps $u:\Omega \to M$ the Euclidean definition of $W^{k,p}(\Omega,\RR)$ directly transfers to sections in $u^{-1}TM$, 
i.e., to vector fields $V:\Omega \to TM$ such that $V(x)\in T_{u(x)}M$ for almost every $x\in \Omega$ (cf. \cite{Jost11}).
\begin{definition}\label{def:covDerivAlongU}
	Let $(M,g)$ be an $n$-dimensional Riemannian manifold, $u\in C^{\infty}(\Omega,M)$, and $V\in L^{p}(\Omega,u^{-1}TM)$, 
	i.e., $V:\Omega \to TM$ with $V(x)\in T_{u(x)}M$ almost everywhere in $\Omega$, and
	\begin{align*}
	\int_{\Omega}|V(x)|_{g(u(x))}^{p}\; dx <\infty.
	\end{align*}
	Let $\eta\in C^{\infty}(\Omega,u^{-1}TM)$ with compact support.
	For $\alpha=1,\ldots,d$,
	the covariant derivative of $\eta$ along $u$ 
	is a vector field along $u$ defined by
	\begin{align*}
	\nabla_{du^{\alpha}}\eta(x)\colonequals \lim_{h\to 0}\frac{1}{h}\left(\pi_{u(x)}^{t\mapsto u(x+te_{\alpha})}(\eta(x+he_{\alpha}))-\eta(x)\right),
	\end{align*}
	where $e_{\alpha}\in \RR^{d}$ denotes the $\alpha$-th Euclidean unit vector, and $\pi_{u(x)}^{t\mapsto u(x+te_{\alpha})}$ is the parallel transport
	along the curve defined by $t\mapsto u(x+te_{\alpha})$.
	In coordinates we can write
	\begin{align*}
	\left(\nabla_{du}\eta\right)_{\alpha}^{k}\colonequals \frac{\partial}{\partial x^{\alpha}}\eta^{k}(x) + \Gamma_{ij}^{k}(u(x))du^{i}\bigg(\frac{\partial}{\partial x_{\alpha}}\bigg|_{x}\bigg)\eta^{j}(x),
	\end{align*}
	where Greek indices range from $1$ to $d$, Latin indices range from $1$ to $n$, and we sum over repeated indices.\\
	We say that $V$ is in $W^{1,p}(\Omega,u^{-1}TM)$ if the partial derivatives in the 
	definition of $\left(\nabla_{du}V\right)^{k}_{\alpha}$ exist in a weak sense and are in $L^{p}(\Omega,u^{-1}TM)$.
	We denote $\nabla_{\alpha}V\colonequals \left(\nabla_{du}V\right)_{\alpha}$.
\end{definition}
\begin{remark}
	If $u$ is smooth enough, the Sobolev embedding theorem for vector fields $V\in W^{k,p}(\Omega,u^{-1}TM)$ follows from the Euclidean case by considering $f(x)\colonequals |V(x)|$.
\end{remark}
%
%
% %%%%%%%%%%%%%%%
%
\begin{remark}\label{R:defByDeform}
	If $p>d$, every map in $W^{1,p}(\Omega,M)$ can be described as a pointwise small deformation of a smooth map.
	Thus, $W^{1,p}(\Omega,M)$ can be locally modeled as a Banach manifold over the space of $W^{1,p}$-deformations, 
	i.e., $W^{1,p}(\Omega,u^{-1}TM)$.
	More on this construction can be found in \cite{Palais}.
	This characterization is equivalent to Definition~\ref{def:NashSobolev}.
	Indeed, $u\in W_{\iota}^{1,p}(\Omega,M)$ with $p>d$ implies that $u$ is continuous and hence the image of $\Omega$ is contained in a compact ball $B_{R}$ in $M$.
	By Remark~\ref{R:indepIota}, the definition of $W_{\iota}^{1,p}(\Omega,B_{R})$ is independent of $\iota$.
	Note, however, that the radius $R$ depends on $u$.
\end{remark}
\begin{remark}
	Although $W^{1,2}(\Omega,M)$ is only a manifold for $d=1$, we can nevertheless consider the $W^{1,2}$-norm 
	for vector fields along maps in $W^{1,q}(\Omega,M)$ if $q>\max\{2,d\}$, as those maps are continuous and thus local charts can be used to define covariant derivatives.
\end{remark}
\subsubsection*{Traces of Sobolev Maps}
In the following we will mostly restrict the analysis to maps $u:\Omega\to M$ that are continuous. Thus, we do not really need to concern ourselves with traces of Sobolev maps. An overview on those can be found in \cite{Wood}.

Given boundary and homotopy data $\phi:\Omega\to M$, we set $W^{k,p}_{\phi}(\Omega,M)$ to be the set of maps $v\in W^{k,p}(\Omega,M)$ for which $\tr(v)=\tr(\phi)$ and $v$ and $\phi$ are of the same homotopy class, i.e., there exists a continuous homotopy connecting $v$ and $\phi$.
\subsection{Smoothness Descriptors}\label{sec:SD}
In Euclidean space Sobolev maps are characterized by having a finite norm. We want to obtain a similar intrinsic characterization for maps into Riemannian manifolds. In particular, we want to replace the concept of a Sobolev half-norm.
The new characterization, the so-called smoothness descriptor (cf. \cite{grohsSanderH}), will be intrinsic as well as equivalent to Sobolev half-norms in embeddings $\iota:M\hookrightarrow \RR^{N}$ under reasonable conditions.
Furthermore, it is subhomogeneous with respect to scaling of the domain $\Omega\in \RR^{d}$.

First we note that we can make sense of weak covariant derivatives by using local charts on the target manifold $(M,g)$ if the map we study is continuous.

For covariant differentiation we cannot use the usual multi-index notation as covariant derivatives do not commute.
In the following we use for multiple covariant derivatives the multi-index notation
\begin{align}
\label{eq:multiple_covariant_derivative}
\nabla^{\multind{\beta}} u
\colonequals \nabla_{d^{\beta_k}u} \dots \nabla_{d^{\beta_2}u} d^{\beta_1}u,
\qquad \multind{\beta}\in \{1,\dots,d\}^k,\ k\in \NN_0,
\end{align}
where $d^{\beta}u=du(\frac{\partial}{\partial x_{\beta}})$, 
and $\nabla_{du}$ denotes the covariant derivative along $u$ as defined in Definition~\ref{def:covDerivAlongU}.

For a shorter notation we denote the length of $\multind{\beta}$ by $|\multind{\beta}|$, and set
\begin{align*}
[d] \colonequals \{1,\dots , d\},
\end{align*}
$\nabla^{\multind{\beta}}u\colonequals 1\in \RR$ if $|\multind{\beta}|=0$, and
for $k\geq 1$, 
\begin{align*}
\int_{\Omega}|\nabla^{k}u|^{p}\;dx
&=\sum_{|\multind{\beta}|=k} \int_\Omega \big|\nabla^{\multind{\beta}}u(x)\big|_{g(u(x))}^p\;dx.
\end{align*}
This last term is an obvious candidate for a Sobolev half-norm.
However, the standard Sobolev half-norm $|\iota\circ u|_{W^{k,p}(\Omega,\RR^{N})}$ in an embedding includes lower order terms. Application of the chain rule motivates the following definition, first introduced in \cite{grohsSanderH}, and discussed in depth in \cite{diss}.
\begin{definition}[Smoothness Descriptor]\label{def:SD}
	Let $k\geq 1$, $p\in [1,\infty]$.
	The homogeneous $k$-th order smoothness descriptor of a map $u\in W^{k,p} \cap C(\Omega,M)$ is defined by
	\begin{align*}
	\dot{\theta}_{k,p,\Omega}(u)\colonequals 
	\Bigg(\sum_{\genfrac{}{}{0pt}{}{\multind{\beta}_j \in [d]^{m_j},\ j=1,\dots , l}{\sum_{j=1}^l m_j = k} }
	\int_\Omega \prod_{j=1}^l \left|\nabla^{{\multind{\beta}}_j}u(x)\right|_{g(u(x))}^p\;dx\Bigg)^{1/p},
	\end{align*}
	with the usual modifications for $p=\infty$.
	For $k=0$, and a fixed reference point $Q\in M$, we set
	\begin{align*}
	\dot{\theta}_{0,p,\Omega;Q}(u)\colonequals \left(\int_{\Omega}d^{p}(u(x),Q)\;dx\right)^{1/p}.
	\end{align*}
	Further, we set
	\begin{align*}
	\dot{\theta}_{0,p,\Omega}(u)\colonequals\min_{Q\in M} \dot{\theta}_{0,p,\Omega;Q}(u).
	\end{align*}
	The corresponding inhomogeneous smoothness descriptor is defined by
	\begin{align*}
	\theta_{k,p,\Omega}(u)\colonequals \left(\sum_{i=0}^k \dot\theta^{p}_{i,p,\Omega}(u)\right)^{\frac{1}{p}}.
	\end{align*}
\end{definition}
As the map $u$ in Definition~\ref{def:SD} is continuous, the condition $u\in W^{k,p}(\Omega, M)$ can be interpreted in terms of weak derivatives in local charts (cf. \cite{Jost11}).

Technically, the first order and $k$-th order terms are enough to characterize the smoothness descriptor.
Indeed, these terms bound all other terms of the smoothness descriptor as well as terms that have the same structure but do not directly appear in the smoothness descriptor.
\begin{proposition}[{\cite{diss}}]\label{prop:SDExtraTerms}
	Let $u\in W^{k,p}\cap C(\Omega, M)$, and let $\multind{\alpha}$ be a multi-index in the sense of \eqref{eq:multiple_covariant_derivative} with $|\multind{\alpha}|=l+1$, $0\leq l\leq k-1$.
	Then 
	\begin{align*}
	\left(\int_{\Omega} |\nabla^{\multind{\alpha}} u|^{\frac{kp}{l+1}}\;dx\right)^{\frac{1}{p}}
	&\leq C\;\left(\int_{\Omega}|\nabla^{k}u|^{p}\;dx +  \int_{\Omega}|du|^{kp}\;dx\right)^{\frac{1}{p}}.
	\end{align*}
	In particular this implies
	\begin{align*}
	\left(\int_{\Omega}|\nabla^{k}u|^{p}\;dx +  \int_{\Omega}|du|^{kp}\;dx\right)^{\frac{1}{p}}\leq \dot{\theta}_{k,p,\Omega}(u)\leq C\left(\int_{\Omega}|\nabla^{k}u|^{p}\;dx +  \int_{\Omega}|du|^{kp}\;dx\right)^{\frac{1}{p}}.
	\end{align*}
\end{proposition}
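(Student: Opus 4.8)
The statement is a covariant Gagliardo--Nirenberg interpolation inequality, and the plan is to reduce it to the classical Euclidean one. Since $u$ is continuous and $\overline\Omega$ is compact, $u(\overline\Omega)$ lies in a compact set $K\subset M$, and $\Omega$ can be covered by finitely many Lipschitz subdomains on each of which $u$ takes values in a single coordinate chart. Working on one such subdomain $\Omega'$, a mollification-in-charts argument reduces matters to $u\in C^{\infty}(\Omega',M)$, with a constant depending only on $d,k,p$ and on bounds for the Christoffel symbols of $M$ and their derivatives on $K$; summing over the cover then gives the claim on $\Omega$.

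First I would record a covariant Fa\`a di Bruno formula: iterating the coordinate identity of Definition~\ref{def:covDerivAlongU} together with $\partial_\beta\big(\Gamma(u)\big)=(\partial\Gamma)(u)\,\partial_\beta u$, one shows by induction on $j$ that for a multi-index $\multind{\gamma}$ with $|\multind{\gamma}|=j$,
\begin{align*}
\nabla^{\multind{\gamma}}u=\sum_{\substack{\multind{\delta}_1,\dots,\multind{\delta}_s\in[d]^{*},\ s\geq 1\\ |\multind{\delta}_1|+\dots+|\multind{\delta}_s|=j}} c_{\multind{\delta}}(u)\,\partial^{\multind{\delta}_1}u\cdots\partial^{\multind{\delta}_s}u,
\end{align*}
where $[d]^{*}$ denotes multi-indices of length $\geq 1$ and the $c_{\multind{\delta}}$ are universal polynomials in the Christoffel symbols and their derivatives (hence bounded on $K$), together with a symmetric expansion of ordinary partials $\partial^{\multind{\gamma}}u$ into such products of covariant derivatives. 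These identities let me pass freely between $\big\|\nabla^{j}u\big\|$ and $\big\|\partial^{j}u\big\|$ at the price of lower-order terms, to be absorbed by an outer induction on $k$.

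The analytic core is then the classical Gagliardo--Nirenberg inequality on the Lipschitz domain $\Omega'$, applied componentwise to $\partial u$: for $1\leq j\leq k$,
\begin{align*}
\big\|\partial^{j}u\big\|_{L^{kp/j}(\Omega')}\leq C\,\big\|\partial^{k}u\big\|_{L^{p}(\Omega')}^{\frac{j-1}{k-1}}\,\big\|\partial u\big\|_{L^{kp}(\Omega')}^{\frac{k-j}{k-1}}+C\,\big\|\partial u\big\|_{L^{kp}(\Omega')},
\end{align*}
the interpolation exponents being dictated by scaling. Inserting the Fa\`a di Bruno expansion for $\nabla^{\multind{\alpha}}u$ with $|\multind{\alpha}|=l+1\leq k$, bounding $c_{\multind{\delta}}(u)$ on $K$, estimating each product by H\"older's inequality with reciprocal exponents $|\multind{\delta}_i|/(kp)$ (which sum to $(l+1)/(kp)$, matching $L^{kp/(l+1)}$), applying the displayed interpolation to each factor, and finally using Young's inequality to turn the resulting products of powers of $\big\|\partial^{k}u\big\|_{L^{p}}$ and $\big\|\partial u\big\|_{L^{kp}}$ into a sum, yields the asserted bound with $\partial$ in place of $\nabla$; the inverse expansion and the pointwise comparison $|du|\sim|\partial u|$ then give the covariant statement up to lower-order terms handled by the induction on $k$. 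The ``in particular'' part follows at once: $\int_{\Omega}|\nabla^{k}u|^{p}$ and, up to equivalence of norms on $\RR^{d}$, $\int_{\Omega}|du|^{kp}$ are themselves among the summands of $\dot{\theta}_{k,p,\Omega}(u)^{p}$, giving the lower bound, while the upper bound follows by applying H\"older (reciprocal exponents $m_j/(kp)$ summing to $1/p$) to each product $\prod_{j}|\nabla^{\multind{\beta}_j}u|$ in the definition of $\dot{\theta}_{k,p,\Omega}$ and invoking the inequality just proved.

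The step I expect to cause the most trouble is bookkeeping rather than conceptual: verifying that in every term produced by the covariant Fa\`a di Bruno formula the integrability exponents forced by H\"older coincide exactly with the Gagliardo--Nirenberg scaling $1/a_i=|\multind{\delta}_i|/(kp)$, so that Young's inequality really does close the estimate, and checking the admissibility of the boundary interpolation exponent $\theta=(j-1)/(k-1)$ in the mildly exceptional low-regularity cases. Continuity of $u$ is essential throughout, since it is what guarantees the compact image $K$ and hence the uniform bounds on all geometric coefficients that make the reduction to the Euclidean inequality legitimate.
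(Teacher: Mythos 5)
Your proposal is correct and rests on the same analytic core as the paper, whose entire proof is the one-line remark that the statement ``follows from the vector-valued Gagliardo--Nirenberg interpolation inequality'' --- precisely the interpolation $\|\partial^{j}u\|_{L^{kp/j}}\leq C\,\|\partial^{k}u\|_{L^{p}}^{(j-1)/(k-1)}\|\partial u\|_{L^{kp}}^{(k-j)/(k-1)}+C\,\|\partial u\|_{L^{kp}}$ that you invoke. The only difference is one of packaging: you carry out the reduction to the Euclidean inequality by hand (compact image, charts, covariant Fa\`a di Bruno, H\"older with exponents $kp/|\multind{\delta}_{i}|$, Young), whereas the paper delegates this to the vector-valued formulation of the inequality; your exponent bookkeeping and your derivation of the ``in particular'' part are consistent with the statement.
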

The proof follows from the vector-valued Gagliardo--Nirenberg interpolation inequality \cite{schmeisser2005vector}.

In the Euclidean setting $M=\RR^{n}$ the smoothness descriptor does not 
coincide with the Sobolev norm, as already shown in \cite{grohsSanderH}.
Instead, the two relate in the following way.
\begin{proposition}\label{prop:SDRRn}
	Let $u\in W^{k,p}(\Omega,\RR^{n})$, $k\geq 1$.
	Then
	\begin{align*}
	|u|_{k,p,\Omega}\leq \dot{\theta}_{k,p,\Omega}(u)\leq  C \left(|u|_{k,p,\Omega} + \|du\|^{k}_{0,kp,\Omega} \right)\leq C\|u\|_{k,p,\Omega}^{k}.
	\end{align*}
\end{proposition}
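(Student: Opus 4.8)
The plan is to reduce everything to Proposition~\ref{prop:SDExtraTerms} by exploiting that in flat space covariant differentiation is ordinary differentiation. For $M=\RR^{n}$ all Christoffel symbols vanish, so by Definition~\ref{def:covDerivAlongU} one has $\nabla^{\multind{\beta}}u=\partial^{\multind{\beta}}u$ for every multi-index $\multind{\beta}$; since partial derivatives commute, the ordering inside $\multind{\beta}$ is irrelevant, and regrouping the $d^{k}$ ordered multi-indices of length $k$ into the classical (unordered) ones shows that $\int_{\Omega}|\nabla^{k}u|^{p}\,dx=\sum_{|\multind{\beta}|=k}\int_{\Omega}|\partial^{\multind{\beta}}u|^{p}\,dx$ is comparable to $|u|_{k,p,\Omega}^{p}$ with constants depending only on $k$, while $\int_{\Omega}|du|^{kp}\,dx=\|du\|_{0,kp,\Omega}^{kp}$. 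The continuity hypothesis in Proposition~\ref{prop:SDExtraTerms} was only needed in the manifold case to define covariant derivatives through charts; in $\RR^{n}$ they are globally defined and the underlying Gagliardo--Nirenberg inequalities require no such assumption, so the proposition applies verbatim to any $u\in W^{k,p}(\Omega,\RR^{n})$ (and when the relevant Gagliardo--Nirenberg inequality degenerates, both sides below are infinite and there is nothing to prove).

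With this dictionary the first two inequalities are immediate, since the final chain of Proposition~\ref{prop:SDExtraTerms} reads, in Euclidean language,
\begin{align*}
\big(|u|_{k,p,\Omega}^{p}+\|du\|_{0,kp,\Omega}^{kp}\big)^{1/p}\;\leq\;\dot{\theta}_{k,p,\Omega}(u)\;\leq\;C\,\big(|u|_{k,p,\Omega}^{p}+\|du\|_{0,kp,\Omega}^{kp}\big)^{1/p}.
\end{align*}
The left-hand side is $\geq|u|_{k,p,\Omega}$, giving the lower bound, and the right-hand side is $\leq C\big(|u|_{k,p,\Omega}+\|du\|_{0,kp,\Omega}^{k}\big)$ by subadditivity of $t\mapsto t^{1/p}$, giving the middle bound. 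If one prefers to argue directly rather than quote Proposition~\ref{prop:SDExtraTerms} wholesale, each summand of $\dot{\theta}_{k,p,\Omega}(u)^{p}$ has the form $\int_{\Omega}\prod_{j=1}^{l}|\partial^{\multind{\beta}_{j}}u|^{p}\,dx$ with $\sum_{j}m_{j}=k$, $m_{j}=|\multind{\beta}_{j}|$; the generalized H\"older inequality with exponents $k/m_{j}$ followed by Proposition~\ref{prop:SDExtraTerms} (with its ``$l+1$'' taken to be $m_{j}$, admissible since $1\leq m_{j}\leq k$) bounds each factor $\big(\int_{\Omega}|\partial^{\multind{\beta}_{j}}u|^{pk/m_{j}}\,dx\big)^{m_{j}/k}$ by $\big(C^{p}(|u|_{k,p,\Omega}^{p}+\|du\|_{0,kp,\Omega}^{kp})\big)^{m_{j}/k}$, and since $\sum_{j}m_{j}/k=1$ the product collapses to $C^{p}(|u|_{k,p,\Omega}^{p}+\|du\|_{0,kp,\Omega}^{kp})$; summing over the finitely many summands (their number depending only on $k,d$) and taking $p$-th roots reproduces the middle bound.

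For the last inequality only crude estimates are needed: trivially $|u|_{k,p,\Omega}\leq\|u\|_{k,p,\Omega}$, and the Sobolev embedding $W^{k,p}(\Omega)\hookrightarrow W^{1,kp}(\Omega)$ (equivalently the relevant Gagliardo--Nirenberg inequality) yields $\|du\|_{0,kp,\Omega}\leq C\|u\|_{k,p,\Omega}$, hence $\|du\|_{0,kp,\Omega}^{k}\leq C\|u\|_{k,p,\Omega}^{k}$; adding the two and bounding the first power by the $k$-th (where the right-hand side is deliberately not tight) gives $C\big(|u|_{k,p,\Omega}+\|du\|_{0,kp,\Omega}^{k}\big)\leq C\|u\|_{k,p,\Omega}^{k}$.

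I do not expect a genuine obstacle: the analytic content is entirely contained in Proposition~\ref{prop:SDExtraTerms}, i.e.\ in the vector-valued Gagliardo--Nirenberg interpolation inequality. The only points demanding a little care are bookkeeping --- absorbing into $C=C(k,d)$ both the combinatorial factors relating the ordered-multi-index expression $\int_{\Omega}|\nabla^{k}u|^{p}$ to the classical Sobolev seminorm and the finite number of descriptor summands --- and being explicit that the final $\|u\|_{k,p,\Omega}^{k}$ bound is loose for small norms.
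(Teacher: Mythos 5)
Your argument is correct and is essentially the paper's own proof, which consists of exactly the two ingredients you use: Proposition~\ref{prop:SDExtraTerms} (with covariant derivatives reducing to ordinary partial derivatives, so that continuity of $u$ is not needed) for the first two inequalities, and the Sobolev/Gagliardo--Nirenberg embedding $W^{k,p}(\Omega)\hookrightarrow W^{1,kp}(\Omega)$ for the last. The one caveat --- inherent in the statement rather than in your proof, and which you already flag --- is that the final link is not dimensionally homogeneous: the embedding $\|du\|_{0,kp,\Omega}\leq C\|u\|_{k,p,\Omega}$ requires $kp\geq d$, and the step bounding $\|u\|_{k,p,\Omega}$ by $\|u\|_{k,p,\Omega}^{k}$ requires $\|u\|_{k,p,\Omega}\geq 1$, so the chain must be read in the regime ($kp>d$, norms bounded away from zero) in which the paper actually applies it.
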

The proof uses Proposition~\ref{prop:SDExtraTerms} and the Sobolev embedding theorem.
Note that we do not need continuity of $u$ in the Euclidean setting.

We can also compare the smoothness descriptor of a map $u\in C(\Omega,M) \cap W_{\iota}^{k,p}(\Omega,M)$
to the smoothness descriptor of the embedded map $\iota\circ u \in W^{k,p}(\Omega,\RR^{N})$.
\begin{proposition}[{\cite{diss}}]\label{prop:SDiota}
	Let $(M,g)$ be compact and of class $C^{k}$, and $\iota:M\to \RR^{N}$ an isometric embedding of class $C^{k}$ such that $0\in \iota(M)$.
	Then for $k\geq 1$ there exist constants depending on $\|\iota\|_{C^{k}}$ such that
	\begin{align}\label{eq:SDiota}
	C\; \dot{\theta}_{k,p,\Omega}(\iota\circ u)\leq \dot{\theta}_{k,p,\Omega}(u) \leq C\; \dot{\theta}_{k,p,\Omega}(\iota\circ u)
	\end{align}
	holds for all $u\in W^{k,p}\cap C(\Omega,M)$.
	For $kp>d$ we have
	\begin{align}\label{eq:spaceEquivSDiota}
	W_{\iota}^{k,p}(\Omega,M)=\left\{v\in C(\Omega,M)\;:\; \theta_{k,p,\Omega}(v) \ \textrm{is well-defined and}\ <\infty \right\},
	\end{align}
	which is independent of $\iota$.
\end{proposition}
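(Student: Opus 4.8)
The plan is to reduce both assertions to the iterated chain rule relating the Euclidean derivatives of $\iota\circ u$ to the covariant derivatives $\nabla^{\multind{\beta}}u$ of $u$ along $u$, and then to feed the resulting pointwise bounds into Definition~\ref{def:SD}. The starting point is the Gauss formula
\[
\partial_\alpha\partial_\beta(\iota\circ u)(x) \;=\; \iota_*\big(\nabla^{(\alpha,\beta)}u(x)\big) \;+\; \mathrm{II}_{u(x)}\big(d_\alpha u(x),\, d_\beta u(x)\big),
\]
where $\mathrm{II}$ is the second fundamental form of $\iota(M)\subset\RR^N$. Differentiating this identity inductively, one obtains for every multi-index $\multind{\beta}\in[d]^{k}$ a representation
\[
\partial^{\multind{\beta}}(\iota\circ u)(x) \;=\; \iota_*\big(\nabla^{\multind{\beta}}u(x)\big) \;+\; \sum_{\substack{\sum_{j}|\multind{\beta}_j|=|\multind{\beta}|\\ l\geq 2}} A^{\multind{\beta}}_{\multind{\beta}_1\cdots\multind{\beta}_l}\!\big(u(x)\big)\big[\nabla^{\multind{\beta}_1}u(x),\dots,\nabla^{\multind{\beta}_l}u(x)\big],
\]
in which each coefficient tensor $A$ is a universal algebraic expression in covariant derivatives of $\iota$ of order at most $k$; since $M$ is compact this gives $|A(q)|\leq C(\|\iota\|_{C^k})$ uniformly in $q\in M$. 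Because $\iota$ is isometric, $|\iota_*(\nabla^{\multind{\beta}}u)|=|\nabla^{\multind{\beta}}u|_{g}$. The same induction, solved for $\nabla^{\multind{\beta}}u$ and using that $d\iota$ admits a left inverse of class $C^{k-1}$ bounded in terms of $\|\iota\|_{C^k}$ (again by compactness), yields the converse representation of $\nabla^{\multind{\beta}}u(x)$ as a polynomial in $\{\partial^{\multind{\gamma}}(\iota\circ u)(x)\}_{|\multind{\gamma}|\leq k}$ with the same partition structure and coefficients controlled by $\|\iota\|_{C^k}$; this bookkeeping is the one carried out in \cite{diss}.

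For \eqref{eq:SDiota} I would simply insert these pointwise bounds into Definition~\ref{def:SD}. Since $\RR^N$ is flat, $\nabla^{\multind{\gamma}}(\iota\circ u)=\partial^{\multind{\gamma}}(\iota\circ u)$, so $\dot{\theta}_{k,p,\Omega}(\iota\circ u)^p=\sum_{\sum m_j=k}\int_\Omega\prod_j|\partial^{\multind{\beta}_j}(\iota\circ u)|^p\,dx$. Bounding each factor via the chain rule and multiplying out, $\prod_j|\partial^{\multind{\beta}_j}(\iota\circ u)(x)|$ is dominated by a finite sum (with cardinality bounded in terms of $k$ and $d$) of products $\prod_i|\nabla^{\multind{\gamma}_i}u(x)|$ with $\sum_i|\multind{\gamma}_i|=\sum_j m_j=k$ and $|\multind{\gamma}_i|\geq 1$ --- that is, by exactly the products occurring in $\dot{\theta}_{k,p,\Omega}(u)$. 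Using $(\sum_{i=1}^{N}a_i)^p\leq N^{p-1}\sum_i a_i^p$, integrating, and summing over the finitely many partitions of $k$ gives $\dot{\theta}_{k,p,\Omega}(\iota\circ u)\leq C\,\dot{\theta}_{k,p,\Omega}(u)$, with the usual modification for $p=\infty$. The reverse inequality is proved the same way from the converse representation of Step~1.

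For \eqref{eq:spaceEquivSDiota} assume $kp>d$. If $u\in W_{\iota}^{k,p}(\Omega,M)$, i.e.\ $\iota\circ u\in W^{k,p}(\Omega,\RR^N)$ with values in $\iota(M)$, then Morrey's embedding forces $\iota\circ u$ (hence $u$) to be continuous, and since $W^{k,p}(\Omega,\RR^N)\hookrightarrow W^{1,jp}(\Omega,\RR^N)$ for $1\leq j\leq k$ when $kp>d$, Proposition~\ref{prop:SDRRn} applied degreewise gives $\dot{\theta}_{j,p,\Omega}(\iota\circ u)<\infty$ for $0\leq j\leq k$; together with \eqref{eq:SDiota} and $\dot{\theta}_{0,p,\Omega}(u)\leq\diam(M)\,|\Omega|^{1/p}<\infty$ this makes $\theta_{k,p,\Omega}(u)$ well-defined and finite. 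Conversely, if $u\in C(\Omega,M)$ with $\theta_{k,p,\Omega}(u)<\infty$, then \eqref{eq:SDiota} yields $\dot{\theta}_{k,p,\Omega}(\iota\circ u)<\infty$; the partitions $(k)$ and $(1,\dots,1)$ present in the descriptor already give $\partial^k(\iota\circ u)\in L^p(\Omega)$ and $d(\iota\circ u)\in L^{kp}(\Omega)$, and the vector-valued Gagliardo--Nirenberg interpolation inequality (as used for Proposition~\ref{prop:SDExtraTerms}) then places every intermediate derivative $\partial^j(\iota\circ u)$, $1\leq j\leq k-1$, in $L^{kp/j}(\Omega)\subset L^p(\Omega)$; since $\iota\circ u\in L^\infty(\Omega)\subset L^p(\Omega)$ as $M$ is compact, $\iota\circ u\in W^{k,p}(\Omega,\RR^N)$, i.e.\ $u\in W_{\iota}^{k,p}(\Omega,M)$. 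Finally, the right-hand side of \eqref{eq:spaceEquivSDiota} refers only to covariant derivatives and the Riemannian distance on $M$ and not to $\iota$, which gives the asserted independence of $\iota$ (recovering Remark~\ref{R:indepIota} in this range).

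The only genuinely delicate point is the combinatorics behind Step~1: one must verify that iterating the Gauss formula produces exactly the partition-of-$k$ product structure of Definition~\ref{def:SD} and no terms of smaller total derivative order, that the inductive inversion for $\nabla^{\multind{\beta}}u$ closes, and that all coefficient tensors remain bounded in terms of $\|\iota\|_{C^k}$ via compactness of $M$. This is standard but tedious --- and is precisely why the estimate is attributed to \cite{diss} --- whereas everything else is routine manipulation of the smoothness descriptor combined with Propositions~\ref{prop:SDExtraTerms} and~\ref{prop:SDRRn} and the classical Sobolev and Gagliardo--Nirenberg inequalities.
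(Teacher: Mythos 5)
Your proposal is correct and follows essentially the same route as the paper, whose proof is only the two-line sketch ``chain rule and Proposition~\ref{prop:SDExtraTerms}'' for \eqref{eq:SDiota} and ``compactness of $M$ and Proposition~\ref{prop:SDRRn}'' for \eqref{eq:spaceEquivSDiota}; you have simply filled in the iterated Gauss-formula bookkeeping that the paper delegates to \cite{diss}. The only cosmetic difference is that you absorb the chain-rule remainder terms directly into the partition structure of Definition~\ref{def:SD} rather than routing them through Proposition~\ref{prop:SDExtraTerms}, which is an equally valid (and self-contained) way to close the estimate.
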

\begin{proof}
	Estimate \eqref{eq:SDiota} follows from the chain rule and Proposition~\ref{prop:SDExtraTerms}. Equality \eqref{eq:spaceEquivSDiota} additionally uses the compactness of $M$ and Proposition~\ref{prop:SDRRn}.
\end{proof}
Note that local charts and finiteness of the smoothness descriptor can only be used to characterize $W^{k,p}(\Omega,M)$ if $kp>d$, as these concepts are restricted to continuous functions. This is the reason why Definition~\ref{def:NashSobolev} uses the Nash Embedding Theorem.
\subsubsection{Scaling}
As already observed in \cite{grohsSanderH}, the homogeneous smoothness descriptor is subhomogeneous with respect to rescaling of the domain $\Omega\in \RR^{d}$ with a parameter $h$.
\begin{definition}\label{def:scaling}
	Let $T,T_{h}$ be two domains in $\RR^{d}$, and $F:T_{h}\to T$ a $C^{\infty}$-diffeomorphism.
	For $l\in \NN_{0}$ we say that $F$ scales with $h$ of order $l$ if we have
	\begin{align*}
	\sup_{x\in T}\left|\partial^{\multind{\beta}}F^{-1}(x)\right|\leq C\;h^{k}&\qquad \textrm{for all}\ \multind{\beta}\in [d]^{k},\ k=0,\ldots,l,\\
	\left|\det(DF(x))\right|\sim h^{-d} &\qquad \textrm{for all}\ x\in T_{h}\ (\textrm{where}\ DF\ \textrm{is the Jacobian of}\ F),\\
	\sup_{x\in T_{h}}\left|\frac{\partial}{\partial x^{\alpha}}F(x)\right|\leq C\;h^{-1} &\qquad \textrm{for all}\ \alpha=1,\ldots,d.
	\end{align*}
	Note that as derivatives commute, the multi-indices $\multind{\beta}$ defined by \eqref{eq:multiple_covariant_derivative} can be equivalently replaced by ordinary multi-indices for $\RR^{d}$.
\end{definition}
Readers familiar with finite element theory will recognize such maps $F$ as transformations of an element of a discretization of $\Omega$ to a reference element.
The scale parameter $h$ can also be visualized as the ratio of the diameters.
The smoothness descriptor scales in the following manner.
\begin{lemma}\label{L:scalingSD}
	Let $T,T_{h}$ be two domains in $\RR^{d}$, and $F:T_{h}\to T$ a map that scales with $h$ of order $l$.
	Consider $u\in W^{k,p}(T_{h},M)$ with $1\leq k\leq l$ and $p\in [1,\infty]$. Then
	\begin{align*}
	\dot{\theta}_{k,p,T}(u\circ F^{-1})&\leq C\; h^{k-\frac{d}{p}} \left(\sum_{l=1}^{k} \dot{\theta}^{p}_{l,p,T_{h}}(u)\right)^{\frac{1}{p}}\\
	&\leq C\; h^{k-\frac{d}{p}} \theta_{k,p,T_{h}}(u).
	\end{align*}
\end{lemma}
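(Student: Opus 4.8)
The plan is to reduce the estimate to the behaviour of a single term $\int_T \bigl|\nabla^{\multind\beta_1}v\cdots\nabla^{\multind\beta_l}v\bigr|^p$ for $v=u\circ F^{-1}$ and then to track how covariant derivatives and the volume element transform under $F$. First I would record the elementary fact that, because $M$ is the target, the chain rule for covariant derivatives along a map mixes the Euclidean derivatives of $F^{-1}$ with the covariant derivatives of $u$: for a single first-order term one has $d^\alpha(u\circ F^{-1})(x)=\sum_\gamma \partial_\alpha (F^{-1})^\gamma(x)\, (d^\gamma u)(F^{-1}(x))$, and inductively
\begin{align*}
\nabla^{\multind\beta}(u\circ F^{-1})(x)=\sum_{\text{partitions}} P_{\multind\beta}\bigl(\partial F^{-1}(x)\bigr)\,\bigl(\nabla^{\multind\gamma}u\bigr)(F^{-1}(x)),
\end{align*}
where each coefficient $P_{\multind\beta}$ is a polynomial in the partial derivatives of $F^{-1}$ up to order $|\multind\beta|$, and the sum ranges over ways of distributing the $|\multind\beta|$ slots among covariant derivatives of $u$ of total order at most $|\multind\beta|$. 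Crucially, the pointwise norm $|\cdot|_{g}$ is the \emph{target} metric evaluated at $u(F^{-1}(x))$, so it is untouched by the coordinate change on the domain; only the Euclidean factors $\partial F^{-1}$ are affected. This is what makes the bound clean and intrinsic.

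Next I would feed in the scaling hypotheses of Definition~\ref{def:scaling}: $|\partial^{\multind\beta}F^{-1}|\le C h^{|\multind\beta|}$ for $|\multind\beta|\le l$, so every coefficient $P_{\multind\beta}$ with $|\multind\beta|\le k\le l$ is bounded by $C h^{|\multind\beta|}$ (a product of derivative factors whose orders sum to $|\multind\beta|$ contributes exactly $h^{|\multind\beta|}$, and lower-order distributions only improve the power since $h\le 1$ is the relevant regime). Consequently, in a generic smoothness-descriptor term $\prod_{j=1}^l |\nabla^{\multind\beta_j}v|$ with $\sum_j m_j=k$, pulling back produces a finite sum of terms $h^{k}\prod_j |\nabla^{\multind\gamma_j}u|\circ F^{-1}$ where now the $\multind\gamma_j$ have total order $\le k$ and each $|\multind\gamma_j|\ge 1$ (order-zero factors cannot appear because $du$ already carries a derivative of $F^{-1}$). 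Then I substitute $x=F(y)$, $dx=|\det DF(y)|\,dy\sim h^{-d}\,dy$, to replace $\int_T(\cdot)\circ F^{-1}\,dx$ by $\sim h^{-d}\int_{T_h}(\cdot)\,dy$. Raising to the $1/p$ power yields the factor $h^{k-d/p}$ and a sum over terms $\prod_j|\nabla^{\multind\gamma_j}u|$ with $\sum_j|\multind\gamma_j|\le k$; each such term is, after applying Proposition~\ref{prop:SDExtraTerms} to absorb any mismatch in exponents and to control intermediate-order derivatives by $\nabla^k u$ and $du$, bounded by $\bigl(\sum_{l=1}^k \dot\theta_{l,p,T_h}^p(u)\bigr)^{1/p}$, which in turn is $\le\theta_{k,p,T_h}(u)$.

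The main obstacle I anticipate is bookkeeping the combinatorics of the covariant Faà di Bruno formula cleanly enough that the power of $h$ comes out to be exactly $h^{k-d/p}$ and not something worse, while simultaneously making sure every resulting product of covariant derivatives of $u$ has total order at most $k$ (so that it is genuinely controlled by the descriptors of order $\le k$, not of order $k+1$). The subtlety is that a term like $\prod_j|\nabla^{\multind\gamma_j}u|^p$ with $\sum|\multind\gamma_j|<k$ does not literally appear in $\dot\theta_{k,p,T_h}$; here I would invoke Proposition~\ref{prop:SDExtraTerms}, which shows precisely that such "extra" terms are dominated by $\int|\nabla^k u|^p+\int|du|^{kp}$, hence by $\dot\theta_{k,p,T_h}(u)^p$, and so by $\bigl(\sum_{l=1}^k\dot\theta_{l,p,T_h}^p(u)\bigr)$. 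One also has to be mildly careful that the norms $|\cdot|_{g(u(F^{-1}(x)))}$ and $|\cdot|_{g(u(y))}$ agree under the substitution $x=F(y)$ — they do, trivially, which is the whole point of working with the target metric. A final routine check is that the case $p=\infty$ goes through with $\esssup$ in place of the integral and the factor $h^{-d/p}$ simply dropping out. Apart from the indexing, there is no analytic difficulty: the statement is a direct transcription of the standard Euclidean affine-scaling lemma, with the covariant chain rule and the descriptor-equivalence Proposition~\ref{prop:SDExtraTerms} doing the manifold-specific work.
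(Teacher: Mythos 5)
Your proposal is correct and follows exactly the route the paper indicates, namely the covariant chain rule combined with the integral transformation formula: the Fa\`a di Bruno coefficients are products of derivatives of $F^{-1}$ whose orders sum to $|\multind\beta_j|$, giving the factor $h^{k}$, the change of variables gives $h^{-d/p}$, and the resulting products $\prod_j|\nabla^{\multind\gamma_j}u|$ of total order $k'\leq k$ are exactly terms of $\dot\theta_{k',p,T_h}(u)$, which explains the sum $\sum_{l=1}^{k}\dot\theta^{p}_{l,p,T_h}(u)$ on the right-hand side. The appeal to Proposition~\ref{prop:SDExtraTerms} is not even strictly necessary here since the exponent on each product is uniformly $p$, but it does no harm.
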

The proof follows from the chain rule and the integral transformation formula.

\begin{remark}
	Note that Lemmas~\ref{L:scalingSD} only states \emph{sub}homogeneity of the smoothness descriptor, as the homogeneous descriptor is bounded by the inhomogeneous one.
\end{remark}

The third assumption of Definition~\ref{def:scaling} is not needed for the proof of Lemmas~\ref{L:scalingSD}.
It is needed for the following `inverse' estimate.
\begin{lemma}\label{L:inverseScale1}
	Let $T,T_{h}$ be two domains in $\RR^{d}$, and $F:T_{h}\to T$ a map that scales with $h$ of order $1$.
	Consider $u\in W^{1,p}\cap C(T_{h},M)$ with $p\in [1,\infty]$.
	Then
	\begin{align*}
	\dot{\theta}_{1,p,T_{h}}(u)&\leq C\; h^{-1+\frac{d}{p}} \dot{\theta}_{1,p,T}(u\circ F^{-1}).
	\end{align*}
\end{lemma}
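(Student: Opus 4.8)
The plan is to run the same change-of-variables argument used for Lemma~\ref{L:scalingSD}, but read in the opposite direction, and this time exploiting the third bound in Definition~\ref{def:scaling}, namely $\sup_{x\in T_h}|\partial_\alpha F(x)| \leq C\,h^{-1}$, which was unused in the subhomogeneity estimate. Concretely, write $v \colonequals u\circ F^{-1}$ on $T$, so that $u = v\circ F$ on $T_h$. Since $u$ and $v$ differ only by precomposition with the diffeomorphism $F$ of the Euclidean domain, and the manifold-valued nature of the map only enters through $g(u(x)) = g(v(F(x)))$ in the pointwise norms, the covariant derivative along $u$ transforms by the ordinary chain rule exactly as in the linear case: $\nabla_{d^\alpha u} = \sum_{\gamma} (\partial_\alpha F^\gamma)\,\nabla_{d^\gamma v}\circ F$ pointwise, because parallel transport along $t\mapsto u(x+te_\alpha)$ and along $t\mapsto v(F(x)+t\,DF(x)e_\alpha)$ agree up to reparametrization. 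Hence $|d^\alpha u(x)|_{g(u(x))} \leq \big(\sum_\gamma |\partial_\alpha F^\gamma(x)|\big)\,\max_\gamma |d^\gamma v(F(x))|_{g(v(F(x)))} \leq C\,h^{-1}\,|dv(F(x))|$.

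Raising to the $p$-th power and integrating over $T_h$, I would then apply the integral transformation formula $\int_{T_h} w(F(x))\,dx = \int_T w(y)\,|\det(DF^{-1}(y))|\,dy$. The Jacobian condition $|\det(DF(x))| \sim h^{-d}$ gives $|\det DF^{-1}(y)| \sim h^{d}$, contributing a factor $h^{d}$, i.e., $h^{d/p}$ after taking the $p$-th root. Combining with the $C\,h^{-1}$ from the derivative of $F$ and using the definition of $\dot\theta_{1,p,\cdot}$ (for $k=1$ there is only the single term $l=1$, $m_1=1$, so $\dot\theta_{1,p,T_h}(u)^p = \int_{T_h}|du|_{g(u)}^p\,dx$ and likewise on $T$), we obtain exactly $\dot\theta_{1,p,T_h}(u) \leq C\,h^{-1+d/p}\,\dot\theta_{1,p,T}(v)$. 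The case $p=\infty$ is handled with the usual $\esssup$ modification, replacing the integral transformation by the fact that $F$ is a bijection, so the essential supremum over $T_h$ of $|du|$ is bounded by $C\,h^{-1}$ times the essential supremum over $T$ of $|dv|$ (here the factor $h^{d/p}$ disappears, consistent with the stated exponent).

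The only genuinely delicate point is justifying the chain-rule identity for the covariant derivative along $u$ in the weak sense, rather than merely for smooth $u$; this is where continuity of $u$ (assumed in the hypothesis $u\in W^{1,p}\cap C(T_h,M)$) is used, so that covariant derivatives may be computed in a fixed local chart of $M$ on a neighborhood of the compact image, where the identity $\partial_\alpha(v^k\circ F) + \Gamma^k_{ij}(u)\,d^i u\,\eta^j$-type formula reduces to the ordinary Euclidean chain rule for weakly differentiable functions composed with the smooth map $F$. Everything else is a routine transformation-of-integrals computation that parallels the proof of Lemma~\ref{L:scalingSD}, so I would not belabor it.
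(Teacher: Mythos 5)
Your proposal is correct and follows essentially the same route the paper intends: the chain rule for $u=v\circ F$ combined with the integral transformation formula, using the third bound $\sup_{x\in T_h}|\partial_\alpha F(x)|\leq C\,h^{-1}$ of Definition~\ref{def:scaling} (which the paper explicitly notes is the extra ingredient needed for this inverse estimate). The only remark is that for $k=1$ the descriptor involves only the first differential $du$, so the worry about the weak chain rule for covariant derivatives is unnecessary here; the ordinary chain rule for the differential of a composition with a smooth diffeomorphism suffices.
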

\subsubsection{Generalization to Vector Fields}
We now extend the definition of smoothness descriptors to vector fields.
Note that, while vector fields are linear in the sense that $u^{-1}TM$ is a vector space for each $u$, 
the set of all vector fields for all base maps $u$ is not linear.
The idea of the definition is to take essentially a full Sobolev norm of the linear vector field part $V:\Omega\to u^{-1}TM$ but weighting it with covariant derivatives of $u$ to obtain the correct scaling.
\begin{definition}\label{def:nonlinSmoothVfield}
	Let $u \in W^{k,b}\cap C(\Omega,M)$, and $V\in W^{k,p}(\Omega,u^{-1}TM)$, 
	where 
	\begin{align*}
	b\colonequals
	\left\{
	\begin{array}{ll}
	p &\quad\textrm{for}\ kp>d,\\
	p+1 &\quad\textrm{for}\ kp=d,\\
	\frac{d}{k} &\quad\textrm{for}\ kp<d.
	\end{array}
	\right. 
	\end{align*}
	We define the $k$-th order homogeneous smoothness descriptor for vector fields by
	\begin{multline*}
	\dot \Theta_{k,p,\Omega}(V)\colonequals 
	\Bigg(\|V\|^{p}_{L^{a}(\Omega,u^{-1}TM)}\dot\theta^{p}_{k,b,\Omega}(u)\\ +
	\sum_{\stackrel{0\leq o\leq k,\; \multind{\beta}_{j}\in [d]^{m_{j}}}{\sum_{j=0}^{o}m_{j}=k}}
	\int_{\Omega}
	\;|\nabla^{\multind{\beta}_{0}}V(x)|^{p}_{g(u(x))}
	\prod_{j=1}^{o} |\nabla^{\multind{\beta}_{j}}u(x)|^{p}_{g(u(x))}
	\;dx\Bigg)^{1/p},
	\end{multline*}
	where 
	\begin{align*}
	\frac{1}{a}=\frac{1}{p}-\frac{1}{b}.
	\end{align*}
\end{definition}
If $u$ maps $\Omega$ to a constant point $P$ on $M$, then the smoothness descriptor of a vector field $V: \Omega\to T_{P}M$ coincides with the Sobolev half-norm.
For a fixed base function $u$, the smoothness descriptor acts like a half-norm on functions into the linear space $u^{-1}TM$.

As the smoothness descriptor for functions, the smoothness descriptor for vector fields is subhomogeneous with respect to scaling of the domain.
\begin{lemma}\label{L:scalingSDVec}
	Let $T,T_{h}$ be two domains in $\RR^{d}$, and $F:T_{h}\to T$ a map that scales with $h$ of order $l$.
	Consider $u\in W^{k,p}(T_{h},M)$ with $1\leq k\leq l$ and $p\in [1,\infty]$, and $V\in W^{k,p}(T_{h},u^{-1}TM)$. Then
	\begin{align*}
	\dot{\Theta}_{k,p,T}(V\circ F^{-1})&\leq C\; h^{k-\frac{d}{p}} \left(\sum_{l=1}^{k} \dot{\Theta}^{p}_{l,p,T_{h}}(V)\right)^{\frac{1}{p}}\\
	&\leq C\; h^{k-\frac{d}{p}} \Theta_{k,p,T_{h}}(V).
	\end{align*}
\end{lemma}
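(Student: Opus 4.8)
The plan is to reduce the vector-field scaling estimate to the already-proven scalar case (Lemma~\ref{L:scalingSD}) by transforming each summand in the definition of $\dot\Theta_{k,p,T}$ individually. Recall that $\dot\Theta_{k,p,\Omega}(V)$ consists of two kinds of terms: the \emph{zeroth-order} term $\|V\|_{L^a}^p\,\dot\theta_{k,b,\Omega}^p(u)$, and the \emph{integral} terms $\int_\Omega |\nabla^{\multind{\beta}_0}V|^p \prod_{j=1}^o |\nabla^{\multind{\beta}_j}u|^p\,dx$ indexed by partitions $\sum_{j=0}^o m_j = k$. The guiding observation is that under pullback by $F^{-1}$, the covariant derivatives transform just like ordinary derivatives (the manifold structure is untouched, only the $\RR^d$-variable changes), so $(\nabla^{\multind{\beta}}(V\circ F^{-1}))(x)$ is, by the chain rule, a linear combination of $(\nabla^{\multind{\gamma}}V)(F^{-1}(x))$ and $(\nabla^{\multind{\gamma}}u)(F^{-1}(x))$ contracted against products of derivatives $\partial^{\multind{\delta}}F^{-1}$, each of which is $O(h^{|\multind{\delta}|})$ by the scaling hypothesis. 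This is exactly the mechanism that drove the scalar proof.

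First I would handle the integral terms. Fix a partition $m_0 + m_1 + \dots + m_o = k$ with $\multind{\beta}_j\in[d]^{m_j}$ and apply the chain rule to $\nabla^{\multind{\beta}_0}(V\circ F^{-1})$ and to each $\nabla^{\multind{\beta}_j}(u\circ F^{-1})$. Expanding the product, every resulting term is (a constant times) a product of factors $|\nabla^{\multind{\gamma}}V(F^{-1}(x))|$ and $|\nabla^{\multind{\gamma}'}u(F^{-1}(x))|$ whose orders still sum to $k$, multiplied by factors $|\partial^{\multind{\delta}}F^{-1}|$ whose orders sum to at least $k$ minus the number of $V/u$-derivatives that were "used up", but in fact — since differentiating $V\circ F^{-1}$ a total of $k$ times produces, in each monomial, inner derivatives of $V$ and $u$ of total order $r\le k$ together with outer $F^{-1}$-derivatives of total order exactly equal to the number of $x$-derivatives applied, which is $k$ in aggregate when all factors are combined — the combined power of $h$ coming from the $\partial F^{-1}$ factors is $\ge h^{k}$ up to the lower-order slack, and then applying the change-of-variables formula $\int_T \dots dx = \int_{T_h} \dots |\det DF|\,dy$ with $|\det DF|\sim h^{-d}$ contributes the remaining $h^{-d}$. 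After taking $p$-th roots this yields the factor $h^{k-d/p}$ in front of terms $\int_{T_h} |\nabla^{\multind{\gamma}_0}V|^p\prod|\nabla^{\multind{\gamma}_j}u|^p\,dy$ with $\sum|\multind{\gamma}_i| = r \le k$. By Proposition~\ref{prop:SDExtraTerms} (applied to $u$) and the analogous interpolation bound for $V$ in Definition~\ref{def:nonlinSmoothVfield}'s structure, each such term of total order $r$ is controlled by $\bigl(\sum_{l=1}^{r}\dot\Theta_{l,p,T_h}^p(V)\bigr)^{1/p}\le\Theta_{k,p,T_h}(V)$, which is exactly the claimed right-hand side. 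The bookkeeping here — keeping track of which multi-indices land on $V$ versus $u$, and checking that no monomial carries fewer than $k$ powers of $h$ from the $F^{-1}$-derivatives before the Jacobian — is the routine-but-fiddly part; it parallels the proof of Lemma~\ref{L:scalingSD} essentially verbatim, now with an extra "slot" for the $V$-factor.

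Second I would handle the zeroth-order term $\|V\circ F^{-1}\|_{L^a(T,\,(u\circ F^{-1})^{-1}TM)}^p\,\dot\theta_{k,b,T}^p(u\circ F^{-1})$. For the norm factor, the change of variables gives $\|V\circ F^{-1}\|_{L^a(T)} = \bigl(\int_{T_h}|V|^a|\det DF|\,dy\bigr)^{1/a}\sim h^{-d/a}\|V\|_{L^a(T_h)}$. For the descriptor factor, Lemma~\ref{L:scalingSD} applied to $u$ with exponent $b$ (and order $k\le l$) gives $\dot\theta_{k,b,T}(u\circ F^{-1})\le C\,h^{k-d/b}\,\theta_{k,b,T_h}(u)$. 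Multiplying, the $h$-powers combine to $h^{-d/a}\cdot h^{k-d/b} = h^{k - d(1/a+1/b)} = h^{k-d/p}$ by the defining relation $1/a = 1/p - 1/b$, exactly as needed; and the remaining quantity $\|V\|_{L^a(T_h)}^p\,\theta_{k,b,T_h}^p(u)$ is bounded by $\Theta_{k,p,T_h}^p(V)$ (up to the lower-order-in-$k$ terms of $\theta_{k,b}$, which are absorbed into the $\dot\Theta_{l,p,T_h}$ for $l<k$ via the same interpolation arguments). Combining the two cases and summing the finitely many partition indices, taking $p$-th roots, and using $\bigl(\sum_{l=1}^k\dot\Theta_{l,p,T_h}^p\bigr)^{1/p}\le\Theta_{k,p,T_h}$ finishes the proof.

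The main obstacle I anticipate is the careful power-counting in the integral terms: one must verify that differentiating the composition $k$ times and then redistributing derivatives across the product $|\nabla^{\multind{\beta}_0}V|\prod_j|\nabla^{\multind{\beta}_j}u|$ never produces a monomial whose accumulated $\partial^{\multind{\delta}}F^{-1}$-exponents fall short of $h^{k}$ before the Jacobian is applied — equivalently, that the scaling is genuinely \emph{sub}homogeneous and no term "loses" an $h$. This is where the third scaling hypothesis of Definition~\ref{def:scaling} ($\sup|\partial_\alpha F|\le Ch^{-1}$) might be feared to enter, but as in Lemma~\ref{L:scalingSD} it is in fact \emph{not} needed here — only the bounds on $\partial^{\multind{\delta}}F^{-1}$ and on $\det DF$ are used — and I would remark on this parallel. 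The secondary obstacle is simply notational: the vector-field descriptor's two-part structure means the chain-rule expansion has one more combinatorial layer than the scalar case, so the exposition should lean on Lemma~\ref{L:scalingSD} and Proposition~\ref{prop:SDExtraTerms} as black boxes rather than re-deriving the interpolation inequalities.
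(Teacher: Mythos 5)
Your argument is correct and follows exactly the route the paper intends: the paper gives no explicit proof of Lemma~\ref{L:scalingSDVec}, and for the scalar analogue Lemma~\ref{L:scalingSD} it states only that the proof ``follows from the chain rule and the integral transformation formula,'' which is precisely your mechanism (Fa\`a di Bruno on $\nabla^{\multind{\beta}}(V\circ F^{-1})$, the bounds on $\partial^{\multind{\delta}}F^{-1}$ and $\det DF$, and the exponent identity $\tfrac{1}{a}+\tfrac{1}{b}=\tfrac{1}{p}$ for the zeroth-order term). You also correctly identify the fiddly part --- the power counting and the absorption of lower-order cross terms into the $\dot\Theta_{l,p,T_h}(V)$ --- which the paper likewise leaves implicit.
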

\subsection{Distances}
Central to this work are errors and thus distances between a minimizer $u$ of $\Energy$ and some finite-dimensional approximation $u_{h}$.
Closely related to the concept of distance is the concept of geodesic.
We can compare geodesics on $M$ with geodesics in $L^{p}(\Omega,M)$ using the concept of geodesic homotopy.
\begin{definition}
	Let $u,v\in L^{p}(\Omega,M)$. We call a map $\Gamma:\Omega\times I\to M$ a 
	\textit{geodesic homotopy} connecting $u$ to $v$ if for almost every $x\in \Omega$ the 
	track curve $\gamma_{x}$
	defined by $\gamma_{x}(t)\colonequals \Gamma(x,t)$ is a constant-speed geodesic connecting $u(x)$ to $v(x)$.
\end{definition}
One can easily verify that geodesic homotopies are $L^{p}$-geodesics.
\subsubsection{The Exponential Map}
The difference between two (close enough) points on a manifold $M$ is characterized by the vector $(p,\log_{p}q)\in TM$,
where $\log_{p}: B_{\textrm{inj}(p)}\to T_{p}M$ denotes the inverse of the exponential map $\exp_{p}:T_{p}M\to M$, and
$\textrm{inj}(p)$ is the injectivity radius at $p\in M$.

The differential of the exponential map is defined by
\begin{align*}
d\exp_{p}V\;:\;T_{p}M\to T_{\exp_{p}V}M,\qquad d\exp_{p}V(W)=\frac{d}{dt}\bigg|_{t=0}\exp_{p}(V+tW).
\end{align*}
For the differential with respect to the base point of $\exp$ we write $d_{2}\exp$, i.e., for $V,W\in T_{p}M$
\begin{align*}
d_{2}\exp_{p} V (W)=\frac{d}{dt}\bigg|_{t=0} \exp_{\gamma_{W}(t)}\pi^{\gamma_{W}}_{\gamma_{W}(0)\mapsto \gamma_{W}(t)}V,
\end{align*}
where $\pi^{\gamma_{W}}$ denotes the parallel transport along $\gamma_{W}$.

For the bivariate logarithm $\log:(p,q)\mapsto \log_{q}p$ we denote the 
covariant derivative with respect to the first and second components by $d$ 
and $d_{2}$, respectively.\\
The following estimates of the derivatives of the logarithm can be proved by direct calculation and Jacobi field estimates. Details can be found in~\cite{diss}.
The bound \eqref{eq:logest1} can also be found in \cite{Karcher1977}. 
\begin{proposition}\label{prop:logest}
	Let $p,q\in B_{\rho}\subset M$ with $\rho$ small enough. Let $\Rm$ denote the Riemannian curvature 
	tensor of $M$, and assume $\Rm$ and $\nabla \Rm$ to be bounded.
	Then
	\begin{align}
	\|d_{2}\log_{p}q+Id\| + \|d\log_{p}q - \pi_{q\mapsto p}\|&\leq |\Rm|_{\infty}\;d^{2}(p,q) \label{eq:logest1},
	\end{align}
	where $\pi_{q\mapsto p}:T_{q}M\to T_{p}M$ denotes parallel transport along a geodesic.
	For a third point $r\in B_{\rho}$, we have
	\begin{align}
	\left|\log_{p}q - \log_{p}r + d\log_{p}q(\log_{q}r)\right|
	&\leq \frac{1}{2}|\Rm|_{\infty} d(p,r)d(p,q)(d(p,q)+d(p,r)).
	\end{align}
	Finally,
	\begin{align}
	\|d_{2}d\log_{p}q\| + \|d^{2}_{2}\log_{p}q\|&\leq C\;d(p,q) \label{eq:logest2},
	\end{align}
	where the constant depends on $\Rm$ and $\nabla \Rm$.
\end{proposition}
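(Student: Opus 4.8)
My plan is to reduce all three bounds to the Jacobi field calculus for $\exp$, in the spirit of~\cite{Karcher1977}, and then to differentiate the defining identity $\exp_p(\log_p q)=q$. First I would fix $p$, set $V\colonequals\log_p q$ and $\gamma(t)\colonequals\exp_p(tV)$, and use the standard facts that $W\mapsto d\exp_p V(W)$ is the value at $t=1$ of the Jacobi field $J$ along $\gamma$ with $J(0)=0$, $\nabla_t J(0)=W$, while $W\mapsto d_2\exp_p V(W)$ is the value at $t=1$ of the Jacobi field with $J(0)=W$, $\nabla_t J(0)=0$ (the latter coming from the geodesic variation $t\mapsto\exp_{\alpha(s)}(t\,\pi^{\alpha}_s V)$, where $\alpha$ is a geodesic with $\dot\alpha(0)=W$ and $\pi^{\alpha}_s V$ is the parallel transport of $V$ along $\alpha$). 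Writing the Jacobi equation in a parallel frame along $\gamma$ and estimating the curvature term by $|\Rm|_\infty$, a Gronwall argument (or two integrations plus a Neumann-series estimate) gives $\|d\exp_p V-\pi_{p\mapsto q}\|\le C|\Rm|_\infty|V|^2$ and $\|d_2\exp_p V-\pi_{p\mapsto q}\|\le C|\Rm|_\infty|V|^2$. Differentiating $\exp_p(\log_p q)=q$ in $q$ yields $d\log_p q=(d\exp_p V)^{-1}$ and in $p$ yields $d_2\log_p q=-(d\exp_p V)^{-1}\circ d_2\exp_p V$; for $\rho$ small $d\exp_p V$ is invertible with inverse $\pi_{q\mapsto p}+O(|\Rm|_\infty|V|^2)$, and composing the expansions produces \eqref{eq:logest1} with $|V|=\dist(p,q)$.

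For the three-point estimate I would expand $\log_p q-\log_p r$ along the geodesic $\sigma(s)\colonequals\exp_q(s\log_q r)$, $s\in[0,1]$, from $q$ to $r$. With $F(s)\colonequals\log_p\sigma(s)\in T_pM$ one has $F(0)=\log_p q$, $F(1)=\log_p r$ and $F'(s)=d\log_p\sigma(s)[\dot\sigma(s)]$; since $\dot\sigma$ is parallel along $\sigma$, $\dot\sigma(s)=\pi_{q\mapsto\sigma(s)}\log_q r$, so by the first step $F'(s)=\pi_{\sigma(s)\mapsto p}\pi_{q\mapsto\sigma(s)}\log_q r+(\text{a curvature remainder})$. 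The operator $\pi_{\sigma(s)\mapsto p}\pi_{q\mapsto\sigma(s)}-\pi_{q\mapsto p}$ is a conjugate of the holonomy around the geodesic triangle $p,q,\sigma(s)$ and is bounded by $|\Rm|_\infty$ times that triangle's area, i.e., by $|\Rm|_\infty\cdot\tfrac12\,\dist(q,\sigma(s))\cdot\dist(p,\sigma)$ with $\dist(p,\sigma)\le\min\{\dist(p,q),\dist(p,r)\}$. Integrating $F'$ over $[0,1]$, subtracting $d\log_p q[\log_q r]=\pi_{q\mapsto p}\log_q r+O(|\Rm|_\infty\dist^2(p,q)\dist(q,r))$, and collecting terms --- the curvature remainders from \eqref{eq:logest1} combining with the holonomy contribution --- gives a bound of the asserted cross-term shape; alternatively one can note that $G(q,r)\colonequals\log_p q-\log_p r+d\log_p q[\log_q r]$ vanishes to second order as $r\to q$ and Taylor-expand along $\sigma$, reducing the remainder to a Hessian of $\log$ of the type controlled in \eqref{eq:logest2}. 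I expect this to be the main obstacle: a crude term-by-term estimate produces spurious contributions (it would fail to vanish when $p=q$), so one must keep the Jacobi-field expansions together and exploit the exact cancellations forcing $G$ to vanish whenever $p\in\{q,r\}$ or $q=r$ --- this bookkeeping is the real content of the "direct calculation" deferred to~\cite{diss}.

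Finally, for \eqref{eq:logest2} I would differentiate the representations $d\log_p q=(d\exp_p V)^{-1}$ and $d_2\log_p q=-(d\exp_p V)^{-1}\circ d_2\exp_p V$ once more in the base point $p$, reducing $d_2 d\log_p q$ and $d_2^2\log_p q$ to first base-point derivatives of $d\exp_p$ and $d_2\exp_p$. Through the Jacobi-field representation these are governed by the second variation fields of the relevant geodesic variations, which solve Jacobi-type equations whose inhomogeneity carries one factor of $\nabla\Rm$; a Gronwall estimate with $|\Rm|_\infty$ and $|\nabla\Rm|_\infty$ bounds them by $C\,\dist(p,q)$. The drop from the quadratic order in \eqref{eq:logest1} to the linear order here is exactly one base-point differentiation hitting a $\dist^2$-factor, while the $\nabla\Rm$-contribution is itself of order $\dist^2\le C\,\dist$ for $\rho$ small. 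Throughout, the only genuine care is in tracking the many parallel transports between $T_pM$, $T_qM$, $T_rM$ and along $\gamma,\sigma,\alpha$, and --- as noted --- in the sharp bookkeeping for the three-point bound.
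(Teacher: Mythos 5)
Your overall framework---Jacobi-field representations of $d\exp_p$ and $d_2\exp_p$, Gronwall/comparison estimates, and the identities $d\log_p q=(d\exp_pV)^{-1}$, $d_2\log_p q=-(d\exp_pV)^{-1}\circ d_2\exp_pV$ obtained by differentiating $\exp_p(\log_pq)=q$---is exactly the route the paper points to (``direct calculation and Jacobi field estimates,'' with \eqref{eq:logest1} credited to Karcher), and your treatment of \eqref{eq:logest1} and \eqref{eq:logest2} is a sound outline of it. The gap is in the three-point estimate, and it is a genuine one, not mere bookkeeping. The asserted bound $\tfrac12|\Rm|_\infty\,d(p,r)\,d(p,q)\,(d(p,q)+d(p,r))$ carries the \emph{product} $d(p,q)\,d(p,r)$, i.e.\ the quantity must vanish (to first order) both as $p\to q$ and as $p\to r$, and neither of your two routes produces this structure. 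In route (a), after replacing $d\log_p\sigma(s)$ by $\pi_{\sigma(s)\mapsto p}$ you are left with remainders $E_{\sigma(s)}[\pi_{q\mapsto\sigma(s)}\log_qr]$ of size $|\Rm|_\infty\,d^2(p,\sigma(s))\,d(q,r)$; at $p=q$ these integrate to a term of order $|\Rm|_\infty\,d^3(q,r)\neq0$ while the right-hand side is $0$ --- you concede this yourself, but then defer precisely the cancellation that constitutes the proof. In route (b), Taylor expansion of $F(s)=\log_p\sigma(s)$ along the geodesic from $q$ to $r$ yields $-G(r)=\int_0^1(1-s)\,(\nabla^2\log_p)_{\sigma(s)}(\dot\sigma,\dot\sigma)\,ds$, so even with a pointwise Hessian bound $\|(\nabla^2\log_p)_x\|\le C|\Rm|_\infty d(p,x)$ you obtain $C|\Rm|_\infty\max_s d(p,\sigma(s))\,d^2(q,r)$, which has the wrong shape: for $d(p,q)\ll d(p,r)$ this is of order $d^3(p,r)$ whereas the claim is of order $d(p,q)\,d^2(p,r)$. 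Moreover the Hessian you would need there is $d^2\log_pq$ (two derivatives in the \emph{target} variable), which is not among the quantities controlled by \eqref{eq:logest2} (those involve at least one base-point derivative), so the claimed reduction to \eqref{eq:logest2} does not go through.

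To close the gap you must exploit that $G(q,r)\colonequals\log_pq-\log_pr+d\log_pq(\log_qr)$ vanishes \emph{identically} on both $\{r=q\}$ and $\{r=p\}$ (the latter because $d\log_pq(\log_qp)=-\log_pq$ exactly, by differentiating $t\mapsto\log_p(\exp_q(t\log_qp))=(1-t)\log_pq$), and to run the Jacobi-field expansion as a two-parameter variation (in $t$ along the radial geodesics from $p$ and in $s$ along $\sigma$) so that the curvature term in the Jacobi equation appears contracted against the ``area'' element $\partial_t\Gamma\wedge\partial_s\Gamma$; it is this double integration that produces the factor $d(p,q)\,d(p,r)$ and the constant $\tfrac12$. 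As presented, your argument establishes the first and third displays but only an inequality of the weaker form $C|\Rm|_\infty\,d(q,r)\,(d^2(p,q)+d^2(p,r))$ for the second, which is not sufficient for the later applications (e.g.\ the Taylor expansion of $\log_{u_I(x)}u(y)$ in Lemma~\ref{L:BHLInterpol}) where the product structure is used.
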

\subsubsection{Sobolev Distance as a Metric on $L^{p}(\Omega,TM)$}\label{sec:metricsTM}
Definition~\ref{def:NashSobolev} implies a notion of Sobolev distance based on the embedding $\iota:M\to \RR^{N}$
\begin{align}
d_{W^{1,p}_{\iota}(\Omega,M)}(u,v)\colonequals \|\iota\circ u-\iota\circ v\|_{W^{1,p}(\Omega,\RR^{N})}.
\end{align}
Geodesics for this distance depend on the embedding.
Our goal is to introduce an equivalent concept that is intrinsic.
In particular we want to obtain one class of distance-realizing curves for all $W^{k,p}$-distances, independent of $k$.
(Note that we only consider $k=0,1$ here, but the ideas generalize to arbitrary $k$.)

The difference of two maps $u,v\in C(\Omega,M)$ is characterized by the pointwise difference $(u(x),\log_{u(x)}v(x))\in TM$ if $u$ and $v$ are close enough to each other.
In order to characterize the difference between the differentials of two maps $u,v\in C^{1}(\Omega,M)$, 
i.e., $(u(x),d^{\alpha}u(x)), (v(x),d^{\alpha}v(x)) \in TM$ at a point $x\in \Omega$, we consider the tangent bundle itself as a manifold.

There are several natural metrics on the tangent bundle.
A complete classification has been provided in \cite{Kowalski}.
There are in particular two classical constructions, namely the Sasaki metric, 
which is a Riemannian metric  on $TM$, and the horizontal (or complete) lift, 
which is only pseudo-Riemannian.
Geodesics of the Sasaki metric are in general complicated objects, whose projections 
onto the manifold are in general not geodesics in $M$.
This property of geodesics in the tangent bundle projecting to $M$-geodesics is desirable 
for a host of reasons, among them a natural splitting of distances into a part on $M$ and a vector part.\\
In local coordinates the horizontal lift $g^{h}$ is given by
\begin{align*}
g^{h}_{(p,V)}=\left(
\begin{array}{ll}
V^{a}\Gamma_{ai}^{k}g_{kj} + V^{a}\Gamma_{aj}^{k}g_{ki}\quad & 
g_{ij}\\
g_{ij} & 0
\end{array}
\right).
\end{align*}
Geodesics of $g^{h}$ correspond to Jacobi fields along geodesics in $M$ \cite{Casciaro}.
The inverse of the exponential map on $TM$ is defined by
\begin{align*}
^{h}\log_{(p,V_{p})}(q,V_{q})=\left(\log_{p}q, d\log_{p}q (V_{q}) +\;d_{2}\log_{p}q (V_{p}) 
\right)
\end{align*}
for $(p,V_{p}),(q,V_{q})\in TM$ with $d(p,q)\leq \textrm{inj}_{M}(p)$.
The horizontal lift arises naturally when we consider the change of the distance 
between two curves $\gamma$ and $\mu$ in $M$ as
\begin{align*}
\frac{d}{dt}\bigg|_{t=0}\left|\log_{\gamma(t)}\mu(t)\right|_{g}^{2}=\left|\;^{h}\log_{\left(\gamma(0),\dot{\gamma}(0)\right)}\left(\mu(0),\dot{\mu}(0)\right)\right|_{^{h}g}^{2}.
\end{align*}
As $g^{h}$ is only a pseudo-Riemannian metric, it is not meaningful to consider lengths with respect to $g^{h}$. 
We can however consider the Sasaki-length of $g^{h}$-geodesics and---as an approximation---of the $g^{h}$-logarithm.
The length of $g^{h}$-geodesics is of particular interest, 
as the derivatives of geodesic homotopies are $g^{h}$-geodesic homotopies in the following sense.
\begin{lemma}[\cite{diss}]
	Let $u,v\in C^{1}(\Omega,M)$ such that $d_{L^{\infty}}(u,v)\leq \textrm{inj}_{M}(p)$ for all points $p\in u(\Omega)\cup v(\Omega)\subset M$.
	Let $\Gamma:\Omega\times I\to M$ be the geodesic homotopy connecting $u$ to $v$. Then $\Gamma(\cdot,t) \in C^{1}(\Omega,M)$, and for any $x\in \Omega$ and $\alpha\in \left\{1,\ldots,d\right\}$ the curve
	$d^{\alpha}\Gamma(x,\cdot):I\to TM$ describes a $g^{h}$-geodesic in $TM$ connecting $d^{\alpha}u(x)$ to $d^{\alpha}v(x)$.
\end{lemma}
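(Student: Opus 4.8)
The plan is to fix $x\in\Omega$ and $\alpha$, and to show that the curve $t\mapsto d^\alpha\Gamma(x,t)\in TM$ is a $g^h$-geodesic by identifying it with a Jacobi field along the track geodesic $\gamma_x$. First I would note that since $d_{L^\infty}(u,v)\le\operatorname{inj}_M(p)$, for a.e.\ (indeed, by continuity, every) $x$ the points $u(x)$ and $v(x)$ lie within the injectivity radius, so the geodesic homotopy $\Gamma(x,t)=\exp_{u(x)}\!\big(t\log_{u(x)}v(x)\big)$ is well defined and depends smoothly on $(x,t)$ because $u,v\in C^1(\Omega,M)$ and $\exp,\log$ are smooth on the relevant region; differentiating in $x^\alpha$ then shows $\Gamma(\cdot,t)\in C^1(\Omega,M)$, which takes care of the first assertion.

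Next I would observe that, for each fixed $\alpha$, the variation $s\mapsto \Gamma(x+se_\alpha,\cdot)$ is a one-parameter family of geodesics in $M$ (each track curve is a geodesic), hence its variation field
\[
J_x^\alpha(t)\colonequals \frac{\partial}{\partial s}\bigg|_{s=0}\Gamma(x+se_\alpha,t)
\]
is, for every $x$, a Jacobi field along $\gamma_x(t)=\Gamma(x,t)$. But $J_x^\alpha(t)$ is precisely $d^\alpha\Gamma(x,t)$ viewed as an element of $T_{\gamma_x(t)}M$, i.e.\ the curve $t\mapsto d^\alpha\Gamma(x,t)$ in $TM$ is the canonical lift of the Jacobi field $J_x^\alpha$. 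It is a constant-speed geodesic because $\gamma_x$ is, so the Jacobi field is the variation through constant-speed geodesics with affinely-parametrized endpoints.

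The key step is then to invoke the cited characterization (\cite{Casciaro}) that a curve in $TM$ is a $g^h$-geodesic if and only if it is the canonical lift $t\mapsto (\gamma(t),J(t))$ of a Jacobi field $J$ along an $M$-geodesic $\gamma$. Combining this with the previous paragraph gives that $t\mapsto d^\alpha\Gamma(x,t)$ is a $g^h$-geodesic; its endpoints are $d^\alpha\Gamma(x,0)=d^\alpha u(x)$ and $d^\alpha\Gamma(x,1)=d^\alpha v(x)$ by definition of $\Gamma$, which finishes the proof.

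The main obstacle is the regularity/measurability bookkeeping rather than the geometry: one must justify that $x\mapsto \Gamma(x,\cdot)$ is genuinely $C^1$ (not merely that each track curve is smooth in $t$), so that the $x$-derivative $d^\alpha\Gamma(x,t)$ exists and equals the variation field $J_x^\alpha(t)$, and that differentiation in $s$ commutes with the geodesic equation in $t$. This is where the hypothesis $u,v\in C^1$ together with $d_{L^\infty}(u,v)<\operatorname{inj}_M$ is used — it keeps $(x,t)\mapsto\Gamma(x,t)$ inside a region where $\exp$ and $\log$ are smooth diffeomorphisms, so $\Gamma(x,t)=\exp_{u(x)}(t\log_{u(x)}v(x))$ is a composition of $C^1$ maps and the Jacobi-field identification is legitimate. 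Everything else is the standard first-variation-of-geodesics computation plus the quoted $g^h$-geodesic $\leftrightarrow$ Jacobi-field equivalence.
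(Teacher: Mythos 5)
Your proof is correct and follows exactly the route the paper intends: the paper states the correspondence between $g^{h}$-geodesics and Jacobi fields along $M$-geodesics (citing Casciaro) immediately before this lemma, and defers the proof itself to the thesis. Identifying $d^{\alpha}\Gamma(x,\cdot)$ with the variation field of the geodesic family $s\mapsto\Gamma(x+se_{\alpha},\cdot)$, hence with a Jacobi field along the track geodesic $\gamma_{x}$, and using the explicit formula $\Gamma(x,t)=\exp_{u(x)}(t\log_{u(x)}v(x))$ for the $C^{1}$-regularity, is precisely the argument called for.
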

\begin{remark}
	For manifolds with bounded curvature, geodesic homotopies also inherit the weak differentiability of their endpoint maps.
	For first order derivatives this follows by the Rauch comparison principle.
	Indeed, if $\exp$ and $\log$ are in $C^{k}$ in their arguments and $M$ admits a $C^{k}$-embedding into Euclidean space, 
	$u,v\in W^{k,p}\cap C(\Omega,M)$, and $d_{L^{\infty}}(u,v)\leq \textrm{inj}_{M}(p)$ for all $p\in u(\Omega)\cup v(\Omega)\subset M$, 
	then the geodesic homotopy connecting $u$ to $v$ lies in $W^{k,p}\cap C(\Omega,M)$.
	This follows by the chain rule.
	In particular, we can always estimate the homogeneous smoothness descriptor along geodesic homotopy by the inhomogeneous smoothness descriptors at the endpoints
	\begin{align}
	\dot \theta_{k,p,\Omega}(\Gamma(s))\leq C(M)\left(\theta_{k,p,\Omega}(\Gamma(0))+\theta_{k,p,\Omega}(\Gamma(1))\right).
	\end{align}
\end{remark}
In light of these considerations, we can now define the following first-order Sobolev distance measure.
\begin{definition}\label{def:SobolevForC1}
	Let $u,v\in W^{1,p}(\Omega,M)\cap C(\Omega,B_{\textrm{inj}_{M}})$, and $\Gamma$ denote the geodesic homotopy connecting $u$ to $v$.
	We set	
	\begin{align*}
	D^{p}_{1,p}(u,v)
	& \colonequals \sum_{\alpha=1}^{d} \int_{\Omega}\; 
	\|\nabla_{d^{\alpha}u}\log_{u(x)}v(x)\|_{g(u(x))}^{p}\;dx,
	\end{align*}
	and
	\begin{align*}
	d_{W^{1,p}}(u,v)\colonequals d_{L^{p}}(u,v) + D_{1,p}(u,v).
	\end{align*}
\end{definition}
Note that $d_{W^{1,p}}$ defined by Definition~\ref{def:SobolevForC1} is not a distance. Locally, however, we can show equivalence to the Sobolev distance in an embedding. To do so we restrict our considerations to the following ball.
\begin{definition}\label{def:HKL}
	Let $q>\max\{p,d\}$, and let $K$ and $L$ be two constants such that $L\leq \inj(M)$ 
	and $KL\leq \frac{1}{|\Rm|_{\infty}}$.
	We set
	\begin{align}
	W^{1,q}_{K}\colonequals \left\{v\in W^{1,q}(\Omega,M)\;:\; \theta_{1,q,\Omega}(v)\leq K\right\},
	\end{align}
	and denote by $H^{1,p,q}_{K,L}$ an $L$-ball w.r.t.\ $L^{s}$ in $W^{1,q}_{K}$, 
	where 
	\begin{align*}
	s\colonequals
	\left\{
	\begin{array}{ll}
	\frac{pq}{q-p}&\quad\textrm{for}\ d<p\\
	\frac{2pq}{q-p}&\quad\textrm{for}\ d=p\\
	\frac{dq}{q-d}&\quad\textrm{for}\ d>p.
	\end{array}
	\right.
	\end{align*}
\end{definition}
On the restricted set $H^{1,p,q}_{K,L}$ one can prove a uniformity lemma for parallel vector fields along geodesic homotopies \cite{grohsSanderH, diss}. By a this we mean a map $V\in C(\Omega\times I \to \Gamma^{-1}TM)$, such that for every $x\in \Omega$ the vector field $V(x,\cdot)$ is in $W^{1,1}(I,\Gamma(x,\cdot)^{-1}TM)$ and parallel along the curve $\Gamma(x,\cdot):I\to M$.
\begin{lemma}[Uniformity Lemma]\label{L:uniformity}
	Let $u,v \in H^{1,p,q}_{K,L}$ as defined in Definition~\ref{def:HKL}, and let $\Gamma$ be the geodesic homotopy connecting $u$ to $v$.
	Consider a parallel vector field $V\in\ W^{1,p} \cap C(\Omega\times I,\Gamma^{-1}TM)$ along $\Gamma$.
	Then there exists a constant  $\Cl{c:uniformity}$ depending on the curvature of $M$, the Sobolev constant, and the dimension $d$ of $\Omega$ such that
	\begin{align*}
	\frac{1}{1+\Cr{c:uniformity}t}\|V(\cdot,0)\|_{W^{1,p}(\Omega,u^{-1}TM)} &\leq \|V(\cdot,t)\|_{W^{1,p}(\Omega,\Gamma(\cdot,t)^{-1}TM)}\\
	& \leq (1+\Cr{c:uniformity}t)\|V(\cdot,0)\|_{W^{1,p}(\Omega,u^{-1}TM)}
	\end{align*}
	holds for all $t\in I$.
\end{lemma}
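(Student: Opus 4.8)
The plan is to reduce everything to a pointwise differential identity along the track curves of $\Gamma$ followed by an $L^{p}$--Hölder estimate, essentially as in \cite{grohsSanderH, diss}. Write $\gamma_{x}(t)\colonequals\Gamma(x,t)$ and let $\frac{D}{dt}$, $\nabla_{\partial_{\alpha}}$ denote the covariant derivatives in the pulled-back bundle $\Gamma^{-1}TM$ in the $t$- and $x^{\alpha}$-directions. Since $V$ is parallel along each $\gamma_{x}$ we have $\frac{D}{dt}V\equiv 0$, hence $|V(x,t)|_{g(\gamma_{x}(t))}=|V(x,0)|_{g(u(x))}$ for a.e.\ $x$ and all $t$, so the $L^{p}$-part of the norm is already constant: $\|V(\cdot,t)\|_{L^{p}(\Omega,\Gamma(\cdot,t)^{-1}TM)}=\|V(\cdot,0)\|_{L^{p}(\Omega,u^{-1}TM)}$. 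It thus remains to control $\|\nabla_{\alpha}V(\cdot,t)\|_{L^{p}}$ (in the paper's notation, the covariant derivative $\nabla_{d^{\alpha}\Gamma(\cdot,t)}V(\cdot,t)$) against $\|\nabla_{\alpha}V(\cdot,0)\|_{L^{p}}$.

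The key step is the commutation rule for the pullback connection, $\frac{D}{dt}\nabla_{\partial_{\alpha}}V-\nabla_{\partial_{\alpha}}\frac{D}{dt}V=R\!\left(\partial_{t}\Gamma,\partial_{\alpha}\Gamma\right)V$, which with $\frac{D}{dt}V=0$ becomes $\frac{D}{dt}\nabla_{\partial_{\alpha}}V=R(\partial_{t}\Gamma,\partial_{\alpha}\Gamma)V$ along each $\gamma_{x}$. Integrating in $t$ and using $\big|\frac{D}{dt}|\nabla_{\alpha}V|\big|\le\big|\frac{D}{dt}\nabla_{\alpha}V\big|$ gives
\[
|\nabla_{\alpha}V(x,t)|\le|\nabla_{\alpha}V(x,0)|+|\Rm|_{\infty}\,|V(x,0)|\int_{0}^{t}|\partial_{s}\Gamma(x,s)|\,|\partial_{\alpha}\Gamma(x,s)|\,ds .
\]
Here $\partial_{s}\Gamma(x,\cdot)$ is the velocity of a constant-speed geodesic, so $|\partial_{s}\Gamma(x,s)|=\dist(u(x),v(x))$, while $\partial_{\alpha}\Gamma(x,\cdot)$ is the Jacobi field along $\gamma_{x}$ with $\partial_{\alpha}\Gamma(x,0)=d^{\alpha}u(x)$ and $\frac{D}{ds}\partial_{\alpha}\Gamma(x,0)=\nabla_{d^{\alpha}u}\log_{u(x)}v(x)$. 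Since $L\le\inj(M)$ and $KL\le 1/|\Rm|_{\infty}$, Rauch comparison controls this Jacobi field uniformly on $I$, $|\partial_{\alpha}\Gamma(x,s)|\le C\big(|d^{\alpha}u(x)|+|\nabla_{d^{\alpha}u}\log_{u(x)}v(x)|\big)$, and by Proposition~\ref{prop:logest} the second term is bounded pointwise by $C(|d^{\alpha}u(x)|+|d^{\alpha}v(x)|)$. Hence
\[
|\nabla_{\alpha}V(x,t)|\le|\nabla_{\alpha}V(x,0)|+C\,t\,\dist(u(x),v(x))\big(|d^{\alpha}u(x)|+|d^{\alpha}v(x)|\big)|V(x,0)| .
\]

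Now take $L^{p}(\Omega)$-norms. The first term gives $\|\nabla_{\alpha}V(\cdot,0)\|_{L^{p}}$; for the second, apply Hölder with exponents $r_{1},q,r_{3}$ satisfying $\tfrac1p=\tfrac1{r_{1}}+\tfrac1q+\tfrac1{r_{3}}$: the factor $\dist(u,v)$ lies in $L^{s}\hookrightarrow L^{r_{1}}$ with norm $\le CL$ by Definition~\ref{def:HKL}; the factor $|d^{\alpha}u|+|d^{\alpha}v|$ lies in $L^{q}$ with norm $\le C(\theta_{1,q,\Omega}(u)+\theta_{1,q,\Omega}(v))\le CK$; and $|V(\cdot,0)|$ lies in $L^{r_{3}}$ with $\|V(\cdot,0)\|_{L^{r_{3}}}\le C_{\mathrm{Sob}}\|V(\cdot,0)\|_{W^{1,p}(\Omega,u^{-1}TM)}$ via the Sobolev embedding for vector fields applied to $x\mapsto|V(x,0)|$. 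The three cases of $s$ in Definition~\ref{def:HKL} are exactly those admitting such a triple with $r_{1}\le s$ and $r_{3}$ below the relevant Sobolev exponent of $W^{1,p}$. Summing over $\alpha$ and adding the $t$-independent $L^{p}$-part yields $\|V(\cdot,t)\|_{W^{1,p}}\le(1+\Cr{c:uniformity}t)\|V(\cdot,0)\|_{W^{1,p}}$, with $\Cr{c:uniformity}$ depending only on $|\Rm|_{\infty}$, $C_{\mathrm{Sob}}$ and $d$ (the constraint $KL\le 1/|\Rm|_{\infty}$ absorbing the $K,L$-dependence). The lower bound follows by integrating the same identity from $t$ down to $0$, which gives $\|\nabla_{\alpha}V(\cdot,0)\|_{L^{p}}\le\|\nabla_{\alpha}V(\cdot,t)\|_{L^{p}}+\Cr{c:uniformity}t\|V(\cdot,0)\|_{W^{1,p}}$ and hence $\|V(\cdot,t)\|_{W^{1,p}}\ge\frac{1}{1+\Cr{c:uniformity}t}\|V(\cdot,0)\|_{W^{1,p}}$.

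The main obstacle is the exponent bookkeeping in the Hölder step: one must check that in each regime $d<p$, $d=p$, $d>p$ the embedding $W^{1,p}(\Omega,u^{-1}TM)\hookrightarrow L^{r_{3}}$ leaves enough room — after also spending an $L^{q}$-factor on $du,dv$ — to place $\dist(u,v)$ in some $L^{r_{1}}$ with $r_{1}\le s$, and that the Sobolev constant is uniform over the maps in play. A secondary technical point is the uniform Jacobi field bound, which requires the smallness $KL\le 1/|\Rm|_{\infty}$ so that Rauch comparison applies along every track curve at once, together with the standard regularization $|\nabla_{\alpha}V|\approx\sqrt{|\nabla_{\alpha}V|^{2}+\varepsilon^{2}}$ to justify differentiating $|\nabla_{\alpha}V|$ where it vanishes.
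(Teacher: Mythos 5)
Your argument is essentially the paper's own: the paper proves this lemma by differentiating $\|V(\cdot,t)\|_{W^{1,p}(\Omega,\Gamma(\cdot,t)^{-1}TM)}$ in $t$ and applying the H\"older and Sobolev inequalities, and your pointwise integration of the commutator identity $\nabla_{t}\nabla_{\alpha}V=R(\partial_{t}\Gamma,\partial_{\alpha}\Gamma)V$, followed by the H\"older split $\tfrac1p=\tfrac1{r_1}+\tfrac1q+\tfrac1{r_3}$, is the same computation in integrated form. The exponent bookkeeping is correct: in each of the three regimes of Definition~\ref{def:HKL} one finds $r_{1}=s$ exactly, with $r_{3}=\infty$, $r_{3}=s$, $r_{3}=dp/(d-p)$ respectively, and your observation that $KL\leq 1/|\Rm|_{\infty}$ is what removes the $K,L$-dependence from the constant is the right reading of that definition.

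The one step that does not go through as written is the lower bound. From $\|\nabla_{\alpha}V(\cdot,0)\|_{L^{p}}\leq\|\nabla_{\alpha}V(\cdot,t)\|_{L^{p}}+Ct\,\|V(\cdot,0)\|_{W^{1,p}}$ you obtain, after summing over $\alpha$ and adding the $t$-independent $L^{p}$-part, $f(0)\leq f(t)+Ct\,f(0)$ with $f(s)\colonequals\|V(\cdot,s)\|_{W^{1,p}}$, hence only $f(t)\geq(1-Ct)f(0)$. Since $\frac{1}{1+Ct}\geq 1-Ct$, this is strictly weaker than the claimed $f(t)\geq\frac{1}{1+Ct}f(0)$, and it becomes vacuous once $Ct\geq1$, which is not excluded because $C$ depends on the curvature and the Sobolev constant. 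The fix costs one line and uses only facts already in your proof: since $V$ is parallel, $|V(x,\sigma)|=|V(x,t)|$ pointwise, so in the reverse-time estimate the curvature term may be bounded by $Ct\,\|V(\cdot,t)\|_{W^{1,p}(\Omega,\Gamma(\cdot,t)^{-1}TM)}$ instead (apply Kato's inequality and the Sobolev embedding to the scalar $x\mapsto|V(x,t)|$). This yields $f(0)\leq(1+Ct)f(t)$, which is the stated lower bound; equivalently, apply your upper bound to the reversed homotopy $\sigma\mapsto\Gamma(\cdot,t-\sigma)$ with initial field $V(\cdot,t)$.
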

The proof of the lemma follows by differentiating $\|V(\cdot,t)\|_{W^{1,p}(\Omega,\Gamma(\cdot,t)^{-1}TM)}$ with respect to $t$ and using the H\"older and Sobolev inequalities.

The uniformity lemma can be used to show, that $d_{W^{1,p}}$ defines a quasi-inframetric on $H^{1,p,q}_{K,L}$.
\begin{definition}\label{def:infra}
	Let $S$ be a set and $D:S\times S \to \RR$ a positive definite mapping.
	We call $D$ a quasi-inframetric if it
	fulfills a relaxed triangle inequality
	\begin{align}\label{eq:infra}
	D(x,y)\leq C\left(D(x,z) + D(z,y)\right)\qquad \forall x,y,z\in S,
	\end{align}
	and is symmetric up to a constant, i.e.,
	\begin{align}\label{eq:quasi}
	D(x,y)\leq C\;D(y,x) \qquad \forall x,y \in S.
	\end{align}
\end{definition}
In particular, we have the following.
\begin{proposition}[{\cite{diss}}]\label{prop:equivEmbD12}
	On $H^{1,p,q}_{K,L}$ the mapping $d_{W^{1,p}}$ is a quasi-inframetric.
	If $\iota:M\to \RR^{N}$ denotes a smooth isometric embedding, then for all $u,v \in H^{1,p,q}_{K,L}$ there exists a constant depending on the curvature of $M$, $\|\iota\|_{C^{2}}$, and $K$, such that
	\begin{align*}
	\|\iota\circ u -\iota\circ v \|_{W^{1,2}(\Omega,\RR^{N})}\leq C \;d_{W^{1,p}}(u,v).
	\end{align*}
	If additionally $d_{L^{\infty}}(u,v)\leq \inj_{M}$, then equilvalence holds, i.e.,
	\begin{align*}
	d_{W^{1,p}}(u,v)\leq C \|\iota\circ u -\iota\circ v \|_{W^{1,2}(\Omega,\RR^{N})}.
	\end{align*}
\end{proposition}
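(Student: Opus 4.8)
The plan is to identify, up to constants independent of $u$ and $v$, the quantity $d_{W^{1,p}}(u,v)$ with the $W^{1,p}$-norm of the vector field $x\mapsto \log_{u(x)}v(x)$ along $u$, and then to exploit two ingredients: the pointwise derivative estimates for the logarithm in Proposition~\ref{prop:logest}, and the fact (Lemma~\ref{L:uniformity}) that parallel transport of a $W^{1,p}$-vector field along a geodesic homotopy changes its $W^{1,p}$-norm only by a bounded factor. The comparison with the embedded distance will come from the chain rule together with Jacobi field estimates along the track curves of the relevant geodesic homotopy $\Gamma$. Throughout, the choices $q>\max\{p,d\}$, the exponent $s$ of Definition~\ref{def:HKL}, and the constraints $L\le\inj(M)$ and $KL\le|\Rm|_{\infty}^{-1}$ are exactly what make the occurring products integrable (by H\"older and Sobolev) and the curvature error terms subordinate.

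Positive definiteness is immediate from $d_{W^{1,p}}(u,v)\ge d_{L^{p}}(u,v)$ and the fact that $d_{L^{p}}$ is a metric on $L^{p}(\Omega,M)$. For quasi-symmetry \eqref{eq:quasi} I would let $\Gamma$ be the geodesic homotopy from $u$ to $v$ and take the velocity field $V(x,t)\colonequals \dot\gamma_{x}(t)$; it is parallel along each $\gamma_{x}$ (being a geodesic velocity), lies in $W^{1,p}\cap C(\Omega\times I,\Gamma^{-1}TM)$ since geodesic homotopies inherit the weak differentiability of their endpoints, and satisfies $V(\cdot,0)=\log_{u}v$ and $V(\cdot,1)=-\log_{v}u$. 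Using $\max\{a,b\}\le(a^{p}+b^{p})^{1/p}\le a+b$ one gets $\|V(\cdot,0)\|_{W^{1,p}(\Omega,u^{-1}TM)}\sim d_{W^{1,p}}(u,v)$ and $\|V(\cdot,1)\|_{W^{1,p}(\Omega,v^{-1}TM)}\sim d_{W^{1,p}}(v,u)$, so Lemma~\ref{L:uniformity} at $t=1$ yields \eqref{eq:quasi} in both directions.

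For the relaxed triangle inequality \eqref{eq:infra} the $L^{p}$-part satisfies the genuine triangle inequality because $d$ is a metric on $M$, so only the first-order part is at stake. The starting point is the pointwise expansion from Proposition~\ref{prop:logest} (with $p=u(x)$, $q=v(x)$, $r=w(x)$ in its statement, and $\|d\log_{u}v-\pi_{v\mapsto u}\|\le|\Rm|_{\infty}d^{2}(u,v)$ from \eqref{eq:logest1}):
\begin{align*}
\log_{u}w=\log_{u}v+\pi_{v\mapsto u}\log_{v}w+E,\qquad |E|\le C|\Rm|_{\infty}\big(d(u,w)\,d(u,v)\,(d(u,v)+d(u,w))+d^{2}(u,v)\,d(v,w)\big).
\end{align*}
Applying $\nabla_{d^{\alpha}u}$, integrating the $p$-th power over $\Omega$ and summing over $\alpha$: the term $\log_{u}v$ contributes $D_{1,p}(u,v)$; the term $\pi_{v\mapsto u}\log_{v}w$ is the $t=0$ value of the parallel transport of $\log_{v}w$ along the geodesic homotopy from $u$ to $v$, so Lemma~\ref{L:uniformity} bounds its $W^{1,p}$-norm along $u$ by $C\,d_{W^{1,p}}(v,w)$; and every summand of $\nabla_{d^{\alpha}u}E$ is, thanks also to the second-order estimates \eqref{eq:logest2}, a product of a distance factor and one first-derivative factor (either $|d^{\alpha}u|,|d^{\alpha}v|$, or $|\nabla_{d^{\alpha}u}\log_{u}v|,|\nabla_{d^{\alpha}v}\log_{v}w|$). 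Since on $H^{1,p,q}_{K,L}$ the pointwise distances are controlled through $K,L$ (Morrey's embedding, $q>d$) with $|\Rm|_{\infty}\cdot(\text{distance})$ small by $KL\le|\Rm|_{\infty}^{-1}$, and the first-derivative factors are bounded in $L^{q}$ by $K$ or contribute $D_{1,p}$, H\"older (with the exponents of Definition~\ref{def:HKL}) and Sobolev turn these error terms into a fixed fraction of $d_{W^{1,p}}(u,v)+d_{W^{1,p}}(v,w)$, which can be absorbed; this yields \eqref{eq:infra} and, with the symmetry statement, that $d_{W^{1,p}}$ is a quasi-inframetric in the sense of Definition~\ref{def:infra}.

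For the equivalence with the embedded distance I would write $\iota\circ u(x)-\iota\circ v(x)=-\int_{0}^{1}d\iota(\Gamma(x,t))\,\dot\gamma_{x}(t)\,dt$; since $|\dot\gamma_{x}(t)|\equiv|\log_{u(x)}v(x)|$, the $L^{2}$-part of $\|\iota\circ u-\iota\circ v\|$ is bounded by $\|\iota\|_{C^{1}}\,d_{L^{p}}(u,v)$ on the bounded domain $\Omega$. Differentiating in $x_{\alpha}$, using $\partial_{\alpha}\partial_{t}(\iota\circ\Gamma)=\partial_{t}\partial_{\alpha}(\iota\circ\Gamma)$, torsion-freeness $\nabla_{\partial_{t}}d^{\alpha}\Gamma=\nabla_{\partial_{\alpha}}\dot\Gamma$, and the fact that $d^{\alpha}\Gamma(x,\cdot)$ is a Jacobi field along $\gamma_{x}$ with initial data $d^{\alpha}u(x)$ and $\nabla_{d^{\alpha}u}\log_{u}v(x)$, the Rauch comparison estimates bound $|\partial_{\alpha}(\iota\circ u-\iota\circ v)|$ pointwise by $C(\Rm,\|\iota\|_{C^{2}})$ times $|\nabla_{d^{\alpha}u}\log_{u}v|+|d^{\alpha}u|\,|\log_{u}v|$; H\"older and Sobolev (the dependence on $K$ entering through $\|du\|_{L^{q}}\le K$) then give the first inequality. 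If additionally $d_{L^{\infty}}(u,v)\le\inj_{M}$, the map $\log_{u(x)}$ is a diffeomorphism with differential non-degenerate and controlled by curvature (Rauch), so running the same computation in reverse produces the converse bound. The main obstacle is the relaxed triangle inequality: one must simultaneously transport vector fields between the three bundles $u^{-1}TM$, $v^{-1}TM$, $w^{-1}TM$ via Lemma~\ref{L:uniformity}, differentiate the curvature remainder $E$ using \eqref{eq:logest2}, and check that every resulting product lies in $L^{p}$ with norm a genuine fraction of $d_{W^{1,p}}(u,v)+d_{W^{1,p}}(v,w)$ --- which is exactly what the restriction to $H^{1,p,q}_{K,L}$ (with $q>\max\{p,d\}$, $L\le\inj(M)$, $KL\le|\Rm|_{\infty}^{-1}$) is designed to guarantee.
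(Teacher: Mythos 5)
Your proposal is correct in substance and follows exactly the route the paper intends: the paper itself gives no proof here (it defers to \cite{diss}), but the surrounding machinery --- the Uniformity Lemma~\ref{L:uniformity} applied to the parallel velocity field of the geodesic homotopy for quasi-symmetry and for transporting $\log_{v}w$ between bundles, the logarithm expansions of Proposition~\ref{prop:logest} for the relaxed triangle inequality and the curvature remainders, and H\"older/Sobolev with the exponents $q>\max\{p,d\}$, $s$, and $KL\le|\Rm|_{\infty}^{-1}$ from Definition~\ref{def:HKL} to control the error products --- is precisely what your sketch assembles, and the Jacobi-field/Rauch argument for the embedded comparison is the standard one. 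The only soft spots are bookkeeping rather than ideas (e.g., the remark about "absorbing a fraction of $d_{W^{1,p}}(u,v)+d_{W^{1,p}}(v,w)$" should be an absorption of the $d(u,w)$-proportional remainder into the left-hand side $D_{1,p}(u,w)$, or alternatively a pointwise triangle inequality on the distance factors), so I consider the attempt consistent with the intended proof.
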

\subsubsection{Scaling and Compatibility with the Smoothness Descriptor}
The Sobolev (half-)metric $D_{1,p}$ is compatible with the smoothness descriptor in two ways:\\
First, it fulfills locally in $H^{1,2,q}_{K,L}$ the following triangle inequality.
\begin{proposition}\label{prop:SDtriangle2}
	Let $u,v \in H^{1,p,q}_{K,L}$. Then there exists a constant $C$ such that
	\begin{align}
	\dot{\theta}_{1,p,\Omega}(v)\leq \dot{\theta}_{1,p,\Omega}(u) + C\;D_{1,p}(u,v).
	\end{align}
\end{proposition}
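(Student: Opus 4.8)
The statement to prove is Proposition~\ref{prop:SDtriangle2}: for $u,v \in H^{1,p,q}_{K,L}$,
\[
\dot{\theta}_{1,p,\Omega}(v)\leq \dot{\theta}_{1,p,\Omega}(u) + C\;D_{1,p}(u,v).
\]
The plan is to interpolate between $u$ and $v$ along the geodesic homotopy $\Gamma:\Omega\times I\to M$ and to control the derivative in $t$ of $t\mapsto \dot\theta_{1,p,\Omega}(\Gamma(\cdot,t))$. Since $\dot\theta_{1,p,\Omega}(w)^p = \sum_\alpha \int_\Omega |d^\alpha w|^p_{g(w)}\,dx$ (up to the fact that for $k=1$ the smoothness descriptor is literally the Sobolev half-norm of the first derivative, by Proposition~\ref{prop:SDExtraTerms}), I would write
\[
\dot\theta_{1,p,\Omega}(v) - \dot\theta_{1,p,\Omega}(u) = \int_0^1 \frac{d}{dt}\dot\theta_{1,p,\Omega}(\Gamma(\cdot,t))\,dt,
\]
and bound the integrand. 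The key differential-geometric fact is that $d^\alpha\Gamma(x,\cdot)$ is a Jacobi field along the geodesic $\gamma_x$ (this is exactly the $g^h$-geodesic statement of the lemma of \cite{diss} quoted just before Definition~\ref{def:SobolevForC1}), so its covariant derivative in $t$ is $\nabla_{\dot\gamma_x} d^\alpha\Gamma = \nabla_{d^\alpha\Gamma}\dot\gamma_x$, and $\dot\gamma_x(t)$ is itself (up to parallel transport and the factor coming from constant speed) the vector $\log_{\Gamma(x,t)}v(x)$ normalized. Thus the pointwise rate of change of $|d^\alpha\Gamma|_g$ is controlled by $|\nabla_{d^\alpha\Gamma(\cdot,t)}\log_{\Gamma(\cdot,t)}v|_g$, which is precisely the pointwise integrand of $D_{1,p}(\Gamma(\cdot,t),v)$.

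The next step is to pass from $D_{1,p}(\Gamma(\cdot,t),v)$ back to $D_{1,p}(u,v)$. This is where I would invoke the uniformity machinery: the vector field $(x,t)\mapsto \log_{\Gamma(x,t)}v(x)$ (suitably interpreted, or more precisely the parallel transport of $\log_{u(x)}v(x)$ along $\Gamma(x,\cdot)$, which differs from it by controlled curvature terms via Proposition~\ref{prop:logest}, estimate \eqref{eq:logest1}) is essentially a parallel vector field along $\Gamma$, and Lemma~\ref{L:uniformity} gives
\[
\|V(\cdot,t)\|_{W^{1,p}(\Omega,\Gamma(\cdot,t)^{-1}TM)} \leq (1+\Cr{c:uniformity}t)\|V(\cdot,0)\|_{W^{1,p}(\Omega,u^{-1}TM)} \leq (1+\Cr{c:uniformity})\,D_{1,p}(u,v)
\]
(the $L^p$ part being absorbed into the $W^{1,p}$-norm, or handled separately using $d_{L^\infty}(u,v)\le L$ and the curvature bounds, together with the fact that on $H^{1,p,q}_{K,L}$ one has $\|V(\cdot,0)\|_{W^{1,p}} \sim D_{1,p}(u,v)$ up to lower-order $L^p$ terms that are themselves $\lesssim D_{1,p}$ on this set). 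Integrating the bound over $t\in[0,1]$ then yields exactly $\dot\theta_{1,p,\Omega}(v)\le \dot\theta_{1,p,\Omega}(u) + C\,D_{1,p}(u,v)$ with $C = 1+\Cr{c:uniformity}$ up to harmless constants.

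The main obstacle is the bookkeeping in the second step: the quantity $|\nabla_{d^\alpha\Gamma(\cdot,t)}\log_{\Gamma(\cdot,t)}v|$ is the $D_{1,p}$-integrand with \emph{base point} $\Gamma(\cdot,t)$, not $u$, so one must transport it back to the pullback bundle $u^{-1}TM$ without losing control, and here the parallel vector field along $\Gamma$ is not literally $\log_{\Gamma(\cdot,t)}v$ — one needs Proposition~\ref{prop:logest} to say that $\log_{\Gamma(x,t)}v(x)$ agrees with the parallel transport of $\log_{u(x)}v(x)$ up to a curvature error of order $d^2(u,v)\cdot d(u,v)$, which is absorbable on $H^{1,p,q}_{K,L}$ because $KL\le 1/|\Rm|_\infty$ and $d_{L^\infty}(u,v)\le L$. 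The covariant derivative of this correction term in the $x$-direction also has to be estimated, using Hölder's inequality to split off $\dot\theta_{1,q,\Omega}(\Gamma(\cdot,t))\le K$ factors (bounded by the remark on smoothness descriptors along geodesic homotopies) against $L^s$-norms of the logarithm, exactly as in the proof of Lemma~\ref{L:uniformity}. Once this transport estimate is in place, everything else is a routine application of the fundamental theorem of calculus and Lemma~\ref{L:uniformity}.
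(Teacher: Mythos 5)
Your overall strategy coincides with the paper's: the proof given there is a one\nobreakdash-line appeal to Lemma~\ref{L:uniformity}, and your argument --- differentiate $t\mapsto\dot\theta_{1,p,\Omega}(\Gamma(\cdot,t))$ along the geodesic homotopy, identify $\nabla_t d^\alpha\Gamma=\nabla_{d^\alpha\Gamma}\dot\Gamma$ with $\dot\Gamma$ the parallel velocity field whose value at $t=0$ is $\log_u v$, and transport the resulting bound back to $t=0$ via the uniformity estimate --- is exactly the intended expansion of that one line.

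There is, however, one step whose stated justification fails. You pass to the full norm $\|V(\cdot,t)\|_{W^{1,p}}\le(1+Ct)\|V(\cdot,0)\|_{W^{1,p}}$ and then claim $\|V(\cdot,0)\|_{W^{1,p}}\sim D_{1,p}(u,v)$ because ``the lower-order $L^p$ terms are themselves $\lesssim D_{1,p}$ on this set.'' That is not true: $\|V(\cdot,0)\|_{W^{1,p}}=\|\log_u v\|_{W^{1,p}}$ contains $d_{L^p}(u,v)$, and Definition~\ref{def:HKL} imposes no common boundary values, so no Poincar\'e inequality is available --- two distinct constant maps lie in $H^{1,p,q}_{K,L}$ and have $D_{1,p}(u,v)=0$ but $d_{L^p}(u,v)>0$. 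The repair stays within your framework: only the homogeneous part is needed, since
\begin{equation*}
\frac{d}{dt}\,\dot\theta_{1,p,\Omega}(\Gamma(\cdot,t))\;\le\;\Bigl(\sum_{\alpha}\int_\Omega\bigl|\nabla_{d^\alpha\Gamma}\dot\Gamma(\cdot,t)\bigr|^p\,dx\Bigr)^{1/p},
\end{equation*}
and this seminorm equals $D_{1,p}(u,v)$ exactly at $t=0$. Its growth in $t$ is governed, as in the proof of Lemma~\ref{L:uniformity}, by the curvature term $R(\dot\Gamma,d\Gamma)\dot\Gamma$, which after H\"older's inequality is controlled by $|\Rm|_\infty$, $\dot\theta_{1,q,\Omega}(\Gamma(\cdot,t))\le CK$, and $L^s$-type norms of $d(u,v)$; this is precisely where the bounds $K$, $L$, and $KL\le 1/|\Rm|_\infty$ enter and produce the constant $C$. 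Be aware that this residual curvature contribution is itself bounded by the $K$- and $L$-constants rather than by $D_{1,p}(u,v)$; folding it into $C\,D_{1,p}(u,v)$ uses the common boundary data present in every application of the proposition (where $H\subset W^{1,q}_\phi$). The paper's own one-line proof is equally silent on this last point, so the only concrete defect in your write-up is the specific (false) justification offered for discarding the $L^p$ term.
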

The proof follows by Lemma~\ref{L:uniformity}.\\
Secondly, under scaling of the domain the $L^p$- and Sobolev metrics behave as follows.
\begin{lemma}\label{L:inverseScale2}
	Let $T,T_{h}$ be two domains in $\RR^{d}$, and $F:T_{h}\to T$ a map that scales with $h$ of order $1$.
	Consider $u,v\in W^{1,p}\cap C(T_{h},M)$ with $p\in [1,\infty]$.
	Then
	\begin{align*}
	d_{L^{p}}(u,v)&\leq C\;h^{\frac{d}{p}}d_{L^{p}}(u\circ F^{-1},v\circ F^{-1})\\
	D_{1,p}(u,v)&\leq C\;h^{\frac{d}{p}-1}D_{1,p}(u\circ F^{-1},v\circ F^{-1}).
	\end{align*}
\end{lemma}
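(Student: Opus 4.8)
The plan is to reduce both inequalities to the chain rule for pullback covariant derivatives combined with the integral transformation formula, in the same spirit as the proofs of Lemma~\ref{L:scalingSD} and Lemma~\ref{L:inverseScale1}; no interpolation inequality is needed here.

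First I would handle the $L^p$-distance. Setting $\tilde u\colonequals u\circ F^{-1}$ and $\tilde v\colonequals v\circ F^{-1}$ and substituting $x=F^{-1}(y)$ in
$$d^p_{L^p}(u,v)=\int_{T_h} d^p\bigl(u(x),v(x)\bigr)\,dx,$$
the Jacobian factor is $|\det DF^{-1}(y)|=|\det DF(F^{-1}(y))|^{-1}\sim h^{d}$ by the second scaling property in Definition~\ref{def:scaling}. Hence $d^p_{L^p}(u,v)\leq C\,h^{d}\,d^p_{L^p}(\tilde u,\tilde v)$, and taking $p$-th roots gives the first estimate; for $p=\infty$ the Jacobian disappears and the change of variables in the essential supremum yields equality of the two sides, consistent with $h^{d/p}=1$.

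For $D_{1,p}$ I would first note that the difference field is unchanged by the reparametrization: with $W(x)\colonequals\log_{u(x)}v(x)$ the vector field along $u$ appearing in Definition~\ref{def:SobolevForC1}, and $\tilde W(y)\colonequals\log_{\tilde u(y)}\tilde v(y)$, one has $W=\tilde W\circ F$. Since $u=\tilde u\circ F$, writing the covariant derivative in local coordinates as in Definition~\ref{def:covDerivAlongU} and using that $\partial_{x^\alpha}(\tilde W^k\circ F)=\sum_\beta\tfrac{\partial F^\beta}{\partial x^\alpha}\,\bigl(\partial_{y^\beta}\tilde W^k\bigr)\circ F$ and $du^i(\partial_{x^\alpha})=\sum_\beta\tfrac{\partial F^\beta}{\partial x^\alpha}\,\bigl(d\tilde u^i(\partial_{y^\beta})\bigr)\circ F$, the Christoffel terms transform consistently and one obtains the pointwise identity
$$\bigl(\nabla_{d^\alpha u}W\bigr)(x)=\sum_{\beta=1}^{d}\frac{\partial F^\beta}{\partial x^\alpha}(x)\,\bigl(\nabla_{d^\beta \tilde u}\tilde W\bigr)\bigl(F(x)\bigr).$$
The third scaling property bounds $|\partial_{x^\alpha}F|\leq C\,h^{-1}$, so $\|(\nabla_{d^\alpha u}W)(x)\|_{g(u(x))}\leq C\,h^{-1}\sum_{\beta}\|(\nabla_{d^\beta\tilde u}\tilde W)(F(x))\|_{g(\tilde u(F(x)))}$. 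Raising to the $p$-th power, summing over $\alpha$, integrating over $T_h$ and substituting $y=F(x)$ (which again produces the factor $|\det DF^{-1}|\sim h^{d}$) yields $D^p_{1,p}(u,v)\leq C\,h^{d-p}\,D^p_{1,p}(\tilde u,\tilde v)$, i.e.\ the second estimate after taking $p$-th roots, again with the obvious modification for $p=\infty$.

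I do not expect a real obstacle. The only points requiring care are verifying the coordinate chain rule for $\nabla_{d^\alpha u}$ carefully enough that exactly the factors $\partial F^\beta/\partial x^\alpha$ appear (and that the $\Gamma$-terms cause no extra contribution, which works precisely because $u$ and $\tilde u$ differ only by the reparametrization $F$), and keeping track of the $p=\infty$ case where integrals are replaced by essential suprema: there the Jacobian contributes no power of $h$, but the factor $h^{-1}$ from $|DF|$ still enters the $D_{1,\infty}$-bound.
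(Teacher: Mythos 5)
Your proof is correct and follows exactly the route the paper indicates for this lemma (the paper only remarks that it ``follows from the chain rule and the integral transformation formula''): the change of variables with $|\det DF^{-1}|\sim h^{d}$ gives the $L^{p}$ bound, and the coordinate chain rule for $\nabla_{d^{\alpha}u}\log_{u}v$ together with $|\partial_{x^{\alpha}}F|\leq C h^{-1}$ gives the $D_{1,p}$ bound. Your verification that the Christoffel terms transform consistently and your handling of the $p=\infty$ case are both accurate.
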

The proof follows from the chain rule and the integral transformation formula.
%

%%%%%%%%%%
\section{Discretization Error Bounds}\label{sec:ch3}
We consider the minimization of energies $\Energy$ in $H\subset W_{\phi}^{1,q}(\Omega,M)$, $q>\max\{2,d\}$
where $\phi:\bar{\Omega}\to M$ denotes suitable boundary and homotopy data:
\begin{align}\label{eq:Pcont}
u\in H:\qquad \Energy(u)\leq \Energy(v)\qquad \forall v\in H.
\end{align}
To bound the error of discrete approximations to minimizers of $\Energy$, we need the concept of $W^{1,2}$-ellipticity.
\begin{definition}\label{def:ellipticEnergy}
	Let $\Energy:H \to \RR$ be twice continuously differentiable along geodesic homotopies.
	We say that $\Energy$ is
	\begin{enumerate}
		\item \label{def:coercive} 
		$W^{1,2}$-coercive, if
		there exists a constant $\lambda>0$ such that for all $v\in H$ and $V\in W_{0}^{1,2}(\Omega, v^{-1}TM)$ we have
		\begin{align}\label{eq:ellipticbelow}
		\lambda\|V\|^2_{W^{1,2}(\Omega,v^{-1}TM)}\leq \frac{d^2}{ds^{2}}_{|s=0}\Energy(\exp_{v}(sV)).
		\end{align}
		\item \label{def:bounded} 
		$W^{1,2}$-bounded, if there exists a constant $\Lambda>0$ such that for all $v\in H$ and
		for all $V,W \in W_{0}^{1,2}(\Omega, v^{-1}TM)$
		we have
		\begin{align}\label{eq:ellipticabove}
		\left|\frac{d^2}{dr\;ds}_{|(r,s)=(0,0)}\Energy(\exp_{v}(sV+rW))\right|\leq \Lambda\;\|V\|_{W^{1,2}(\Omega,v^{-1}TM)}\|W\|_{W^{1,2}(\Omega,v^{-1}TM)}.
		\end{align}
		\item  $W^{1,2}$-elliptic, if \ref{def:coercive} and \ref{def:bounded} hold.
	\end{enumerate}
\end{definition}
In order to obtain a finite-dimensional approximation of $H$, we assume that we have a conforming grid $G$ on $\Omega$, i.e., a partition into polytopes, such that the closures intersect in a common face. 
\begin{definition}\label{def:widthh}
	We say that a conforming grid $G$ for the domain 
	$\Omega\subset 
	\RR^{d}$ is of width $h$ and order $m$, if for each element $T_{h}$ of $G$ there exists a 
	$C^{\infty}$-diffeomorphism $F_{h}:T_{h}\to T$ to a reference element  $T\subset \RR^{d}$ that scales with $h$ of order $m$.
\end{definition}
Let $S^{m}_{h}\subset H$ be a discrete approximation space for a grid $G$ on $\Omega$ of width $h$ and order $m$.
\begin{remark}
	Note that the assumption that $S_{h}^{m}$ is conforming includes that the boundary data $\phi|_{\partial\Omega}$ can be represented exactly in $S_{h}^{m}$. This part of the assumption may be
	waived and replaced by a standard approximation argument for boundary data \cite{ciarlet, grohsSanderH}.
\end{remark}
Consider the discrete approximation of~\eqref{eq:Pcont}
\begin{align}\label{eq:Pdisc}
u_{h}\in S^{m}_{h}:\qquad \Energy(u_{h})\leq \Energy(v_{h})\qquad \forall v_{h}\in S_{h}^{m}.
\end{align}
Variations of discrete functions provide a notion of discrete test vector fields \cite{diss}. For a detailed construction of these in the context of geodesic finite elements see~\cite{SanderTest}.
\begin{definition}\label{def:discreteVfields}
	Let $u_{h}\in S_{h}^{m}$. We denote by $IV_{0}(\Omega,u_{h}^{-1}TM)$ the set of all vector fields $V_{h}\in W^{1,2}(\Omega,u_{h}^{-1}TM)$ with boundary values $0$, such that there exists a family $v_{h}(t)\in S_{h}^{m}$ with $v_{h}(0)=u_{h}$ and $\frac{d}{dt}v_{h}(0)=V_{h}$.
\end{definition}
In order to control the error between $u$ and $u_{h}$, we need to
formulate approximability conditions on the discrete space $S_{h}^{m}$.
This allows us to obtain discretization error estimates for all approximation spaces that fulfill these conditions.
We will later in Section~\ref{sec:ch2} introduce a specific example for $S_{h}^{m}$.

We want to provide an intrinsic theory, i.e., all estimates should change equivalently under isometries of the manifold and be independent of embeddings into Euclidean space. If for example the $(m+1)$-th order derivatives of functions $u_{h}\in S_{h}^{m}$ vanish (as for polynomials in the Euclidean case), this does not mean that the $(m+1)$-th order derivatives of $\iota\circ u_{h}$ with an embedding $\iota:M\to \RR^{m}$ vanish. Thus, we will phrase the conditions in terms of the smoothness descriptor introduced in Section~\ref{sec:ch1} for this purpose.

The first condition consists of an estimate for the best approximation error in $S_{h}^{m}$ as usually also used in the Euclidean setting \cite{ciarlet}.
\begin{condition}\label{cond:1}
	Let $kp>d$, $m\geq k-1$, and $u\in W^{k,p}(\Omega,M)$ with $u(T_{h})\subset B_{\rho}\subset M$, where $\rho\leq \inj(M)$, for all elements $T_{h}\in G$. For small enough $h$ let there exist a map $u_{I}\in S_{h}^{m}$ and constants $\Cl{c:m+1Deriv}, \Cl{c:cond1} $ with 
	\begin{align}\label{eq:cond1b}
	\dot{\theta}_{l,q,T_{h}}(u_{I})\leq \Cr{c:m+1Deriv}\;\dot{\theta}_{l,q,T_{h}}(u)
	\end{align}
	for all $k-\frac{d}{p}\leq l\leq k$ and $q\leq \frac{pd}{d-p(k-l)}$, that fulfills
	on each element $T_{h}\in G$ the estimate
	\begin{align}\label{eq:cond1a}
	d_{L^{p}}(u,u_{I}) + h\;D_{1,p}(u,u_{I}) \leq
	\Cr{c:cond1}\;h^{k}\;\theta_{k,p,T_{h}}(u).
	\end{align}
\end{condition}
Note that the discrete functions in $S_{h}^{m}\subset H$ are globally only of $W^{1,q}$-smoothness.
In accordance with standard theory we define grid dependent smoothness descriptors.

\begin{definition}
	Let $k\geq 1$, $p\in [1,\infty]$, and $u\in C(\Omega,M)$ with $u_{|T_{h}}\in W^{k,p}(T_{h},M)$ 
	for all elements $T_{h}$ from the grid $G$.
	We set
	\begin{align}
	\dot{\theta}_{k,p,G}(u)\colonequals \left(\sum_{T_{h}\in G} \dot{\theta}_{k,p,T_{h}}^{p}(u)\right)^{\frac{1}{p}}.
	\end{align}
	Analogously, for a function $v\in C(\Omega,M)$ with $v_{|T_{h}}\in W^{k,b}(T_{h},M)$
	for all elements $T_{h}\in G$, $b$ as in Definition~\ref{def:nonlinSmoothVfield}, and a vector field $V$ along $v$ such that	$V\in W^{k,p}(T_{h},v^{-1}TM)$ for all $T_{h}\in G$, we set
	\begin{align}
	\dot{\Theta}_{k,p,G}(V)\colonequals \left(\sum_{T_{h}\in G} \dot{\Theta}_{k,p,T_{h}}^{p}(V)\right)^{\frac{1}{p}}.
	\end{align}
\end{definition}
By summation over all elements, estimates like~\eqref{eq:cond1b} and~\eqref{eq:cond1a} carry over to the global grid-dependent smoothness descriptors.

As we later need to approximate the generalized test functions as well, we also need a best approximation error estimate between non-discrete vector fields and variations of discrete maps.
\begin{condition}\label{cond:2b}
	For any element $T_{h}$ from the grid $G$, let $S_{h}^{m}(T_{h},M)\subset W^{2,b}(T_{h},M)$ for
	$b$ as in Definition~\ref{def:nonlinSmoothVfield} with $p=2$.
	Given any $u_{h}\in S_{h}^{m}(T_{h},M)$ and $V\in W^{2,2}(T_{h},u_{h}^{-1}TM)$, let there exist a family of maps $v_{h}(t)\in S_{h}^{m}(T_{h},M)$ with $v_{h}(0)=u_{h}$ and constants $\Cl{c:cond2b}, \Cl{c:cond2b2}$ such that
	for $V_{I}=\frac{d}{dt}v_{h}(0)$ the estimates
	\begin{align}\label{eq:dgeq4bound}
	\Theta_{2,2,T_{h}}(V_{I})\leq \Cl{c:cond2b}\; \Theta_{2,2,T_{h}}(V)
	\end{align}
	and
	\begin{align}\label{eq:vecInterpolCond}
	\| V - V_{I}\|_{W^{1,2}(T_{h},u_{h}^{-1}TM)} &\leq \Cl{c:cond2b2} h \;
	\Theta_{2,2,T_{h}}(V)
	\end{align}
	hold.
\end{condition}
Conditions~\ref{cond:1} and~\ref{cond:2b} are sufficient to prove approximation errors locally close to the exact solution $u$ of~\eqref{eq:Pcont}. This means that the set of discrete functions $S_{h}^{m}$ in~\eqref{eq:Pdisc} has to be restricted by additional bounds. We still obtain a meaningful local result if we can show that the discrete solution $u_{h}$ stays away from these bounds. In order to do this, we 
need the following condition generally known as an inverse estimate.
\begin{condition}\label{cond:3}
	On a grid $G$ of width $h$ and order $m$, under the additional assumption that $F^{-1}_{h}:T\to T_{h}$ scales with order $2$ for every $T_{h}\in G$, 
	for $p,q\in[1,\infty]$ let there exist constants $\Cl{c:inverseEstref11}$ and $\Cl{c:inverseEstref21}$ such that		
	\begin{align}\label{eq:inverse2p1qScaled}
	\dot{\theta}_{1,p,T_{h}}(v_{h})&\leq \Cr{c:inverseEstref11}\;\;h^{-d\max\left\{0,\frac{1}{q}-\frac{1}{p}\right\}}\;\dot{\theta}_{1,q,T_{h}}(v_{h})\\
	\dot{\theta}_{2,p,T_{h}}(v_{h}) &\leq \Cr{c:inverseEstref21}\;\dot{\theta}^{2}_{1,2p,T_{h}}(v_{h}) + \Cr{c:inverseEstref21}\;h^{-1-d\max\left\{0,\frac{1}{q}-\frac{1}{p}\right\}}\;\dot{\theta}_{1,q,T_{h}}(v_{h})
	\end{align}
	for any $v_{h}\in S^{m}_{h}(T_{h},M)$ with $v(T_{h})\subset B_{\rho}$ for $\rho$ small enough.
\end{condition}
Once we can show that the discrete solution is indeed close to the continuous one, we can infer even stronger bounds on higher derivatives of it from the exact solution. In order to do so, we need inverse estimates on differences of discrete functions, i.e., vector fields of the form $\log_{v_{h}}w_{h}$ with $v_{h},w_{h}\in S^{m}_{h}(T_{h},M)$. Note that these are not discrete vector fields in the sense of Definition~\ref{def:discreteVfields} themselves.
\begin{condition}\label{cond:4}
	On a grid $G$ of width $h$ and order $m$, under the additional assumption that $F^{-1}_{h}:T\to T_{h}$ scales with order $2$ for every $T_{h}\in G$, for $p,q\in[1,\infty]$ let there exist constants $\Cl{c:vecInv0}, \Cl{c:vecInv2}, \Cl{c:vecInvm}$ such that		
	\begin{align}\label{}
	\|\log_{v_{h}}w_{h}\|_{L^{p}(T_{h}, v_{h}^{-1}TM)}&\leq \Cr{c:vecInv0}\;h^{-d\max\left\{0,\frac{1}{q}-\frac{1}{p}\right\}}\|\log_{v_{h}}w_{h}\|_{L^{q}(T_{h}, v_{h}^{-1}TM)}\\
	|\log_{v_{h}}w_{h}|_{W^{2,p}(T_{h}, u_{h}^{-1}TM)}&\leq \Cr{c:vecInv2}\dot\theta_{2,p,T_{h}}(v_{h}) +\Cr{c:vecInv2}\dot\theta_{1,2p,T_{h}}^{2}(w_{h})\\
	&\quad   + \Cr{c:vecInv2} h^{-1-d\max\left\{0,\frac{1}{q}-\frac{1}{p}\right\}}\|\log_{v_{h}}w_{h}\|_{W^{1,q}(T_{h}, v_{h}^{-1}TM)}\notag\\ 
	&\quad+ \Cr{c:vecInvm} h^{-2}\|\log_{v_{h}}w_{h}\|_{L^{p}(T_{h}, v_{h}^{-1}TM)}\notag
	\end{align}
	for any $v_{h},w_{h}\in S^{m}_{h}(T_{h},M)$ with $w(T_{h}),v(T_{h})\subset B_{\rho}$ for $\rho$ small enough.
	For $m=1$, the constant $\Cr{c:vecInvm}$ must be zero.
\end{condition}

\subsection{$W^{1,2}$-Error Bounds}
We recall the $W^{1,2}$-discretization error bounds given in \cite{grohsSanderH}, in particular the generalized C\'ea Lemma:
\begin{lemma}\label{L:cea}
	Assume that $u\in H$ is a minimizer of $\Energy:H\to \RR$ w.r.t.\ variations along geodesic homotopies in $H$, and
	that $\Energy$ is elliptic along geodesic homotopies starting in $u$.\\
	For $K> \theta_{1,q,\Omega}(u)$, $L\leq \inj(M)$, and $KL\leq \frac{1}{|\Rm|_{\infty}}$ let $W^{1,q}_{K}$ and $H^{1,2,q}_{K,L}$
	be defined by Definition~\ref{def:HKL}, and set
	\begin{align*}
	H_{K,L}\colonequals H\cap H^{1,2,q}_{K,L}.
	\end{align*}
	Consider a subset $V_{h} \subset H_{K,L}$ such that	
	\begin{align*}
	w = \argmin_{v\in V_{h}}\Energy(v)
	\end{align*}
	exists.\\
	Then
	\begin{equation}\label{eq:cea}
	D_{1,2}(u,w) \leq (1+\Cr{c:uniformity})^{2}\sqrt{\frac{\Lambda}{\lambda}} \inf_{v\in V}D_{1,2}(u,v)
	\end{equation}
	holds, where $\Cr{c:uniformity}$ is the constant appearing in Lemma~\ref{L:uniformity}.
\end{lemma}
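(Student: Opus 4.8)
The plan is to mimic the classical proof of Céa's lemma, using the geodesic homotopy between $u$ and $w$ in place of a line segment and $D_{1,2}$ in place of the energy norm. Fix a competitor $v\in V_{h}$. Let $\Gamma_{w}$ and $\Gamma_{v}$ be the geodesic homotopies connecting $u$ to $w$ and $u$ to $v$, and set $f_{w}(s)\colonequals\Energy(\Gamma_{w}(\cdot,s))$, $f_{v}(s)\colonequals\Energy(\Gamma_{v}(\cdot,s))$. Since $u,w,v$ all lie in $H^{1,2,q}_{K,L}$, the endpoints of each homotopy are within the injectivity radius of one another, so the homotopies are well defined, stay in $H^{1,2,q}_{K,L}$, and inherit enough regularity (via the regularity of geodesic homotopies recalled above) that $f_{w},f_{v}\in C^{2}([0,1])$. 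Write $V_{w}(\cdot,s)=\frac{\partial}{\partial s}\Gamma_{w}(\cdot,s)$ and $V_{v}(\cdot,s)=\frac{\partial}{\partial s}\Gamma_{v}(\cdot,s)$ for the velocity fields; then $V_{w}(\cdot,0)=\log_{u}w$, $V_{v}(\cdot,0)=\log_{u}v$, and since these have vanishing trace (as $u,w,v$ share boundary data) their $W^{1,2}(\Omega,u^{-1}TM)$-norms are equivalent to $D_{1,2}(u,w)$ and $D_{1,2}(u,v)$, the Poincaré constant on the zero-trace test space being absorbed.

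Two structural observations drive the estimate. First, the velocity of a geodesic is parallel along that geodesic, so $V_{w},V_{v}$ are parallel vector fields along $\Gamma_{w},\Gamma_{v}$, and the Uniformity Lemma~\ref{L:uniformity} applies to each: for $s\in[0,1]$ one has $\|V(\cdot,s)\|_{W^{1,2}}\le(1+\Cr{c:uniformity})\|V(\cdot,0)\|_{W^{1,2}}$ together with the reverse bound with factor $\frac{1}{1+\Cr{c:uniformity}}$. Second, near any time $s_{0}$ a geodesic homotopy coincides with $\exp_{\Gamma(\cdot,s_{0})}((s-s_{0})V(\cdot,s_{0}))$, so the ellipticity of $\Energy$ along geodesic homotopies starting in $u$ gives, for every $s_{0}\in[0,1]$,
\[
\lambda\,\|V_{w}(\cdot,s_{0})\|_{W^{1,2}}^{2}\le f_{w}''(s_{0}),\qquad
f_{v}''(s_{0})\le\Lambda\,\|V_{v}(\cdot,s_{0})\|_{W^{1,2}}^{2},
\]
the second inequality by taking $V=W$ in the $W^{1,2}$-boundedness estimate.

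Now carry out the comparison. A second-order Taylor expansion gives $\Energy(w)-\Energy(u)=f_{w}'(0)+\tfrac12 f_{w}''(\xi_{w})$ and $\Energy(v)-\Energy(u)=f_{v}'(0)+\tfrac12 f_{v}''(\xi_{v})$ for some $\xi_{w},\xi_{v}\in(0,1)$; the first-variation terms vanish, as $u$ satisfies the Euler--Lagrange equation of $\Energy$ and $\log_{u}w,\log_{u}v$ lie in $W^{1,2}_{0}(\Omega,u^{-1}TM)$. Chaining coercivity, the Uniformity Lemma (with $\xi_{w}\le1$), the minimality of $w$ in $V_{h}$, the Taylor identity for $f_{v}$, boundedness, and the Uniformity Lemma again (with $\xi_{v}\le1$) yields
\begin{align*}
\frac{\lambda}{2(1+\Cr{c:uniformity})^{2}}\|V_{w}(\cdot,0)\|_{W^{1,2}}^{2}
&\le\frac{\lambda}{2}\|V_{w}(\cdot,\xi_{w})\|_{W^{1,2}}^{2}\le\frac{1}{2} f_{w}''(\xi_{w})=\Energy(w)-\Energy(u)\\
&\le\Energy(v)-\Energy(u)=\frac{1}{2} f_{v}''(\xi_{v})\le\frac{\Lambda}{2}\|V_{v}(\cdot,\xi_{v})\|_{W^{1,2}}^{2}\le\frac{\Lambda(1+\Cr{c:uniformity})^{2}}{2}\|V_{v}(\cdot,0)\|_{W^{1,2}}^{2}.
\end{align*}
Dividing, taking square roots, rewriting the endpoints in terms of $D_{1,2}$, and taking the infimum over $v\in V_{h}$ gives \eqref{eq:cea}.

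The main obstacle is the one already stressed in the introduction: the objects compared in the chain above live on the distinct pullback bundles $u^{-1}TM$, $w^{-1}TM$, $v^{-1}TM$ and the intermediate $\Gamma(\cdot,s)^{-1}TM$, and the ellipticity constants $\lambda,\Lambda$ are pointwise in the base map, so the seminorm bounds must be transported along the homotopies without the constant degenerating. This is exactly what the Uniformity Lemma supplies, and its hypotheses are precisely why one must stay inside $H_{K,L}$ with $L\le\inj(M)$ and $KL\le|\Rm|_{\infty}^{-1}$ --- a restriction one must check survives for $u$, $w$, $v$ and all the logarithms and geodesic homotopies built from them. A minor technical point is the identification, on the common-trace test space, of the $W^{1,2}$-norm appearing in the ellipticity conditions with the seminorm measured by $D_{1,2}$.
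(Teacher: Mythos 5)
The paper does not reprove this lemma (it is recalled from \cite{grohsSanderH}), but your argument is precisely the standard generalized C\'ea proof used there: Taylor expansion of $s\mapsto\Energy(\Gamma(s))$ along the two geodesic homotopies, vanishing first variation at the minimizer $u$, coercivity and boundedness applied at the intermediate points $\Gamma(\xi)$, and the Uniformity Lemma applied to the parallel velocity fields to transport the norms back to $s=0$, which produces exactly the factor $(1+\Cr{c:uniformity})^{2}\sqrt{\Lambda/\lambda}$. The only point to tidy is your final passage from $\|\log_{u}v\|_{W^{1,2}}$ to the seminorm $D_{1,2}(u,v)$: since Definition~\ref{def:ellipticEnergy} is phrased in the full $W^{1,2}$-norm, this direction costs a Poincar\'e factor that the stated constant does not display, so one should either read the ellipticity norms as the zero-trace seminorm (equivalent by Poincar\'e) or accept that the constant absorbs it.
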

A combination of this version of C\'ea's Lemma with Condition~\ref{cond:1} yields the $W^{1,2}$-error estimate shown in \cite{grohsSanderH}.
\begin{theorem}\label{T:H1err}
	Let $2(m+1)>d$, and $m\geq 1$. Assume that $u\in W_{\phi}^{m+1,2}(\Omega,M)$ is a minimizer of $\Energy:H\to \RR$ w.r.t.\ variations along geodesic homotopies in $H$, and
	that $\Energy$ is elliptic along geodesic homotopies starting in $u$.
	\\
	For a conforming grid $G$ of width $h$ and order $m$ (cf. Definition~\ref{def:widthh}) set $V_{h}\colonequals 
	H\cap S_{h}^{m}$.
	Assume that the boundary data $\phi_{|\partial \Omega}$ is such that $V_{h}$ is not empty.
	\\
	Let $u_{I}\in S_{h}^{m}$ be the approximating map from Condition~\ref{cond:1}.
	Let $K$ be a constant such that
	\begin{align}\label{eq:Kbuffer}
	K\geq \Cr{c:m+1Deriv}\theta_{1,q,\Omega}(u),
	\end{align}
	and assume that $h$ is small enough such that $u_{I}\in H_{K,L}$, where $H_{K,L}$ is defined as in Lemma~\ref{L:cea}.
	\\
	Then the discrete minimizer
	\begin{align*}
	u_{h}\colonequals \argmin_{v_{h}\in V_{h}\cap H_{K,L}} \Energy(v_{h})
	\end{align*}
	fulfills the a priori error estimate
	\begin{align}\label{eq:H1err}
	D_{1,2}(u,u_{h})\leq \Cl{c:H1err} h^{m}\theta_{m+1,2,\Omega}(u).
	\end{align}
	If the error is measured in an isometric embedding $\iota:M\to \RR^{N}$, we have
	\begin{align}\label{eq:H1errEmb}
	\|\iota\circ u - \iota\circ u_{h}\|_{W^{1,2}(\Omega,\RR^{N})}\leq C\;h^{m}\theta_{m+1,2,\Omega}(\iota\circ u)\leq C\;h^{m}\|\iota\circ u\|_{m+1,p,\Omega}^{m+1}.
	\end{align}
\end{theorem}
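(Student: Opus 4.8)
The plan is to follow the classical argument for $W^{1,2}$-finite element error bounds, adapting Lemma~\ref{L:cea} (the generalized C\'ea Lemma) to the nonlinear setting. First I would observe that the hypotheses of Lemma~\ref{L:cea} are met: $u$ minimizes $\Energy$ over $H$ with respect to variations along geodesic homotopies, and $\Energy$ is elliptic along such homotopies starting in $u$. The role of the abstract subset $V_h$ in the C\'ea Lemma will be played by $V_h\cap H_{K,L}$ with $V_h=H\cap S_h^m$. The key point is that this set is nonempty and that the infimum on the right-hand side of \eqref{eq:cea} is controlled: I would choose the competitor $v=u_I\in S_h^m$ supplied by Condition~\ref{cond:1} with $k=m+1$ and $p=2$, which is admissible since $2(m+1)>d$ forces $kp>d$ and $m\geq k-1$ holds by construction.

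Next, I would verify that $u_I\in H_{K,L}$ for $h$ small enough. The bound \eqref{eq:cond1b} on the $l$-th order smoothness descriptors of $u_I$ in terms of those of $u$ gives, after summation over the grid and using $l$ in the admissible range, the estimate $\theta_{1,q,\Omega}(u_I)\leq \Cr{c:m+1Deriv}\,\theta_{1,q,\Omega}(u)\leq K$ by the choice \eqref{eq:Kbuffer}, so $u_I\in W^{1,q}_K$; the $L^s$-proximity to $u$ needed for the $L$-ball condition follows from \eqref{eq:cond1a} once $h$ is small (one uses the $L^p$-part of \eqref{eq:cond1a} together with a Sobolev/H\"older bound to pass from $L^p$ to $L^s$, exactly the exponents built into Definition~\ref{def:HKL}). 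Having placed $u_I$ inside $H_{K,L}$, I would apply Lemma~\ref{L:cea} to conclude
\begin{align*}
D_{1,2}(u,u_h)\leq (1+\Cr{c:uniformity})^{2}\sqrt{\tfrac{\Lambda}{\lambda}}\;D_{1,2}(u,u_I).
\end{align*}
Then I would estimate $D_{1,2}(u,u_I)$ elementwise by \eqref{eq:cond1a}: on each $T_h$, $D_{1,2}(u,u_I)\leq \Cr{c:cond1} h^{-1}\cdot h^{m+1}\theta_{m+1,2,T_h}(u)=\Cr{c:cond1} h^{m}\theta_{m+1,2,T_h}(u)$, and summing the $p=2$ powers over all elements of $G$ gives $D_{1,2}(u,u_I)\leq C h^{m}\theta_{m+1,2,\Omega}(u)$, which yields \eqref{eq:H1err}.

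Finally, for the embedded estimate \eqref{eq:H1errEmb} I would invoke Proposition~\ref{prop:equivEmbD12}: since both $u$ and $u_h$ lie in $H^{1,2,q}_{K,L}$, the embedded $W^{1,2}$-distance is bounded by $C\,d_{W^{1,2}}(u,u_h)$, and the $L^2$-part $d_{L^2}(u,u_h)$ is absorbed into $D_{1,2}$ at the cost of another power of $h$ (or is of the same order, using \eqref{eq:cond1a} again plus the C\'ea bound); then Proposition~\ref{prop:SDiota} converts $\theta_{m+1,2,\Omega}(u)$ to $\theta_{m+1,2,\Omega}(\iota\circ u)$ up to a constant depending on $\|\iota\|_{C^{m+1}}$, and Proposition~\ref{prop:SDRRn} gives the final bound by $\|\iota\circ u\|_{m+1,p,\Omega}^{m+1}$. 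The main obstacle I anticipate is the bookkeeping needed to ensure $u_I\in H_{K,L}$ uniformly in $h$ — specifically, tracking that the buffer constant $K$ in \eqref{eq:Kbuffer} leaves enough room after the multiplicative perturbation \eqref{eq:cond1b}, and that the $L^s$-smallness genuinely follows from the $L^p$-bound in \eqref{eq:cond1a} for the relevant exponent $s$ from Definition~\ref{def:HKL}; everything else is the standard C\'ea-plus-interpolation chain, already packaged in the cited lemmas.
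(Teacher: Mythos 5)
Your proposal is correct and is essentially the paper's own argument: the paper explicitly states that Theorem~\ref{T:H1err} follows by combining the generalized C\'ea Lemma~\ref{L:cea} with the best-approximation estimate of Condition~\ref{cond:1} applied to $u_I$ with $k=m+1$, $p=2$, which is exactly the chain you describe (including the passage to the embedded estimate via Propositions~\ref{prop:equivEmbD12}, \ref{prop:SDiota}, and \ref{prop:SDRRn}). The only remark is that the membership $u_I\in H_{K,L}$, which you propose to verify, is already taken as a hypothesis in the theorem statement, so that step is not strictly needed.
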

Note that C\'ea's Lemma and Theorem~\ref{T:H1err} do not need a variational formulation of the problem, but instead use minimization. Thus, they do not conflict with the additional $K$- and $L$-bounds on the discrete functions. We can show that if Condition~\ref{cond:3} holds, and if the grid $G$ fulfills indeed the additional assumption that $F^{-1}_{h}:T\to T_{h}$ scales with order $2$ for every $T_{h}\in G$, then we can choose $q$ such that the restricted solution $u_{h}\in V_{h}\cap H_{K,L}$ stays away from the $K$ and $L$ bounds and is thus indeed a local solution in $V_{h}$. This is the content of the following lemma.
\begin{lemma}[{\cite{diss}}]\label{L:chooseq}
	Assume that Condition~\ref{cond:3} holds, and that the grid $G$ fulfills the additional assumption that $F^{-1}_{h}:T\to T_{h}$ scales with order $2$ for every $T_{h}\in G$.
	Let $u\in W_{\phi}^{m+1,2}(\Omega,M)$, $2(m+1)>d$, $u_{h}\in H\cap S_{h}^{m}$ with $\dot\theta_{1,q,\Omega}(u_{h})\leq K$, and
	\begin{align*}
	D_{1,2}(u,u_{h})\leq C\;h^{m}\theta_{m+1,2,\Omega}(u).
	\end{align*}
	Then we can choose $q>\max\{d,2\}$ such that
	\begin{align}
	D_{1,q}(u,u_{h})\leq C\;h^{\delta}
	\end{align}
	holds with some $\delta>0$ and a constant depending on $K$ and $u$. Note that this also implies
	\begin{align}
	d_{L^{\infty}}(u,u_{h})\leq C\;h^{\delta},
	\end{align}
	as well as $\theta_{1,q,\Omega}(u_{h})<K$, as long as $K>\theta_{1,q,\Omega}(u)$ and $h$ small enough.
\end{lemma}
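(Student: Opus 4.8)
The plan is to run a bootstrap (inverse-estimate) argument starting from the already-established $W^{1,2}$-error bound \eqref{eq:H1err}. First I would record the hypotheses: we have $D_{1,2}(u,u_h) \leq C\,h^m\theta_{m+1,2,\Omega}(u)$ and $\dot\theta_{1,q,\Omega}(u_h)\leq K$, and Condition~\ref{cond:3} holds with the grid satisfying the order-$2$ scaling assumption on $F_h^{-1}$. The target is to upgrade the $W^{1,2}$-control of the difference $\log_u u_h$ to $W^{1,q}$-control for some $q>\max\{d,2\}$, at the cost of a (strictly positive) power of $h$, i.e.\ $D_{1,q}(u,u_h)\leq C h^\delta$. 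The natural vehicle is the inverse estimate \eqref{eq:inverse2p1qScaled}: applied elementwise to the discrete vector field (or discrete difference) representing $u_h$ relative to $u$, it lets one pass from a weak $L^q$/$W^{1,q}$-norm to a stronger $L^p$/$W^{1,p}$-norm while paying only $h^{-d\max\{0,1/q-1/p\}}$.

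The key steps, in order, would be: (i) split $\log_u u_h$ (or, after triangulating through $u_I$, the difference $\log_{u_I}u_h$, which is genuinely built from discrete data) into a part controlled by the continuous solution $u$ and a discrete-difference part; since $u\in W^{m+1,2}$ with $2(m+1)>d$, the part coming from $u$ is benign by the Sobolev embedding. (ii) On each element $T_h$, invoke Condition~\ref{cond:3}'s first inequality in the form $\dot\theta_{1,q',T_h}(v_h)\leq C h^{-d(1/q-1/q')}\dot\theta_{1,q,T_h}(v_h)$ to interpolate between the exponent $2$ (where we have the $h^m$-bound) and a target exponent $q>\max\{d,2\}$. Concretely: write $D_{1,q}(u,u_h)\leq D_{1,2}(u,u_h)^{1-\alpha}\cdot(\text{inverse-estimate bound})^{\alpha}$ via an interpolation/Hölder argument across elements, with $\alpha\in(0,1)$ chosen so that the exponent of $h$ remains positive. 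The $h^{-d\max\{0,1/q-1/p\}}$ loss from the inverse estimate is a \emph{fixed} negative power, while the gain $h^m$ from \eqref{eq:H1err} is available to be split; choosing $q$ close enough to $\max\{d,2\}$ (using $2(m+1)>d$, equivalently $m>d/2-1$) makes the net power $\delta>0$. (iii) Having $D_{1,q}(u,u_h)\leq Ch^\delta$, the embedding $W^{1,q}\hookrightarrow C^0$ for $q>d$ combined with Proposition~\ref{prop:equivEmbD12} (comparing $d_{W^{1,q}}$ to the embedded Sobolev distance) yields $d_{L^\infty}(u,u_h)\leq C h^\delta$. (iv) Finally, Proposition~\ref{prop:SDtriangle2} gives $\dot\theta_{1,q,\Omega}(u_h)\leq \dot\theta_{1,q,\Omega}(u) + C D_{1,q}(u,u_h)\leq \theta_{1,q,\Omega}(u) + Ch^\delta$, which is $<K$ for $h$ small whenever $K>\theta_{1,q,\Omega}(u)$.

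The main obstacle I expect is step (ii): making the interpolation rigorous in the manifold-valued setting. One cannot simply interpolate $L^p$-norms of a fixed function, because $\log_u u_h$ lives in the pullback bundle $u^{-1}TM$ and the relevant "discrete" quantities are more naturally expressed relative to $u_h$ or $u_I$; so before applying Condition~\ref{cond:3} (which is stated for $v_h\in S_h^m(T_h,M)$, i.e.\ for genuinely discrete maps and their smoothness descriptors) one must transfer the vector field estimates between the pullback bundles along $u$, $u_I$, and $u_h$. This is exactly where the Uniformity Lemma~\ref{L:uniformity} and the $K,L$-confinement of all three maps are needed, so the argument is somewhat delicate in bookkeeping: one transports $\log_u u_h$ to a vector field along $u_h$ with uniformly bounded distortion, applies the interpolation inverse estimate there, and transports back. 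The second obstacle, minor by comparison, is verifying that the exponent arithmetic closes — i.e.\ that the admissible range of $q$ allowed by $2(m+1)>d$ is nonempty after accounting for the $h^{-d(1/2-1/q)}$ loss; this is a direct computation showing $\delta = m - d(1/2 - 1/q) > 0$ for $q$ sufficiently close to $\max\{d,2\}$.
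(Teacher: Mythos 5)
Your proposal is correct and follows essentially the same route as the paper's proof: interpolate $D_{1,q}(u,u_h)$ between the known $O(h^{m})$ bound at exponent $2$ and a high exponent $r$ (the Sobolev exponent of $W^{m+1,2}$, i.e.\ $r=\tfrac{2d}{d-2m}$ for $2m<d<2(m+1)$ and arbitrarily large otherwise), control the $r$-endpoint by $\dot{\theta}_{1,r,\Omega}(u)+\dot{\theta}_{1,r,\Omega}(u_h)$ with Condition~\ref{cond:3} applied directly to $u_h$ and the Sobolev embedding applied to $u$, and check that the exponent $\delta=\bigl(m-d(\tfrac12-\tfrac1q)\bigr)/(1+\mu)$ is positive precisely because $2(m+1)>d$. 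Two small remarks: the bundle-transport difficulty you flag as the main obstacle does not actually arise, since the high-exponent bound only needs the pointwise estimate $|\nabla\log_{u}u_h|\leq C(|du|+|du_h|)$ coming from Proposition~\ref{prop:logest} (no Uniformity Lemma, no detour through $u_I$); and the paper proves the estimate for \emph{every} $q$ in the full interval $(\max\{2,d\},r)$ rather than only for $q$ near $\max\{2,d\}$, which is needed later because Theorem~\ref{T:L2err} invokes this lemma with $q>\max\{d,4\}$.
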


\begin{proof}
	Let
	\begin{align*}
	r\colonequals \left\{\begin{array}{ll}
	\infty &\ \textrm{for}\ d<2m\\
	\epsilon^{-1} &\ \textrm{for}\ d=2m\\
	\frac{2d}{d-2m} &\ \textrm{for}\ 2m<d<2(m+1),
	\end{array}\right.
	\end{align*}
	with $\epsilon>0$ arbitrary, and choose $\max\{2,d\}<q<r$. We set
	\begin{align*}
	\mu&\colonequals \left(\frac{1}{2}-\frac{1}{q}\right)\left(\frac{1}{q}-\frac{1}{r}\right)^{-1},&
	\delta&\colonequals \frac{m-d\left(\frac{1}{2}-\frac{1}{q}\right)}{1+\mu}.
	\end{align*}
	Then by choice of $q$, we have $\delta>0$ and by $L^{p}$-interpolation
	\begin{align*}
	D_{1,q,\Omega}(u,u_{h})&\leq h^{\delta-m}\;D_{1,2,\Omega}(u,u_{h}) + h^{\frac{m-\delta}{\mu}}\;D_{1,r,\Omega}(u,u_{h})\\
	&\leq C\;h^{\delta}\;\theta_{m+1,2,\Omega}(u) + C\;h^{\frac{m-\delta}{\mu}}\;\left(\dot\theta_{1,r,\Omega}(u) + \dot\theta_{1,r,\Omega}(u_{h})\right)\\
	&\leq  C\;h^{\delta}\;\theta_{m+1,2,\Omega}(u) + C\;h^{\frac{m-\delta}{\mu} - d\left(\frac{1}{q}-\frac{1}{r}\right)}\; \dot\theta_{1,q,\Omega}(u_{h})\\
	&= C\;h^{\delta}\left(\theta_{m+1,2,\Omega}(u) + \dot\theta_{1,q,\Omega}(u_{h}) \right).\qedhere
	\end{align*}
\end{proof}

\subsection{The Aubin--Nitsche Lemma and Predominantly Quadratic Energies}
Our goal is to show that for $W^{1,2}$-elliptic minimization problems the $L^{2}$-discretization error is in $O(h^{m+1})$, where $m$ is the approximation order. 

We recall the Aubin--Nitsche lemma for the approximation of a quadratic minimization problem in $H=H^{1}_{0}(\Omega,\RR)$ 
by standard finite elements.
For an elliptic bilinear form $a(\cdot,\cdot)$ and given $f\in H^{-1}$
consider the energy $J(v)=\frac{1}{2}a(v,v) - (f,v)$, the variational equalities
\begin{gather*}
\begin{aligned}
u&\in H: &\quad a(u,v)&=(f,v) &\quad \forall v&\in H,\\
u_{h}&\in S_{h}^{m}: &\quad a(u_{h},v_{h})&=(f,v_{h}) &\quad \forall v_{h}&\in S_{h}^{m},\\
\end{aligned}
\end{gather*}
and the adjoint problem
\begin{align*}
w\in H:\qquad a(v,w)&=(g,v)\qquad \forall v\in H ,
\end{align*}
where $g\colonequals u-u_{h}$.
We assume $H^2$-regularity of the adjoint problem, i.e., $|w|_{H^2}\leq C\|g\|_{L^2}$.
Using Galerkin orthogonality and the $H^1$-ellipticity of $a(\cdot,\cdot)$, we can then estimate
\begin{align*}
\|u-u_{h}\|^{2}_{L^2}&=(g,u-u_{h})=a(u-u_{h},w)= a(u-u_{h},w-w_{I})\\
&\leq \Lambda\|u-u_{h}\|_{H^{1}}\|w-w_{I}\|_{H^{1}}\\
&\leq C h^{m} |u|_{H^{k}}\; h\;|w|_{H^{2}}\\
&\leq C h^{m+1} |u|_{H^2} \|u-u_{h}\|_{L^2}.
\end{align*}
We want to generalize the Aubin--Nitsche lemma to the $L^2$-error for energies for functions with manifold targets. However, the proof given above only works for quadratic energies with linear target space, i.e., for energies with linear Euler--Lagrange equations.
Euclidean techniques to obtain error estimates for nonlinear energies as found in \cite{Rannacher} rely on the deformation to a linear problem and weighted norms.
A generalization of these is desirable, in particular since they also provide $L^{\infty}$-error estimates.
We, however, follow the general approach to geometrically generalize the concept of linearity rather than use a linearization.
We restrict our analysis to energies that are ``predominantly quadratic", by which we mean the following bound on the third variation of the energy.
% %
\begin{definition}\label{def:almostLinear}
	Let $q>\max\{d,2\}$ and $\Energy:H \to \RR$ be an energy functional.
	We say that $\Energy$ is predominantly quadratic if $\Energy$ is $C^{3}$ along geodesic homotopies, 
	and for any $v\in H \cap W^{1,q}_{K}$, and vector fields $U,V$ along $v$ 
	\begin{align}\label{eq:almostLinear}
	|\delta^{3}\Energy(v)(U,V,V)|\leq C(K,M)\|U\|_{W^{2,2}(\Omega,v^{-1}TM)}\|V\|_{W^{1,2}(\Omega,v^{-1}TM)}
	\|V\|_{W^{o,r}(\Omega,v^{-1}TM)},
	\end{align}
	with either $(o,r)=(1,2)$, or $o=0$ and $r\leq d$.
\end{definition}
Note that this is still a restriction on the energy, not on the manifold $M$.
A combination with a deformation argument in order to obtain error estimates for more general energies is conceivable, but beyond the scope of this work.
\begin{example}
	In the Euclidean case $M=\RR^{n}$, quadratic energies are predominantly quadratic, as the third variation vanishes.
	As long as the coefficient functions of a semilinear PDE coming from a minimization problem are smooth enough and bounded, 
	the third variation of the energy will have a bound of the form
	\begin{align*}
	|\delta^{3}\Energy(v)(U,V,V)|\leq C\;\int_{\Omega}(|U|(|\nabla V|+|V|)^{2}+|\nabla U||\nabla V||V|)\;dx.
	\end{align*}
	Such an energy is also predominantly quadratic.
	
	The leading term of the third variation of the energy for a typical quasi-linear equation, 
	e.g., the minimal surface energy for graphs $\Energy(u)=\int_{\Omega}\sqrt{1+|Du|^{2}}\;dx$, has the form
	\begin{align*}
	|\delta^{3}\Energy(v)(U,V,V)|\leq C\;\int_{\Omega}|\nabla U|\;|\nabla V|^{2}\;dx.
	\end{align*}
	For $d=1$ such an energy is predominantly quadratic, but in general not for higher dimensions.
\end{example}
For a Riemannian manifold $M$, the harmonic energy is predominantly quadratic (see Section~\ref{sec:ch4}).

\subsection{Galerkin Orthogonality}
We consider the variational formulation of the problems \eqref{eq:Pcont} and \eqref{eq:Pdisc}
\begin{align}\label{eq:PcontVar}
u\in H:\qquad \frac{d}{dt}\bigg|_{t=0}\Energy(\exp_{u}(tV))=0\qquad \forall V\in W_{0}^{1,2}(\Omega,u^{-1}TM),
\end{align}
and
\begin{align}\label{eq:PdiscVar}
u_{h}\in S_{h}^{m}:\qquad \frac{d}{dt}\bigg|_{t=0}\Energy(\exp_{u_{h}}(tV_{h}))=0\qquad \forall V_{h}\in IV_{0}(\Omega,u_{h}^{-1}TM).
\end{align}
In the context of quadratic energies in the Euclidean setting, Galerkin orthogonality states that the error between the continuous and the discrete solution is perpendicular to the discrete space with respect to the quadratic form.

As we are in a nonlinear setting, the natural bilinear form is the second variation of the energy. The discrete test vector fields are a priori only defined at the discrete solution which suggests a orthogonality principle of the form
\begin{align}\label{eq:suggestOrtho}
\delta^{2}\Energy(u_{h})(\log_{u_{h}}u,V_{h})=0\qquad \forall V_{h}\in  IV_{0}(\Omega,u_{h}^{-1}TM).
\end{align}
However, we cannot use~\eqref{eq:PcontVar} to show this kind of orthogonality without transporting the discrete test vector to $u$.
The following proposition shows the impact the choice of transport has on the notion of Galerkin orthogonality one can obtain from~\eqref{eq:PcontVar} and~\eqref{eq:PdiscVar}.
\begin{proposition}\label{prop:galerkin}
	Let $u$ and $u_{h}$ be solutions to \eqref{eq:PcontVar} and \eqref{eq:PdiscVar}, respectively, 
	and let $\Gamma$ be the geodesic homotopy joining $u$ and $u_{h}$.
	Then, for any transport $V_{h}:I\to W_{0}^{1,2}(\Omega,\Gamma(t)^{-1}TM)$ along $\Gamma$ of any discrete vector field $V_{h}(1)=V_{h,1}\in IV(\Omega,u_{h}^{-1}TM)$
	holds
	\begin{align}\label{eq:galerkin}
	\int_{0}^{1}\delta^{2} \Energy(\Gamma(t))(V_{h}(t),\dot{\Gamma}(t)) + \delta \Energy(\Gamma(t))(\nabla_{t}V_{h}(t))\;dt=0.
	\end{align}
\end{proposition}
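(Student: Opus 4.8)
The plan is to exploit the two variational equations \eqref{eq:PcontVar} and \eqref{eq:PdiscVar} as the endpoint conditions of a curve of first variations along the geodesic homotopy $\Gamma$, and to recognize the integrand in \eqref{eq:galerkin} as a total derivative. Concretely, fix a discrete test field $V_{h,1}\in IV_{0}(\Omega,u_h^{-1}TM)$ and a transport $V_h(t)$ of it along $\Gamma$, with $V_h(1)=V_{h,1}$. Define the scalar function
\begin{align*}
  \phi(t)\colonequals \frac{d}{ds}\bigg|_{s=0}\Energy\big(\exp_{\Gamma(t)}(sV_h(t))\big)=\delta\Energy(\Gamma(t))(V_h(t)).
\end{align*}
The first step is to compute $\phi'(t)$. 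Differentiating the first variation of $\Energy$ along the curve $t\mapsto\Gamma(t)$ (with $\dot\Gamma(t)$ the velocity field of the homotopy) and using the definition of the second variation together with the covariant derivative $\nabla_t V_h(t)$ of the transported field, one obtains, by the usual product/chain rule for covariant differentiation of the bilinear pairing defining $\delta^2\Energy$,
\begin{align*}
  \phi'(t)=\delta^{2}\Energy(\Gamma(t))(V_h(t),\dot\Gamma(t))+\delta\Energy(\Gamma(t))(\nabla_t V_h(t)).
\end{align*}
This is exactly the integrand of \eqref{eq:galerkin}, so $\int_0^1\phi'(t)\,dt=\phi(1)-\phi(0)$.

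The second step is to evaluate the boundary terms. At $t=1$ we have $\Gamma(1)=u_h$ and $V_h(1)=V_{h,1}\in IV_0(\Omega,u_h^{-1}TM)$, so by the discrete Euler--Lagrange equation \eqref{eq:PdiscVar}, $\phi(1)=\delta\Energy(u_h)(V_{h,1})=0$. At $t=0$ we have $\Gamma(0)=u$ and $V_h(0)\in W_0^{1,2}(\Omega,u^{-1}TM)$ (the transport preserves the boundary values $0$ and the $W^{1,2}$-regularity, which is where the hypotheses on $\Gamma$ and the endpoint maps — continuity, bounded curvature, the Rauch/Jacobi-field estimates recalled before Definition~\ref{def:SobolevForC1} — are used to guarantee $V_h(0)$ is an admissible test field); hence by the continuous Euler--Lagrange equation \eqref{eq:PcontVar}, $\phi(0)=\delta\Energy(u)(V_h(0))=0$. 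Therefore $\int_0^1\phi'(t)\,dt=0$, which is \eqref{eq:galerkin}.

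The main obstacle is the differentiation step: one must justify that $\phi$ is differentiable and that $\phi'$ has the claimed form. This requires (i) that $\Energy$ is $C^2$ along geodesic homotopies — which is assumed in Definition~\ref{def:ellipticEnergy} — and (ii) that the transported field $V_h(\cdot)$ is regular enough in $t$ (e.g. $V_h(x,\cdot)\in W^{1,1}$ in $t$ with values in $\Gamma(x,\cdot)^{-1}TM$, as in the parallel-transport setup of the Uniformity Lemma) so that $\nabla_t V_h(t)$ makes sense in $W_0^{1,2}(\Omega,\Gamma(t)^{-1}TM)$ and the bilinear pairings are continuous in $t$. A technical but routine point is that in a variational expression one differentiates along $\Gamma$ "at the endpoint" via $\exp$, so the identity $\frac{d}{dt}\delta\Energy(\Gamma(t))(V_h(t))=\delta^2\Energy(\Gamma(t))(V_h(t),\dot\Gamma(t))+\delta\Energy(\Gamma(t))(\nabla_t V_h(t))$ is the covariant product rule for the second variation — this is where one must be careful that the "extra" curvature terms coming from commuting derivatives are already absorbed into the intrinsic definition of $\delta^2\Energy$, so no spurious terms appear. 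Everything else is bookkeeping.
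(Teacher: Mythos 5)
Your proof is correct and is essentially the paper's own argument: both recognize the integrand as the total derivative $\frac{d}{dt}\,\delta\Energy(\Gamma(t))(V_{h}(t))$, apply the fundamental theorem of calculus, and kill the two boundary terms with \eqref{eq:PdiscVar} at $t=1$ and \eqref{eq:PcontVar} at $t=0$. The extra care you take about the admissibility of $V_{h}(0)$ is already built into the hypothesis that the transport takes values in $W_{0}^{1,2}(\Omega,\Gamma(t)^{-1}TM)$ for every $t$.
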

\begin{proof}
	Indeed, as $u$ and $u_{h}$ fulfill \eqref{eq:PcontVar} and \eqref{eq:PdiscVar}, 
	respectively, we have 
	\begin{align*}
	\int_{0}^{1}\delta^{2} \Energy(\Gamma(t))(V_{h}(t),\dot{\Gamma}(t))+\delta \Energy(\Gamma(t))(\nabla_{t}V_{h}(t))\;dt
	&= \int_{0}^{1} \frac{d}{dt} \delta \Energy(\Gamma(t)(V_{h}(t))\;dt\\
	&= \delta \Energy(u_{h})(V_{h}(1))-\delta \Energy(u)(V_{h}(0))\\
	&=0.\qedhere
	\end{align*}
\end{proof}
To obtain~\eqref{eq:suggestOrtho} from this, one can define a transport with the initial values $V_{h}(1)=V_{h,1}$ and $\nabla_{t}V_{h}(1)=0$ such that the integrand in~\eqref{eq:galerkin} is constant in $t$, i,e., $V_{h}$ fulfills the second order equation
\begin{align*}
\delta\Energy(\Gamma(t))(\nabla^{2}_{t}V_{h}(t))
+ 2\delta^{2}\Energy(\Gamma(t))(\dot \Gamma(t),\nabla_{t}V_{h}(t))
+ \delta^{3}\Energy(\Gamma(t))(\dot \Gamma(t), \dot \Gamma(t),V_{h}(t))\equiv 0.
\end{align*}
However, as this system is rather complicated, we will in the following use~\eqref{eq:galerkin} with parallel transport as our notion of Galerkin orthogonality instead of~\eqref{eq:suggestOrtho}.
\subsection{The Deformation Problem}
We now define a nonlinear generalization of the adjoint problem from the Aubin--Nitsche-Trick.
For nonlinear energies the adjoint problem is essentially a linearization of problem~\eqref{eq:PcontVar}
with a right hand side that is given by the difference of the solutions $u$ and $u_{h}$ to~\eqref{eq:PcontVar} and~\eqref{eq:PdiscVar}, respectively.

In the context of Riemannian manifolds, this linearized problem will not act on functions but on vector fields:

Find $(w,W)\in W^{1,2}(\Omega,TM)$ such that
\begin{align}\label{eq:deformedCont}
\delta^{2}\Energy(w)(W,V_{1}) + \delta \Energy(w)(V_{2})=-(V_{1},\log_{w}u_{h} - \log_{w}u)_{L^{2}(\Omega,w^{-1}TM)}
\end{align}
holds for all tangent vectors $(V_{1},V_{2})\in T_{(w,W)}TH=(W^{1,2}_{0}(\Omega,w^{-1}TM))^{2}$.

\begin{remark}
	One can easily check by inserting test vector fields of the form $(V_{1},0)$ that the solution $(w,W)$ 
	of \eqref{eq:deformedCont} projects over the solution $u$ of \eqref{eq:Pcont}, i.e., $w=u$.
	Thus, \eqref{eq:deformedCont} is equivalent 
	to the system consisting of \eqref{eq:PcontVar} and finding $W\in W^{1,2}(\Omega,u^{-1}TM)$ such that
	\begin{align}\label{eq:deformedContVec}
	\delta^{2}\Energy(u)(W,V) =-(V,\log_{u}u_{h})_{L^{2}(\Omega,u^{-1}TM)}\qquad \forall V\in W^{1,2}(\Omega,u^{-1}TM).
	\end{align}
	As the bilinear form acts on deformations of $u$, we call \eqref{eq:deformedContVec} the deformation problem.
\end{remark}
\subsection{Parallel Transport of Vector Fields}
One major difference to the Euclidean setting is that the base points of vector fields do not matter there.
In order to use Galerkin orthogonality in the manifold setting, we however need a discrete vector field along $u_{h}$ as a valid test vector field for~\eqref{eq:PdiscVar}.
The discretization of~\eqref{eq:deformedCont} projects over $u_{h}$, but discretization error bounds are not trivial, as a best approximation of the solution $W\in W^{1,2}(\Omega,u^{-1}TM)$ of~\eqref{eq:deformedContVec} in $IV(\Omega,u_{I}^{-1}TM)$ is a discrete vector field not along $u_{h}$ but along $u_{I}$.
Thus, we instead first parallel transport the $W$ to $u_{h}$ along a geodesic homotopy and then approximate along $u_{h}$.

In order to preserve bounds through this transport, we will need the following technical estimate which generalizes the Uniformity Lemma~\ref{L:uniformity}.
\begin{proposition}\label{L:thetaPara}
	Let $q>\max\{d,4\}$, $a,b$ as in Definition~\ref{def:nonlinSmoothVfield} with $p=2$ and $k=2$, and $\Gamma$ be a geodesic homotopy such that $\Gamma(s)\in H$ with $\Gamma(s)_{|T_{h}} \in W^{2,b}(T_{h},M)$ for all $s\in [0,1]$ and elements $T_{h}\in G$.
	We set $K_{1}=\max_{s}\dot{\theta}_{1,q,\Omega}(\Gamma(s))$, and $K_{2}=\max_{s}\dot{\theta}_{2,b,G}(\Gamma(s))$.
	\\
	Then there exists a constant $\Cl{c:thetaPara}$ depending on $K_{1}$ and $M$, but independent of $K_{2}$ such that
	\begin{align}
	\Theta_{2,2,G}(W(1))
	\leq \Cr{c:thetaPara}\left(\|W(0)\|_{W^{2,2}(\Gamma(0)^{-1}TM)}
	+ K_{2}\|W(0)\|_{L^{a}(\Omega,\Gamma(0)^{-1}TM)}\right)
	\end{align}
	holds for the parallel transport $W(s)$ of any vector field $W(0)\in W^{2,2}(\Omega,\Gamma(0)^{-1}TM)$.
\end{proposition}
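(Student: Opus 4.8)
The plan is to mimic the proof of the Uniformity Lemma~\ref{L:uniformity} but at the level of second covariant derivatives, controlling the growth of $\Theta_{2,2,G}(W(s))$ by a Gronwall-type argument in the homotopy parameter $s$. Recall that $\Theta_{2,2,G}(W(s))$ involves, on each element $T_h$, the term $\|W(s)\|_{L^a}^2\,\dot\theta_{2,b}^2(\Gamma(s))$ together with integrals of $|\nabla^{\beta_0}W(s)|^2$ weighted by products $\prod_j|\nabla^{\beta_j}\Gamma(s)|^2$ where the orders sum to $2$. Since $W(s)$ is parallel along the track curves of $\Gamma$, the pointwise norm $|W(s)(x)|_{g(\Gamma(s)(x))}$ is constant in $s$, so the zeroth-order piece $\|W(s)\|_{L^a}$ is exactly $\|W(0)\|_{L^a}$, which already accounts for the second summand on the right-hand side via the $K_2$ factor. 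The remaining work is to differentiate the first- and second-order covariant-derivative pieces in $s$ and bound the $s$-derivative by a constant (depending only on $K_1$ and $M$) times the current value of $\Theta_{2,2,G}(W(s))$ plus the $K_2\|W(0)\|_{L^a}$ term.

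The key steps, in order, are as follows. First, I would fix an element $T_h$ and an $x\in T_h$ and compute $\nabla_s$ of the weighted integrands. For a parallel field $\nabla_s W = 0$, but $\nabla_s$ does not commute with the spatial covariant derivatives $\nabla^{\beta}$; the commutator introduces the curvature tensor $\Rm$ contracted with $\dot\Gamma(s,x) = \log_{u}u_h$ (the velocity of the geodesic homotopy) and its spatial derivatives. Thus $\nabla_s\nabla_\alpha W$ is controlled by $|\Rm|_\infty|\dot\Gamma|\,|\nabla_\alpha W|$ plus lower-order terms, and $\nabla_s\nabla_\beta\nabla_\alpha W$ brings in $|\Rm|_\infty$, $|\nabla\Rm|_\infty$, the quantities $|\dot\Gamma|$, $|\nabla\dot\Gamma|$, and the derivatives of $W$ up to order one. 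Likewise $\nabla_s$ of the weights $|\nabla^{\beta_j}\Gamma(s)|^2$ produces terms like $2\langle\nabla_s\nabla^{\beta_j}\Gamma,\nabla^{\beta_j}\Gamma\rangle$ which again, via curvature commutators and the geodesic equation $\nabla_s\dot\Gamma = 0$, are expressible through $|\dot\Gamma|$, $|\nabla\dot\Gamma|$ and $|\nabla^{\beta_j}\Gamma|$. Second, I would bound the velocity $\dot\Gamma(s) = -\log_{\Gamma(s)}u$ (up to sign/transport) and its first spatial derivative $\nabla\dot\Gamma(s)$ in terms of $d_{L^\infty}(u,u_h)$, $D_{1,q}$-type quantities, and $\dot\theta_{1,q}(\Gamma(s))$, using Proposition~\ref{prop:logest} for the derivative estimates of the logarithm; crucially, since $\Gamma(s)\in H^{1,2,q}_{K,L}$-type sets with $\dot\theta_{1,q,\Omega}(\Gamma(s))\le K_1$, the relevant $L^q$-norm of $\dot\Gamma$ and $\nabla\dot\Gamma$ is bounded by a constant depending only on $K_1$ and $M$ (the $L$-ball condition $K_1 L\le|\Rm|_\infty^{-1}$ makes the logarithm and its derivatives Lipschitz with controlled constants). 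Third, I would assemble the estimate for $\frac{d}{ds}\Theta_{2,2,T_h}^2(W(s))$: each cross term is a product of a derivative-of-$W$ factor, a derivative-of-$\Gamma$ weight factor, and a $\dot\Gamma$ (or $\nabla\dot\Gamma$) factor; applying Hölder with the exponent split $\frac1a = \frac12 - \frac1b$ and $q>\max\{d,4\}$ (so that $\dot\Gamma\in L^q$ suffices to close the product into an $L^2$-integrand), these are bounded by $C(K_1,M)\,\Theta_{2,2,T_h}(W(s))^2 + C(K_1,M)K_2\|W(0)\|_{L^a(T_h)}\Theta_{2,2,T_h}(W(s))$. Summing over $T_h\in G$ and using that $\dot\theta_{2,b,G}(\Gamma(s))\le K_2$ uniformly turns this into a differential inequality $\frac{d}{ds}Y(s)\le C(K_1,M)\big(Y(s) + K_2 A\big)$ with $Y(s)=\Theta_{2,2,G}(W(s))$ and $A=\|W(0)\|_{L^a(\Omega)}$; Gronwall on $[0,1]$ then gives $Y(1)\le e^{C}\,(Y(0)+CK_2 A)$, and noting $Y(0)\le C\big(\|W(0)\|_{W^{2,2}}+K_2\|W(0)\|_{L^a}\big)$ by definition of $\Theta$ yields the claim.

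The main obstacle I expect is the bookkeeping of the curvature commutator terms that arise when interchanging $\nabla_s$ with the iterated spatial covariant derivatives $\nabla^{\beta_2}\nabla^{\beta_1}$ — in particular making sure that only $|\Rm|_\infty$ and $|\nabla\Rm|_\infty$ (both assumed bounded) appear, that no uncontrolled second derivative of $\Gamma$ multiplies a second derivative of $W$ (this is exactly why the weighting structure of $\Theta_{2,2}$ is designed as it is, and why the constant may be taken independent of $K_2$: the dangerous term $|\nabla^2\Gamma|\,|\nabla^2 W|$ never occurs, since in $\Theta_{2,2}$ a second-order weight on $\Gamma$ is always paired with the \emph{zeroth}-order norm of $W$), and that every cross term admits a Hölder split whose exponents are feasible precisely under the hypothesis $q>\max\{d,4\}$. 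A secondary technical point is justifying that $W(s)\in W^{2,2}(T_h,\Gamma(s)^{-1}TM)$ for every $s$ so that the differentiation in $s$ is legitimate; this follows from the regularity-inheritance remark after Lemma~\ref{L:uniformity} (geodesic homotopies inherit the $W^{2,b}$-smoothness of their endpoints when $\exp,\log\in C^2$ and $M$ admits a $C^2$-embedding), applied to the parallel transport equation, together with the assumption $\Gamma(s)_{|T_h}\in W^{2,b}(T_h,M)$.
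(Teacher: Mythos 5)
Your proposal is correct and takes essentially the same route as the paper: the heart of both arguments is differentiating the top-order quantity in the homotopy parameter, commuting $\nabla_{s}$ past the spatial covariant derivatives to produce exactly the curvature terms $|\dot\Gamma|\,|d\Gamma|\,|\nabla W|$, $|\nabla\dot\Gamma|\,|d\Gamma|\,|W|$, and $|\dot\Gamma|\,|\nabla^{2}\Gamma|\,|W|$ (the last being the sole source of $K_{2}$, paired only with the zeroth-order norm of $W$), and closing with the H\"older splits permitted by $q>\max\{d,4\}$. The only cosmetic difference is that the paper controls the lower-order pieces by invoking the Uniformity Lemma directly and then integrates a $t$-independent bound on $\frac{d}{dt}\|\nabla^{2}W(t)\|_{L^{2}}$, whereas you run a Gronwall inequality on the full quantity $\Theta_{2,2,G}(W(s))$; both yield the same estimate with the same dependence of the constant.
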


\begin{proof}
	We write
	\begin{align*}
	\Theta_{2,2,G}^{2}(W(1))
	&=\sum_{T_{h}\in G}	\Theta_{2,2,T_{h}}^{2}(W(1))\\
	&\!\begin{multlined}[t][\displaywidth]
	=\sum_{T_{h}\in G}\bigg(\dot\Theta_{0,2,T_{h}}^{2}(W(1)) +\dot\Theta_{1,2,T_{h}}^{2}(W(1))
	+\|W(1)\|_{L^{a}(T_{h},\Gamma(1)^{-1}TM)}^{2}\dot\theta_{2,b,T_{h}}^{2}(\Gamma(1))\\
	+\int_{T_{h}}|\nabla W(1)|^{2}|d\Gamma(1)|^{2}\;dx
	+\|\nabla^{2} W(1)\|_{L^{2}(T_{h},\Gamma(1)^{-1}TM)}^{2}
	\bigg).
	\phantom{\Theta_{2,2,G}^{2}(W(1))}
	\end{multlined}
	\end{align*}
	By H\"older's inequality we have
	\begin{align*}
	\Theta_{2,2,G}(W(1))&\leq
	\dot\Theta_{0,2,\Omega}(W(1)) +\dot\Theta_{1,2,\Omega}(W(1))
	+\|W(1)\|_{L^{a}(\Omega,\Gamma(1)^{-1}TM)}\dot\theta_{2,b,G}(\Gamma(1))\\
	&\quad+\|\nabla W(1)\|_{L^{\frac{2q}{q-2}}(\Omega,\Gamma(1)^{-1}TM)}\dot\theta_{1,q,\Omega}(\Gamma(1))
	+\|\nabla^{2} W(1)\|_{L^{2}(G,\Gamma(1)^{-1}TM)}.
	\end{align*}
	As $\frac{2q}{q-2}<q$ and $b_{1}\leq q$, we can use Lemma~\ref{L:uniformity} to estimate
	\begin{align*}
	\Theta_{2,2,G}(W(1))&\leq
	\|W(0)\|_{L^{2}(\Omega,\Gamma(0)^{-1}TM)}
	+(K_{1}+\Cr{c:uniformity}+1)\|W(0)\|_{W^{1,2}(\Omega,\Gamma(0)^{-1}TM)}\\
	&\quad +K_{2}\|W(0)\|_{L^{a}(\Omega,\Gamma(0)^{-1}TM)}+K_{1}(\Cr{c:uniformity}+1)\|W(0)\|_{W^{1,\frac{2q}{q-2}}(\Omega,\Gamma(0)^{-1}TM)}\\
	&\quad
	+\|\nabla^{2} W(1)\|_{L^{2}(G,\Gamma(1)^{-1}TM)}.
	\end{align*}
	Note that the choice of $q$ and $a$ allows for the following Sobolev--type estimate
	\begin{align*}
	\|V\|_{L^{a}} + \|V\|_{W^{1,\frac{2q}{q-2}}}&\leq C\; \|V\|_{W^{2,2}}.
	\end{align*}
	Thus, all that is left to estimate are the second order derivatives of $W$.
	As in the proof of Lemma~\ref{L:uniformity}, we differentiate and use H\"older's inequality to obtain
	\begin{align*}
	\frac{d}{dt}& \|\nabla^{2} W(t)\|_{L^{2}(G,\Gamma(t)^{-1}TM)}\\
	&=\|\nabla^{2} W(t)\|_{L^{2}(G,\Gamma(t)^{-1}TM)}^{-1}
	\sum_{T_{h}\in G}\int_{T_{h}}\langle\nabla_{t}\nabla_{x}^{2}W(t),\nabla_{x}^{2}W(t)\rangle_{g}\;dx\\
	&\!\begin{multlined}[t][\displaywidth]\leq C(M)\|\nabla^{2} W(t)\|_{L^{2}(G,\Gamma(t)^{-1}TM)}^{-1}\\
	\sum_{T_{h}\in G}\int_{T_{h}}|\nabla^{2}W|\left(
	|\dot\Gamma|\;|d\Gamma|\;|\nabla W| + |\nabla\dot\Gamma|\;|d\Gamma|\;|W| + |\dot\Gamma|\;|\nabla^{2} \Gamma|\;|W|
	\right)\;dx
	\phantom{\frac{d}{dt}}
	\end{multlined}\\
	&\leq \!\begin{multlined}[t][\displaywidth]
	C(M)
	\bigg(
	\dot\theta_{1,q,\Omega}(\Gamma(t))\|\dot\Gamma(t)\|_{L^{\infty}(\Omega,\Gamma(t)^{-1}TM)}
	\|\nabla W(t)\|_{L^{\frac{2q}{q-2}}(\Omega,\Gamma(t)^{-1}TM)}\\
	\begin{aligned}
	&+\dot\theta_{1,q,\Omega}(\Gamma(t))\|\nabla\dot\Gamma(t)\|_{L^{q}(\Omega,\Gamma(t)^{-1}TM)}
	\|W(t)\|_{L^{\frac{2q}{q-4}}(\Omega,\Gamma(t)^{-1}TM)}\phantom{\frac{d}{dt}}\\
	&+\dot\theta_{2,b,G}(\Gamma(t))\|\dot\Gamma(t)\|_{L^{\infty}(\Omega,\Gamma(t)^{-1}TM)}
	\|W(t)\|_{L^{a}(\Omega,\Gamma(t)^{-1}TM)}
	\bigg).
	\end{aligned}
	\phantom{\frac{d}{dt}}
	\end{multlined}
	\end{align*}
	Using Lemma~\ref{L:uniformity}, we obtain
	\begin{multline*}
	\frac{d}{dt} \|\nabla^{2} W(t)\|_{L^{2}(G,\Gamma(t)^{-1}TM)}\\
	\leq C(M) K_{1} \left(
	K_{1}(\Cr{c:uniformity}+1)
	\|W(0)\|_{W^{2,2}(\Omega,\Gamma(0)^{-1}TM)}
	+K_{2}
	\|W(0)\|_{L^{a}(\Omega,\Gamma(0)^{-1}TM)}
	\right),
	\end{multline*}
	and thus
	\begin{align*}
	\|\nabla^{2}W(1)\|_{L^{2}(\Omega,\Gamma(t)^{-1}TM)}
	&= \|\nabla^{2} W(0)\|_{L^{2}(\Omega,\Gamma(0)^{-1}TM)}+\int_{0}^{1}\frac{d}{dt} \|\nabla^{2}W(t)\|_{L^{2}(\Omega,\Gamma(t)^{-1}TM)}\;dt\\
	&\leq C\;\left(\|W(0)\|_{W^{2,2}(\Omega,\Gamma(0)^{-1}TM)}+K_{2} \|W(0)\|_{L^{a}}\right),
	\end{align*}
	which yields the assertion.
\end{proof}
\subsection{$L^{2}$-Error Estimate}
We can now prove the $L^{2}$-error estimate, which is the main result of this chapter.
The statement of the theorem contains a discrete $H^{2}$-regularity assumption that will be discussed in Section~\ref{sec:discH2reg}.
\begin{theorem}\label{T:L2err}
	Let the assumptions of Theorem~\ref{T:H1err} and Lemma~\ref{L:chooseq} be fulfilled with $q>\max\{d,4\}$ chosen as in Lemma~\ref{L:chooseq} for a predominantly quadratic energy $\Energy$, and a discrete set $S_{h}^{m}$ fulfilling Condition~\ref{cond:2b}. 
	\\
	Let $u_{h}$ be a minimizer of $\Energy$ in $S_{h}^{m}\cap H_{K,L}$
	under the boundary and homotopy conditions.
	We assume that
	\begin{align} \label{eq:2bBound}
	\theta_{2,b,G}(u_{h})\leq K_{2}
	\end{align}
	for a constant $K_{2}$.
	Finally, suppose that the deformation problem~\eqref{eq:deformedContVec} is $H^{2}$-regular, i.e., that the solution $W$ fulfills
	\begin{align}\label{eq:H2reg}
	\|W\|_{W^{2,2}(\Omega,u^{-1}TM)}\leq C\;\|\log_{u}u_{h}\|_{L^{2}(\Omega,u^{-1}TM)}.
	\end{align}
	Then there exists a constant $\Cl{c:L2const}$, such that
	\begin{align*}
	d_{L^{2}}(u,u_{h})\leq \Cr{c:L2const}\;h^{m+1} \left(\theta_{m+1,2,\Omega}(u)+\theta^{2}_{m+1,2,\Omega}(u)\right).
	\end{align*}
\end{theorem}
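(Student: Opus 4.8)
The plan is to transpose the Aubin--Nitsche argument recalled above to the manifold setting: the deformation problem~\eqref{eq:deformedContVec} plays the role of the adjoint problem, the Galerkin identity~\eqref{eq:galerkin} that of Galerkin orthogonality, and the geodesic homotopy $\Gamma$ joining $u$ to $u_{h}$ is used to move vector fields between the fibres $u^{-1}TM$ and $u_{h}^{-1}TM$. Let $W\in W^{2,2}(\Omega,u^{-1}TM)$ be the solution of~\eqref{eq:deformedContVec}, which by~\eqref{eq:H2reg} satisfies $\|W\|_{W^{2,2}(\Omega,u^{-1}TM)}\leq C\,d_{L^{2}}(u,u_{h})$. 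Since $d_{L^{2}}^{2}(u,u_{h})=\|\log_{u}u_{h}\|_{L^{2}(\Omega,u^{-1}TM)}^{2}$ and $\log_{u}u_{h}=\dot\Gamma(0)$, testing~\eqref{eq:deformedContVec} with $V=\log_{u}u_{h}$ gives
\begin{align*}
d_{L^{2}}^{2}(u,u_{h})=-\delta^{2}\Energy(u)(W,\dot\Gamma(0)).
\end{align*}
I also record that the hypotheses of the Uniformity Lemma~\ref{L:uniformity} and of Proposition~\ref{L:thetaPara} hold along $\Gamma$: $u,u_{h}\in H_{K,L}$, the endpoints are continuous with $d_{L^{\infty}}(u,u_{h})\leq C\,h^{\delta}$ small by Lemma~\ref{L:chooseq}, $\Gamma(s)$ inherits the piecewise $W^{2,b}$-regularity of $u$ and $u_{h}$, and $\max_{s}\theta_{2,b,G}(\Gamma(s))$ is finite by~\eqref{eq:2bBound} and the smoothness of $u$.

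Next I build a discrete test field. Let $\tilde W\in W^{1,2}(\Omega,u_{h}^{-1}TM)$ be the parallel transport of $W$ along $\Gamma$ to $u_{h}$; by Proposition~\ref{L:thetaPara}, the Sobolev embedding $W^{2,2}\hookrightarrow L^{a}$ and~\eqref{eq:H2reg}, we get $\Theta_{2,2,G}(\tilde W)\leq C\,d_{L^{2}}(u,u_{h})$ with $C$ depending on the data. Applying Condition~\ref{cond:2b} on each element, choose $W_{h}\in IV_{0}(\Omega,u_{h}^{-1}TM)$ with
\begin{align*}
\|\tilde W-W_{h}\|_{W^{1,2}(G,u_{h}^{-1}TM)}\leq C\,h\,\Theta_{2,2,G}(\tilde W),\qquad \Theta_{2,2,G}(W_{h})\leq C\,\Theta_{2,2,G}(\tilde W).
\end{align*}
Let $V_{h}(t)$ be the parallel transport of $W_{h}$ along $\Gamma$, so $V_{h}(1)=W_{h}$, $\nabla_{t}V_{h}\equiv 0$, and $\hat W_{h}:=V_{h}(0)\in W^{1,2}(\Omega,u^{-1}TM)$. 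Galerkin orthogonality~\eqref{eq:galerkin} then reads $\int_{0}^{1}\delta^{2}\Energy(\Gamma(t))(V_{h}(t),\dot\Gamma(t))\,dt=0$, because $V_{h}$ is parallel. Since $\dot\Gamma$ and $V_{h}$ are both parallel along $\Gamma$ and $\Energy$ is $C^{3}$ along geodesic homotopies, the integrand has $t$-derivative $\delta^{3}\Energy(\Gamma(t))(\dot\Gamma(t),V_{h}(t),\dot\Gamma(t))$, so a Taylor expansion at $t=0$ gives
\begin{align*}
\delta^{2}\Energy(u)(\hat W_{h},\dot\Gamma(0))=-\int_{0}^{1}(1-t)\,\delta^{3}\Energy(\Gamma(t))(\dot\Gamma(t),V_{h}(t),\dot\Gamma(t))\,dt .
\end{align*}
Subtracting this from the identity of the first paragraph yields
\begin{align*}
d_{L^{2}}^{2}(u,u_{h})=-\delta^{2}\Energy(u)(W-\hat W_{h},\dot\Gamma(0))+\int_{0}^{1}(1-t)\,\delta^{3}\Energy(\Gamma(t))(\dot\Gamma(t),V_{h}(t),\dot\Gamma(t))\,dt .
\end{align*}

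It remains to bound the two terms. For the first, $W-\hat W_{h}$ is the value at $t=0$ of the parallel field along $\Gamma$ whose value at $t=1$ is $\tilde W-W_{h}$, so the Uniformity Lemma~\ref{L:uniformity} together with Condition~\ref{cond:2b} gives $\|W-\hat W_{h}\|_{W^{1,2}(\Omega,u^{-1}TM)}\leq C\,h\,d_{L^{2}}(u,u_{h})$; combined with $W^{1,2}$-boundedness of $\Energy$ (Definition~\ref{def:ellipticEnergy}) and $\|\dot\Gamma(0)\|_{W^{1,2}}\leq C(D_{1,2}(u,u_{h})+d_{L^{2}}(u,u_{h}))\leq C\,h^{m}\theta_{m+1,2,\Omega}(u)+C\,d_{L^{2}}(u,u_{h})$ (the bound on $D_{1,2}$ being~\eqref{eq:H1err} from Theorem~\ref{T:H1err}), the first term is controlled by $C\,h^{m+1}\,d_{L^{2}}(u,u_{h})\,\theta_{m+1,2,\Omega}(u)+C\,h\,d_{L^{2}}^{2}(u,u_{h})$. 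For the second term I invoke predominant quadraticity (Definition~\ref{def:almostLinear}) with $U=V_{h}(t)$, $V=\dot\Gamma(t)$: a further use of Proposition~\ref{L:thetaPara} bounds $\|V_{h}(t)\|_{W^{2,2}(G)}$ by $C\,\Theta_{2,2,G}(V_{h}(t))\leq C\,d_{L^{2}}(u,u_{h})$; the Uniformity Lemma~\ref{L:uniformity} gives $\|\dot\Gamma(t)\|_{W^{1,2}}\leq C\,h^{m}\theta_{m+1,2,\Omega}(u)+C\,d_{L^{2}}(u,u_{h})$; and $\|\dot\Gamma(t)\|_{W^{o,r}}$ is estimated the same way when $(o,r)=(1,2)$ and, in the case $o=0$, $r\leq d$, by $L^{p}$-interpolation between the $W^{1,2}$-bound of Theorem~\ref{T:H1err} and the $W^{1,q}$-bound of Lemma~\ref{L:chooseq} on $\log_{u}u_{h}$ (this is where $q>\max\{d,4\}$ is used). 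Hence the second term is bounded by $C\,d_{L^{2}}(u,u_{h})\,(h^{m}\theta_{m+1,2,\Omega}(u)+d_{L^{2}}(u,u_{h}))^{2}$.

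Collecting the estimates and expanding the square, and using $2m\geq m+1$, $h\leq 1$, and that $d_{L^{2}}(u,u_{h})\leq C\,h^{\delta}$ is small, every term on the right is either a multiple of $d_{L^{2}}^{2}(u,u_{h})$ with a coefficient tending to $0$ with $h$, or a multiple of $h^{m+1}\,d_{L^{2}}(u,u_{h})(\theta_{m+1,2,\Omega}(u)+\theta_{m+1,2,\Omega}^{2}(u))$. Absorbing the former into the left-hand side for $h$ small enough and dividing by $d_{L^{2}}(u,u_{h})$ (trivial if it vanishes) gives the claim, with $\Cr{c:L2const}$ absorbing the fixed constants $K$, $L$, $K_{2}$ and the $u$-dependence. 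The main obstacle is the two-sided transport of the dual field: $W$ is produced along $u$, but the Galerkin identity only accepts a discrete test field along $u_{h}$, so one must transport $W$ to $u_{h}$, discretize there via Condition~\ref{cond:2b}, and transport back --- all while preserving the $W^{2,2}$- and $W^{1,2}$-bounds with constants whose leading part is independent of the second-order descriptor bound, which is exactly the content of Proposition~\ref{L:thetaPara} and the Uniformity Lemma~\ref{L:uniformity}. Threading these through (using only grid-dependent norms, since $u_{h}$ and hence $\tilde W$, $V_{h}$ are merely piecewise $W^{2,b}$) and justifying the Taylor expansion of $t\mapsto\delta^{2}\Energy(\Gamma(t))(V_{h}(t),\dot\Gamma(t))$ from $\nabla_{t}\dot\Gamma=\nabla_{t}V_{h}=0$ and the $C^{3}$-regularity of $\Energy$ is the delicate part; the extra $\theta^{2}_{m+1,2,\Omega}(u)$ in the final bound is the residue of the third-variation term.
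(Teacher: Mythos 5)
Your proposal is correct and follows essentially the same route as the paper: test the deformation problem with $\log_{u}u_{h}$, transport $W$ to $u_{h}$, discretize there via Condition~\ref{cond:2b}, transport back, invoke the Galerkin identity~\eqref{eq:galerkin} for the parallel discrete field, and control the resulting $\delta^{2}$-difference by $W^{1,2}$-boundedness and the third-variation remainder by predominant quadraticity, with Proposition~\ref{L:thetaPara}, the Uniformity Lemma and $H^{2}$-regularity preserving all bounds, and the same case split on $(o,r)$. The only deviation is cosmetic: you extract the error identity by a Taylor expansion of $t\mapsto\delta^{2}\Energy(\Gamma(t))(V_{h}(t),\dot\Gamma(t))$ at $t=0$, whereas the paper differentiates a blended field $\tfrac{s}{t}W_{I}(s)+(1-\tfrac{s}{t})W(s)$ in a double integral; the two decompositions are equivalent and require the same estimates.
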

\begin{proof}
	First note that it is indeed possible to choose $4<q<r$ with $r$ as in the proof of Lemma~\ref{L:chooseq},
	as either
	\begin{enumerate}
		\item $2m<d<2(m+1)$, $m\geq2$, and $r=\frac{2d}{d-2m}>2m\geq 4$,
		\item or $m=1$ and $d=3$, and $	r=6>4$,
		\item or  $r$ can be arbitrarily large.
	\end{enumerate}
	We insert $V\colonequals \log_{u}u_{h}$ into \eqref{eq:deformedContVec}, and obtain
	\begin{align*}
	d_{L^{2}}^{2}(u,u_{h})&= -\delta^{2}\Energy(u)(W,\log_{u}u_{h}),
	\end{align*}
	where $W\in W^{2,2}(\Omega,u^{-1}TM)$ is the solution of \eqref{eq:deformedContVec}.
	
	Let $\Gamma$ denote the geodesic homotopy joining $u$ and $u_{h}$, and $W(t)$ the parallel transport of $W$ along $\Gamma$.
	Let $W(1)_{I}$ be the approximation of $W(1)$ along $u_{h}$ in the sense of Condition~\ref{cond:2b}, and let $W_{I}(t)$ denote its parallel transport along $\Gamma$.
	
	As Lemma~\ref{L:chooseq} ensures that $u_{h}$ is a local minimizer in $S_{h}^{m}$, generalized Galerkin orthogonality (Proposition~\ref{prop:galerkin}) holds, so that
	\begin{align}
	d^{2}_{L^{2}}(u,u_{h})&=  - \delta^{2}\Energy(u)(W,\log_{u}u_{h}) + \int_{0}^{1}\delta^{2} \Energy(\Gamma(t))\left(W_{I}(t),\dot{\Gamma}(t)\right)\;dt \nonumber \\
	&= \int_{0}^{1} \int_{0}^{t} \frac{d}{ds} \delta^{2} \Energy(\Gamma(s))\left(\frac{s}{t} W_{I}(s) + \left(1-\frac{s}{t}\right)W(s),\dot{\Gamma}(s)\right)\;ds\;dt \nonumber \\
	&= \int_{0}^{1} \int_{0}^{t}  \delta^{3} \Energy(\Gamma(s))\left(\frac{s}{t} W_{I}(s)+\left(1-\frac{s}{t}\right)W(s),\dot{\Gamma}(s),\dot{\Gamma}(s)\right)\;ds\;dt \nonumber \\
	&\qquad  + \int_{0}^{1} \int_{0}^{t} \frac{1}{t} \delta^{2} \Energy(\Gamma(s))(W_{I}(s)- W(s),\dot{\Gamma}(s))\;ds\;dt. \label{eq:dummieGalerkin}
	\end{align}
	We can estimate the second integral in \eqref{eq:dummieGalerkin} using the ellipticity assumption \eqref{eq:ellipticabove}
	\begin{multline*}
	\int_{0}^{1} \int_{0}^{t} \frac{1}{t} \delta^{2} \Energy(\Gamma(s))(W_{I}(s)- W(s),\dot{\Gamma}(s))\;ds\;dt\\
	\leq \Lambda \int_{0}^{1} \int_{0}^{t} \frac{1}{t} \|W_{I}(s)- W(s)\|_{W^{1,2}(\Omega,\Gamma(s)^{-1}TM)} \|\dot{\Gamma}(s)\|_{W^{1,2}(\Omega,\Gamma(s)^{-1}TM)}\;ds\;dt.
	\end{multline*}
	As the vector fields $W_{I}$, $W$, and $\dot{\Gamma}$ are parallel along $\Gamma$, we can further estimate using the Uniformity Lemma~\ref{L:uniformity}
	\begin{multline*}
	\int_{0}^{1} \int_{0}^{t} \frac{1}{t} \delta^{2} \Energy(\Gamma(s))(W_{h}(s)- W(s),\dot{\Gamma}(s))\;ds\;dt\\
	\leq C\;\|W(1)_{I}- W(1)\|_{W^{1,2}(\Omega,u_{h}^{-1}TM)} \|\log_{u}u_{h}\|_{W^{1,2}(\Omega,u^{-1}TM)}.
	\end{multline*}
	Condition~\ref{cond:2b}, Proposition~\ref{L:thetaPara}, and Assumption \eqref{eq:2bBound} imply that
	\begin{align*}
	\|W(1)_{I}- W(1)\|_{W^{1,2}(\Omega,u_{h}^{-1}TM)}
	&=\left(\sum_{T_{h}\in G}\|W(1)_{I}- W(1)\|^{2}_{W^{1,2}(T_{h},u_{h}^{-1}TM)}\right)^{\frac{1}{2}}\\
	&\leq C\;h\;\Theta_{2,2,G}(W(1))\\
	&\leq C\;h\; \|W\|_{W^{2,2}( \Omega,u^{-1}TM)}.
	\end{align*}
	Combining this with the $H^{2}$-regularity and Theorem~\ref{T:H1err}, we obtain
	\begin{align*}
	\int_{0}^{1} \int_{0}^{t} \frac{1}{t} \delta^{2} \Energy(\Gamma(s))(W_{I}(s)- W(s),\dot{\Gamma}(s))\;ds\;dt
	&\leq C\;h^{m+1}\;d_{L^{2}}(u,u_{h})\;\theta_{m+1,2,\Omega}(u).
	\end{align*}
	In order to estimate the first integral term in \eqref{eq:dummieGalerkin} we use that $\Energy$ is predominantly quadratic
	\begin{multline*}
	|\delta^{3} \Energy(\Gamma(s))(s W_{I}(s)+(t-s)W(s),\dot{\Gamma}(s),\dot{\Gamma}(s))|\\*
	\leq C(K,M) \|\dot{\Gamma}(s)\|_{W^{1,2}}\|\dot{\Gamma}(s)\|_{W^{o,r}} \left(s \|W_{I}(s)\|_{W^{2,2}} + (t-s)\|W(s)\|_{W^{2,2}}\right).
	\end{multline*}
	The Uniformity Lemma~\ref{L:uniformity} and Theorem~\ref{T:H1err} imply further that for all $s\in[0,1]$
	\begin{align*}
	\|\dot{\Gamma}(s)\|_{W^{1,2}}\|\dot{\Gamma}(s)\|_{W^{o,r}}\leq C\; \|\log_{u}u_{h}\|_{W^{1,2}}\|\log_{u}u_{h}\|_{W^{o,r}}\leq C\; h^{m}\theta_{m+1,2,\Omega}(u)\;\|\log_{u}u_{h}\|_{W^{o,r}}.
	\end{align*}
	Using first Proposition~\ref{L:thetaPara}, then Condition~\ref{cond:2b} and then again Proposition~\ref{L:thetaPara} we obtain for all $s\in [0,t]$ 
	\begin{align*}
	s \|W_{I}(s)\|_{W^{2,2}} + (t-s)\|W(s)\|_{W^{2,2}}
	& \leq C\; s \|W_{I}(1)\|_{W^{2,2}} + (t-s)\|W(s)\|_{W^{2,2}}\\
	& \leq C\;s  \Theta_{2,2,G}(W(1)) + (t-s)\|W(s)\|_{W^{2,2}}\\
	&\leq C\;t\|W(0)\|_{W^{2,2}}.
	\end{align*}
	Thus we obtain, using the $H^{2}$-regularity of the deformation problem \eqref{eq:deformedContVec},
	\begin{multline*}
	\int_{0}^{1} \int_{0}^{t}  \delta^{3} \Energy(\Gamma(s))\left(\frac{s}{t} W_{I}(s)+\left(1-\frac{s}{t}\right)W(s),\dot{\Gamma}(s),\dot{\Gamma}(s)\right)\;ds\;dt\\*
	\leq C\;h^{m}\theta_{m+1,2,\Omega}(u)\;\|\log_{u}u_{h}\|_{W^{o,r}}\; d_{L^{2}}(u,u_{h}).
	\end{multline*} 
	If $o=1$ and $r=2$,
	then
	\begin{multline*}
	\int_{0}^{1} \int_{0}^{t}  \delta^{3} \Energy(\Gamma(s))\left(\frac{s}{t} W_{I}(s)+\left(1-\frac{s}{t}\right)W(s),\dot{\Gamma}(s),\dot{\Gamma}(s)\right)\;ds\;dt\\*
	\leq C\;h^{2m}\theta^{2}_{m+1,2,\Omega}(u)\; d_{L^{2}}(u,u_{h}),
	\end{multline*}
	with $2m\geq m+1$.
	
	If instead $o=0$ and $r\leq d$, then either we are in the same situation as before, or $d\geq4$ and $\frac{2d}{d-2}\leq r\leq d$. In that case $L^{p}$-interpolation with $\epsilon=h$ yields
	\begin{align*}
	\|\log_{u}u_{h}\|_{L^{r}}&\leq h\; \|\log_{u}u_{h}\|_{L^{\infty}} + h^{1-\frac{r(d-2)}{2d}}\|\log_{u}u_{h}\|_{L^{\frac{(d-2)}{2d}}}\\
	&\leq C\; h + C\; h^{m+1-\frac{r(d-2)}{2d}}\theta_{m+1,2,\Omega}(u).
	\end{align*}
	As $(m+1)\geq \frac{d}{2}$ and $d\geq r$, we have $m+1-\frac{r(d-2)}{2d} \geq 1$.
	Thus, we obtain also for this case
	\begin{align*}
	\int_{0}^{1} \int_{0}^{t}  \delta^{3} \Energy(\Gamma(s))\left(\frac{s}{t} W_{I}(s)+\left(1-\frac{s}{t}\right)W(s),\dot{\Gamma}(s),\dot{\Gamma}(s)\right)\;ds\;dt
	\leq C\;h^{m+1}\; d_{L^{2}}(u,u_{h}).
	\end{align*}
	This yields the assertion.
\end{proof}
\subsection{Discrete $H^{2}$-Regularity}\label{sec:discH2reg}
Theorem~\ref{T:L2err} assumes \eqref{eq:2bBound}, i.e., a bound on second derivatives of the discrete solution $u_{h}$.
In Lemma~\ref{L:chooseq}, Condition~\ref{cond:3} and convergence in $D_{1,2}$ is used to show that the discrete solution stays away from the additional bounds $K$ and $L$ in $H^{1,2,q}_{K,L}$. Now, we will show that under Condition~\ref{cond:4} also Assumption~\eqref{eq:2bBound} is implied.

In contrast to Assumption~\eqref{eq:2bBound}, Conditions~\ref{cond:3} and~\ref{cond:4} are general assumptions on inverse estimates on the discrete functions. We will introduce a class of finite elements in Section~\ref{sec:ch2} that fulfill these conditions.
\begin{proposition}\label{prop:discreteH2Reg}
	Let $m\geq 1$ and $2(m+1)>d$.
	Define $a,b$ as in Definition~\ref{def:nonlinSmoothVfield} with $p=k=2$,
	and let $q>\max\{4,d\}$ fulfill
	\begin{align*}
	2b\leq q< \left\{\begin{array}{ll}
	\infty &\ \textrm{for}\ d<2m\\
	\epsilon^{-1} &\ \textrm{for}\ 2=dm\\
	\frac{2d}{d-2m} &\ \textrm{for}\ 2m<d<2(m+1).
	\end{array}\right.
	\end{align*}
	Suppose that the grid $G$ on $\Omega$ is of width $h$ and order $m$, that $F^{-1}_{h}:T\to T_{h}$ scales with order $2$ for all elements $T_{h}$, and that $S_{h}^{m}$ fulfills Conditions~\ref{cond:1},~\ref{cond:3}, and~\ref{cond:4}.\\
	For $v\in W^{m+1,2}(\Omega,M)$, and $v_{h}\in S^{m}_{h}\cap  H^{1,2,q}_{K,L}$ with $v_{I}|_{\partial\Omega}=v_{h}|_{\partial\Omega}$, we assume the relation
	\begin{align}\label{eq:H1errorAssumed}
	D_{1,2}(v,v_{h})\leq C\;h^{m}\theta_{m+1,2,\Omega}(v).
	\end{align}
	Then there exists a constant $K_{2}$ depending on $v$ and $K$ but independent of $h$ such that
	\begin{align}\label{eq:discRegd>1}
	\theta_{2,b,G}(v_{h})\leq K_{2}
	\end{align}
	if $h$ is small enough.
\end{proposition}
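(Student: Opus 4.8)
The plan is to transplant the classical broken-norm inverse-estimate argument for discrete $H^{2}$-regularity to the manifold setting. Let $v_{I}\in S_{h}^{m}$ be the Condition~\ref{cond:1} interpolant of $v$ (with regularity index $m+1$), and let $\Gamma$ be the geodesic homotopy joining $v_{I}$ to $v_{h}$. The starting point is that the deformation $\log_{v_{I}}v_{h}$ is small in $W^{1,2}$: summing~\eqref{eq:cond1a} over the grid gives $d_{L^{2}}(v,v_{I})+hD_{1,2}(v,v_{I})\le Ch^{m+1}\theta_{m+1,2,\Omega}(v)$, the hypothesis~\eqref{eq:H1errorAssumed} gives the matching bound for $(v,v_{h})$, and combining the two through the quasi-inframetric property of $d_{W^{1,2}}$ (Proposition~\ref{prop:equivEmbD12}) on a common ball $H^{1,2,q}_{K',L'}$ containing $v,v_{I},v_{h}$ --- which are mutually $L^{\infty}$-close for $h$ small, by Lemma~\ref{L:chooseq} --- yields
\begin{align*}
\|\log_{v_{I}}v_{h}\|_{W^{1,2}(\Omega,v_{I}^{-1}TM)}\le C\,h^{m}\,\theta_{m+1,2,\Omega}(v).
\end{align*}
It then suffices to bound the top-order term $\dot\theta_{2,b,G}(v_{h})$, since $\dot\theta_{0,b,G}(v_{h})$ and $\dot\theta_{1,b,G}(v_{h})$ are controlled by $v_{h}\in H^{1,2,q}_{K,L}$ and $b\le q$ (H\"older on each element) together with the $L^{\infty}$-closeness to the compact set $v(\overline\Omega)$.

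Next I would bound the interpolant part: applying~\eqref{eq:cond1b} at order $l=2$ (the index $2$ being admissible for a suitable choice of the regularity exponent $k$ with $2k>d$, $k\le m+1$, and the remaining $(d,m)$-ranges covered by the Gagliardo--Nirenberg interpolation of Proposition~\ref{prop:SDExtraTerms}), followed by summation and the Sobolev embedding $W^{m+1,2}(\Omega)\hookrightarrow W^{2,b}(\Omega)$ (valid since $2(m+1)>d$ and by the definition of $b$), gives $\dot\theta_{2,b,G}(v_{I})\le C\theta_{m+1,2,\Omega}(v)$. To compare $v_{h}$ with $v_{I}$ I would use a second-order analogue of the triangle inequality in Proposition~\ref{prop:SDtriangle2}, i.e.\ the manifold chain rule for $v_{h}=\exp_{v_{I}}(\log_{v_{I}}v_{h})$ under the standing bounded-geometry hypotheses,
\begin{align*}
\dot\theta_{2,b,T_{h}}(v_{h})\le C\Big(\dot\theta_{2,b,T_{h}}(v_{I})+|\log_{v_{I}}v_{h}|_{W^{2,b}(T_{h},v_{I}^{-1}TM)}+\dot\theta_{1,2b,T_{h}}^{2}(v_{I})+|\log_{v_{I}}v_{h}|_{W^{1,2b}(T_{h})}^{2}\Big),
\end{align*}
so that the whole problem reduces to controlling the $W^{2,b}$-seminorm of the deformation over the grid.

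This is the heart of the argument, and it uses Condition~\ref{cond:4} with $p=b$ and with its free parameter also set to $b$ (so that the factor $h^{-d\max\{0,\cdot\}}$ is trivial):
\begin{align*}
|\log_{v_{I}}v_{h}|_{W^{2,b}(T_{h})}\le C\dot\theta_{2,b,T_{h}}(v_{I})+C\dot\theta_{1,2b,T_{h}}^{2}(v_{h})+Ch^{-1}\|\log_{v_{I}}v_{h}\|_{W^{1,b}(T_{h})}+\Cr{c:vecInvm}h^{-2}\|\log_{v_{I}}v_{h}\|_{L^{b}(T_{h})}.
\end{align*}
The first term is handled above. For $\dot\theta_{1,2b,T_{h}}^{2}(v_{h})$ I would bound $\dot\theta_{1,2b,T_{h}}(v_{h})\le\dot\theta_{1,2b,T_{h}}(v_{I})+C|\log_{v_{I}}v_{h}|_{W^{1,2b}(T_{h})}$ (Proposition~\ref{prop:SDtriangle2} and the inverse estimate of Condition~\ref{cond:3}) and then sum in $\ell^{b}$, exploiting the elementwise decay $\dot\theta_{1,2b,T_{h}}(v)\le C\,h^{d/(2b)}\|dv\|_{L^{\infty}}$ to keep the sum bounded, the deformation contribution being handled by element scaling. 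For the last two terms I would pass from the $W^{1,2}$-smallness of $\log_{v_{I}}v_{h}$ to the required $L^{b}$- and $W^{1,b}$-norms by the inverse estimates for differences of discrete functions (Condition~\ref{cond:4}, first estimate, for $L^{b}$; element scaling for $W^{1,b}$), at the cost of a factor $h^{-d(1/2-1/b)}$; the resulting exponents $m-1-d(1/2-1/b)$, and --- only when $m\ge 2$, since $\Cr{c:vecInvm}=0$ for $m=1$ --- $m-2-d(1/2-1/b)$, are nonnegative precisely because $2(m+1)>d$ and by the stated bounds on $b$. Summing over $G$ (using $\|\cdot\|_{\ell^{q}}\le\|\cdot\|_{\ell^{2}}$ for $q\ge2$) then gives $|\log_{v_{I}}v_{h}|_{W^{2,b}(G)}\le C\big(\theta_{m+1,2,\Omega}(v)+\theta^{2}_{m+1,2,\Omega}(v)\big)+o(1)$.

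Assembling: inserting the last two estimates into the second-order triangle inequality, summing over $T_{h}\in G$, and bounding the lower-order product terms by $v_{h}\in H^{1,2,q}_{K,L}$ gives $\dot\theta_{2,b,G}(v_{h})\le C\big(\theta_{m+1,2,\Omega}(v)+\theta^{2}_{m+1,2,\Omega}(v)\big)$ for $h$ small, hence $\theta_{2,b,G}(v_{h})\le K_{2}$ with $K_{2}$ depending only on $v$ and $K$. The main obstacle is the bookkeeping in this last stage: the negative powers of $h$ generated by the inverse estimates of Conditions~\ref{cond:3} and~\ref{cond:4} must be absorbed by the $O(h^{m})$-smallness of $\log_{v_{I}}v_{h}$ and by the $O(h^{d/(2b)})$ elementwise decay of the descriptors of the smooth map $v$ --- which is exactly what forces the admissible range $2b\le q<r$ and the vanishing of the $h^{-2}$-constant for $m=1$. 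A secondary nuisance is that the vector field $\log_{v_{I}}v_{h}$, its transports along $\Gamma$, and the descriptors of $v_{I}$ and $v_{h}$ live in different pullback bundles, so each comparison has to be routed through the Uniformity Lemma~\ref{L:uniformity} or Proposition~\ref{L:thetaPara}.
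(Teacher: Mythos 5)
Your architecture matches the paper's: interpolate $v$ by $v_I$, compare $v_h$ to $v_I$ through the geodesic homotopy / a second-order triangle inequality, invoke Condition~\ref{cond:4} for $\nabla^2\log_{v_I}v_h$, and absorb the resulting negative powers of $h$ using the $O(h^m)$-smallness of $D_{1,2}(v_I,v_h)$. (Your ``second-order triangle inequality'' is asserted rather than derived; in the paper it is obtained by differentiating $\|\nabla^2\Gamma(t)\|_{L^b}$ along the homotopy and absorbing the curvature terms, which contain $\dot\theta_{2,b}(v_h)$ itself, via smallness of $d_{L^\infty}(v_I,v_h)$ and Proposition~\ref{L:thetaPara} --- but that is a presentational gap, not a fatal one.)

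The genuine gap is in your treatment of the $h^{-2}\|\log_{v_I}v_h\|_{L^b}$ term. You pass from $L^2$ to $L^b$ by the inverse estimate at cost $h^{-d(\frac12-\frac1b)}$ and then use Poincar\'e, arriving at the exponent $m-2-d\left(\frac12-\frac1b\right)$, which you claim is nonnegative for $m\ge2$ because $2(m+1)>d$. This is false in admissible cases: for $d=4$, $m=2$ one has $b=3$ and the exponent equals $2-2-\frac23=-\frac23$; for $d=5$, $m=2$ one has $b=\frac52$ and the exponent equals $-\frac12$ (and similarly for every odd $d\ge5$ with $m=\frac{d-1}{2}$). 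So your bound on the $O(h^{-2})$ contribution blows up. The repair, which is what the paper does, is to exploit that $\log_{v_I}v_h$ vanishes on $\partial\Omega$ and use the Sobolev--Poincar\'e inequality $\|\log_{v_I}v_h\|_{L^{2d/(d-2)}}\le C\,D_{1,2}(v_I,v_h)$ rather than Poincar\'e in $L^2$ followed by a full $L^2\to L^b$ inverse estimate. For $d\le6$ one has $b\le\frac{2d}{d-2}$, so $d_{L^b}(v_I,v_h)\le C\,D_{1,2}(v_I,v_h)\le Ch^m\theta_{m+1,2,\Omega}(v)$ with no loss and the term is $O(h^{m-2})$, bounded since $m\ge2$; for $d>6$ one only pays the inverse-estimate cost over the gap from $\frac{2d}{d-2}$ to $b$, namely $h^{-\frac{d-6}{2}}$, giving the exponent $m+1-\frac d2>0$. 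This is exactly one power of $h$ better than your route, and that single power is what the argument needs.
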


\begin{proof}
	First note that it is possible to choose $q$ as proposed. Indeed if $2m<d<2(m+1)$, we distinguish   three cases.
	\begin{enumerate}
		\item For $d<3$ and $d=4$, there is no $m\geq 1$ with $2m<d<2(m+1)$.
		\item For $d=3$, the condition $2m<d<2(m+1)$ holds only for $m=1$. In that case $2b=4<6=\frac{2d}{d-2m}$.
		\item For $d> 4$, the condition $d<2(m+1)$ implies $2b=d<\frac{2d}{d-2m}$.
	\end{enumerate}
	As $b\leq q$ it is enough to estimate the homogeneous part $\dot{\theta}_{2,b,G}(v_{h})$.
	By the choice of $q$, Lemma~\ref{L:chooseq} shows that for $h$ small enough
	\begin{align*}
	d_{L^{\infty}}(v,v_{h})\leq C\;h^{\delta}\leq \rho
	\end{align*}
	even for small $\rho$. By the triangle inequality this extends to the approximation $v_{I}$ of $v$ defined by Condition~\ref{cond:1}. 
	Let $\Gamma$ denote the geodesic homotopy connecting $v_{I}$ to $v_{h}$. Then
	\begin{align*}
	\dot\theta_{2,b}(v_{h})
	&\leq C\;\theta_{1,2b}^{2}(v_{h}) + C\;\dot\theta_{2,b}(v_{I})+ C\int_{0}^{1}\frac{d}{dt}\|\nabla^{2} \Gamma\|_{L^{b}}\;dt\\
	&
	\!\begin{multlined}[t][\displaywidth]
	= C\;\theta_{1,2b}^{2}(v_{h}) + C\;\dot\theta_{2,b}(v_{I})\\
	+ C\int_{0}^{1}\|\nabla^{2} \Gamma\|_{L^{b}}^{1-b}\int_{\Omega}|\nabla^{2}\Gamma|_{g}^{b-2}\left(\langle \nabla^{2}\dot\Gamma,\nabla^{2}\Gamma\rangle_{g}+\Rm(\dot \Gamma,d\Gamma,d\Gamma,\nabla^{2}\Gamma)\right)\;dx\;dt
	\phantom{\dot\theta_{2,b}(v_{h})}
	\end{multlined}
	\\
	&\leq  C\;\theta_{1,2b}^{2}(v_{h}) + C\;\dot\theta_{2,b}(v_{I})+ 
	C\int_{0}^{1}\|\nabla^{2}\dot\Gamma\|_{L^{b}} +\rho \theta_{1,2b}^{2}(\Gamma(t))\;dt.
	\end{align*}
	Now $\theta_{1,2b}^{2}(\Gamma(t))\leq C\max\{\theta_{1,2b}^{2}(v_{h}),\theta_{1,2b}^{2}(v_{I})\}$.
	We use Proposition~\ref{L:thetaPara}
	to estimate
	\begin{align*}
	\int_{0}^{1}\|\nabla^{2}\dot\Gamma(t)\|_{L^{b}}\;dt&\leq C\;\|\nabla^{2}\dot\Gamma(0)\|_{L^{b}} + C\;\left(\dot\theta_{2,b}(v_{h})+\dot\theta_{2,b}(v_{I})\right)\|\dot\Gamma(0)\|_{L^{b}}\\
	&\leq C\;\|\nabla^{2}\dot\Gamma(0)\|_{L^{b}} +  C\;\left(\dot\theta_{2,b}(v_{h})+\dot\theta_{2,b}(v_{I})\right)\rho.
	\end{align*}
	Thus, if $\rho$ is small enough, we have
	\begin{align*}
	\dot\theta_{2,b}(v_{h})\leq C\;\theta_{1,2b}^{2}(v_{h}) + C\;\dot\theta_{2,b}(v_{I}) + C\;\|\nabla^{2}\log_{v_{I}}v_{h}\|_{L^{b}}.
	\end{align*}
	Condition~\ref{cond:4} implies
	\begin{align*}
	\dot\theta_{2,b}(v_{h})
	&\leq 
	C\;\theta_{1,2b}^{2}(v_{h}) + C\;\dot\theta_{2,b}(v_{I})+ 
	C h^{-1-d\left(\frac{1}{b}-\frac{1}{2}\right)}D_{1,2}(v_{h},v_{I}) + C h^{-2} d_{L^{b}}(v_{h},v_{I})\\
	&\leq 
	C\;K^{2} + C\;\dot\theta_{2,b}(v)+ 
	C h^{m-1-d\left(\frac{1}{b}-\frac{1}{2}\right)}\theta_{m+1,2}(v) + C h^{-2} d_{L^{b}}(v_{h},v_{I}).
	\end{align*}
	Note that for $m=1$, we have
	\begin{align*}
	m-1-d\left(\frac{1}{b}-\frac{1}{2}\right)=0,
	\end{align*}
	and the $O(h^{-2})$-term vanishes, which yields the assertion.
	Otherwise, $m>\max\{1,\frac{d-2}{2}\}$ implies
	\begin{align*}
	m-1-d\left(\frac{1}{b}-\frac{1}{2}\right) \geq \left\{
	\begin{array}{ll}
	1 &\quad\textrm{for}\ d<4\\
	\frac{1}{3}  &\quad\textrm{for}\ d=4\\
	\frac{2d-8}{2} &\quad\textrm{for}\ d>4,
	\end{array}
	\right\}>0.
	\end{align*}
	For $d\leq 6$, using the Poincar\'e inequality, the approximate triangle inequality for $D_{1,2}$, and Condition~\ref{cond:1}, we have
	\begin{align*}
	h^{-2}d_{L^{b}}(v_{h},v_{I})\leq C\;h^{-2}\;D_{1,2}(v_{h},v_{I})\leq C\;h^{m-2}\theta_{m+1,2}(v).
	\end{align*}
	For $d>6$, we obtain using Condition~\ref{cond:4} and the Poincar\'e inequality
	\begin{align*}
	h^{-2}d_{L^{b}}(v_{h},v_{I})&\leq C h^{-5+\frac{d}{2}}d_{L^{\frac{2d}{d-2}}}(v_{h},v_{I})\leq C\;h^{-5+\frac{d}{2}}\;D_{1,2}(v_{h},v_{I})\leq C\;h^{m-5+\frac{d}{2}}\theta_{m+1,2}(v)\\
	&\leq C\;h^{d-6}\theta_{m+1,2}(v).
	\end{align*}
	This proves the bound on $\theta_{2,b,G}(u_{h})$.
\end{proof}
% % % % % % % % % % % % %
\section{Geodesic Finite Elements}\label{sec:ch2}
We now introduce one specific way to construct discrete approximation spaces $S_{h}^{m}$.
Geodesic finite elements have been introduced as a means to interpolate data in a Riemannian manifold \cite{grohs}, 
and to solve partial differential equations from liquid crystal theory and Cosserat mechanics \cite{sander12, sander13}.
\subsection{Definition and General Properties}

Let $G$ be a conforming grid of width $h$ and order $m\in \NN$ with reference element $T\in \RR^{d}$.
For a set of Lagrange nodes $a_{i}\in T$, $i=1,\ldots,l$, let $\lambda_{i} : T \to \RR$, $i=1,\ldots,l$, denote the corresponding Lagrange polynomials of order $m$, i.e.,
\begin{align*}
\lambda_{i}(a_{j})=\delta_{ij}\qquad \forall 1\leq i,j,\leq l,
\qquad \textrm{and}\quad \sum_{i=1}^{l}\lambda_{i}(x)=1\qquad \forall x\in T.
\end{align*}
The following generalization of Lagrangian interpolation was given and motivated in \cite{sander13}.

\begin{definition}
	Let $v_i \in M$, $i=1,\dots,l$ be values at the
	corresponding Lagrange nodes. We call 
	\begin{align}\label{eq:geodesic_interpolation}
	\Upsilon & \; : \; M^l \times T \to M, \qquad 
	\Upsilon(v_1,\dots,v_l;x) = \argmin_{q \in M} \sum_{i=1}^l \lambda_i(x) d(v_i,q)^2
	\end{align}
	$m$-th order geodesic interpolation on $M$.
	The set of all such functions will be denoted by $P_{m}(T,M)$.
\end{definition}

It is easy to verify that this definition reduces to $m$-th order Lagrangian interpolation
if $M=\RR^{n}$ and if $d(\cdot,\cdot)$ denotes the Euclidean distance.
We have chosen the letter $P$ to denote these functions to point out that they form a generalization of interpolatory polynomials to nonlinear (even metric) codomains.

For manifolds with either negative sectional curvature, or certain restrictions on the curvature and the $v_{i}$, 
well-posedness of the definition for $m=1$ is a classic result by Karcher \cite{Karcher1977}.
For $m \ge 2$, where the $\lambda_i$ can become negative, well-posedness has been proven in~\cite{sander13}.
The following slightly weaker well-posedness result has a much simpler proof.
\begin{lemma}[\cite{diss}]\label{L:wellposed}
	For $i=1,\ldots,l$ let $v_{i}\in M$ with $d(v_{i},v_{1})\leq \rho$ for all $i=1,\ldots,l$.
	Then for all $x\in T$, the set $\Upsilon(v_1,\dots,v_l;x)$ is non-empty, and there exists a constant $\Cl{c:wellposed}\leq 6l\max_{i}\|\lambda_{i}\|_{\infty}$, such that for each $x\in T$ the inclusion
	$\Upsilon(v_1,\dots,v_l;x) \subset B_{\Cr{c:wellposed}\rho}(v_{1})$ holds.
	\\
	If $\rho$ is small enough depending on the curvature of $M$, the solution $v_{I}(x)$ to the minimization problem defining $\Upsilon$ is unique, and the map $v_{I}:T\to M$ is smooth.
\end{lemma}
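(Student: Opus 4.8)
The plan is to analyse the objective $f_x(q):=\sum_{i=1}^{l}\lambda_i(x)\,d(v_i,q)^2$ on $M$ directly, in three steps: a coercivity-type lower bound giving existence together with the a priori localization; a Hessian estimate giving strict geodesic convexity near the minimizer, hence uniqueness, once $\rho$ is small; and the implicit function theorem for smoothness in $x$. The one genuine difficulty, present throughout, is that for $m\ge 2$ the weights $\lambda_i(x)$ need not be nonnegative, so $f_x$ is a priori neither bounded below nor convex; the argument everywhere exploits the partition-of-unity identity $\sum_i\lambda_i(x)=1$ to split off the dominant isotropic part of $f_x$ and to bound the remainder by $\Lambda_0:=\sum_i|\lambda_i(x)|\le l\max_i\|\lambda_i\|_\infty$ times a small factor ($\rho$ for the gradient/coercivity estimate, $|\Rm|_\infty\rho^2$ for the Hessian estimate).

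First I would prove existence and localization simultaneously. Writing $R:=d(v_1,q)$ and using $|d(v_i,q)-d(v_1,q)|\le d(v_i,v_1)\le\rho$ gives $|d(v_i,q)^2-R^2|\le 2\rho R+\rho^2$; multiplying by $\lambda_i(x)$, summing, and using $\sum_i\lambda_i(x)=1$ yields
\begin{align*}
f_x(q)\ \ge\ R^2-\Lambda_0\bigl(2\rho R+\rho^2\bigr),\qquad \Lambda_0:=\sum_{i=1}^{l}|\lambda_i(x)|,
\end{align*}
while $f_x(v_1)\le\Lambda_0\rho^2$. Since $\sum_i\lambda_i(x)=1$ forces $\Lambda_0\ge1$, an elementary estimate of the quadratic shows $f_x(q)>f_x(v_1)$ whenever $R\ge 3\Lambda_0\rho$. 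Hence $\inf_M f_x=\inf_{\overline{B_{3\Lambda_0\rho}(v_1)}}f_x$; the closed ball $\overline{B_{3\Lambda_0\rho}(v_1)}$ is compact, so continuity of $f_x$ gives a minimizer there, which the strict inequality promotes to a global minimizer, proving $\Upsilon(v_1,\dots,v_l;x)\neq\emptyset$. The same inequality shows every global minimizer lies in $B_{\Cr{c:wellposed}\rho}(v_1)$ provided $\Cr{c:wellposed}\ge 3\sup_{x\in T}\Lambda_0$, and since $\sup_{x\in T}\Lambda_0\le l\max_i\|\lambda_i\|_\infty$ one may take $\Cr{c:wellposed}\le 6l\max_i\|\lambda_i\|_\infty$.

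Next I would establish uniqueness for $\rho$ small. By the previous step $q$ and all the nodes $v_i$ lie in $B_{(1+\Cr{c:wellposed})\rho}(v_1)$, so all pairwise distances are $O(\rho)$; for $\rho$ below the convexity radius of $M$ this ball is geodesically convex. Hessian comparison gives $\operatorname{Hess}\bigl(\tfrac12 d(p,\cdot)^2\bigr)(q)=\mathrm{Id}+E_p(q)$ with $\|E_p(q)\|\le C|\Rm|_\infty d(p,q)^2$ for each node $p$; summing against $\lambda_i(x)$ and once more invoking $\sum_i\lambda_i(x)=1$ gives $\operatorname{Hess}f_x(q)=2\,\mathrm{Id}+\sum_i\lambda_i(x)E_{v_i}(q)$ with remainder of operator norm $\le C\Lambda_0|\Rm|_\infty\rho^2$. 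Choosing $\rho$ small, depending only on $l$, $\max_i\|\lambda_i\|_\infty$ and the curvature of $M$, forces $\operatorname{Hess}f_x\succeq\mathrm{Id}$ throughout the convex ball; then $f_x$ is strictly geodesically convex there, hence has at most one critical point, and combined with the localization of the minimizers from the first step this yields uniqueness of $v_I(x)$.

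Finally, smoothness of $v_I:T\to M$ follows from the implicit function theorem applied to the first-order optimality condition. Since $\grad_q\bigl(\tfrac12 d(v_i,\cdot)^2\bigr)(q)=-\log_q v_i$, the minimizer is characterized by $G(x,q):=\sum_i\lambda_i(x)\log_q v_i=0$. Fixing $x_0\in T$ and trivializing $TM$ over a neighbourhood of $v_I(x_0)$, the map $(x,q)\mapsto G(x,q)$ is $C^\infty$ (the $\lambda_i$ are polynomials and $\log$ is smooth on the relevant ball), vanishes at $(x_0,v_I(x_0))$, and has $q$-derivative $-\tfrac12\operatorname{Hess}f_{x_0}(v_I(x_0))$, which is invertible by the previous step. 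The implicit function theorem then produces a local $C^\infty$ solution, which coincides with $v_I$ by uniqueness; since $x_0$ is arbitrary, $v_I\in C^\infty(T,M)$. I expect the real work to lie in the first two steps---keeping ``$\rho$ small enough'' and the constant $6l\max_i\|\lambda_i\|_\infty$ quantitatively consistent while handling the sign-indefinite weights---whereas the compactness step and the implicit function theorem argument are routine.
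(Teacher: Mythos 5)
Your proposal is correct and follows essentially the same route the paper sketches: existence and the $B_{\Cr{c:wellposed}\rho}(v_1)$-localization via the coercivity estimate and compactness of the closed ball (Hopf--Rinow, using completeness of $M$), and smoothness via the implicit function theorem applied to the first-order condition $\sum_i\lambda_i(x)\log_q v_i=0$, whose $q$-derivative is invertible by the Hessian estimate $\|\operatorname{Hess}(\tfrac12 d(v_i,\cdot)^2)-\mathrm{Id}\|\leq|\Rm|_\infty d^2$ (the paper's \eqref{eq:logest1}). The only cosmetic difference is that you derive uniqueness from strict geodesic convexity of $f_x$ on the small ball rather than folding it into the implicit-function-theorem step, and your quantitative handling of the signed weights via $\Lambda_0=\sum_i|\lambda_i(x)|\geq 1$ even yields the constant $3l\max_i\|\lambda_i\|_\infty$, consistent with the stated bound $\Cr{c:wellposed}\leq 6l\max_i\|\lambda_i\|_\infty$.
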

The existence follows by compactness of $\overline{B}_{\Cr{c:wellposed}\rho}(v_{1})$. Uniqueness and smoothness use the implicit function theorem.

Since the values of $\Upsilon$ are defined as solutions of a minimization problem, we can also characterize
them by the corresponding first-order optimality condition (see, for instance, \cite{Karcher1977}).
\begin{lemma}\label{L:varInterpol}
	The minimizer $q^* \colonequals \Upsilon(v_1,\dots,v_l;x)$ is (locally uniquely) characterized
	by the first-order condition
	\begin{equation}\label{eq:firstorderinterpol}
	\sum_{i=1}^l\lambda_i(x)\log_{q^*} v_i
	= 0  \; \in \; T_{q^*}M.
	\end{equation}
\end{lemma}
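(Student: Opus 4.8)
The plan is to obtain \eqref{eq:firstorderinterpol} as the vanishing of the Riemannian gradient of the objective
\[
f_x(q) \colonequals \sum_{i=1}^l \lambda_i(x)\, d(v_i,q)^2
\]
at its minimizer $q^*$. The first ingredient is the classical identity for the gradient of a squared distance function: whenever $d(v_i,q)$ lies below the injectivity radius, the map $q\mapsto d(v_i,q)^2$ is smooth near $q$ with $\grad_q d(v_i,\cdot)^2 = -2\log_q v_i$. By Lemma~\ref{L:wellposed}, for $\rho$ small enough (depending only on the curvature of $M$) the minimizer $q^*$ lies in the \emph{open} ball $B_{\Cr{c:wellposed}\rho}(v_1)$; after possibly shrinking $\rho$ once more, every $v_i$ stays strictly within the injectivity radius of every point of that ball, so that $f_x$ is smooth on $B_{\Cr{c:wellposed}\rho}(v_1)$.

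Since $q^*$ is an interior minimizer of the smooth function $f_x$, its gradient vanishes there, and by linearity of $\grad$ together with the identity above,
\[
0 = \grad f_x(q^*) = -2\sum_{i=1}^l \lambda_i(x)\, \log_{q^*} v_i,
\]
which is exactly \eqref{eq:firstorderinterpol} up to the irrelevant factor $-2$. For the local uniqueness I would argue that $f_x$ is strongly convex on a sufficiently small ball: by the Jacobi field estimates behind Proposition~\ref{prop:logest} (in particular $\|d_2\log_p q + \mathrm{Id}\|\le |\Rm|_\infty d^2(p,q)$), the Hessian of $q\mapsto d(v_i,q)^2$ differs from $2\,\mathrm{Id}$ by a term of order $O(\rho^2)$; since $\sum_i \lambda_i(x)=1$ while $\sum_i|\lambda_i(x)|\le l\max_i\|\lambda_i\|_\infty$ is bounded, the Hessian of $f_x$ at any point of $B_{\Cr{c:wellposed}\rho}(v_1)$ is positive definite once $\rho$ is small enough. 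Hence $f_x$ has at most one critical point in that ball, so the first-order condition characterizes $q^*$ locally uniquely; equivalently, this uniqueness is already part of the conclusion of Lemma~\ref{L:wellposed}, obtained there via the implicit function theorem.

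The one genuinely delicate point is the bookkeeping of the several radii — the geodesic convexity radius, the injectivity radius, and the enlargement factor $\Cr{c:wellposed}$ from Lemma~\ref{L:wellposed} — so that all computations take place inside a single geodesic ball on which each $d(v_i,\cdot)^2$ is simultaneously smooth and strongly convex. This is also where the possibility of negative weights $\lambda_i(x)$ for $m\ge 2$ enters: it is precisely what prevents strong convexity from being automatic and forces the smallness condition on $\rho$. Once the constants from Lemma~\ref{L:wellposed} are in hand, the remaining estimates are routine.
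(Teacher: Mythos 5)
Your argument is correct and is essentially the argument the paper has in mind: the paper states the lemma without proof, deferring to Karcher's observation that the interior minimizer of $q\mapsto\sum_i\lambda_i(x)d^2(v_i,q)$ must satisfy the vanishing-gradient condition $\grad_q d^2(v_i,\cdot)=-2\log_q v_i$, with local uniqueness coming from the positivity of the Hessian (equivalently, the implicit function theorem argument already recorded in Lemma~\ref{L:wellposed}). Your explicit Hessian estimate via Proposition~\ref{prop:logest} and the careful handling of the radii and of the possibly negative $\lambda_i$ for $m\ge 2$ fill in exactly the details the paper leaves to the reference \cite{Karcher1977}.
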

As in the linear setting, global finite elements are defined as continuous functions that are interpolants on each grid element.
\begin{definition}[Geodesic finite elements]\label{def:gfe}
	Let $M$ be a Riemannian manifold and $G$ a grid for a
	$d$-dimensional domain $ \Omega$, $d\geq 1$.  A geodesic finite element function
	is a continuous function $v_h : \Omega \to M$ such that for each element
	$T$ of $G$, $v_h|_T \in P_{m}(T,M)$.
	The space of all such functions will be called $\gfe$.
\end{definition}
\begin{remark}
	Geodesic finite elements can be used to interpolate continuous functions $v\in C(\Omega,M)$.
	Indeed, if $G$ is a grid for $\Omega$ of width $h$ and order $m$ (cf. Definition~\ref{def:widthh}), 
	by continuity of $v$ there exists an $h_{0}$ such that for all $h\leq h_{0}$ 
	the interpolation nodes for each element are contained in a ball of radius $\rho$, 
	and thus the nodal interpolant $v_{I}\in \gfe$ is well-defined.
	
	Note furthermore that we can always control the diameter $\diam(v_{I}(T_{h}))$ of 
	the image of an element $T_{h}$ of $G$ under $v_{I}$ by choosing $h_{0}$ small enough.
\end{remark}
Already in \cite{sander12} it was observed, that geodesic finite elements are $H^1$-functions in the sense of Definition~\ref{def:NashSobolev}.

\subsection{Error Estimates for Geodesic Finite Elements}
In \cite{grohsSanderH} a priori $W^{1,2}$-error estimates were shown for geodesic finite elements.
An abstract version of this result is Theorem~\ref{T:H1err}.

We now show a priori error bounds in the $L^2$-sense. For this, we will show that the assumptions of Theorem~\ref{T:L2err} are fulfilled. This leads to the following theorem.
\begin{theorem}\label{T:gfe}
	Let $2(m+1)>d$, $m\geq 1$.
	Choose $q$ as in Lemma~\ref{L:chooseq} and set $H = W^{1,q}_{\phi}(\Omega,M)$.
	Let $\Energy:H\to \RR$ be a predominantly quadratic functional in the sense of 	Definition~\ref{def:almostLinear}.
	Assume that $u\in  H\cap W^{m+1,2}(\Omega,M)$ is a minimizer of $\Energy:H\to \RR$ w.r.t.\ variations along geodesic homotopies in $H$, and that $\Energy$ is elliptic along geodesic homotopies starting in $u$.
	
	Let $G$ be a conforming grid of width $h$ and order $m$, for that each $F^{-1}_{h}:T\to T_{h}$ scales with order $2$. Set $V_{h}\colonequals H\cap \gfe$.
	Assume that the boundary data $\phi_{|\partial \Omega}$ is such that $V_{h}$ is not empty.
	
	Let $u_{I}\in \gfe$ be the geodesic interpolant of $u$, and let $K$ be a constant such that
	\begin{align}
	K\geq \Cr{c:m+1Deriv}\theta_{1,q,\Omega}(u).
	\end{align}
	Assume that $h$ is small enough such that $u_{I}\in H_{K,L}$, where $H_{K,L}$ is defined as in Lemma~\ref{L:cea}.
	
	Suppose that the deformation problem is $H^{2}$-regular, i.e., that the solution $W$ of \eqref{eq:deformedContVec} fulfills~\eqref{eq:H2reg}.
	
	Then $K$ can be chosen such that the discrete minimizer
	\begin{align*}
	u_{h}=\argmin_{v_{h}\in V_{h}\cap H_{K,L}} \Energy(v_{h})
	\end{align*}
	is a local minimizer in $V_{h}$ and fulfills the a priori error estimate
	\begin{align}\label{eq:gfeTheorem}
	d_{L^{2}}(u,u_{h}) + h\;D_{1,2}(u,u_{h})\leq C h^{m+1}\left(\theta_{m+1,2,\Omega}(u)+\theta^{2}_{m+1,2,\Omega}(u)\right).
	\end{align}
	If the error is measured in an isometric embedding $\iota:M\to \RR^{N}$, we have
	\begin{align}
	\|\iota\circ u - \iota\circ u_{h}\|_{0,2,\Omega} +
	h\; \|\iota\circ u - \iota\circ u_{h}\|_{1,2,\Omega}\leq C\;h^{m+1}\left(\|\iota\circ u\|_{m+1,p,\Omega}+\|\iota\circ u\|_{m+1,p,\Omega}^{2(m+1)}\right).
	\end{align}
\end{theorem}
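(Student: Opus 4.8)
The strategy is to verify that the geodesic finite element spaces $\gfe$ satisfy the abstract hypotheses used in Section~\ref{sec:ch3} --- Conditions~\ref{cond:1}, \ref{cond:2b}, \ref{cond:3}, \ref{cond:4} --- and then to quote Theorem~\ref{T:H1err}, Lemma~\ref{L:chooseq}, Proposition~\ref{prop:discreteH2Reg} and Theorem~\ref{T:L2err} in sequence. All four conditions reduce, via the scaling Lemmas~\ref{L:scalingSD}, \ref{L:inverseScale1}, \ref{L:scalingSDVec}, \ref{L:inverseScale2}, to $h$-independent statements about the interpolation operator $\Upsilon$ on the fixed reference element $T$; the key structural inputs are Lemma~\ref{L:wellposed} (well-posedness and smoothness of $\Upsilon$ on a small ball, with derivative bounds depending only on the curvature of $M$) and the derivative estimates for $\log$ from Proposition~\ref{prop:logest}.

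Condition~\ref{cond:1} for the nodal geodesic interpolant $u_I\in\gfe$ is the interpolation estimate already underlying the $W^{1,2}$-theory: a Taylor expansion of $u$ in geodesic normal coordinates shows that $\Upsilon$ agrees with ordinary Lagrange interpolation up to curvature terms of higher order, which together with exactness of $\Upsilon$ on $P_m(T,M)$ gives $d_{L^p}(u,u_I)+h\,D_{1,p}(u,u_I)\le C\,h^{m+1}\theta_{m+1,p,T_h}(u)$ after scaling; the accompanying smoothness-descriptor bound \eqref{eq:cond1b} follows from the $C^{m+1}$-bounds on $\Upsilon$, the chain rule, and Proposition~\ref{prop:SDExtraTerms}. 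The genuinely new ingredient is Condition~\ref{cond:2b}, for which I would extend geodesic interpolation to the tangent bundle: equip $TM$ with the horizontal lift $g^{h}$ and, given $u_h\in\gfe$ and $V\in W^{2,2}(T_h,u_h^{-1}TM)$, define $V_I$ as the $g^{h}$-geodesic finite element through the nodes $(u_h(a_i),V(a_i))\in TM$. Because a $g^{h}$-geodesic projects to an $M$-geodesic, this interpolant is the derivative $\frac{d}{dt}v_h(0)$ of a curve $v_h(t)\in\gfe$, so $V_I\in IV_0(\Omega,u_h^{-1}TM)$; the explicit formula for ${}^{h}\log$ in Section~\ref{sec:metricsTM} and Proposition~\ref{prop:logest} let one repeat the reference-element argument, now carrying the covariant-derivative weights of Definition~\ref{def:nonlinSmoothVfield}, to obtain \eqref{eq:dgeq4bound} and \eqref{eq:vecInterpolCond}.

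For the inverse estimates I would use that on the finite-dimensional space $P_m(T,M)$ --- more precisely, on the compact family of such maps whose image lies in a fixed small ball --- all the relevant (semi)norms are equivalent, with constants controlled through the differential of $\Upsilon$; transferring via Lemma~\ref{L:inverseScale1} and the extra hypothesis that $F_h^{-1}$ scales with order $2$ gives Condition~\ref{cond:3}. For Condition~\ref{cond:4} one additionally needs that $\log_{v_h}w_h$, which is not itself a discrete object, still has controlled first and second covariant derivatives: this is obtained by differentiating $x\mapsto\log_{v_h(x)}w_h(x)$ twice, using the first- and second-order estimates for $\log$ from Proposition~\ref{prop:logest} together with the already-established inverse estimates applied separately to $v_h$ and $w_h$. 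With the four conditions verified, Theorem~\ref{T:H1err} gives $D_{1,2}(u,u_h)\le Ch^m\theta_{m+1,2,\Omega}(u)$; Lemma~\ref{L:chooseq} with Condition~\ref{cond:3} fixes $q>\max\{d,4\}$ keeping $u_h$ strictly inside the $K$- and $L$-balls, hence a genuine local minimizer in $V_h$; Proposition~\ref{prop:discreteH2Reg} with Condition~\ref{cond:4} supplies \eqref{eq:2bBound}; and Theorem~\ref{T:L2err} then yields the $L^2$-bound, which combined with $h$ times the $W^{1,2}$-bound is \eqref{eq:gfeTheorem}. The embedded estimate follows by pushing forward with an isometric embedding $\iota$ and invoking Propositions~\ref{prop:equivEmbD12}, \ref{prop:SDiota} and~\ref{prop:SDRRn} to convert smoothness descriptors into Sobolev norms. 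I expect the main difficulty to be the tangent-bundle construction behind Condition~\ref{cond:2b} --- making the interpolation and its error analysis precise with the merely pseudo-Riemannian $g^{h}$ --- closely followed by Condition~\ref{cond:4}, since controlling $\nabla^{2}\log_{v_h}w_h$ requires the second-order estimates of Proposition~\ref{prop:logest} and cannot be reduced to an inverse estimate for a single discrete function.
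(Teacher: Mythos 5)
Your overall architecture is exactly the paper's: verify Conditions~\ref{cond:1}--\ref{cond:4} for $\gfe$ by reducing to the reference element via the scaling lemmas, using Lemma~\ref{L:wellposed}, the first-order optimality condition \eqref{eq:firstorderinterpol} and Proposition~\ref{prop:logest}, and then chain Theorem~\ref{T:H1err}, Lemma~\ref{L:chooseq}, Proposition~\ref{prop:discreteH2Reg} and Theorem~\ref{T:L2err}. Your tangent-bundle construction for Condition~\ref{cond:2b} (interpolation of the nodes $(u_h(a_i),V(a_i))$ compatible with the horizontal lift $g^{h}$, realized as $\frac{d}{dt}v_h(0)$ for a curve of geodesic interpolants) is precisely the paper's Definition~\ref{def:vfieldinterpol} and Remark~\ref{R:geodesicVfields}, and your treatment of Conditions~\ref{cond:3} and~\ref{cond:4} matches the paper's.

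There is, however, one genuine gap. The vector field that must be approximated in Theorem~\ref{T:L2err} is the parallel transport to $u_h$ of the solution $W$ of the deformation problem, which is only known to lie in $W^{2,2}(\Omega,u_h^{-1}TM)$. Your construction of $V_I$ requires the nodal values $V(a_i)$, hence continuity of $V$; by Sobolev embedding this holds only for $d<4$, whereas the theorem's hypothesis $2(m+1)>d$ admits $d\geq 4$ as soon as $m\geq 2$. The paper therefore proves Condition~\ref{cond:2b} by nodal interpolation only for $d<4$, and for general $d$ replaces it by the weaker macro-element Condition~\ref{cond:2c}: one first mollifies $(u_h,V)$ with parameter proportional to $h$ via the intrinsic Karcher-mean mollifier \eqref{eq:mineps}, then interpolates, and then corrects by the inverse $E_{\epsilon h}^{-1}$ of the mollify-and-interpolate operator restricted to discrete maps so that the resulting operator $F_{\epsilon h}$ fixes discrete vector fields. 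This costs locality (estimates hold on $T_h$ in terms of the macro-element $\omega_h$) and requires the additional boundedness and error estimates for mollification (Propositions~\ref{prop:mollibd} and~\ref{prop:smoothedErrorEst}) together with an approximate triangle inequality for the comparison functional $D_{T}$. Without this (or some substitute such as a Cl\'ement-type quasi-interpolant, which raises the same non-locality issues), your verification of Condition~\ref{cond:2b} fails for $d\geq 4$ and the theorem would only be established for $d\leq 3$.
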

Theorem~\ref{T:gfe} follows if we can show that geodesic finite elements fulfill the assumptions of Theorems~\ref{T:H1err} and~\ref{T:L2err}. It is enough to show that the four Conditions~\ref{cond:1},~\ref{cond:2b},~\ref{cond:3}, and~\ref{cond:4} are fulfilled.
In particular, we will show that the required estimates hold for geodesic interpolants.
\subsubsection{Condition~\ref{cond:1}: Interpolation Error Bounds for GFEs}
Let $kp>d$, $m\geq k-1$, and $u\in W^{k,p}(\Omega,M)$ with $u(T_{h})\subset B_{\rho}\subset M$ for all elements $T_{h}\in G$, where $\rho$ is small enough such that the geodesic interpolant $u_{I}\in \gfe$ is well-defined.

We begin by considering the a priori bound \eqref{eq:cond1b}.
Note that $(m+1)$-th order derivatives of geodesic finite elements do not vanish in general, as it is the case with Euclidean finite elements.
\begin{proposition}\label{prop:m+1Deriv}
	Let $M$ possess a $C^{k}$-atlas with $kp>d$, and let $m\geq k-1$.
	Then there exists a constant $C$ such that for all $u\in 
	W^{k,p}(T,M)$ with $u(T)\subset B_{\rho}\subset M$
	\begin{align*}
	\dot{\theta}_{k,p,T}(u_{I})\leq C\;\dot{\theta}_{k,p,T}(u)
	\end{align*}
	holds for the interpolation $u_{I}\in P_{m}(T,M)$ of $u$.
\end{proposition}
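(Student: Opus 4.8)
The plan is to reduce the statement to a Bramble--Hilbert-type stability estimate for Euclidean Lagrange interpolation, carried out in a fixed coordinate chart, with the nonlinearity of geodesic interpolation controlled through the first-order optimality condition of Lemma~\ref{L:varInterpol}. Since $kp>d$, the Sobolev embedding gives $u\in C(T,M)$; together with $u(T)\subset B_{\rho}$ and Lemma~\ref{L:wellposed}, $u_{I}$ is well defined and $u_{I}(T)\subset B_{\Cr{c:wellposed}\rho}(v_{1})$, which for $\rho$ small lies in a single chart $\varphi\colon U\to\RR^{n}$ with bounded metric, inverse metric, Christoffel symbols, and $\exp,\log$ (up to the available order). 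Writing $\tilde u\colonequals\varphi\circ u$ and $\tilde u_{I}\colonequals\varphi\circ u_{I}$ as $\RR^{n}$-valued maps, the chain rule together with Proposition~\ref{prop:SDExtraTerms} --- applied just as in the proof of Proposition~\ref{prop:SDiota}, with a bounded $C^{k}$-chart in place of an isometric embedding --- shows that $\dot\theta_{k,p,T}(u_{I})$ is controlled by $\|D^{k}\tilde u_{I}\|_{L^{p}(T)}+\|D\tilde u_{I}\|_{L^{kp}(T)}^{k}$ while $\|D^{k}\tilde u\|_{L^{p}(T)}+\|D\tilde u\|_{L^{kp}(T)}^{k}$ is controlled by $\dot\theta_{k,p,T}(u)$. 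So it suffices to prove the purely Euclidean stability bound $\|D^{k}\tilde u_{I}\|_{L^{p}(T)}+\|D\tilde u_{I}\|_{L^{kp}(T)}^{k}\le C\big(\|D^{k}\tilde u\|_{L^{p}(T)}+\|D\tilde u\|_{L^{kp}(T)}^{k}\big)$.

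Next I would differentiate the optimality condition. By Lemma~\ref{L:varInterpol}, $\tilde u_{I}(x)$ is the unique small solution of $\sum_{i}\lambda_{i}(x)\,\ell(\tilde u_{I}(x),\tilde u(a_{i}))=0$, where $\ell$ is the coordinate expression of $\log$, satisfying $\ell(p,p)=0$ and $\partial_{1}\ell(p,p)=-\mathrm{Id}$, $\partial_{2}\ell(p,p)=\mathrm{Id}$. The nodal data $\tilde u(a_{i})$ do not depend on $x$; since $\sum_{i}\lambda_{i}\partial_{1}\ell(\tilde u_{I},\tilde u(a_{i}))$ is invertible for $\rho$ small, differentiating $j$ times ($1\le j\le k$) and using $\sum_{i}\lambda_{i}\equiv 1$ one solves recursively
\begin{align*}
D^{j}\tilde u_{I}=D^{j}(\mathcal I_{m}\tilde u)+R_{j},
\end{align*}
where $\mathcal I_{m}\tilde u=\sum_{i}\lambda_{i}\tilde u(a_{i})$ is the Euclidean Lagrange interpolant and $R_{j}$ collects the contributions lost by linearising $\ell$: a finite sum of bounded coefficients (derivatives of the $\lambda_{i}$ and of $\ell$, the latter evaluated along $\tilde u_{I}$) times products $D^{j_{1}}\tilde u_{I}\cdots D^{j_{s}}\tilde u_{I}$ of total order $j_{1}+\dots+j_{s}=j$, $s\ge 2$, $j_{i}\ge 1$. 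Any piece of $R_{j}$ that would otherwise be of deficient order arises only through an $x$-polynomial of degree $\le k-1$ obtained by Lagrange reproduction of $\mathcal P_{k-1}$ (here the hypothesis $m\ge k-1$ enters), so its $k$-th derivative vanishes; thus $R_{j}$ behaves, for our purposes, like a sum of products of lower-order derivatives of total order $j$.

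For the estimates, the main term is standard: $\mathcal I_{m}$ reproduces $\mathcal P_{k-1}\subset\mathcal P_{m}$, so Bramble--Hilbert together with translation invariance gives $\|D^{k}\mathcal I_{m}\tilde u\|_{L^{p}(T)}\le C\,\|D^{k}\tilde u\|_{L^{p}(T)}$, and, using $W^{k,p}(T)\hookrightarrow W^{j,kp/j}(T)$ (valid since $kp>d$), $\|D^{j}\mathcal I_{m}\tilde u\|_{L^{kp/j}(T)}\le C\big(\|D^{k}\tilde u\|_{L^{p}(T)}+\|D\tilde u\|_{L^{kp}(T)}\big)$ for $1\le j\le k$. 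For the corrections, the $j=1$ step already yields $\|D\tilde u_{I}\|_{L^{\infty}(T)}\le C\operatorname{osc}_{T}(\tilde u)\le C\,\|D\tilde u\|_{L^{kp}(T)}$ by Morrey's inequality, and an induction on $j$ with Hölder's inequality (exponents $kp/j_{i}$) bounds every product in $R_{j}$ in $L^{kp/j}(T)$ by products of the $\|D^{j_{i}}\mathcal I_{m}\tilde u\|_{L^{kp/j_{i}}}$ plus still-lower-order terms. Since the summands of $\dot\theta_{k,p,T}(\tilde u_{I})^{p}$ are exactly the integrals $\int_{T}\prod_{r}|D^{|\multind{\beta}_{r}|}\tilde u_{I}|^{p}$ over all splittings $\sum_{r}|\multind{\beta}_{r}|=k$, this product structure matches term by term, and Proposition~\ref{prop:SDExtraTerms} collapses every mixed product back into $\|D^{k}\tilde u\|_{L^{p}(T)}+\|D\tilde u\|_{L^{kp}(T)}^{k}\le C\,\dot\theta_{k,p,T}(u)$. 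When $\dot\theta_{k,p,T}(u)$ is bounded away from zero the claim is immediate from $\|u_{I}\|_{C^{k}(T)}\le C(\rho,T)$ (smoothness of geodesic interpolation on the compact data set), so the linearisation is needed only in the regime where $\operatorname{osc}_{T}(\tilde u)$ is small.

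The main obstacle is entirely in the last two steps. In the Euclidean case $D^{k}$ of an order-$(k-1)$ interpolant vanishes outright, whereas here the non-identity of $\log$ produces the corrections $R_{j}$, and the reason the hypothesis is phrased with the smoothness descriptor rather than a Sobolev seminorm is that $\dot\theta_{k,p,T}$ is built precisely from the products $\prod_{r}|\nabla^{\multind{\beta}_{r}}(\cdot)|$ that these corrections generate. Keeping track of the recursion so that no term of deficient order survives in $R_{j}$ --- in particular isolating where Lagrange reproduction of polynomials must be used --- and checking that the Hölder exponents always close against Proposition~\ref{prop:SDExtraTerms} is the technical core of the argument.
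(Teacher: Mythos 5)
Your route is genuinely different from the paper's: the paper establishes Proposition~\ref{prop:m+1Deriv} by a compactness/contradiction argument on the nodal values (in the same spirit as the proof given for the vector-field analogue, Proposition~\ref{prop:vecInterpolBd}), whereas you propose a constructive argument that linearizes the optimality condition of Lemma~\ref{L:varInterpol} around the Euclidean Lagrange interpolant in a fixed chart and invokes Bramble--Hilbert. The skeleton is reasonable and close in spirit to the proof of Lemma~\ref{L:BHLInterpol}, and you correctly localize the difficulty to the regime where $\dot\theta_{k,p,T}(u)$, equivalently $\operatorname{osc}_{T}(u)$, is small.

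However, the step you yourself label as the technical core is exactly where the proof is missing, and the sentence you offer in its place is not correct as stated. You claim every deficient-order piece of $R_{j}$ ``arises only through an $x$-polynomial of degree $\le k-1$ obtained by Lagrange reproduction of $\mathcal{P}_{k-1}$, so its $k$-th derivative vanishes.'' But the quantities summed against $D^{j_{0}}\lambda_{i}$ --- for instance $Q(\tilde u_{I}(x),\tilde u(a_{i}))$ or $\partial_{1}Q(\tilde u_{I}(x),\tilde u(a_{i}))$ with $Q\colonequals \ell(p,q)-(q-p)$ --- are not values of a polynomial at the nodes $a_{i}$: they are compositions of the chart and the logarithm with $u$ and with the $x$-dependent base point $\tilde u_{I}(x)$. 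Reproduction of $\mathcal{P}_{k-1}$ kills only their polynomial part; what survives is a curvature/chart remainder, and the naive bound on it is a power of $\operatorname{osc}_{T}(u)\le C\|D\tilde u\|_{L^{kp}}$ that is too low. Concretely, for $k\ge 3$ the term $\sum_{i}D^{k}\lambda_{i}\,Q(\tilde u_{I},\tilde u_{i})$ is only bounded by $C\|D\tilde u\|_{L^{kp}}^{2}$, and with $A\colonequals\|D^{k}\tilde u\|_{L^{p}}$ and $B\colonequals\|D\tilde u\|_{L^{kp}}$ one can have $B^{2}\gg A+B^{k}$ (take $B=\epsilon$, $A\sim\epsilon^{k}$, which occurs e.g.\ for a slowly reparametrized geodesic when $d=1$); the terms $D^{j_{0}}\lambda_{i}\,\partial_{1}Q\,D^{j_{1}}\tilde u_{I}$ with $j_{0}\ge 2$ are deficient in the same way. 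Closing these requires Taylor-expanding $y\mapsto\log_{u_{I}(x)}u(y)$ and its base-point derivatives to order $k$ with a remainder controlled by $\|\nabla^{k}u\|$, using $\sum_{i}D^{j_{0}}\lambda_{i}\,\pi(a_{i})=0$ for $\pi\in\mathcal{P}_{j_{0}-1}$ on the polynomial part only, and verifying via Proposition~\ref{prop:logest} that every curvature correction carries enough extra factors of $\operatorname{osc}_{T}(u)$ for Young's inequality to close against $A+B^{k}$. That is the machinery of Lemma~\ref{L:BHLInterpol} iterated to order $k$; until it is carried out, the proposal is a plan rather than a proof. (A smaller point: you also need $k$ derivatives of $\ell$, i.e.\ more regularity than a bare $C^{k}$-atlas literally provides.)
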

The proposition can be shown by contradiction. Details can be found in \cite{diss}.

A proof that GFE spaces fulfill the interpolation estimate \eqref{eq:cond1a} has appeared in \cite{grohsSanderH}. We state a slightly different version from \cite{diss} that has better constants, in particular allowing for geometric estimates for first order finite elements.
\begin{lemma}\label{L:BHLInterpol}
	Let $kp>d$, $m\geq k-1$, and $u\in W^{k,p}(T,M)$ with $u(T)\subset B_{\rho}\subset M$.
	Let $u_{I}\in P_{m}(T,M)$ denote the geodesic interpolation of $u$.
	Then there exists a constant $\Cl{L:BHLInterpol}$ such that
	\begin{align}
	d_{L^{p}(T,M)}(u,u_{I}) &\leq \Cr{L:BHLInterpol}C_{1,u}(T)\;\dot{\theta}_{k,p,T}(u), \label{eq:BHLInterpolLp}
	\end{align}
	and
	\begin{align}
	D_{1,p,T}(u,u_{I}) &\leq \Cr{L:BHLInterpol}\left(C^{p}_{1,u}(T) +C^{p}_{2,u}(T)\right)^{\frac{1}{p}}\;\dot{\theta}_{k,p,T}(u),\label{eq:BHLInterpolW1p}
	\end{align}
	where
	\begin{align*}
	C_{1,u}(T)&\colonequals 
	\sup_{1\leq j\leq k} \sup_{\genfrac{}{}{0pt}{1}{p\in u_{I}(T)}{q\in u(T)}}
	\|d^{j}\log_{p}{q}\|,\qquad  
	C_{2,u}(T)\colonequals \sup_{1\leq j\leq k-1} \sup_{\genfrac{}{}{0pt}{1}{p\in u_{I}(T)}{q\in u(T)}}\|d_{2}d^{j}\log_{p}{q}\| ,
	\end{align*}
	and the constant $ \Cr{L:BHLInterpol}$ depends on the shape functions $\lambda_{i}$, but is independent of $u$ and $M$.
\end{lemma}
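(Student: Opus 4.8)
This is a Bramble--Hilbert argument carried out intrinsically; since the estimate lives on the fixed reference element $T$, no power of $h$ appears here --- those are produced afterwards by the scaling Lemmas~\ref{L:scalingSD} and~\ref{L:inverseScale2}. Because $kp>d$ the map $u$ is continuous, so by Lemma~\ref{L:wellposed} the interpolant $u_I\in P_m(T,M)$ is well defined and smooth for $\rho$ small, with $u_I(T)$ contained, together with $u(T)$, in one small geodesic ball. The guiding observation is that $u$ itself trivially interpolates the nodal data ($u(a_i)=v_i$), so the ``defect'' $\sum_i\lambda_i(x)\log_{u(x)}v_i$ of $u$ in the first-order optimality condition of Lemma~\ref{L:varInterpol} is exactly a classical nodal interpolation error: introducing the $T_{u(x)}M$-valued auxiliary map $w_x(y)\colonequals\log_{u(x)}u(y)$, which vanishes at $y=x$, one has $\sum_i\lambda_i(x)\log_{u(x)}v_i=(Iw_x-w_x)(x)$ with $I$ ordinary order-$m$ Lagrange interpolation, and $u_I$ is recovered from this defect by the stability of the defining minimisation.

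For \eqref{eq:BHLInterpolLp}: for fixed $x$ the map $G_x\colon q\mapsto\sum_i\lambda_i(x)\log_q v_i$ has, by~\eqref{eq:logest1} and $\sum_i\lambda_i\equiv 1$, covariant derivative $-Id+O(|\Rm|_\infty\rho^2)$, hence is a diffeomorphism near its zero $u_I(x)$ with inverse bounded uniformly in $x$ for $\rho$ small; thus $d(u(x),u_I(x))\le C\,|G_x(u(x))|=C\,|(Iw_x-w_x)(x)|$. The classical Bramble--Hilbert lemma (polynomial reproduction, $m\ge k-1$) together with $W^{k,p}(T)\hookrightarrow C^0(T)$ bounds this by $C\,|w_x|_{W^{k,p}(T)}$; the chain rule for $y\mapsto\log_{u(x)}u(y)$ --- the covariant derivatives of $q\mapsto\log_{u(x)}q$ up to order $k$ being by definition $\le C_{1,u}(T)$ --- together with Proposition~\ref{prop:SDExtraTerms} to re-collect the products $\prod_j|\nabla^{\multind{\beta}_j}u|$ then gives $|w_x|_{W^{k,p}(T)}\le C\,C_{1,u}(T)\,\dot\theta_{k,p,T}(u)$, uniformly in $x$, and integration over $T$ yields \eqref{eq:BHLInterpolLp}. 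For \eqref{eq:BHLInterpolW1p} one needs $\nabla_{d^\alpha u}\log_{u(x)}u_I(x)$, which forces a handle on $d^\alpha u_I$: covariantly differentiating the identity $\sum_i\lambda_i(x)\log_{u_I(x)}v_i\equiv 0$ along $u_I$ and inverting the same uniformly invertible linearisation expresses $d^\alpha u_I$, up to a bounded factor and curvature corrections (Proposition~\ref{prop:logest}), through $\sum_i\partial_\alpha\lambda_i(x)\log_{u_I(x)}v_i$, which with $z_x(y)\colonequals\log_{u_I(x)}u(y)$ equals $\partial_\alpha(Iz_x)(x)$ and compares with the parallel transport of $d^\alpha u$ via the first-order classical interpolation error of $z_x$; inserting this into the chain rule for $x\mapsto\log_{u(x)}u_I(x)$ and using that $\log_u u$ has vanishing covariant derivative, the principal terms cancel, and what remains is bounded --- via Bramble--Hilbert, Proposition~\ref{prop:SDExtraTerms} and the curvature estimates --- by $C\,(C^{p}_{1,u}(T)+C^{p}_{2,u}(T))^{1/p}\dot\theta_{k,p,T}(u)$. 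The factor $C_{2,u}(T)$ enters only because, for $k\ge 2$, one compares the base-point derivative $d_2\log$ evaluated at the argument $u_I$ with its value $-Id$ at the argument $u$, i.e.\ through the mean-value bound $\|d_2d^j\log\|$ of Proposition~\ref{prop:logest}, which is precisely why $C_{2,u}$ drops out for first-order elements.

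The main obstacle is the $W^{1,p}$-estimate: on differentiating the implicit optimality condition one must check that its linearisation is invertible \emph{uniformly} in $x$ and over the admissible node configurations, with inverse bounded independently of the manifold, and then carry every estimate while tracking which tangent space --- $T_{u(x)}M$, $T_{u_I(x)}M$, or a fibre along a connecting geodesic --- each quantity inhabits, matching them by parallel transport and the sharp logarithm estimates of Proposition~\ref{prop:logest}; additional care is needed in the range $(k-1)p\le d<kp$, where $u$ is not $C^1$ and pointwise arguments on $u$ are unavailable, which is why the differentiation is performed on the smooth map $u_I$ and $u$ enters only through its node values and through the already-established $L^p$-type bounds. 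The remaining ingredients --- the Fa\`a di Bruno chain rule, the Gagliardo--Nirenberg-type repackaging of products via Proposition~\ref{prop:SDExtraTerms}, and the classical Bramble--Hilbert lemma --- are routine, though the bookkeeping of the constants $C_{1,u}(T)$ and $C_{2,u}(T)$, which is the whole point of this version of the lemma (cf.\ its use with $k=m=1$), must be done attentively.
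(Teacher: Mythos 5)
Your argument is correct and follows essentially the same route the paper indicates for this lemma: the first-order optimality condition of Lemma~\ref{L:varInterpol}, a Taylor/Bramble--Hilbert expansion of the logarithm field exploiting polynomial reproduction ($m\geq k-1$), the chain rule together with Proposition~\ref{prop:SDExtraTerms} to assemble $\dot\theta_{k,p,T}(u)$, and the curvature estimates of Proposition~\ref{prop:logest}, with your inversion of the differentiated optimality condition playing the role of the paper's appeal to Proposition~\ref{prop:m+1Deriv} for controlling derivatives of $u_I$. One cosmetic remark: in the $L^p$ step you expand $w_x(y)=\log_{u(x)}u(y)$, whose base point lies in $u(T)$, whereas $C_{1,u}(T)$ as stated takes its supremum over base points in $u_I(T)$; expanding $\log_{u_I(x)}u(y)$ instead (as the paper's sketch does) matches the stated constant exactly and makes the auxiliary diffeomorphism $G_x$ unnecessary, since $\left|\log_{u_I(x)}u(x)\right|=d(u(x),u_I(x))$ directly.
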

The proof is based on the first-order optimality condition~\eqref{eq:firstorderinterpol} and uses a Taylor expansion of the vector field $\log_{u_{I}(x)}u(y)$. Proposition~\ref{prop:m+1Deriv} is used to estimate the smoothness descriptor of $u_{I}$.
\begin{corollary}
	Geodesic finite elements fulfill Condition~\eqref{cond:1}.
\end{corollary}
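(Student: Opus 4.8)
The plan is to take for $u_{I}$ the nodal geodesic interpolant of $u$ in $\gfe$ and to verify the two estimates \eqref{eq:cond1b} and \eqref{eq:cond1a} element by element, reducing everything to the reference element $T$ via the element maps $F_{h}$. Fix $T_{h}\in G$ and write $\widehat u\colonequals u\circ F_{h}^{-1}\in W^{k,p}(T,M)$; since $u(T_{h})\subset B_{\rho}$, the nodal values of $\widehat u$ all lie in $B_{\rho}$, so by Lemma~\ref{L:wellposed} the geodesic interpolant $\widehat u_{I}\in P_{m}(T,M)$ exists, is smooth, and satisfies $\widehat u_{I}(T)\subset B_{C\rho}$ with $C$ depending only on the shape functions. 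Setting $u_{I}|_{T_{h}}\colonequals\widehat u_{I}\circ F_{h}$ defines an element of $\gfe$ (continuity across a face holds because the interpolation on a face depends only on the nodal values lying on that face, and the conformity assumption reproduces the boundary data). For $\rho$ small enough, depending only on $M$, the set $u(T_{h})\cup u_{I}(T_{h})$ lies in a ball on which $\log$ is smooth near the relevant part of the diagonal, so by Proposition~\ref{prop:logest} together with smoothness of $\log$ the geometric quantities
\[
C_{1,u}(T)=\sup_{1\le j\le k}\ \sup_{p\in\widehat u_{I}(T),\,q\in\widehat u(T)}\|d^{j}\log_{p}q\|,\qquad C_{2,u}(T)=\sup_{1\le j\le k-1}\ \sup_{p\in\widehat u_{I}(T),\,q\in\widehat u(T)}\|d_{2}d^{j}\log_{p}q\|
\]
appearing in Lemma~\ref{L:BHLInterpol} are bounded by a constant depending only on $M$ and $\rho$, uniformly over all elements of $G$.

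For \eqref{eq:cond1b}, observe that every admissible pair $(l,q)$ has $m\ge k-1\ge l-1$, so Proposition~\ref{prop:m+1Deriv} applies with $(k,p)$ there replaced by $(l,q)$ and yields $\dot{\theta}_{l,q,T}(\widehat u_{I})\le C\,\dot{\theta}_{l,q,T}(\widehat u)$ on the reference element. Transporting both sides back to $T_{h}$ by the scaling behaviour of the homogeneous smoothness descriptor under $F_{h}$ (Lemma~\ref{L:scalingSD} and its inverse counterpart Lemma~\ref{L:inverseScale1}) turns this into $\dot{\theta}_{l,q,T_{h}}(u_{I})\le \Cr{c:m+1Deriv}\,\dot{\theta}_{l,q,T_{h}}(u)$; the constant $\Cr{c:m+1Deriv}$ is obtained by taking the maximum over the finitely many admissible values of $l$ and, for each $l$, the largest admissible $q$ (smaller exponents follow on the bounded domain $T$ from monotonicity of $L^{q}(T)$ in $q$).

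For \eqref{eq:cond1a}, I would apply Lemma~\ref{L:BHLInterpol} to $\widehat u$ on $T$, giving $d_{L^{p}(T,M)}(\widehat u,\widehat u_{I})\le C\,C_{1,u}(T)\,\dot{\theta}_{k,p,T}(\widehat u)$ and $D_{1,p,T}(\widehat u,\widehat u_{I})\le C\,(C_{1,u}^{p}(T)+C_{2,u}^{p}(T))^{1/p}\,\dot{\theta}_{k,p,T}(\widehat u)$, with both prefactors $O(1)$ by the first paragraph. Pulling these back to $T_{h}$ with Lemma~\ref{L:inverseScale2} ($d_{L^{p}}$ acquires a factor $h^{d/p}$, $D_{1,p}$ a factor $h^{d/p-1}$) and bounding $\dot{\theta}_{k,p,T}(\widehat u)\le C\,h^{k-d/p}\,\theta_{k,p,T_{h}}(u)$ by Lemma~\ref{L:scalingSD} produces
\[
d_{L^{p}(T_{h},M)}(u,u_{I})\le C\,h^{k}\theta_{k,p,T_{h}}(u),\qquad D_{1,p,T_{h}}(u,u_{I})\le C\,h^{k-1}\theta_{k,p,T_{h}}(u),
\]
hence \eqref{eq:cond1a} on $T_{h}$ with $\Cr{c:cond1}$ the larger of the two constants; summing the $p$-th powers over all $T_{h}\in G$ gives the global version, and by summation the same passes to the grid-dependent descriptors.

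Given Proposition~\ref{prop:m+1Deriv} and Lemma~\ref{L:BHLInterpol}, this is mostly assembly, so the main obstacle is not a hard estimate but the \emph{uniform} control of the geometric prefactors $C_{1,u}(T)$ and $C_{2,u}(T)$: one must choose $\rho$ small enough, independently of $h$ and of $u$, so that Lemma~\ref{L:wellposed} keeps $\widehat u_{I}(T)$ inside a ball on which all first- and second-order derivatives of $\log$ are uniformly bounded and on which the hypotheses of Lemma~\ref{L:BHLInterpol} and Proposition~\ref{prop:m+1Deriv} hold uniformly across the grid — which is exactly what the smallness assumption $u(T_{h})\subset B_{\rho}$ in Condition~\ref{cond:1} provides. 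The remaining work is the routine tracking of scaling exponents through Lemmas~\ref{L:scalingSD}, \ref{L:inverseScale1}, and~\ref{L:inverseScale2}.
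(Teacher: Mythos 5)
Your argument is exactly the paper's proof: the paper disposes of this corollary in one sentence, deducing \eqref{eq:cond1b} from Proposition~\ref{prop:m+1Deriv}, \eqref{eq:cond1a} from Lemma~\ref{L:BHLInterpol}, and transferring both to the elements $T_{h}$ via the scaling properties of $D_{1,p}$ and the smoothness descriptor from Section~\ref{sec:ch1}. You simply carry out that assembly explicitly (including the uniform control of $C_{1,u}$, $C_{2,u}$ via Lemma~\ref{L:wellposed}), and the exponent bookkeeping checks out.
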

Estimates~\eqref{eq:cond1b} and~\eqref{eq:cond1a} follow directly from Proposition~\ref{prop:m+1Deriv} and Lemma~\ref{L:BHLInterpol} using the scaling properties of $D_{1,2}$ and the smoothness descriptor (see Section~\ref{sec:ch1}).
\subsubsection{Condition~\ref{cond:2b}: Interpolation Error Estimates for Variations of GFEs}
First note that on each element $T_{h}$ geodesic finite elements are smooth, i.e., $P_{m}(T_{h},M)\subset C^{\infty}(T_{h},M)\cap C(\overline{T}_{h},M)$. Therefore we have $P_{m}(T_{h},M)\subset  W^{2,b}(T_{h},M)$ for all $b$, in particular as specified in Condition~\ref{cond:2b} for
\begin{align*}
b\colonequals
\left\{
\begin{array}{ll}
2 &\quad\textrm{for}\ 2k>d,\\
3 &\quad\textrm{for}\ 2k=d,\\
\frac{d}{k} &\quad\textrm{for}\ 2k<d.
\end{array}
\right. 
\end{align*}

The variation of geodesic interpolants through geodesic interpolants induces a natural definition of the interpolation of vector fields along a geodesic interpolant.
\begin{definition}\label{def:vfieldinterpol}
	Let $\polyp\in P_{m}(T,M)$, and let $V^{i}\in T_{\polyp(a_{i})}M$, $i=1,\ldots,l$, be vectors given at the $l$ Lagrange nodes.
	Set $v_{i}(t)\colonequals \exp_{\polyp(a_{i})}(tV^{i})$ for $i=1,\ldots,l$.
	The interpolating vector field $V_{I}$ along $\polyp$ is then defined by
	\begin{align*}
	V_{I}(x)\colonequals \frac{d}{dt}\bigg|_{t=0}\Upsilon(v_{1}(t),\ldots,v_{l}(t);x).
	\end{align*}
	We denote the space of all interpolating vector fields along $\polyp$ by $IV(T,\polyp^{-1}TM)$.
\end{definition}
Note that the interpolating vector fields are generalized Jacobi fields in the 
same sense as geodesic finite elements are generalized geodesics.
\begin{remark}
	The interpolation of vector fields along a discrete function $\polyp$ 
	is well defined as long as geodesic interpolation of the points $\exp_{\polyp(a_{i})}(tV^{i})$ is well defined and smooth for small $t$.
	Smoothness follows by smoothness of the geodesic finite element 
	interpolation \cite{sander12}.
	Indeed, we can differentiate \eqref{eq:firstorderinterpol} for 
	$\Upsilon(v_{1}(t),\ldots,v_{l}(t);x)$ with respect to $t$ and obtain
	\begin{align}\label{eq:VIimplicit}
	V_{I}(x)=\sum_{i=1}^{l}\lambda_{i}(x)\left(Id+ 
	d_{2}\log_{\polyp(x)}\polyp_{i}\right)(V_{I}(x)) + 
	\sum_{i=1}^{l}\lambda_{i}(x)d\log_{\polyp(x)}\polyp_{i}(V_{i})
	\end{align}
	as an implicit formula for $V_{I}$. For $\diam(\polyp(T))\leq 
	\frac{1}{\sqrt{|R|_{\infty}\|\sum_{i}|\lambda_{i}|\|_{\infty}}}$, this yields in 
	particular
	\begin{align}\label{eq:absBoundVI}
	|V_{I}(x)|\leq C \max_{i}|V_{i}|.
	\end{align}
	Note further that for a constant function $\polyp$, vector field interpolation 
	corresponds to polynomial interpolation in $\RR^{n}$.
\end{remark}

\begin{remark}\label{R:geodesicVfields}
	Geodesic vector field interpolation is defined by variation of geodesic 
	interpolants. However, we can also see it as a variational form of geodesic
	interpolation on $TM$ 
	with respect to the pseudo-Riemannian metric $\;^{h}g$ defined by the horizonal lift 
	(see~\ref{sec:metricsTM}).
	By this we mean that if $(u_{i},V^{i})$ denote values in $TM$, $u_{I}$ the geodesic 
	interpolation of $u_{i}$ in $M$, and $V_{I}$ the interpolation of the $V^{i}$ 
	in the sense of Definition~\ref{def:vfieldinterpol}, we have
	\begin{align}\label{eq:firstorderOptV}
	\sum_{i=1}^{l}\lambda_{i}(x)
	\left[\log_{u_{I}(x)}u_{i}, d\log_{u_{I}(x)}u_{i} (V^{i}) +\;d_{2}\log_{u_{I}(x)}u_{i} (V_{I}(x))
	\right]
	&=[0,0]\;\in (T_{u_{I}(x)}M)^{2}.
	\end{align}
	Note that we do not obtain a minimization formulation of geodesic vector field interpolation as $\;^{h}g$ is only a pseudo-metric.
\end{remark}
We begin by considering the a priori bound~\eqref{eq:dgeq4bound}.
\begin{proposition}\label{prop:vecInterpolBd}
	Let $u\in W^{2,b}(T,M)\cap C(\overline{T},M)$, $b$ as above, with $u(T)\subset B_{\rho}\subset M$, $\rho$ small enough, and let $V\in W^{2,2}(T,u^{-1}TM)\cap C(\overline{T},u^{-1}TM)$ with $|V(x)|_{g(u(x))}\leq R$ for all $x\in T$. Let $(u_{I},V_{I})$ denote the $m$-th order GFE interpolant of $(u,V)$ for some $m\geq 1$.
	Then there exists a constant $C$ such that
	\begin{align}\label{eq:vecInterpolBd}
	\Theta_{2,2,T}(V_{I})\leq C \Theta_{2,2,T}(V).
	\end{align}
\end{proposition}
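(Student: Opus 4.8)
The plan is to derive pointwise bounds for $V_I$ and its first two covariant derivatives directly from the implicit representation \eqref{eq:VIimplicit}, exploiting that on the fixed reference element $T$ the interpolant $u_I$ is automatically very flat, and then to integrate. Write $V^i=V(a_i)\in T_{u(a_i)}M=T_{u_I(a_i)}M$ for the nodal data. Since $\sum_i\lambda_i\equiv 1$, equation \eqref{eq:VIimplicit} is equivalent to
\[
  -\Big(\sum_{i=1}^l\lambda_i(x)\,d_2\log_{u_I(x)}u_i\Big)V_I(x)=\sum_{i=1}^l\lambda_i(x)\,d\log_{u_I(x)}u_i\,(V^i).
\]
By Lemma~\ref{L:wellposed} we have $\diam u_I(T)\le C\rho$, and Proposition~\ref{prop:logest} gives $d_2\log_p q=-\mathrm{Id}+O(d^2(p,q))$ and $\|d\log_p q\|\le 1+C\rho^2$; hence for $\rho$ small the fibrewise endomorphism $\mathcal A(x):=-\sum_i\lambda_i(x)\,d_2\log_{u_I(x)}u_i$ is uniformly invertible, $\|\mathcal A(x)^{-1}\|\le 2$, and $V_I=\mathcal A^{-1}\mathcal B$ with $\mathcal B(x):=\sum_i\lambda_i(x)\,d\log_{u_I(x)}u_i(V^i)$. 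In particular $|V_I(x)|\le C\max_i|V^i|$, recovering \eqref{eq:absBoundVI}.

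The key observation is that $u_I$ is flat on $T$. Differentiating the first-order condition $\sum_i\lambda_i\log_{u_I}u_i=0$ yields $du_I(x)=-\mathcal A(x)^{-1}\sum_i(\nabla\lambda_i(x))\log_{u_I(x)}u_i$; since $|\log_{u_I(x)}u_i|=d(u_I(x),u_i)\le C\rho$ and the $\lambda_i$ are fixed polynomials on the fixed domain $T$, this gives $|du_I|\le C\rho$ on $T$, and inductively $|\nabla^j u_I|\le C_j\rho$ for all $j$. Consequently $\dot\theta_{2,b,T}(u_I)\le C\rho$ (Proposition~\ref{prop:SDExtraTerms}), and every weight $|\nabla^{\beta_j}u_I|$, $|du_I|$ occurring in $\dot\Theta_{2,2,T}(V_I)$ (Definition~\ref{def:nonlinSmoothVfield}) is $\le C$. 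Therefore it suffices to bound $\|V_I\|_{L^\infty(T)}$, $\|\nabla V_I\|_{L^2(T)}$ and $\|\nabla^2 V_I\|_{L^2(T)}$.

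Next I would differentiate $V_I=\mathcal A^{-1}\mathcal B$ twice covariantly along $u_I$, via $\nabla\mathcal A^{-1}=-\mathcal A^{-1}(\nabla\mathcal A)\mathcal A^{-1}$ and the product rule. Each derivative falls either on some $\lambda_i$ (bounded on $T$), or on the base-point slot of $d\log$ resp.\ $d_2\log$, contributing a factor $du_I$ resp.\ $\nabla^2 u_I$ and one further base-point derivative of the logarithm; the $V^i$ are covariantly constant. Using $\|d_2 d\log\|,\|d_2^2\log\|\le C\rho$ from Proposition~\ref{prop:logest}, the corresponding third-order bounds $\|d_2^2 d\log\|,\|d_2^3\log\|\le C$ (same Jacobi-field computations, cf.\ \cite{diss}, assuming $M$ sufficiently regular), and $|\nabla^j u_I|\le C\rho$, one obtains $|\nabla V_I|\le C\max_i|V^i|$ and $|\nabla^2 V_I|\le C\max_i|V^i|$. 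With the reduction above this gives $\Theta_{2,2,T}(V_I)\le C\max_i|V(a_i)|$, the lower-order parts $\dot\Theta_{0,2}$, $\dot\Theta_{1,2}$ being handled identically with fewer derivatives. Since $\Theta_{2,2,T}(V)\ge c\,\|V\|_{W^{2,2}(T,u^{-1}TM)}$ and $W^{2,2}(T)\hookrightarrow C(\overline T)$ when $4>d$, the nodal values satisfy $\max_i|V(a_i)|\le C\|V\|_{L^\infty(T)}\le C\,\Theta_{2,2,T}(V)$, completing the proof.

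The hard part is this last step. One must control $\max_i|V(a_i)|$ by a norm of $V$ with a constant independent of $V$; for $d\le 3$ this is exactly the Sobolev embedding $W^{2,2}\hookrightarrow C^0$, but for $d\ge 4$ that embedding fails, and one has to fall back on the standing hypotheses that $V$ is continuous with $|V|\le R$ (which are anyway needed for $V_I$ to be defined), so that the constant then also depends on $R$ --- harmless wherever Condition~\ref{cond:2b} is applied. A secondary technicality is the bookkeeping in the differentiation step: one needs third-order base-point derivative estimates for the logarithm not explicitly recorded in Proposition~\ref{prop:logest}, and must verify that the $O(\rho)$-smallness of all the weights genuinely survives each application of the product rule.
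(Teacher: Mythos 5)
Your argument takes a genuinely different route from the paper. The paper proves \eqref{eq:vecInterpolBd} by a compactness--contradiction argument: assuming a sequence $(u^{K},V_{K})$ violating the bound with factor $K$, it passes to limits of the nodal data in $\overline{B}_{\rho}\times\overline{B}_{R}$, uses continuous dependence of the interpolation operator on the nodal values to pin down a nontrivial limit interpolant $V_{I}^{\star}$, and derives a contradiction in the two degenerate cases $\|V_{K}\|_{L^{a}}\to 0$ and $\dot\theta_{2,b,T}(u^{K})\to 0$ (the latter reducing to polynomial interpolation into a single tangent space). You instead work directly and quantitatively from the implicit relation \eqref{eq:VIimplicit}, inverting the fibrewise operator $\mathcal{A}$ via Proposition~\ref{prop:logest} and Lemma~\ref{L:wellposed}, exploiting the flatness $|\nabla^{j}u_{I}|\leq C\rho$ on the reference element, and differentiating twice to get $\Theta_{2,2,T}(V_{I})\leq C\max_{i}|V(a_{i})|$. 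What your approach buys is explicit constants (with traceable dependence on $\rho$ and the curvature) in place of the non-constructive constant from the compactness argument; what it costs is the need for third-order base-point derivative bounds on $\log$ that Proposition~\ref{prop:logest} does not record, hence slightly stronger regularity hypotheses on $M$. Both proofs hinge on the same crux, namely passing from the nodal values $V(a_{i})$ to an integral norm of $V$, and you have identified it correctly.

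One caveat: your fallback for $d\geq 4$ does not actually establish \eqref{eq:vecInterpolBd}. Replacing the Sobolev embedding by the standing bound $|V|\leq R$ yields only the absolute estimate $\Theta_{2,2,T}(V_{I})\leq C(R)$, which is not the multiplicative bound $\Theta_{2,2,T}(V_{I})\leq C\,\Theta_{2,2,T}(V)$: for $d\geq 4$ point evaluation is unbounded on $W^{2,2}(T)$, so one can make $\Theta_{2,2,T}(V)$ arbitrarily small while keeping $V(a_{i})$, and hence $\Theta_{2,2,T}(V_{I})$, bounded away from zero. This is not a defect specific to your method --- the paper's own proof relies at the corresponding point on the inference ``$\|V_{K}\|_{L^{a}}\to 0$ and continuity imply $V^{K}(a_{i})\to 0$'', which fails for the same reason without an equicontinuity assumption, and the paper only invokes the proposition to conclude Condition~\ref{cond:2b} for $d<4$, treating higher dimensions by mollification via Condition~\ref{cond:2c}. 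So your proof is a sound alternative precisely in the regime where the result is used, but you should state the restriction $d\leq 3$ rather than suggest the $R$-dependent bound closes the general case.
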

\begin{proof}
	Assume the converse, i.e., for all $K>0$ assume that there exist $u^{K}\in W^{2,b}(T,M)\cap C(\overline{T},M)$
	and $V_{K}\in W^{2,2}(T,u^{-1}TM)\cap C(\overline{T},u^{-1}TM)$ with $u_{i}(K)=u^{K}(a_{i})\in B_{\rho}$ and $V^{i}(K)=V_{K}(a_{i})\in B_{R}\subset T_{u_{i}(K)}M$
	such that
	\begin{align*}
	\Theta_{2,2,T}(V^{K})<\frac{1}{K}\; \Theta_{2,2,T}(V^{K}_{I}).
	\end{align*}
	Then w.l.o.g. $u_{i}(K)\to u_{i}^{*}\in \overline{B}_{\rho}$ for $K\to \infty$.
	As parallel transport preserves the length, we also have 
	\begin{align*}
	|\pi_{u_{i}(K)\mapsto u_{i}^{*}}V^{i}(K)|_{g(u_{i}^{*})}=|V^{i}(K)|_{g(u_{i}(K))}\leq R,
	\end{align*}
	and thus w.l.o.g.
	\begin{align*}
	\pi_{u_{i}(K)\mapsto u_{i}^{*}}V^{i}(K)\to V^{i}_{*}\in \overline{B}_{R}\subset T_{u_{i}^{*}}M.
	\end{align*}
	We define $(u_{I}^{\star}, V_{I}^{\star})$ as the first-order interpolation of $(u_{i}^{*},V^{i}_{\star})$.
	By smoothness of the interpolation operator, we have
	\begin{align*}
	\Theta_{2,2,T}(V^{K}_{I})\leq C\; \Theta_{2,2,T}(V^{\star}_{I}),
	\end{align*}
	and hence
	\begin{align*}
	\Theta_{2,2,T}(V^{K})<\frac{C}{K}\; \Theta_{2,2,T}(V^{\star}_{I})\to 0 \ \textrm{for}\ K\to\infty.
	\end{align*}
	In particular this implies
	\begin{align*}
	\dot\theta_{2,b,T}(u^{K})\|V_{K}\|_{L^{a}}\to 0 \ \textrm{for}\ K\to\infty.
	\end{align*}
	If $\|V_{K}\|_{L^{a}}\to 0$, the continuity of the $V^{K}$ implies $V^{K}(a_{i})\to 0$ and hence $V_{I}^{\star}\equiv0$, which is a contradiction.
	If $\dot\theta_{2,b,T}(u^{K})\to 0$, then $u^{K}(x)\to p\in M$ for all $x\in T$ as $b>\frac{d}{2}$. Thus $u_{I}^{\star}\equiv p\in M$.
	This implies that $V_{I}^{\star}=\sum_{i}\lambda_{i}V_{i}^{*}$ is an $m$-th order polynomial into the linear space $T_{p}M$.
	As $V_{I}^{\star}=\lim_{K\to \infty}\left(\pi_{u^{K}(x) \mapsto p} V_{K}(x)\right)_{I}$, we have
	\begin{align*}
	\dot\Theta_{2,2,T}(V^{\star}_{I})=\|\nabla^{2} V^{\star}_{I}\|_{L^{2}}\leq C \|\nabla^{2}(\pi_{u^{K} \mapsto p} V_{K})_{I}\|_{L^{2}}\leq C\; \Theta_{1,1,T}(V_{K}),
	\end{align*}
	and thus a contradiction for large $K$.
\end{proof}
In \cite{diss} a similar proof to that of Lemma~\ref{L:BHLInterpol} in \cite{grohsSanderH} leads to the following.
\begin{lemma}\label{L:BHLInterpolVfield}
	Let $m\geq k-1$ and $(u,V)\in W^{k,p}\cap C(\overline{T},TM)$ with $u(T)\subset B_{\rho}\subset M$, and $u\in W^{k,b}(T,M)$ with $b$ as in Definition~\ref{def:nonlinSmoothVfield}.
	Let $(u_{I},V_{I})$ denote the geodesic interpolant of $(u,V)$.
	Then there exists a constant $\Cl{c:BHLInterpolVfield}$ such that
	\begin{multline}
	\left\|d\log_{u_{I}(x)}u(x)(V(x))+d_{2}\log_{u_{I}(x)}u(x)(V_{I}(x))\right\|_{W^{1,p}(T,u_{I}^{-1}TM)} \\
	\leq \Cr{c:BHLInterpolVfield}\;C_{u,k+1}(T) \left(\dot \Theta_{k,p,\Omega}(V_{I})+\dot \Theta_{k,p,\Omega}(V) \right),
	\end{multline}
	where
	\begin{align*}
	C_{u,k+1}(T)\colonequals \sup_{i+j\leq k+1} \sup_{\genfrac{}{}{0pt}{1}{p\in u_{I}(T)}{q\in u(T)}}\|d_{2}^{i}d^{j}\log_{p}{q}\|.
	\end{align*}
\end{lemma}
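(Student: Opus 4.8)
The plan is to mirror the proof of Lemma~\ref{L:BHLInterpol}, with the first-order optimality condition~\eqref{eq:firstorderinterpol} for geodesic interpolation replaced by its vector-field analogue, the second component of~\eqref{eq:firstorderOptV} from Remark~\ref{R:geodesicVfields}. Introduce the auxiliary $T_{u_I(x)}M$-valued function of two domain variables
\begin{align*}
\Psi(x,y)\colonequals d\log_{u_I(x)}u(y)\,(V(y)) + d_2\log_{u_I(x)}u(y)\,(V_I(x)),
\end{align*}
and write $\Phi(x)\colonequals\Psi(x,x)$, so that the left-hand side of the claim is $\|\Phi\|_{W^{1,p}(T,u_I^{-1}TM)}$. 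Since $u(a_i)=u_i$ and $V(a_i)=V^i$, evaluating the second component of~\eqref{eq:firstorderOptV} at $x$ gives $\sum_{i=1}^l\lambda_i(x)\Psi(x,a_i)=0$. Because $\sum_i\lambda_i\equiv1$, for every polynomial $P_x$ of degree $<k$ with values in the fixed linear space $T_{u_I(x)}M$ we obtain
\begin{align*}
\Phi(x) = \Psi(x,x)-\sum_{i=1}^l\lambda_i(x)\Psi(x,a_i) = \big(\mathrm{Id}-I_m\big)_y\big[\Psi(x,\cdot)-P_x\big](x),
\end{align*}
where $I_m$ denotes $m$-th order Lagrange interpolation in the second argument, which reproduces polynomials of degree $\leq m\geq k-1$.

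Next I would invoke the vector-valued Bramble--Hilbert (Deny--Lions) argument on the fixed reference element $T$: choosing $P_x$ to be an averaged Taylor polynomial of $y\mapsto\Psi(x,y)$ and estimating the Lagrange interpolation error together with its single covariant $x$-derivative yields a bound of $\|\Phi\|_{W^{1,p}(T)}$ by the $L^p(T)$-norms of $\nabla_y^{\multind\beta}\Psi$ and $\nabla_x\nabla_y^{\multind\beta}\Psi$ for $|\multind\beta|=k$, plus lower-order $y$-derivative terms that are absorbed via Proposition~\ref{prop:SDExtraTerms}. It then remains to expand these $k$-th (and one extra $x$-) covariant derivatives of $\Psi$ by the Leibniz and chain rules. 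Every resulting factor that is a derivative of $\log$ --- in either slot, of total order at most $k+1$, the extra order stemming from the composition with $u(y)$ respectively $u_I(x)$ --- is bounded uniformly by $C_{u,k+1}(T)$; the remaining factors are covariant derivatives $\nabla^{\multind\beta}u$, $\nabla^{\multind\beta}u_I$, $\nabla^{\multind\beta}V$, $\nabla^{\multind\beta}V_I$. Grouping these products with H\"older's inequality, and using Proposition~\ref{prop:m+1Deriv} to replace the $u_I$-derivative factors by $u$-derivative factors, reproduces precisely the admissible index combinations of $\dot\Theta_{k,p,T}$ from Definition~\ref{def:nonlinSmoothVfield} for $V$ and for $V_I$; the term $\|V\|_{L^a}\dot\theta_{k,b,T}(u)$ arises when all $k$ derivatives fall on the $u(y)$-composition, and its $L^a$--$L^b$ pairing is exactly why $b$ (hence $a$) is chosen as in Definition~\ref{def:nonlinSmoothVfield}. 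This gives the claimed estimate; well-posedness, smoothness, and the pointwise bound~\eqref{eq:absBoundVI} for $V_I$, needed to make $\Psi$ and the supremum $C_{u,k+1}(T)$ finite, come from Lemma~\ref{L:wellposed}, Proposition~\ref{prop:logest}, and the remark following Definition~\ref{def:vfieldinterpol}.

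The main obstacle is the bookkeeping of the two-variable expansion: one must verify that after the chain and Leibniz rules and after H\"older's inequality the surviving products of $\|\nabla^{\multind\beta}u\|$, $\|\nabla^{\multind\beta}V\|$ (and their $u_I$, $V_I$ counterparts) fall into the index patterns allowed in $\dot\Theta_{k,p}$ --- with the correct intermediate exponents $\tfrac{kp}{|\multind\beta|}$ on each factor --- rather than producing a term with an unbounded or mismatched exponent. This is exactly the point where the vector-valued Gagliardo--Nirenberg inequality behind Proposition~\ref{prop:SDExtraTerms} is indispensable, and where the precise definition of $b$ in Definition~\ref{def:nonlinSmoothVfield} is forced. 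Everything else is a fairly direct transcription of the scalar interpolation-error proof of Lemma~\ref{L:BHLInterpol}.
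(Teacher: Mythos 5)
Your proposal follows the same route the paper indicates: the first-order optimality condition for vector-field interpolation (the second component of~\eqref{eq:firstorderOptV}) combined with a Taylor/Bramble--Hilbert expansion of the two-variable field $d\log_{u_I(x)}u(y)(V(y))+d_2\log_{u_I(x)}u(y)(V_I(x))$, exactly mirroring the proof of Lemma~\ref{L:BHLInterpol} as the paper states. The reconstruction, including the role of $C_{u,k+1}(T)$, Proposition~\ref{prop:m+1Deriv}, and the H\"older pairing that forces the choice of $b$ and $a$ from Definition~\ref{def:nonlinSmoothVfield}, is consistent with the argument the paper defers to \cite{diss}.
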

For dimensions $d<4$ the interpolation error estimate for vector fields implies \eqref{eq:vecInterpolCond} in Condition~\ref{cond:2b}.
\begin{corollary}
	For $d<4$, geodesic finite elements fulfill Condition~\ref{cond:2b}.
\end{corollary}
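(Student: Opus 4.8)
The plan is to verify Condition~\ref{cond:2b} for geodesic finite elements using the geodesic vector field interpolant of Definition~\ref{def:vfieldinterpol}. First note that for $d<4$ one has $b=2$ in Definition~\ref{def:nonlinSmoothVfield} (with $k=2$, since $2k=4>d$), and the Sobolev embedding gives $W^{2,2}(T_h,u_h^{-1}TM)\hookrightarrow C(\overline T_h,u_h^{-1}TM)$; moreover $P_m(T_h,M)\subset C^\infty(T_h,M)\cap C(\overline T_h,M)\subset W^{2,2}(T_h,M)$, which settles the inclusion requested in Condition~\ref{cond:2b}. Given $u_h\in P_m(T_h,M)$ and $V\in W^{2,2}(T_h,u_h^{-1}TM)$, the continuity of $V$ makes the nodal vectors $V^i\colonequals V(a_i)\in T_{u_h(a_i)}M$ meaningful, and I take $V_I$ to be the interpolating vector field of Definition~\ref{def:vfieldinterpol}; concretely, $v_h(t)\colonequals\Upsilon(\exp_{u_h(a_1)}(tV^1),\dots,\exp_{u_h(a_l)}(tV^l);\cdot)$ is a family in $P_m(T_h,M)$ with $\tfrac{d}{dt}v_h(0)=V_I$, and $v_h(0)=u_h$ because a geodesic interpolant reproduces its own nodal values. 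It then remains to prove the two estimates \eqref{eq:dgeq4bound} and \eqref{eq:vecInterpolCond} for this $V_I$.

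The first estimate \eqref{eq:dgeq4bound}, $\Theta_{2,2,T_h}(V_I)\le C\,\Theta_{2,2,T_h}(V)$, is essentially Proposition~\ref{prop:vecInterpolBd}: one transfers it to the element $T_h$ by the standard passage to the reference element, pulling $(u_h,V)$ back along $F_h$ (geodesic interpolation commutes with this pullback, Lagrange nodes being mapped to Lagrange nodes), applying Proposition~\ref{prop:vecInterpolBd} on $T$ with a constant independent of $h$, and transporting the inequality back via the scaling behaviour of the vector field smoothness descriptor (Lemma~\ref{L:scalingSDVec}).

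For \eqref{eq:vecInterpolCond} the key observation is that, since the base map $u_h\in P_m(T_h,M)$ is itself a geodesic finite element, its geodesic interpolant equals $u_h$; hence Lemma~\ref{L:BHLInterpolVfield} applies with $u=u_I=u_h$ and $k=p=2$ (admissible since $d<4$ places $(u_h,V)$ in $W^{2,2}\cap C(\overline T_h,TM)$ and since $m\ge 1=k-1$). Because $\log_{p}p=0$, the curvature terms in Proposition~\ref{prop:logest} drop out, so $d\log_{p}p=\mathrm{Id}$ and $d_2\log_{p}p=-\mathrm{Id}$, and the left-hand side of the estimate in Lemma~\ref{L:BHLInterpolVfield} reduces to exactly $\|V-V_I\|_{W^{1,2}(T_h,u_h^{-1}TM)}$. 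Applying that lemma to the pulled-back data on the reference element then bounds $\|\widehat V-\widehat{V_I}\|_{W^{1,2}(T)}$ by $C\,C_{u_h,3}\bigl(\dot\Theta_{2,2,T}(\widehat{V_I})+\dot\Theta_{2,2,T}(\widehat V)\bigr)$, with $C_{u_h,3}$ bounded once $u_h(T_h)\subset B_\rho$ for small $\rho$. Using \eqref{eq:dgeq4bound} to absorb the $\widehat{V_I}$ term and Lemma~\ref{L:scalingSDVec} to bound $\dot\Theta_{2,2,T}(\widehat V)\le Ch^{2-d/2}\Theta_{2,2,T_h}(V)$, while $\|V-V_I\|_{W^{1,2}(T_h)}\le Ch^{d/2-1}\|\widehat V-\widehat{V_I}\|_{W^{1,2}(T)}$ for $h\le1$, the two powers combine to the desired single power of $h$, giving $\|V-V_I\|_{W^{1,2}(T_h,u_h^{-1}TM)}\le Ch\,\Theta_{2,2,T_h}(V)$.

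The main obstacle --- and the reason for the restriction $d<4$ --- is the availability of nodal values: one needs $W^{2,2}(T_h)\hookrightarrow C^0$ both to define the geodesic vector field interpolant from the nodal data and to place $(u_h,V)$ in the hypotheses of Proposition~\ref{prop:vecInterpolBd} and Lemma~\ref{L:BHLInterpolVfield}. The second, more conceptual, point is recognising that over a base map already in $P_m(T_h,M)$ the nonlinear defect controlled by Lemma~\ref{L:BHLInterpolVfield} collapses to the true interpolation error $V-V_I$; once this is seen, the remaining work is routine reference-element and scaling bookkeeping. For $d\ge4$ this nodal construction breaks down (then $b>2$ and $W^{2,2}\not\hookrightarrow C^0$), and a genuinely different approximation operator for vector fields would be needed.
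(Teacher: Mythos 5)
Your proof is correct and follows essentially the same route as the paper: Proposition~\ref{prop:vecInterpolBd} gives \eqref{eq:dgeq4bound}, Lemma~\ref{L:BHLInterpolVfield} applied with $u=u_I=u_h$ (so that its left-hand side collapses to $\|V-V_I\|_{W^{1,2}}$) gives \eqref{eq:vecInterpolCond}, and the element-level estimates follow from the scaling properties in Lemma~\ref{L:scalingSDVec}; your identification of $d<4$ with the continuity requirement $W^{2,2}\hookrightarrow C^0$ matches the paper's stated reason for the dimension restriction. No gaps.
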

As for functions, the estimates on the elements $T_{h}$ are obtained via the scaling properties of the Sobolev norm and the smoothness descriptor (see Lemma~\ref{L:scalingSDVec}).

The restriction on the dimension comes from the fact that geodesic interpolation is only defined for continuous vector fields.
If we have a function $u\in C(\Omega,M)$, and a vector field $V\in W^{k,p}(\Omega,u^{-1}TM)$ along $u$ that is of a different smoothness than the base function $u$, we say that $(u,V)\in C(\Omega,M)\times_{\pi} W^{k,p}(\Omega,TM)$. This notation extends to other specifications of smoothness as well.

In order to generalize to vector fields that may have discontinuities, we use the interpolation of mollified vector fields.
Mollification, as well as other techniques to deal with discontinuities like, e.g., Cl\'ement-interpolation, always introduce a kind of non-locality, i.e., we cannot expect the estimates~\eqref{eq:dgeq4bound} and~\eqref{eq:vecInterpolCond} to hold locally on each element $T_{h}$ of the grid anymore. Thus, we weaken Condition~\ref{cond:2b} to the following:
\begin{condition}\label{cond:2c}
	For any element $T_{h}$ from the grid $G$, let $S_{h}^{m}(T_{h},M)\subset W^{2,b}(T_{h},M)$ for $b$ as in Definition~\ref{def:nonlinSmoothVfield} with $p=k=2$.
	Let 
	\begin{align*}
	\overline{\omega}_{h}\colonequals \bigcup\left\{\overline{T}\in G\;:\;\overline{T}_{h}\cap \overline{T}\neq\emptyset\right\}
	\end{align*}
	be the macro-element determined by $T_{h}$.
	Given $(u_{h},V)\in S_{h}^{m}(\omega_{h},M)\times_{\pi} W^{2,2}(\omega_{h},TM)$, let there exist a family of maps $v_{h}(t)\in S_{h}^{m}(\omega_{h},M)$ with $v_{h}(0)=u_{h}$ and and constants $\Cl{c:cond2c}, \Cl{c:cond2c2}$ such that
	for $V_{I}=\frac{d}{dt}v_{h}(0)$ the estimates
	\begin{align}\label{eq:dgeq4boundc}
	\Theta_{2,2,T_{h}}(V_{I})\leq \Cr{c:cond2c}\; \Theta_{2,2,\omega_{h}}(V)
	\end{align}
	and
	\begin{align}\label{eq:vecInterpolCondc}
	\| V - V_{I}\|_{W^{1,2}(T_{h},u_{h}^{-1}TM)} &\leq \Cl{c:cond2c2} h \;\left(
	\Theta_{2,2,\omega_{h}}(V) + 	\Theta_{2,2,\omega_{h}}(V_{I})\right)
	\end{align}
	hold for the grid-dependent smoothness descriptor $\Theta_{2,2,\omega_{h}}$.
\end{condition}
As the shape regularity of the grid implies bounded overlap of the $\omega_{h}$, we can view Condition~\ref{cond:2c} as tantamount to Condition~\ref{cond:2b}.

Note that we are still restricted to continuous functions, as this is not only needed for point values but also for using coordinates, and the logarithm along a function. Even for smoothing a map $u$, the radius of the image under $u$ needs to be controllable by the radius of the preimage \cite{Karcher1977} (although $u$ might be undefined on a set of measure 0).

It is well-known how to define the mollification of a function $u\in C(\Omega,M)$ \cite{Karcher1977}, namely, for a small smoothing parameter $\epsilon>0$ we define the mollification operator $R_{\epsilon}:C(\Omega,M)\to C^{\infty}(\Omega_{\epsilon},M)$ by
\begin{align}\label{eq:mineps}
R_{\epsilon}u(x)\colonequals u_{\epsilon}(x)\colonequals \argmin_{q\in M}\frac{1}{2}\int_{B_{\epsilon}(x)}\phi_{\epsilon}(|x-y|)d^{2}(u(y),q)\;dy
\end{align}
at every $x\in \Omega_{\epsilon}\colonequals \left\{ x\in \Omega\;:\;d_{\RR^{d}}(x,\partial\Omega)>\epsilon\right\}$, where $\phi_{\epsilon}$ denotes a standard mollification kernel (cf., e.g., \cite{Evans}).
This minimization is well-posed for continuous functions $u$ and $\epsilon$ small enough depending on the injectivity radius of $M$ at $u(x)$.

Making use of mollification requires the extension of maps to a fixed larger domain $\tilde{\Omega}$, such that the closure of $\Omega$ is contained in $\tilde{\Omega}$. We then choose the mollification parameter $\epsilon$ small enough such that $\Omega\subset \tilde{\Omega}_{\epsilon}$.

Thus, given $u_{h}\in S_{h}(\Omega,M)$ and $V\in L^{1}(\Omega,u_{h}^{-1}TM)$, we first extend $u$ and $V$ to $\tilde{\Omega}$, and then set %for a small mollification parameter $\epsilon>0$
\begin{align}\label{eq:smoothedFamily}
I_{\epsilon h}\big(u_{h;V}(t)\big)\colonequals \left(R_{h\epsilon}\big(u_{h;V}(t)\big)\right)_{I},
\end{align}
where $u_{h;V}(t)\in L^{1}(\Omega,M)$ with $u_{h;V}(0)=u_{h}$ and $\dot u_{h;V}(0)=V$.
%In order to define $v_{h}$ on all of $T_{h}$ we need to extend $(u_{h},V)$ smoothly outside of $T_{h}$ to some $\hat T_{h}$ with $T_{h}\subset \hat T_{h\epsilon}$.
We define a discrete family of maps by first smoothing with a mollification parameter proportional to the discretization parameter $h$, and then interpolating. Note that the operator $I_{\epsilon h}:L^{1}(\Omega,M)\to S_{h}(\Omega,M)$ is only well-defined if the mollified function still fulfills the condition that its values at neighboring Lagrangian nodes are contained in a $\rho$-ball. Thus, $\epsilon$ has to be small enough to ensure this.

The operator $I_{\epsilon h}$ defines an interpolation operator on vector fields by
\begin{align*}
H_{\epsilon h}: S_{h}(\Omega,M)\times_{\pi} L^{1}(\Omega,TM)&\to S_{h}(\Omega,TM),\\
(u_{h},V)&\mapsto (I_{\epsilon h}u, V_{\epsilon I}),
\end{align*}
where $V_{\epsilon I}=\frac{d}{dt}|_{t=0}I_{\epsilon h}\left(u_{h;V}(t)\right)$ denotes the variational field of the family.
However, $V_{\epsilon I}$ is not defined along the original discrete function $u_{h}$ as $I_{\epsilon h}u_{h}\neq u_{h}$. To overcome this problem, we could introduce a transport of this vector field to $u_{h}$ and interpolate along this discrete function. This leads to the unfavorable property that the obtained interpolation operator for vector fields does not preserve discrete vector fields.

Borrowing ideas from \cite{arnold2006finite}, we instead restrict the operator $I_{\epsilon h}$ to the set of discrete maps in $K(\tilde{\rho})\subset S_{h}$ whose neighboring nodal values at the Lagrangian nodes $a_{i}$ are not further than $\tilde{\rho}$ apart, where $\tilde{\rho}$ is smaller than $\rho$. Note that we can identify this set with a compact set $\tilde{K}(\tilde{\rho})\subset M^{n_{l}}$, where $n_{l}$ denotes the number of Lagrange nodes on $G$.
With this identification $E_{\epsilon h}\colonequals I_{\epsilon h}|_{K(\tilde{\rho})}$ corresponds to an operator $\tilde E_{\epsilon h}:\tilde{K}(\tilde{\rho})\to  M^{n_{l}}$. We further introduce a dependence on $\epsilon$, i.e.
$\tilde E_{h}:[0,\delta]\times \tilde{K}(\tilde{\rho})\to  M^{n_{l}}$, $\tilde{E}_{h}(\epsilon,\cdot)\colonequals \tilde{E}_{\epsilon h}$. For any $\vec{v}\in \tilde{K}(\tilde{\rho})$, the differential $d\tilde{E}_{h}((0,\vec{v}))$ is the identity map and thus invertible. Thus, there exists a neighborhood $U_{\vec{v}}\subset \tilde{K}(\tilde{\rho})$ such that $\tilde{E}_{h}$ is invertible on $[0,\delta(\vec{v}))\times U_{\vec{v}}$. As $\tilde{K}(\tilde{\rho})$ is compact, we can use a finite covering and choose a minimal $\delta$ to obtain invertibility of $\tilde {E}_{h}$ on $[0,\delta_{\min})\times \tilde{K}(\tilde{\rho})$.

Thus $E_{\epsilon h}$ is invertible, and
we define a family of discrete maps by
\begin{align*}
v_{h}(t)\colonequals E_{\epsilon h}^{-1}\circ I_{\epsilon h}\big(u_{h;V}(t)\big).
\end{align*}
This is well-defined for $u_{h;V}(t)\in K(\tilde\rho)$, i.e., for functions $u_{h}$ such that values at neighboring Lagrange nodes are contained in an open $\tilde \rho$-ball, and $t$ small enough. Obviously, the property $v_{h}(0)=u_{h}$ is fulfilled.
Finally, we define the interpolation of $V$ by
\begin{align*}
V_{I}\colonequals \frac{d}{dt}\bigg|_{t=0}v_{h}(t).
\end{align*}
This defines an operator
\begin{align*}
F_{\epsilon h}: S_{h}(\Omega,M)\times_{\pi} L^{1}(\Omega,TM)&\to S_{h}(\Omega,TM),\\ (u_{h},V)&\mapsto (u_{h},V_{I}),
\end{align*}
that leaves discrete vector fields unchanged.

Assuming $(u_{h},V)\in S_{h}^{m}(\Omega,M)\times_{\pi} W^{2,2}(\Omega,TM)$, we now need to show~\eqref{eq:dgeq4boundc} and~\eqref{eq:vecInterpolCondc} for $V_{I}$ on each element $T_{h}$ with corresponding macroelement $\omega_{h}$.
As we have $F_{\epsilon h}\circ  H_{\epsilon h}= H_{\epsilon h}=H_{\epsilon h}\circ F_{\epsilon h}$, % , as we have
%\begin{align*}
%H_{\epsilon h}(F_{\epsilon h}(u_{h},V))
%&=H_{\epsilon h}(u_{h},  DE_{\epsilon h}^{-1}(I_{\epsilon h}u_{h})(V_{\epsilon I}))\\
%&=(I_{\epsilon h}u_{h}, DI_{\epsilon h}(u_{h})(DE_{\epsilon h}^{-1}(I_{\epsilon h}u_{h})(V_{\epsilon I}))\\
%&=(I_{\epsilon h}u_{h}, V_{\epsilon I}).
%\end{align*}
we will first prove the estimates for $V_{\epsilon I}$, and then use these relations to deduce them for $V_{I}$. As $V_{\epsilon I}$ is defined by subsequent mollification and interpolation, we will begin by considering the effect of mollification.

\begin{proposition}[Boundedness of mollification]\label{prop:mollibd}
	Let $T_{h}\in G$ with macro-element $\omega_{h}$. Let $\epsilon$ be small enough such that $T_{h}\subset \tilde{T}_{h;\epsilon}\subset \tilde{T}_{h}\subset \omega_{h}$.
	For $(u,V)\in C\cap W^{k,p_{k}}(\omega_{h},M)\times_{\pi} W^{k,p}(\omega_{h},TM)$, the mollification $V_{\epsilon}=\frac{d}{dt}\big|_{t=0}R_{\epsilon}\left(u_{V}(t)\right)$ fulfills
	\begin{align}\label{eq:fullbdd}
	\dot \Theta_{k,p,T_{h}}(V_{\epsilon})\leq C\; \dot \Theta_{k,p,\omega_{h}}(V)
	\end{align}
	if $\epsilon$ is small enough depending on the curvature of $M$.
\end{proposition}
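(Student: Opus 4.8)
The plan is to exploit the first-order optimality condition characterizing the mollification, just as Lemma~\ref{L:varInterpol} characterizes geodesic interpolation. Writing $u_\epsilon(t)\colonequals R_\epsilon(u_V(t))$, the minimizer in \eqref{eq:mineps} satisfies
\begin{align}\label{eq:molliopt}
\int_{B_\epsilon(x)}\phi_\epsilon(|x-y|)\log_{u_\epsilon(t)(x)}u_V(t)(y)\;dy = 0\ \in\ T_{u_\epsilon(t)(x)}M
\end{align}
for every $x\in T_h$ and small $t$; this is well posed and, by the implicit function theorem, smooth in $(t,x)$ once $\epsilon$ is small enough that $\diam u(B_\epsilon(x))$ is small relative to the curvature (cf. Lemma~\ref{L:wellposed}). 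Differentiating \eqref{eq:molliopt} at $t=0$ and writing $u_\epsilon\colonequals u_\epsilon(0)$ gives the implicit relation
\begin{align}\label{eq:molliimpl}
\int_{B_\epsilon(x)}\phi_\epsilon(|x-y|)\Bigl(d_2\log_{u_\epsilon(x)}u(y)\bigl(V_\epsilon(x)\bigr)+d\log_{u_\epsilon(x)}u(y)\bigl(V(y)\bigr)\Bigr)\;dy = 0.
\end{align}
Since $\int\phi_\epsilon = 1$ and, by \eqref{eq:logest1}, $\|d_2\log_{u_\epsilon(x)}u(y)+Id\|\le |\Rm|_\infty\,d^2(u_\epsilon(x),u(y))$, the operator multiplying $V_\epsilon(x)$ in \eqref{eq:molliimpl} is invertible as soon as $|\Rm|_\infty\rho^2<1$; solving for $V_\epsilon(x)$ and using $\|d\log\|\le 1+|\Rm|_\infty\rho^2$ yields the pointwise bound $|V_\epsilon(x)|_{g(u_\epsilon(x))}\le C\int_{B_\epsilon(x)}\phi_\epsilon(|x-y|)\,|V(y)|_{g(u(y))}\,dy$. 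Differentiating the $t=0$ case of \eqref{eq:molliopt} in $x$ and arguing the same way controls $|du_\epsilon(x)|$ by $C\int_{B_\epsilon(x)}\phi_\epsilon(|x-y|)\,|du(y)|\,dy$, and iterating gives analogous bounds for $|\nabla^{j}u_\epsilon(x)|$.

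The heart of the proof is bounding the higher covariant derivatives $\nabla^{\multind{\beta}_0}V_\epsilon$ and the weighted products $|\nabla^{\multind{\beta}_0}V_\epsilon|\prod_{j\ge1}|\nabla^{\multind{\beta}_j}u_\epsilon|$ that enter $\dot\Theta_{k,p,T_h}(V_\epsilon)$ in Definition~\ref{def:nonlinSmoothVfield}. I would apply covariant $x$-derivatives along $u_\epsilon$ to \eqref{eq:molliimpl} up to order $k-1$, expanding by the Leibniz rule and the Fa\`{a} di Bruno formula. Two effects occur: differentiating $y\mapsto\log_{u_\epsilon(x)}u(y)$ brings down $d^{j}\log$ applied to the covariant derivatives $du,\nabla^2u,\dots$, while differentiating in $x$ brings down $d_2^{i}d^{j}\log$ applied to $du_\epsilon, V_\epsilon,\nabla V_\epsilon,\dots$; all of these are uniformly bounded by \eqref{eq:logest1} and \eqref{eq:logest2} on the small image ball. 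The only dangerous terms are those in which an $x$-derivative hits the kernel $\phi_\epsilon(|x-y|)$, producing a factor $\sim\epsilon^{-1}$; these are removed exactly as in the linear theory by using $\partial_{x^\alpha}\phi_\epsilon(|x-y|)=-\partial_{y^\alpha}\phi_\epsilon(|x-y|)$ and integrating by parts in $y$, which transfers the derivative onto $u$ (hence onto $du$, $\nabla^2u$, \dots) at no cost in $\epsilon$, and, after the later rescaling to the reference element, at no bad cost in $h$. Collecting terms, using the already-established pointwise bounds on $V_\epsilon$ and $\nabla^j u_\epsilon$, and absorbing the lower-order covariant-derivative products via Proposition~\ref{prop:SDExtraTerms}, one arrives at a pointwise estimate
\begin{align*}
\bigl|\nabla^{\multind{\beta}_0}V_\epsilon(x)\bigr|\prod_{j\ge1}\bigl|\nabla^{\multind{\beta}_j}u_\epsilon(x)\bigr|\ \le\ C\int_{B_\epsilon(x)}\phi_\epsilon(|x-y|)\sum\bigl|\nabla^{\multind{\gamma}_0}V(y)\bigr|\prod_{j\ge1}\bigl|\nabla^{\multind{\gamma}_j}u(y)\bigr|\;dy,
\end{align*}
where on the right the multi-indices again sum to total order $k$.

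Finally I would raise this to the $p$-th power, integrate over $x\in T_h$, and apply Fubini together with Jensen's inequality (equivalently Young's inequality for the convolution with $\phi_\epsilon$) to move the $y$-integration outside. For $x\in T_h$ and $\epsilon$ small one has $B_\epsilon(x)\subset\tilde T_h\subset\omega_h$, so the right-hand side is bounded by the corresponding integrals over $\omega_h$, that is, by $\dot\Theta_{k,p,\omega_h}(V)^p$; the remaining term $\|V_\epsilon\|_{L^a(T_h)}^p\,\dot\theta_{k,b,T_h}(u_\epsilon)^p$ of $\dot\Theta_{k,p,T_h}(V_\epsilon)$ is handled the same way, using $\|V_\epsilon\|_{L^a(T_h)}\le C\|V\|_{L^a(\omega_h)}$ and the scalar version of the above computation for $\dot\theta_{k,b,T_h}(u_\epsilon)\le C\dot\theta_{k,b,\omega_h}(u)$. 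I expect the main obstacle to be the bookkeeping in the higher-order step: one must organize the iterated differentiation of \eqref{eq:molliimpl} so that every derivative of $\phi_\epsilon$ is eliminated by integration by parts, the surviving integrand has precisely the multilinear form appearing in the smoothness descriptor, and the curvature contributions coming from commuting covariant derivatives and from Proposition~\ref{prop:logest} are absorbed by taking $\epsilon$, hence $\rho$, small enough; this is routine in principle but must be carried out carefully to obtain a constant independent of $\epsilon$ and $h$.
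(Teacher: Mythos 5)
Your proposal is correct and follows essentially the same route as the paper's proof: both start from the first-order optimality condition characterizing $V_{\epsilon}$, invert the $d_{2}\log$-operator via Proposition~\ref{prop:logest} to get the pointwise bound $|V_{\epsilon}(x)|\leq C\int\phi_{\epsilon}|V|$, and then differentiate the implicit relation inductively, with derivatives falling on the kernel removed by integrating by parts in $y$ (which the paper does silently but visibly in its displayed $k=1$ identity). Your write-up merely makes the integration-by-parts step and the final Young/Jensen convolution estimate explicit.
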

\begin{proof}
	A variational characterization of $V_{\epsilon}$ is given by
	\begin{align}\label{eq:1stordervec}
	\int_{B_{\epsilon}(x)}\phi_{\epsilon}(|x-y|)
	\left(d\log_{u_{\epsilon}(x)}u(y) (V(y)) + d_{2}\log_{u_{\epsilon}(x)}u(y) (V_{\epsilon}(x))\right)
	\;dy=0\in T_{u_{\epsilon}(x)}M\;.
	\end{align}
	As $u$ is continuous, $u_{\epsilon}=R_{\epsilon}u$ converges to $u$ uniformly on compact subsets.
	Choosing $\epsilon$ small enough such that $d(u(x),u(y))+d(u_{\epsilon}(x),u(x))\leq \frac{1}{\sqrt{2\|\Rm\|_{\infty}}}$ for $|x-y|\leq \epsilon$, we can use Proposition~\ref{prop:logest} to estimate
	\begin{align*}
	|V_{\epsilon}(x)|&\leq 3 \int_{B_{\epsilon}(x)}\phi_{\epsilon}(|y-x|)\left|V(y)\right| \;dy.
	\end{align*}
	Thus, for $k=0$ the bound~\eqref{eq:fullbdd} is fulfilled.
	As the differential and mollification can be interchanged for functions, i.e., $\left(du^{\alpha}\right)_{\epsilon}=du_{\epsilon}^{\alpha}$, we can write for $k=1$
	\begin{align*}
	\dot\Theta_{1,p}^{p}(V_{\epsilon})
	%&=\|\nabla V_{\epsilon}\|_{L^{p}}^{p} + \|du_{\epsilon}\|_{L^{p_{0}}}^{p}\|V_{\epsilon}\|_{L^{r_{0}}^{p}\\
	&=\|\nabla V_{\epsilon}\|_{L^{p}}^{p} + \dot\Theta_{0,p_{0}}^{p}(du_{\epsilon})\dot\Theta^{p}_{0,r_{0}}(V_{\epsilon})\\
	&\leq \|\nabla V_{\epsilon}\|_{L^{p}}^{p} + C\; \dot\Theta_{0,p_{0}}^{p}(du)\dot\Theta^{p}_{0,r_{0}}(V).
	\end{align*}
	Since for vector fields $(\nabla V)_{\epsilon}\neq \nabla V_{\epsilon}$, we differentiate~\eqref{eq:1stordervec} to obtain
	\begin{multline*}
	-\int_{B_{\epsilon}(x)}\phi_{\epsilon}(|x-y|)
	d_{2}\log_{u_{\epsilon}(x)}u(y) \left(\nabla_{du_{\epsilon}}^{\alpha} V_{\epsilon}(x)\right)
	\;dy\\
	\begin{aligned}
	&= 		\int_{B_{\epsilon}(x)}\phi_{\epsilon}(|x-y|)
	\big( d_{2}d \log_{u_{\epsilon}(x)}u(y)\left(V(y),d^{\alpha}u_{\epsilon}(x)\right)\\
	&\qquad +d^2 \log_{u_{\epsilon}(x)}u(y)\left(V(y),d^{\alpha}u(y)\right)
	+  d_{2}^{2}\log_{u_{\epsilon}(x)}u(y)\left(V_{\epsilon}(x),d^{\alpha}u_{\epsilon}(x)\right)\\
	&\qquad +
	dd_{2} \log_{u_{\epsilon}(x)}u(y)\left(V_{\epsilon}(y),d^{\alpha}u(y)\right)
	+ d\log_{u_{\epsilon}(x)}u(y)\left(\nabla_{du}^{\alpha}V(y)\right)
	\big)\;dy.
	\end{aligned}
	\end{multline*}
	Proposition~\ref{prop:logest} then implies~\eqref{eq:fullbdd} for $k=1$.
	
	Induction over $k$ by further differentiating~\eqref{eq:1stordervec} yields the assertion. Note that we always obtain a description of the highest-order derivatives of $V_{\epsilon}$ in terms of the same derivatives of $V$, mixed lower-order derivatives of $V$, $V_{\epsilon}$, $u$, and $u_{\epsilon}$, and up to $k$-th order derivatives of the logarithm.
\end{proof}

\begin{proposition}
	Let $T_{h}\in G$ with macro-element $\omega_{h}$, and let $\epsilon$ be small enough. 
	Let $(u_{h},V)\in S_{h}^{m}(\omega_{h},M)\times_{\pi} W^{2,2}(\omega_{h},TM)$,
	such that values of $u_{h}$ at neighboring Lagrange nodes are contained in an open $\tilde{\rho}$-ball, where $\tilde\rho>0$ is small enough.
	Then $(u_{h},V_{I})=F_{\epsilon h}(u,V)$ fulfills \eqref{eq:dgeq4boundc}.
\end{proposition}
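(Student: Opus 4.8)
The plan is to exploit the operator identity $H_{\epsilon h}=H_{\epsilon h}\circ F_{\epsilon h}$ established just above: applied to $(u_h,V)$ it gives $(I_{\epsilon h}u_h,V_{\epsilon I})=H_{\epsilon h}(u_h,V_I)$, i.e.\ the mollify-then-interpolate field of $V_I$ is the \emph{same} object $V_{\epsilon I}$ as the mollify-then-interpolate field of $V$. So it suffices to prove the two inequalities
\[
\Theta_{2,2,T_h}(V_{\epsilon I})\le C\,\Theta_{2,2,\omega_h}(V)\qquad\text{and}\qquad \Theta_{2,2,T_h}(V_I)\le C\,\Theta_{2,2,T_h}(V_{\epsilon I}),
\]
and compose them; the constant $\Cr{c:cond2c}$ of \eqref{eq:dgeq4boundc} is then their product. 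The first is the ``forward'' estimate through mollification and interpolation, the second the ``backward'' one that undoes the finite-dimensional perturbation $E_{\epsilon h}$.

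For the forward estimate I would write $u_{h,\epsilon}\colonequals R_{h\epsilon}u_h$ and $V_\epsilon\colonequals \frac{d}{dt}\big|_{t=0}R_{h\epsilon}(u_{h;V}(t))$, so that $(u_{h,\epsilon},V_\epsilon)$ is the mollification of $(u_h,V)$ and $V_{\epsilon I}$ is precisely its geodesic vector-field interpolant in the sense of Definition~\ref{def:vfieldinterpol}. Proposition~\ref{prop:mollibd} with $k=2$, $p=2$ (summed over $k=0,1,2$ for the inhomogeneous descriptor, the weight $\dot\theta_{2,b}(u_{h,\epsilon})$ being controlled by $\dot\theta_{2,b,\omega_h}(u_h)$ via the scalar analogue of that proposition) gives $\Theta_{2,2,\tilde T_h}(V_\epsilon)\le C\,\Theta_{2,2,\omega_h}(V)$ for $\epsilon$ small. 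Then Proposition~\ref{prop:vecInterpolBd} --- whose constant may be taken independent of the pointwise bound $R$ because geodesic vector-field interpolation is linear in the nodal vectors for a fixed base map, and whose hypothesis $V_\epsilon$ meets since mollification at scale $h\epsilon$ makes it bounded --- transferred from the reference element to $T_h$ by the scaling Lemma~\ref{L:scalingSDVec} exactly as for functions (the $h$-powers on the two sides cancelling), yields $\Theta_{2,2,T_h}(V_{\epsilon I})\le C\,\Theta_{2,2,\tilde T_h}(V_\epsilon)$. Chaining gives the forward estimate.

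For the backward estimate I would differentiate $v_h(t)=E_{\epsilon h}^{-1}\circ I_{\epsilon h}(u_{h;V}(t))$ at $t=0$ to obtain $V_I=d(E_{\epsilon h}^{-1})\big|_{I_{\epsilon h}u_h}(V_{\epsilon I})$. Identifying discrete maps near $u_h$ with their nodal vectors, $E_{\epsilon h}$ is $\tilde E_h(\epsilon,\cdot)$ with $d\tilde E_h(0,\cdot)=\mathrm{Id}$, so --- by the finite-covering/compactness argument preceding the statement, reducing as usual to a single element since mollification at scale $h\epsilon$ couples only neighbouring Lagrange nodes --- for $\epsilon<\delta_{\min}$ both $d\tilde E_h(\epsilon,\cdot)$ and its inverse are bounded uniformly in $h$; hence $\max_i|\pi(V_I(a_i))|\le C\max_i|V_{\epsilon I}(a_i)|$, where $\pi$ is parallel transport along the short geodesic from $(I_{\epsilon h}u_h)(a_i)$ to $u_h(a_i)$ (these points agree up to $o(1)$ since $R_{h\epsilon}u_h\to u_h$ uniformly). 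Since $V_I$ is the geodesic vector-field interpolant \emph{along} $u_h$ of its own nodal values, the implicit formula \eqref{eq:VIimplicit}, the bound \eqref{eq:absBoundVI}, Proposition~\ref{prop:logest} for the derivatives of $\log$, and Proposition~\ref{prop:m+1Deriv} for $\dot\theta_{2,b,T_h}(u_h)$ then bound every constituent of $\Theta_{2,2,T_h}(V_I)$ --- the terms $\|V_I\|_{L^a}$, $\|\nabla V_I\|$, $\|\nabla^2 V_I\|_{L^2}$, and the $\dot\theta_{2,b}(u_h)$-weighted ones --- by $C\,\Theta_{2,2,T_h}(V_{\epsilon I})$, with $C$ depending on the fixed proportionality constant $\epsilon$ but not on $h$.

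The main obstacle, I expect, is this last paragraph: keeping the weighted inhomogeneous descriptor $\Theta_{2,2}$ mutually comparable along the chain of base-point changes from $u_h$ to $R_{h\epsilon}u_h$ to $I_{\epsilon h}u_h$ and back to $u_h$, and securing the $h$-uniform invertibility of $dE_{\epsilon h}$; both rest on the mollification and interpolation bounds already in hand together with the compactness argument that fixes $\delta_{\min}$. The remaining ingredients --- the $F_h$-transfer via the scaling lemmas, the parallel transports across $C^0$-small distances, and the summation over the elements of $\omega_h$ into the grid-dependent descriptor --- are routine and mirror the function case.
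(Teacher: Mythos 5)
Your proposal follows the paper's own proof essentially verbatim: the same splitting into a forward estimate $\Theta_{2,2,T_{h}}(V_{\epsilon I})\leq C\,\Theta_{2,2,\omega_{h}}(V)$ via Propositions~\ref{prop:mollibd} and~\ref{prop:vecInterpolBd}, followed by the backward estimate $\Theta_{2,2,T_{h}}(V_{I})\leq C\,\Theta_{2,2,T_{h}}(V_{\epsilon I})$ obtained from $V_{I}=dE_{\epsilon h}^{-1}(I_{\epsilon h}u_{h})(V_{\epsilon I})$ and the boundedness of $dE_{\epsilon h}^{-1}$ coming from the inverse-function-theorem/compactness construction of $E_{\epsilon h}^{-1}$. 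Your added detail on how the nodal values and the implicit formula~\eqref{eq:VIimplicit} realize the operator-norm bound is a legitimate unpacking of what the paper compresses into ``by the chain rule.''
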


\begin{proof}
	First note that as $V_{\epsilon I}$ given by $(I_{\epsilon h}u_{h},V_{\epsilon I})=H_{\epsilon h}(u_{h},V)$ is defined  by subsequent mollification and interpolation, Propositions~\ref{prop:mollibd} and~\ref{prop:vecInterpolBd} imply
	\begin{align*}
	\Theta_{2,2,T_{h}}(V_{\epsilon I})\leq C\; \Theta_{2,2,\omega_{h}}(V).
	\end{align*}
	We further have $V_{I}= dE_{\epsilon h}^{-1}(I_{\epsilon h}u_{h})(V_{\epsilon I})$. Thus, we obtain by the chain rule
	\begin{align}\label{eq:boundByhatV}
	\Theta_{2,2,T_{h}}(V_{I})\leq C \Theta_{2,2,T_{h}}(V_{\epsilon I}),
	\end{align}
	where the constant depends on the operator norm $\|dE_{\epsilon h}^{-1}\|$.
	As $E_{\epsilon h}^{-1}$ is defined via the inverse function theorem, it is bounded, as $E_{\epsilon h}$ itself is bounded.
\end{proof}
In order to prove~\eqref{eq:vecInterpolCond}, we define on vector fields that are in 
$C\cap W^{2,b}(T,M)\times_{\pi} W^{2,2}(T,TM)$ on each element $T$
the mapping $D_{T}$ by
\begin{align*}
D_{T}((u,U),(v,V))\colonequals\left\| d\log_{u(x)}v(x)(V(x))+d_{2}\log_{u(x)}v(x)(U(x))\right\|_{W^{1,2}(T,u^{-1}TM)}.
\end{align*}
Note that $D_{T}$ fulfills an approximate triangle inequality of the form
\begin{align*}
D_{T}\left((u,U),(v,V)\right)&\leq C\big(D_{T}\left((u,U),(w,W)\right) + D_{T}\left((w,W),(v,V)\right)\big)\\
&\qquad  + C(d(u,w)+d(w,v))\big(\Theta_{2,2,T}(U)+ \Theta_{2,2,T}(V)+\Theta_{2,2,T}(W)\big).
\end{align*}
It can be proven by a technical calculation using Proposition~\ref{prop:logest} to estimate derivatives of $\log$.

This triangle inequality again allows us to consider the effect of mollification separately.
\begin{proposition}[Error estimate for mollification]\label{prop:smoothedErrorEst}
	Let $T_{h}\in G$ with macro-element $\omega_{h}$,
	let $(u_{h},V)\in S_{h}^{m}(\omega_{h},M)\times_{\pi} W^{2,2}(\omega_{h},TM)$, and $(u_{h\epsilon},V_{\epsilon})=R_{h\epsilon}(u_{h},V)$.
	Then, if $\epsilon$ is small enough,
	\begin{align}\label{eq:smoothedErrorEst}
	D_{T_{h}}((u_{h\epsilon},V_{\epsilon}),(u_{h},V))
	\leq C\;h\epsilon  \Theta_{2,2,\omega_{h}}(V).
	\end{align}
\end{proposition}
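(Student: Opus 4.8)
The plan is to argue directly from the first‑order optimality conditions that define $u_{h\epsilon}=R_{h\epsilon}u_h$ and $V_\epsilon$. By the definition of $D_{T_h}$, the quantity to be estimated is the $W^{1,2}(T_h,u_{h\epsilon}^{-1}TM)$‑norm of the vector field
\begin{align*}
W(x)\colonequals d\log_{u_{h\epsilon}(x)}u_h(x)\big(V(x)\big)+d_2\log_{u_{h\epsilon}(x)}u_h(x)\big(V_\epsilon(x)\big),
\end{align*}
which is exactly the integrand of the variational characterisation~\eqref{eq:1stordervec} of $V_\epsilon$ (with mollification radius $h\epsilon$) evaluated on the diagonal $y=x$. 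Since the $\phi_{h\epsilon}$‑weighted average of that integrand over $y\in B_{h\epsilon}(x)$ vanishes, I would write
\begin{align*}
W(x)=\int_{B_{h\epsilon}(x)}\phi_{h\epsilon}(|x-y|)\big(g_x(x)-g_x(y)\big)\,dy,\qquad g_x(y)\colonequals d\log_{u_{h\epsilon}(x)}u_h(y)\big(V(y)\big)+d_2\log_{u_{h\epsilon}(x)}u_h(y)\big(V_\epsilon(x)\big),
\end{align*}
and expand $g_x(x)-g_x(y)$ by the integral form of the first‑order Taylor remainder in $y$. For $\epsilon$ small enough $u_{h\epsilon}\to u_h$ uniformly (as in the proof of Proposition~\ref{prop:mollibd}), so all base points lie in a common $\rho$‑ball and every derivative of $\log$ occurring below is bounded by Proposition~\ref{prop:logest}; moreover $|V_\epsilon(x)|\le C\,\|V\|_{L^\infty(B_{h\epsilon}(x))}$, again by the estimate in that proof.

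For the $L^2$‑part, the remainder gives $g_x(x)-g_x(y)=-\int_0^1\nabla_y g_x\big(x+s(y-x)\big)[y-x]\,ds$ on each element, with $\nabla_y g_x$ built from $\nabla V$, $du_h\cdot V$ and $du_h\cdot V_\epsilon$ weighted by first derivatives of $\log$; since $|y-x|\le h\epsilon$ this yields $|W(x)|\le C\,h\epsilon$ times a local average of $|\nabla_y g_x|$, and integrating over $T_h$ (using that averaging against $\phi_{h\epsilon}$ is $L^2$‑bounded by Young's inequality), followed by Hölder's inequality exactly as in Proposition~\ref{prop:mollibd}, produces $\|W\|_{L^2(T_h)}\le C\,h\epsilon\,\Theta_{1,2,\omega_h}(V)\le C\,h\epsilon\,\Theta_{2,2,\omega_h}(V)$. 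For the gradient I would differentiate $W$ in $x$: the derivatives hitting the first slot $(u_{h\epsilon}(x),V_\epsilon(x))$ of $g_x$ cancel between $g_x(x)$ and the averaged term (using $\nabla_x u_{h\epsilon}=(\nabla_x u_h)_\epsilon$ and the boundedness of mollification from Proposition~\ref{prop:mollibd}), so $\nabla_x W(x)$ is again a $\phi_{h\epsilon}$‑weighted difference of $\nabla_y g_x$ at points at most $h\epsilon$ apart; one more integral remainder gives $|\nabla_x W(x)|\le C\,h\epsilon$ times a local average of $|\nabla_y^2 g_x|$, where $\nabla_y^2 g_x$ involves $\nabla^2 V$, $\nabla^2 u_h$ and the products $\nabla V\cdot du_h$, $V\cdot(du_h)^2$, $V\cdot\nabla^2 u_h$ weighted by derivatives of $\log$ of order $\le 3$. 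Summing the Hölder estimates over the elements of $\omega_h$, these are precisely the quantities assembled in $\Theta_{2,2,\omega_h}(V)$, so $\|\nabla W\|_{L^2(T_h)}\le C\,h\epsilon\,\Theta_{2,2,\omega_h}(V)$, and combining the two bounds yields~\eqref{eq:smoothedErrorEst}.

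The main obstacle is the behaviour near element faces: for $x\in T_h$ within $h\epsilon$ of $\partial T_h$ the averaging ball $B_{h\epsilon}(x)$ reaches into neighbouring elements of $\omega_h$, where $u_h\in S_{h}^{m}$ has jumping first derivatives, so the segment from $x$ to $y$ may cross a face and the integral remainder picks up a surface term proportional to the jump of $\nabla u_h$. This is exactly why the estimate is only first order in $h\epsilon$ and why the right‑hand side is taken over the macro‑element $\omega_h$ rather than over $T_h$. I would handle it by keeping $g_x$ merely Lipschitz (resp.\ $W^{1,2}$) in $y$ on all of $\omega_h$ and bounding the jump of $\nabla u_h$ across a face by the element‑wise inverse estimate implicit in $\theta_{1,q,\Omega}(u_h)\le K$ with $q>\max\{4,d\}$; since the offending strip has width $\sim h\epsilon$ and the crossing contributes one further factor $|y-x|\le h\epsilon$, its total contribution is again of order $h\epsilon\,\Theta_{2,2,\omega_h}(V)$. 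Apart from this bookkeeping, the argument is the same technical computation with Proposition~\ref{prop:logest} that underlies Proposition~\ref{prop:mollibd} and the approximate triangle inequality for $D_{T_h}$ recorded just before the statement.
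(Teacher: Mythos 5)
Your argument is essentially the paper's: the published proof consists of the single sentence that the assertion follows from the variational characterization \eqref{eq:1stordervec}, Proposition~\ref{prop:logest}, and Proposition~\ref{prop:mollibd}, which are exactly the three ingredients you deploy (writing the defect as the $\phi_{h\epsilon}$-average of $g_x(x)-g_x(y)$, extracting the factor $h\epsilon$ from $|x-y|\le h\epsilon$ via a Taylor remainder, and controlling the resulting derivatives of $\log$, $u_h$, $V$ and $V_\epsilon$ through Propositions~\ref{prop:logest} and~\ref{prop:mollibd} to assemble $\Theta_{2,2,\omega_h}(V)$). You in fact supply more detail than the paper does, including the discussion of the mollification ball reaching across element faces of $\omega_h$, a point the paper leaves entirely implicit.
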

\begin{proof}
	The assertion follows from the variational characterization of $V_{\epsilon}$~\eqref{eq:1stordervec}, Proposition~\ref{prop:logest}, and Proposition~\ref{prop:mollibd}.
\end{proof}
\begin{proposition}\label{prop:cond2discVfields}
	Let $(u_{h},V)\in S_{h}^{m}(\omega_{h},M)\times_{\pi} W^{2,2}(\omega_{h},TM)$, such that values of $u_{h}$ at neighboring Lagrange nodes are contained in an open $\tilde{\rho}$-ball, where $\tilde\rho>0$ is small enough. 
	There exist $\epsilon$ and $h$ such that for $(u_{h},V_{I})=F_{\epsilon h}(u_{h},V)$
	\begin{align*}
	\|V-V_{I}\|_{W^{1,2}(T_{h},u_{h}^{-1}TM)}\leq C\;h\left(\Theta_{2,2,\omega_{h}}(V)+\Theta_{2,2,\omega_{h}}(V_{I})\right).
	\end{align*}
\end{proposition}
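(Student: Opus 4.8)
The plan is to reproduce, for vector fields, the strategy already used for functions, exploiting the factorisation of $H_{\epsilon h}$ into mollification followed by geodesic finite element interpolation together with the identity $H_{\epsilon h}\circ F_{\epsilon h}=H_{\epsilon h}$ coming from the construction of $F_{\epsilon h}$. Since $\log_{p}p=0$, $d\log_{p}q\big|_{q=p}=Id$ and $d_{2}\log_{p}q\big|_{q=p}=-Id$, the quantity to be estimated is
\[
\|V-V_{I}\|_{W^{1,2}(T_{h},u_{h}^{-1}TM)}=D_{T_{h}}\big((u_{h},V),(u_{h},V_{I})\big).
\]
Set $Q\colonequals H_{\epsilon h}(u_{h},V)=(I_{\epsilon h}u_{h},V_{\epsilon I})$; because $F_{\epsilon h}$ leaves the base map fixed and $H_{\epsilon h}\circ F_{\epsilon h}=H_{\epsilon h}$, one also has $Q=H_{\epsilon h}(u_{h},V_{I})$. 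Using the approximate triangle inequality for $D_{T_{h}}$ noted above, with $Q$ as intermediate element, gives
\begin{multline*}
\|V-V_{I}\|_{W^{1,2}(T_{h},u_{h}^{-1}TM)}\leq C\Big(D_{T_{h}}\big((u_{h},V),Q\big)+D_{T_{h}}\big(Q,(u_{h},V_{I})\big)\Big)\\
+C\,d_{L^{\infty}(T_{h})}(u_{h},I_{\epsilon h}u_{h})\Big(\Theta_{2,2,T_{h}}(V)+\Theta_{2,2,T_{h}}(V_{I})+\Theta_{2,2,T_{h}}(V_{\epsilon I})\Big),
\end{multline*}
so it remains to control the two ``error of $H_{\epsilon h}$'' terms and the $d_{L^{\infty}}$-correction.

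For the first term set $(u_{h\epsilon},V_{\epsilon})\colonequals R_{h\epsilon}(u_{h},V)$ and apply the triangle inequality once more with this pair as intermediate element. The contribution $D_{T_{h}}\big((u_{h},V),(u_{h\epsilon},V_{\epsilon})\big)$ is the mollification error and is bounded by $Ch\epsilon\,\Theta_{2,2,\omega_{h}}(V)$ by Proposition~\ref{prop:smoothedErrorEst}. The contribution $D_{T_{h}}\big((u_{h\epsilon},V_{\epsilon}),Q\big)=D_{T_{h}}\big((u_{h\epsilon},V_{\epsilon}),((u_{h\epsilon})_{I},V_{\epsilon I})\big)$ is exactly the geodesic vector-field interpolation error of the pair $(u_{h\epsilon},V_{\epsilon})$: Lemma~\ref{L:BHLInterpolVfield} on the reference element, rescaled to $T_{h}$ (the standard $h^{\frac{d}{2}-1}$ scaling of the $W^{1,2}$-norm combining with the $h^{2-\frac{d}{2}}$ scaling of $\dot\Theta_{2,2}$ from Lemma~\ref{L:scalingSDVec} to a single $h$), bounds it by $Ch\big(\dot\Theta_{2,2,T_{h}}(V_{\epsilon I})+\dot\Theta_{2,2,T_{h}}(V_{\epsilon})\big)$, and Propositions~\ref{prop:mollibd} and~\ref{prop:vecInterpolBd} turn this into $Ch\,\Theta_{2,2,\omega_{h}}(V)$. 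The $d_{L^{\infty}}$-corrections appearing here and in the first display are absorbed once $d_{L^{\infty}(T_{h})}(u_{h},u_{h\epsilon})\leq Ch$ and $d_{L^{\infty}(T_{h})}(u_{h},I_{\epsilon h}u_{h})\leq Ch$, which hold for $\epsilon$ small enough because mollification moves $u_{h}$ pointwise by at most $Ch\epsilon\,\|du_{h}\|_{L^{\infty}(\omega_{h})}$ and the inverse estimate of Condition~\ref{cond:3} forces $\diam u_{h}(\omega_{h})\leq Ch$. Thus $D_{T_{h}}\big((u_{h},V),Q\big)\leq Ch\,\Theta_{2,2,\omega_{h}}(V)$, and the identical argument applied with $V_{I}$ in place of $V$ — using $R_{h\epsilon}(u_{h},V_{I})=(u_{h\epsilon},(V_{I})_{\epsilon})$ and $V_{\epsilon I}=(V_{I})_{\epsilon I}$ — gives $D_{T_{h}}\big(Q,(u_{h},V_{I})\big)\leq Ch\big(\Theta_{2,2,\omega_{h}}(V)+\Theta_{2,2,\omega_{h}}(V_{I})\big)$. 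Combining the displays with $\Theta_{2,2,T_{h}}(V_{\epsilon I})\leq C\Theta_{2,2,\omega_{h}}(V)$, $\Theta_{2,2,T_{h}}(V)\leq\Theta_{2,2,\omega_{h}}(V)$, and $\Theta_{2,2,T_{h}}(V_{I})\leq\Theta_{2,2,\omega_{h}}(V_{I})$ then yields the assertion.

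The main obstacle is the step ``the identical argument with $V_{I}$ in place of $V$'': $V_{I}$ is only an element-wise $W^{2,2}$ vector field, so mollifying the induced family $u_{h;V_{I}}(t)$ across element faces produces second-derivative bounds that degrade with the mollification radius $h\epsilon$; making Propositions~\ref{prop:mollibd} and~\ref{prop:smoothedErrorEst} and the boundedness estimate usable for $(u_{h},V_{I})$ forces a coupled choice of $\epsilon=\epsilon(h)$ — small, and in general tending to $0$ with $h$ — balanced against the $h\epsilon$ gain of the mollification error and the $h$ gain of the rescaled interpolation error. This non-locality is precisely why the macro-element formulation of Condition~\ref{cond:2c} is the correct one. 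A secondary point is that the rescaling of Lemma~\ref{L:BHLInterpolVfield} must produce exactly one power of $h$; this, together with the bound on $d_{L^{\infty}}(u_{h},I_{\epsilon h}u_{h})$, relies on $u_{h}$ having image of diameter $O(h)$ over each macro-element, i.e.\ on the inverse estimates of Conditions~\ref{cond:3}--\ref{cond:4}.
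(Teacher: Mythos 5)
Your proposal is correct and follows essentially the same route as the paper: the approximate triangle inequality for $D_{T_{h}}$ with $H_{\epsilon h}(u_{h},V)=H_{\epsilon h}(u_{h},V_{I})$ as the intermediate element (via $H_{\epsilon h}\circ F_{\epsilon h}=H_{\epsilon h}$), the decomposition of $H_{\epsilon h}$ into mollification followed by geodesic interpolation, and the combination of Propositions~\ref{prop:smoothedErrorEst},~\ref{prop:mollibd},~\ref{prop:vecInterpolBd} with Lemma~\ref{L:BHLInterpolVfield} to bound each half by $Ch$ times the macro-element smoothness descriptor. The paper's proof is terser but uses exactly these ingredients in the same order.
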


\begin{proof}
	For any $(v_{h},W)\in  S_{h}^{m}(\omega_{h},M)\times_{\pi} W^{2,2}(\omega_{h},TM)$
	the approximate triangle inequality, the definition of $H_{\epsilon h}$ as subsequent mollification and interpolation, Propositions~\ref{prop:smoothedErrorEst}, ~\ref{prop:mollibd}, and~\ref{prop:vecInterpolBd}, and Lemma~\ref{L:BHLInterpolVfield} imply
	\begin{align*}
	D_{T_{h}}((v_{h},W),H_{\epsilon h}(v_{h},W))
	&\leq C\;h\Theta_{2,2,\omega_{h}}(W).
	\end{align*}
	Further, the approximate triangle inequality in conjunction with $H_{\epsilon h}\circ F_{\epsilon h}=H_{\epsilon h}$ yields
	\begin{align*}
	\|V-V_{I}\|_{W^{1,2}(T_{h},u_{h}^{-1}TM)}&=D_{T_{h}}((u_{h},V),F_{\epsilon h}(u_{h},V))\\
	&\leq C\left[D_{T_{h}}((u_{h},V),H_{\epsilon h}(u_{h},V))+D_{T_{h}}(H_{\epsilon h}(u_{h},V_{I}),(u_{h},V_{I}))\right]\\
	&\qquad  + C\;d(u_{h},I_{h\epsilon}u_{h})\left[\Theta_{2,2,T_{h}}(V) + \Theta_{2,2,T_{h}}(V_{\epsilon I}) + \Theta_{2,2,T_{h}}(V_{I})\right]\\
	&\leq C\;h \left(\Theta_{2,2,\omega_{h}}(V) + \Theta_{2,2,\omega_{h}}(V_{I})\right).
	\end{align*}
	Propositions~\ref{prop:mollibd} and~\ref{prop:vecInterpolBd} yield the assertion.
\end{proof}

\begin{corollary}
	Geodesic finite elements fulfill Condition~\ref{cond:2c} for all $d\in \NN$.
\end{corollary}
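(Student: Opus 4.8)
The plan is to verify the two estimates of Condition~\ref{cond:2c} for the family constructed above, namely $v_h(t) := E_{\epsilon h}^{-1}\circ I_{\epsilon h}\big(u_{h;V}(t)\big)$, with $V_I$ its variational field $\frac{d}{dt}v_h(0)$. First I would record the easy points: $v_h(0) = u_h$ holds since $E_{\epsilon h}^{-1}\circ I_{\epsilon h}$ fixes $u_h$; by construction of $F_{\epsilon h}$ one has $(u_h, V_I) = F_{\epsilon h}(u_h, V)$, so $V_I$ is in fact a discrete vector field along $u_h$ in the sense of Definition~\ref{def:discreteVfields}; and the inclusion $S_h^m(T_h,M)\subset W^{2,b}(T_h,M)$ demanded in Condition~\ref{cond:2c} is immediate because $P_m(T_h,M)\subset C^\infty(T_h,M)$. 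One must also check that the construction of $F_{\epsilon h}$ is legitimate, i.e.\ that $u_h \in K(\tilde\rho)$ so that $E_{\epsilon h}^{-1}$ exists; this is true for $h$ small enough, either by hypothesis or because a bound $\dot\theta_{1,q,\Omega}(u_h)\le K$ with $q>d$ forces $\diam(u_h(\omega_h))\le C\,h^{1-d/q}K \to 0$.

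Granting this, the bound~\eqref{eq:dgeq4boundc} follows as in the proposition above: writing $(I_{\epsilon h}u_h, V_{\epsilon I}) = H_{\epsilon h}(u_h,V)$ and recalling that $H_{\epsilon h}$ is mollification followed by geodesic interpolation, Proposition~\ref{prop:mollibd} controls the mollification step and Proposition~\ref{prop:vecInterpolBd} the interpolation step, giving $\Theta_{2,2,T_h}(V_{\epsilon I}) \le C\,\Theta_{2,2,\omega_h}(V)$; since $V_I = dE_{\epsilon h}^{-1}(I_{\epsilon h}u_h)(V_{\epsilon I})$ and $dE_{\epsilon h}^{-1}$ is bounded by the inverse function theorem, the chain rule yields $\Theta_{2,2,T_h}(V_I) \le C\,\Theta_{2,2,T_h}(V_{\epsilon I}) \le C\,\Theta_{2,2,\omega_h}(V)$. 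The bound~\eqref{eq:vecInterpolCondc} is exactly Proposition~\ref{prop:cond2discVfields}: its proof combines the approximate triangle inequality for $D_{T_h}$, the identity $H_{\epsilon h}\circ F_{\epsilon h} = H_{\epsilon h}$, the mollification error estimate (Proposition~\ref{prop:smoothedErrorEst}), the vector-field interpolation error (Lemma~\ref{L:BHLInterpolVfield}, rescaled to $T_h$ via Lemma~\ref{L:scalingSDVec}), and Propositions~\ref{prop:mollibd} and~\ref{prop:vecInterpolBd} for the remaining lower-order terms, to give the asserted bound of order $h$ in $\Theta_{2,2,\omega_h}(V) + \Theta_{2,2,\omega_h}(V_I)$.

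The last point is uniformity over the grid. The thresholds $\epsilon$, $\tilde\rho$, the admissible $h$, and $\delta_{\min}$ from the inverse function theorem, together with the constants in Propositions~\ref{prop:mollibd}, \ref{prop:vecInterpolBd} and~\ref{prop:cond2discVfields}, depend — after Lemma~\ref{L:scalingSDVec} and the scaling of the $W^{1,2}$-norm are used to transport between $T_h$ and the reference element $T$ — only on $T$, the shape functions $\lambda_i$, and the curvature of $M$; hence they may be chosen once and for all. Summing the two estimates over the elements of $G$ and using the bounded overlap of the macro-elements $\omega_h$ implied by the shape regularity of $G$ then gives the grid-dependent form of Condition~\ref{cond:2c}, for every dimension $d$. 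I expect the main obstacle — and the reason the construction is set up over the compact set $\tilde K(\tilde\rho)$ with a finite cover — to be precisely this uniform invertibility of $E_{\epsilon h}$ with a uniformly bounded inverse differential $dE_{\epsilon h}^{-1}$; no dimensional restriction appears, since mollification supplies the continuity of $V$ that was needed for the direct vector-field interpolation underlying Condition~\ref{cond:2b}.
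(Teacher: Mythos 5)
Your proposal is correct and follows essentially the same route as the paper: the corollary is obtained by combining the construction of $F_{\epsilon h}$ with Propositions~\ref{prop:mollibd} and~\ref{prop:vecInterpolBd} (giving~\eqref{eq:dgeq4boundc} via the boundedness of $dE_{\epsilon h}^{-1}$) and with Proposition~\ref{prop:cond2discVfields} (giving~\eqref{eq:vecInterpolCondc}), exactly as you describe. Your additional remarks on the well-definedness of $E_{\epsilon h}^{-1}$ and uniformity over the grid only make explicit what the paper's compactness and finite-cover argument already provides.
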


\subsubsection{Condition~\ref{cond:3}: Inverse Estimates for GFEs}
For Condition~\ref{cond:3} we require the grid $G$ to be such that $F^{-1}_{h}:T\to T_{h}$ scales with order $2$ for all elements $T_{h}$ of $G$. We need to show for $v_{h}\in P_{m}(T_{h},M)$ with $v(T_{h})\subset B_{\rho}$ and $p,q\in[1,\infty]$ the inverse estimate~\eqref{eq:inverse2p1qScaled}.
As the homogeneous smoothness descriptor $\dot\theta_{k,p,T_{h}}$ consists of mixed-order derivatives, 
inverse estimates are more complicated than those for standard Sobolev half-norms.
An exception are the first-order smoothness descriptors.
\begin{proposition}\label{prop:1derivPm}
	Let $v\in P_{m}(T,M)$ with $v(T)\subset B_{\rho}$, $\rho\leq 
	\frac{1}{\sqrt{|R|_{\infty}\|\sum_{i}|\lambda_{i}|\|_{\infty}}}$.
	Let $p,q\in[1,\infty]$.
	Then 
	such that
	\begin{align}
	\dot{\theta}_{1,p,T}(v)&\leq C\;\dot{\theta}_{0,q,T;Q}(v), \label{eq:1derivPmRef}\\
	\dot{\theta}_{1,p,T}(v)&\leq C\;\dot{\theta}_{1,q,T}(v) \label{eq:inverse1p1q},
	\end{align}
	where $Q$ is the $v_{i}$ such that $\dot{\theta}_{0,p,T;v_{i}}(v)$ is maximal.
	
	After rescaling by a map $F:T_{h}\to T$ of order $1$, we have for $v_{h}\in P_{m}(T_{h},M)$
	\begin{align}
	\dot{\theta}_{1,p,T_{h}}(v_{h})&\leq C\;h^{-1 - d\max\left\{0,\frac{1}{q}-\frac{1}{p}\right\}}\;\dot{\theta}_{0,q,T_{h};Q}(v_{h}), \label{eq:1derivPmLoc} \\
	\dot{\theta}_{1,p,T_{h}}(v_{h})&\leq C\;h^{-d\max\left\{0,\frac{1}{q}-\frac{1}{p}\right\}}\;\dot{\theta}_{1,q,T_{h}}(v_{h}). \label{eq:inverse1p1qScaled}
	\end{align}
\end{proposition}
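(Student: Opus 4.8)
The plan is to combine the first-order optimality condition~\eqref{eq:firstorderinterpol} characterising geodesic interpolation with the smooth dependence of $v \in P_m(T,M)$ on its nodal values (Lemma~\ref{L:wellposed}). Differentiating the identity $\sum_{i=1}^l \lambda_i(x)\log_{v(x)}v_i = 0$ covariantly in the direction $\tfrac{\partial}{\partial x^\alpha}$ and using the paper's convention for the derivatives $d$, $d_2$ of the bivariate logarithm yields
\begin{align*}
\Big(\sum_{i=1}^l \lambda_i(x)\,d_2\log_{v(x)}v_i\Big)\big(d^\alpha v(x)\big) = -\sum_{i=1}^l \partial_\alpha\lambda_i(x)\,\log_{v(x)}v_i .
\end{align*}
By~\eqref{eq:logest1} and $\sum_i\lambda_i\equiv 1$ the coefficient matrix equals $-Id$ up to an error of operator norm at most $\big\|\sum_i|\lambda_i|\big\|_\infty |\Rm|_\infty \rho^2$, which is $<1$ precisely under the assumed bound $\rho\le(|R|_\infty\|\sum_i|\lambda_i|\|_\infty)^{-1/2}$; hence it is invertible with inverse bounded independently of $v$. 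Since $|\log_{v(x)}v_i|\le d(v(x),v_i)$, this gives the pointwise estimate $|d^\alpha v(x)|\le C\sum_{i=1}^l d(v(x),v_i)$ on $T$, with $C$ depending only on the shape functions.

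Integrating the $p$-th power over $T$ and summing over $\alpha$ gives $\dot\theta_{1,p,T}(v)\le C\sum_{i=1}^l \dot\theta_{0,p,T;v_i}(v)\le C\,\dot\theta_{0,p,T;Q}(v)$, which is~\eqref{eq:1derivPmRef} whenever $q\ge p$ (using $\dot\theta_{0,p,T;Q}(v)\le|T|^{1/p-1/q}\dot\theta_{0,q,T;Q}(v)$ on the bounded reference element); combined with Lemma~\ref{L:wellposed} it also gives $\dot\theta_{1,p,T}(v)\le C\max_i d(v_i,Q)$. To close~\eqref{eq:inverse1p1q} and the remaining range $q<p$ of~\eqref{eq:1derivPmRef} it remains to bound $\max_i d(v_i,Q)$ from below by $\dot\theta_{1,q,T}(v)$, respectively $\dot\theta_{0,q,T;Q}(v)$. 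I would obtain this from a compactness argument: by Lemma~\ref{L:wellposed} the set of admissible $v$ is a smooth image of a compact subset of $M^l$, and both sides of each inequality vanish exactly on the ``diagonal'' where all $v_i$ coincide; at a diagonal point $(P,\dots,P)$ the linearisation of geodesic interpolation is, by~\eqref{eq:VIimplicit}, ordinary $T_PM$-valued Lagrange interpolation, so a blow-up at the diagonal reduces the estimates to the equivalence of the $L^p$-, $L^q$- and $W^{1,q}$-(semi)norms on the finite-dimensional space of polynomials of degree $\le m$, for which the statement is classical.

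The scaled versions~\eqref{eq:1derivPmLoc} and~\eqref{eq:inverse1p1qScaled} then follow by applying the reference-element estimates to $v_h\circ F^{-1}\in P_m(T,M)$ and transporting the result back through $F$. For the first-order terms this is exactly the scaling behaviour recorded in Lemmas~\ref{L:scalingSD} and~\ref{L:inverseScale1} for a map of order $1$; for the zeroth-order terms one uses the integral transformation formula together with $|\det DF|\sim h^{-d}$, noting that rewriting an $L^q$-integral over $T$ as an $L^p$-integral over $T_h$ contributes the factor $h^{-d\max\{0,1/q-1/p\}}$ (via Hölder on $T$ when $q<p$) while the covariant derivative contributes the additional factor $h^{-1}$ in~\eqref{eq:1derivPmLoc}.

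The main obstacle is the genuine inverse direction $q<p$ (and $q<\infty$ against the nodal/sup quantity): there Hölder on the bounded domain is useless, and one must make precise the idea that geodesic finite elements on one element form an essentially finite-dimensional family. The blow-up argument sketched above does this, its only non-routine point being the verification that near the diagonal the quantities $\dot\theta_{1,p,T}$, $\dot\theta_{0,q,T;Q}$ and $\dot\theta_{1,q,T}$ are comparable to their linearised counterparts; keeping track of the moving base point $v(x)$ and the parallel transports it induces when differentiating~\eqref{eq:firstorderinterpol}, controlled via Proposition~\ref{prop:logest}, is the other place requiring care.
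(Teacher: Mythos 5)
Your proposal is correct and follows essentially the same route as the paper, whose proof is only the one-line sketch ``Lemma~\ref{L:wellposed}, norm equivalence in $\RR^{l}$, and differentiation of~\eqref{eq:firstorderinterpol}'': your differentiated optimality condition with the invertibility of $\sum_i\lambda_i\,d_2\log_{v(x)}v_i$ via~\eqref{eq:logest1} is the third ingredient, and your compactness/blow-up reduction to Lagrange interpolation in $T_PM$ is a concrete realization of the second. The only cosmetic caveat is that the perturbation bound should use $d(v(x),v_i)\leq C_{\ref{c:wellposed}}\rho$ rather than $\rho$, so strict invertibility requires $\rho$ slightly below the stated threshold.
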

The proof follows from Lemma~\ref{L:wellposed}, norm equivalence in $\RR^{l}$, where $l$ is the number of degrees of freedom, and the differentiation of~\eqref{eq:firstorderinterpol}.
For higher-order smoothness descriptors, we obtain the following.
\begin{proposition}\label{prop:inverseEst21}
	Let $v\in P_{m}(T,M)$ with $v(T)\subset B_{\rho}$, $\rho\leq 
	\frac{1}{\sqrt{|R|_{\infty}\|\sum_{i}|\lambda_{i}|\|_{\infty}}}$.
	Let $p,q\in[1,\infty]$.
	Then 
	\begin{align}
	\dot{\theta}_{2,p,T}(v)&\leq \dot{\theta}^{2}_{1,2p,T}(v) + C\;\dot{\theta}_{1,q,T}(v).\label{eq:inverse2p1q}
	\end{align}
	As $F^{-1}_{h}:T\to T_{h}$ scales with order $2$, we have after rescaling for $v_{h}\in P_{m}(T_{h},M)$
	\begin{align}\label{eq:inverse2p1qScaledConcrete}
	\dot{\theta}_{2,p,T_{h}}(v_{h})\leq C\;\dot{\theta}^{2}_{1,2p,T_{h}}(v_{h}) + C\;h^{-1-d\max\left\{0,\frac{1}{q}-\frac{1}{p}\right\}}\;\dot{\theta}_{1,q,T_{h}}(v_{h}).
	\end{align}
\end{proposition}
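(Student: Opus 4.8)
The plan is to establish the reference-element inequality \eqref{eq:inverse2p1q} from a pointwise bound on $\nabla^2 v$ obtained by differentiating the first-order optimality condition \eqref{eq:firstorderinterpol} twice, and then to transfer it to $T_h$ by the chain rule for second covariant derivatives, combined with the first-order inverse estimates of Proposition~\ref{prop:1derivPm} and the scaling Lemma~\ref{L:scalingSD}.

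\textit{Reference element.} The vector field $V(y):=\sum_i\lambda_i(y)\log_{v(y)}v_i$ along $v$ vanishes identically on $T$ and is smooth by Lemma~\ref{L:wellposed}. Differentiating $V\equiv 0$ covariantly along $v$ in direction $\beta$ and then $\alpha$, and using $\nabla_{d^\alpha v}\log_{v(y)}v_i=d_2\log_{v(y)}v_i(d^\alpha v)$ together with $\nabla_{d^\beta v}\big[d_2\log_{v(y)}v_i(d^\alpha v)\big]=d_2^2\log_{v(y)}v_i(d^\beta v,d^\alpha v)+d_2\log_{v(y)}v_i(\nabla^{\beta\alpha}v)$, one obtains
\begin{align*}
\Big(\sum_i\lambda_i\,d_2\log_{v(y)}v_i\Big)(\nabla^{\beta\alpha}v)
=-\sum_i\partial_\beta\partial_\alpha\lambda_i\,\log_{v(y)}v_i
-\sum_i\big(\partial_\alpha\lambda_i\,d_2\log_{v(y)}v_i(d^\beta v)+\partial_\beta\lambda_i\,d_2\log_{v(y)}v_i(d^\alpha v)\big)
-\sum_i\lambda_i\,d_2^2\log_{v(y)}v_i(d^\beta v,d^\alpha v).
\end{align*}
Since $\sum_i\lambda_i=1$, Proposition~\ref{prop:logest} gives $\|\sum_i\lambda_i d_2\log_{v(y)}v_i+\mathrm{Id}\|\le\|\sum_i|\lambda_i|\|_\infty|\Rm|_\infty(C\rho)^2$, so for $\rho$ small the operator on the left is invertible with uniformly bounded inverse; moreover $\|d_2\log_{v(y)}v_i\|\le C$ and $\|d_2^2\log_{v(y)}v_i\|\le C\,d(v(y),v_i)\le C\rho$, while $|\log_{v(y)}v_i|=d(v(y),v(a_i))\le C\|dv\|_{L^\infty(T)}$ by integrating $dv$ along a segment joining $a_i$ to $y$. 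Hence $|\nabla^2v(y)|\le C\|dv\|_{L^\infty(T)}+C|dv(y)|^2$; taking $L^p(T)$-norms, using Proposition~\ref{prop:SDExtraTerms} to identify $\dot\theta_{2,p,T}(v)$ with $\|\nabla^2v\|_{L^p(T)}+\|dv\|^2_{L^{2p}(T)}$ and bounding $\|dv\|_{L^\infty(T)}\le C\dot\theta_{1,q,T}(v)$ by \eqref{eq:inverse1p1q}, this yields \eqref{eq:inverse2p1q}.

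\textit{Scaling.} Writing $v:=v_h\circ F_h^{-1}\in P_m(T,M)$ and using $|\partial F_h|\le Ch^{-1}$ together with the second-derivative control of $F_h$ provided by the order-$2$ scaling of $F_h^{-1}$, the chain rule for second covariant derivatives gives $|\nabla^2 v_h(x)|\le Ch^{-1}|dv(F_h(x))|+Ch^{-2}|\nabla^2v(F_h(x))|$, into which I insert the pointwise bound from the previous step. Taking $L^p(T_h)$-norms, substituting $y=F_h(x)$ (so that $dx\sim h^d\,dy$ and $|T_h|\sim h^d$), and transporting the reference quantities $\dot\theta_{1,p,T}(v)$, $\dot\theta_{1,\infty,T}(v)$, $\dot\theta_{1,2p,T}(v)$ back to $T_h$ by Lemma~\ref{L:scalingSD} with $k=1$ followed by \eqref{eq:inverse1p1qScaled}, the three contributions come out respectively as $Ch^{-d\max\{0,1/q-1/p\}}\dot\theta_{1,q,T_h}(v_h)$, $Ch^{-1-d(1/q-1/p)}\dot\theta_{1,q,T_h}(v_h)$, and $C\dot\theta^2_{1,2p,T_h}(v_h)$; since $h\le 1$ the first two are $\le Ch^{-1-d\max\{0,1/q-1/p\}}\dot\theta_{1,q,T_h}(v_h)$, and Proposition~\ref{prop:SDExtraTerms} then delivers \eqref{eq:inverse2p1qScaledConcrete}.

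I expect the $h$-bookkeeping in the scaling step to be the delicate part: the second-order descriptor mixes the genuinely quadratic term $\dot\theta^2_{1,2p,T_h}(v_h)$ with the lower-order term $\dot\theta_{1,q,T_h}(v_h)$, and these transform with different powers of $h$, so one must verify that the exponent $-2+d/p-d/q$ produced by the $\|dv\|_{L^\infty(T)}$-contribution collapses to exactly $-1-d\max\{0,1/q-1/p\}$ in both regimes $q\le p$ and $q>p$ (using $h\le 1$ in the latter). The one genuinely analytic ingredient is the invertibility of the operator $\sum_i\lambda_i d_2\log_{v(\cdot)}v_i$ in the reference step, which is where the smallness of $\rho$ and the curvature bound enter.
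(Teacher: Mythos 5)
Your argument is correct and takes essentially the route the paper itself indicates (and defers to the thesis for): differentiate the first-order optimality condition \eqref{eq:firstorderinterpol} twice, invert the operator $\sum_i\lambda_i\,d_2\log_{v(\cdot)}v_i$ using the curvature bounds of Proposition~\ref{prop:logest} and the smallness of $\rho$, and then rescale via Lemma~\ref{L:scalingSD} combined with the first-order inverse estimates of Proposition~\ref{prop:1derivPm}. Two harmless blemishes: the exponent produced by the $\|dv\|_{L^{\infty}(T)}$-contribution is $-1+\frac{d}{p}-\frac{d}{q}$ rather than the $-2+\frac{d}{p}-\frac{d}{q}$ quoted in your closing remark (the three contributions you actually display are the correct ones), and your reference-element bound carries a constant $C$ in front of $\dot{\theta}^{2}_{1,2p,T}(v)$ where the statement has coefficient $1$, which is immaterial for \eqref{eq:inverse2p1qScaledConcrete}.
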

The proof follows from differentiation of the first-order condition~\eqref{eq:firstorderinterpol} and using the scaling properties of the smoothness descriptor. Details about both propositions can be found in \cite{diss}.
\begin{corollary}
	Geodesic finite elements fulfill Condition~\ref{cond:3}.
\end{corollary}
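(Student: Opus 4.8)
The plan is to assemble the two inverse estimates that have already been proved for geodesic interpolants, Propositions~\ref{prop:1derivPm} and~\ref{prop:inverseEst21}, into exactly the form required by Condition~\ref{cond:3}. Recall that by Definition~\ref{def:gfe} the local space $S_{h}^{m}(T_{h},M)$ consists precisely of the restrictions $v_{h}|_{T_{h}}\in P_{m}(T_{h},M)$, so it suffices to verify the two displayed inequalities of Condition~\ref{cond:3} for every $v_{h}\in P_{m}(T_{h},M)$ with $v_{h}(T_{h})\subset B_{\rho}$. Fix $\rho$ no larger than $\bigl(|R|_{\infty}\,\|\sum_{i}|\lambda_{i}|\|_{\infty}\bigr)^{-1/2}$ and small enough that Lemma~\ref{L:wellposed} guarantees well-posedness and smoothness of geodesic interpolation; since the shape functions $\lambda_{i}$ live on the fixed reference element $T$ and are transported identically to every $T_{h}$, this single threshold works uniformly over the grid.

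First I would invoke the rescaled first-order estimate \eqref{eq:inverse1p1qScaled} of Proposition~\ref{prop:1derivPm}. The hypothesis there is that $F_{h}:T_{h}\to T$ scales with $h$ of order $1$, which is implied by the standing assumption of Condition~\ref{cond:3} that $F_{h}^{-1}:T\to T_{h}$ scales with order $2$ (a scaling of order $l$ in particular controls all derivatives up to order $l$, cf.\ Definition~\ref{def:scaling}). Then \eqref{eq:inverse1p1qScaled} reads
\begin{align*}
\dot{\theta}_{1,p,T_{h}}(v_{h})\leq C\;h^{-d\max\{0,\frac{1}{q}-\frac{1}{p}\}}\;\dot{\theta}_{1,q,T_{h}}(v_{h}),
\end{align*}
which is precisely the first inequality of Condition~\ref{cond:3} with $\Cr{c:inverseEstref11}\colonequals C$.

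Next I would invoke the rescaled second-order estimate \eqref{eq:inverse2p1qScaledConcrete} of Proposition~\ref{prop:inverseEst21}, whose hypothesis is exactly that $F_{h}^{-1}:T\to T_{h}$ scales with order $2$. Its conclusion
\begin{align*}
\dot{\theta}_{2,p,T_{h}}(v_{h})\leq C\;\dot{\theta}^{2}_{1,2p,T_{h}}(v_{h}) + C\;h^{-1-d\max\{0,\frac{1}{q}-\frac{1}{p}\}}\;\dot{\theta}_{1,q,T_{h}}(v_{h})
\end{align*}
is verbatim the second inequality of Condition~\ref{cond:3}, with $\Cr{c:inverseEstref21}$ taken as the maximum of the two constants appearing there. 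Combining the two estimates establishes Condition~\ref{cond:3}. I do not expect any genuine obstacle here: the substance is contained in Propositions~\ref{prop:1derivPm} and~\ref{prop:inverseEst21} (which reduce, via differentiation of the first-order optimality condition \eqref{eq:firstorderinterpol}, to norm equivalence in $\RR^{l}$ and the scaling behaviour of the smoothness descriptor from Section~\ref{sec:ch1}); the only bookkeeping point is the uniform choice of $\rho$ discussed above.
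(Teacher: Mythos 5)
Your proposal is correct and matches the paper's own (implicit) argument: the corollary is stated immediately after Propositions~\ref{prop:1derivPm} and~\ref{prop:inverseEst21} precisely because estimates \eqref{eq:inverse1p1qScaled} and \eqref{eq:inverse2p1qScaledConcrete} are verbatim the two inequalities of Condition~\ref{cond:3}. The only cosmetic slip is attributing the order-$1$ scaling of $F_{h}$ to the order-$2$ scaling of $F_{h}^{-1}$; it in fact comes from the standing assumption that the grid is of width $h$ and order $m\geq 1$ (Definition~\ref{def:widthh}), but this does not affect the argument.
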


\subsubsection{Condition~\ref{cond:4}: Inverse Estimates for Differences of GFEs}
In this section we want to show inverse estimates for differences of geodesic finite elements. Note that this is not the same as inverse estimates for corresponding discrete vector fields, as in general the vector field describing a difference, in our notation $\log_{v_{h}}w_{h}$ for $v_{h},w_{h}\in \gfe$, will not be in $IV(\Omega,v_{h}^{-1}TM)$.

Inverse estimates for discrete vector fields can be obtained analogously to the ones for discrete functions. Thus, one valid approach to prove Condition~\ref{cond:4} is to show that for discrete functions 
$v_{h},w_{h}\in \gfe$ a discrete Sobolev distance defined by
\begin{align*}
\left(\sum_{T_{h}\in G}\left\|\left(\log_{v_{h}}w_{h}\right)_{I}\right\|_{W^{k,p}(T_{h}, v_{h}^{-1}TM)}^{p}\right)^{\frac{1}{p}}
\end{align*}
is equivalent to the distance $d_{W^{1,p}}$ that was defined without interpolation of the vector field $\log_{v_{h}}w_{h}$. As this uses almost the same arguments as a direct proof of Condition~\ref{cond:4}, we will do the latter.

As Condition~\ref{cond:4} is about inverse estimates and thus can only be valid for discrete functions, we will in the following drop the lower index $h$ on the discrete functions for brevity of notation as there is no ambiguity.
Further we will assume that $G$ is a grid of width $h$ and order $m$, such that $F^{-1}_{h}:T\to T_{h}$ scales with order $2$ for every $T_{h}\in G$.

\begin{lemma}
	Let $u,v\in \gfe$ with $d_{L^{\infty}}(u,v)\leq \rho$ small enough, $p,q\in [1,\infty]$ with $q\leq p$. Then we have
	\begin{align*}
	d_{L^{p}}(u,v)\leq C\;h^{-d\left(\frac{1}{q}-\frac{1}{p}\right)}d_{L^{q}}(u,v)
	\end{align*}
	if $h$ is small enough such that $u(T_{h}), v(T_{h})\subset B_{\rho}$.
\end{lemma}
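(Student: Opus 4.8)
The plan is to reduce the assertion to an $h$-free inverse estimate on the reference element by the change of variables $F_h$, and to prove that reduced estimate by a compactness argument resting on the well-posedness and continuous dependence of geodesic interpolation (Lemma~\ref{L:wellposed}). First I would localize: for a grid element $T_h\in G$ with scaling map $F_h\colon T_h\to T$, put $\hat u\colonequals u\circ F_h^{-1}$, $\hat v\colonequals v\circ F_h^{-1}\in P_m(T,M)$. Since $|\det DF_h^{-1}|\sim h^{d}$ by Definition~\ref{def:scaling}, the integral transformation formula gives $d_{L^r(T_h)}(u,v)\sim h^{d/r}\,d_{L^r(T)}(\hat u,\hat v)$ for every $r\in[1,\infty)$ (and $d_{L^\infty(T_h)}(u,v)=d_{L^\infty(T)}(\hat u,\hat v)$), and the image bounds $\hat u(T),\hat v(T)\subset B_\rho$ pass over. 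Hence it suffices to prove the $h$-independent bound: there is a constant $C$ depending only on $T$, $m$, $\rho$, $M$ with
\begin{align*}
d_{L^p(T)}(w_1,w_2)\le C\,d_{L^q(T)}(w_1,w_2)\quad\text{for all }w_1,w_2\in P_m(T,M),\ w_i(T)\subset B_\rho,\ d_{L^\infty(T)}(w_1,w_2)\le\rho.
\end{align*}
Granting this, one obtains the per-element estimate $d_{L^p(T_h)}(u,v)\le C\,h^{-d(1/q-1/p)}d_{L^q(T_h)}(u,v)$, and summing over $T_h\in G$ together with the elementary inclusion $\ell^q\hookrightarrow\ell^p$ (valid since $q\le p$) yields the global bound.

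For the reduced estimate I would argue by contradiction and compactness. If it fails there are $w_1^{(k)},w_2^{(k)}\in P_m(T,M)$ satisfying the bounds with $d_{L^p(T)}(w_1^{(k)},w_2^{(k)})>k\,d_{L^q(T)}(w_1^{(k)},w_2^{(k)})$; discarding the trivial case $w_1^{(k)}=w_2^{(k)}$ and rescaling we may take $d_{L^p(T)}(w_1^{(k)},w_2^{(k)})=1$, so $d_{L^q(T)}(w_1^{(k)},w_2^{(k)})\to0$. The nodal values of $w_1^{(k)},w_2^{(k)}$ lie in the compact set $\overline{B_\rho}\subset M$; passing to a subsequence they converge, and by Lemma~\ref{L:wellposed} the interpolants converge uniformly on $\overline T$ to $w_1^*,w_2^*\in P_m(T,M)$. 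Then $d(w_1^{(k)}(\cdot),w_2^{(k)}(\cdot))\to d(w_1^*(\cdot),w_2^*(\cdot))$ uniformly on $T$, so $d_{L^p(T)}(w_1^*,w_2^*)=1$ while $d_{L^q(T)}(w_1^*,w_2^*)=0$; the latter forces $d(w_1^*(x),w_2^*(x))=0$ for a.e.\ $x$, hence identically by continuity, i.e.\ $w_1^*\equiv w_2^*$, contradicting $d_{L^p(T)}(w_1^*,w_2^*)=1$.

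The one point that requires care is the uniform convergence of geodesic interpolants under convergence of their nodal data: this is precisely the implicit-function-theorem part of Lemma~\ref{L:wellposed}, and what must be checked is merely that the whole convergent sequence of nodal tuples remains in the fixed compact region on which $\Upsilon$ is well defined and smooth, which holds because $w_i^{(k)}(T)\subset B_\rho$ and $d_{L^\infty(T)}(w_1^{(k)},w_2^{(k)})\le\rho$ confine all nodal values to a fixed compact set and the constraint passes to the limit (for genuinely curved elements one additionally invokes the scaling estimates of Definition~\ref{def:scaling} to see that the transformed shape functions $\lambda_i\circ F_h^{-1}$ have $h$- and element-uniformly bounded derivatives, so the pulled-back interpolants still form an equicontinuous family). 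Everything else is just the bounded-domain H\"older inequality in the easy direction, the real content being that on a finite-dimensional family of functions the reverse inequality—the genuine inverse estimate—costs only a constant; this is what the compactness argument supplies.
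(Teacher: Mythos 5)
Your reduction to the reference element via the scaling map $F_h$ and the final summation over elements (using $\ell^q\hookrightarrow\ell^p$ for $q\le p$) are fine, and this part matches the paper, which also states that the norm equivalence is ``applied on a reference element and scaled to the grid.'' The gap is in your compactness proof of the $h$-free estimate on $T$. You write that after discarding the case $w_1^{(k)}=w_2^{(k)}$ you may, ``by rescaling,'' normalize $d_{L^p(T)}(w_1^{(k)},w_2^{(k)})=1$. There is no such rescaling: the maps take values in $M$, not in a vector space, so you cannot multiply the pair by a scalar to fix the size of their difference while staying inside $P_m(T,M)$. Without that normalization the contradiction argument collapses in exactly the dangerous regime: a maximizing sequence for the ratio $d_{L^p}/d_{L^q}$ may have both $w_1^{(k)}$ and $w_2^{(k)}$ converging to the \emph{same} limit interpolant $w^*$, in which case your limit satisfies $d_{L^p}(w_1^*,w_2^*)=d_{L^q}(w_1^*,w_2^*)=0$ and no contradiction arises, even though the ratio along the sequence blows up. An inverse estimate is precisely a statement about this degenerate limit, so compactness of the nodal data alone cannot deliver it; you need homogeneity in \emph{some} linear variable.

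The paper supplies that linear variable by a pointwise identity: it expands $\log_{u(x)}v(x)$ using the first-order optimality condition and the estimates of Proposition~\ref{prop:logest} to show that $(1-C\rho^2)\,d(u(x),v(x))\le \bigl|\sum_{i}\lambda_i(x)\, d\log_{u(x)}u_i(\log_{u_i}v_i)\bigr|\le (1+C\rho^2)\,d(u(x),v(x))$. The right-hand object is linear in the nodal difference vectors $W_i=\log_{u_i}v_i\in T_{u_i}M$, i.e.\ it ranges over a genuinely finite-dimensional vector space $\prod_i T_{u_i}M$ (for nodal values $u_i$ in a compact set), and there the classical equivalence of norms --- which does use homogeneity --- gives the reverse H\"older inequality with a uniform constant. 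If you want to salvage your argument, you would have to carry out essentially this linearization: parametrize $v$ by $v_i=\exp_{u_i}(W_i)$ and show that the ratio extends continuously to $W=0$ with value controlled by the norm-equivalence constant of the linearized interpolation operator; at that point you have reproduced the paper's proof.
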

\begin{proof}
	We can write for any $x\in \Omega$
	\begin{align*}
	\log_{u(x)}v(x)&=\sum_{i=1}^{l}\lambda_{i}(x)\left(\log_{u(x)}v(x)-\log_{u(x)}u_{i}+d\log_{u(x)}v(x)(\log_{v(x)}v_{i}) - d\log_{u(x)}u_{i}(\log_{u_{i}}v_{i})\right)\\
	&\quad  + \sum_{i=1}^{l}\lambda_{i}(x) d\log_{u(x)}u_{i}(\log_{u_{i}}v_{i}).
	\end{align*}
	Thus, 
	\begin{align*}
	(1-C\;\rho^{2})d(u(x),v(x))\leq \Big|\sum_{i=1}^{l}\lambda_{i}(x) d\log_{u(x)}u_{i}(\log_{u_{i}}v_{i})\Big| \leq (1+C\;\rho^{2})d(u(x),v(x)).
	\end{align*}
	The class of vector fields of the form
	\begin{align*}
	\sum_{i=1}^{l}\lambda_{i}(x) d \log_{ u (x)} u_{i}(W_{i})
	\end{align*}
	is isomorphic to the finite-dimensional product space $\Pi_{i=1}^{l}T_{ u_{i}}M$. Thus, standard arguments for the norm equivalence can be applied on a reference element and scaled to the grid.
\end{proof}

\begin{lemma}\label{L:invDiff1}
	Let $u,v\in \gfe$ with $d_{L^{\infty}}(u,v)\leq \rho$ small enough. Then we have
	\begin{align*}
	D_{1,p}(u,v)\leq C\dot\theta_{1,p}(u) + h^{-1} d_{L^{p}}(u,v)
	\end{align*}
	if $h$ is small enough such that $u(T_{h}), v(T_{h})\subset B_{\rho}$.
\end{lemma}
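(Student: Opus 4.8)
The plan is to prove the estimate on a single element $T_{h}\in G$ and then take $\ell^{p}$-sums over $G$, which is legitimate since for $u,v\in\gfe$ the quantities $D_{1,p}(u,v)$, $\dot\theta_{1,p}(u)$, $d_{L^{p}}(u,v)$ all split as sums over elements. So fix $T_{h}$; there $u$ and $v$ are smooth, and $d_{L^{\infty}}(u,v)\le\rho$ forces the nodal distances $d(u_{i},v_{i})$ to be $\le\rho$, so $x\mapsto\log_{u(x)}v(x)$ is smooth on $T_{h}$ (Lemma~\ref{L:wellposed}) and everything below is well defined. The first reduction is to bound $D_{1,p,T_{h}}(u,v)$ by a first-order quantity of $v$ alone. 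Differentiating the bivariate logarithm gives, for each $\alpha$,
\[
\nabla_{d^{\alpha}u}\log_{u(x)}v(x)=d\log_{u(x)}v(x)\big(d^{\alpha}v(x)\big)+d_{2}\log_{u(x)}v(x)\big(d^{\alpha}u(x)\big),
\]
and by Proposition~\ref{prop:logest}, estimate~\eqref{eq:logest1}, both $d\log_{u(x)}v(x)$ and $d_{2}\log_{u(x)}v(x)$ have operator norm $\le 1+|\Rm|_{\infty}\rho^{2}\le 2$ for $\rho$ small. Hence $D_{1,p,T_{h}}(u,v)\le C\big(\dot\theta_{1,p,T_{h}}(v)+\dot\theta_{1,p,T_{h}}(u)\big)$, and it remains to prove the inverse-type bound $\dot\theta_{1,p,T_{h}}(v)\le C\dot\theta_{1,p,T_{h}}(u)+Ch^{-1}d_{L^{p}(T_{h},M)}(u,v)$.

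For this I would differentiate the first-order optimality conditions $\sum_{i}\lambda_{i}(x)\log_{v(x)}v_{i}=0$ and $\sum_{i}\lambda_{i}(x)\log_{u(x)}u_{i}=0$ covariantly in $x^{\alpha}$, getting $\big(\sum_{i}\lambda_{i}d_{2}\log_{v(x)}v_{i}\big)d^{\alpha}v(x)=-\sum_{i}(\partial_{\alpha}\lambda_{i})\log_{v(x)}v_{i}$ and the analogue for $u$. Since $\sum_{i}\lambda_{i}=1$ and $\|d_{2}\log_{p}q+\mathrm{Id}\|\le|\Rm|_{\infty}d^{2}(p,q)$, each of $\sum_{i}\lambda_{i}d_{2}\log_{v(x)}v_{i}$ and $\sum_{i}\lambda_{i}d_{2}\log_{u(x)}u_{i}$ is within $O(\rho^{2})$ of $-\mathrm{Id}$ (using $\rho\le(|\Rm|_{\infty}\|\sum_{i}|\lambda_{i}|\|_{\infty})^{-1/2}$), hence invertible with inverse bounded by $2$. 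Subtracting the two identities writes $d^{\alpha}v-d^{\alpha}u$ as a sum of (i) a term $\le C\,\|\sum_{i}\lambda_{i}(d_{2}\log_{v(x)}v_{i}-d_{2}\log_{u(x)}u_{i})\|\cdot|\sum_{i}(\partial_{\alpha}\lambda_{i})\log_{v(x)}v_{i}|$ and (ii) a term $\le C\,|\sum_{i}(\partial_{\alpha}\lambda_{i})(\log_{v(x)}v_{i}-\log_{u(x)}u_{i})|$. In (i) the first factor is $\le C(d(u(x),v(x))+\max_{i}d(u_{i},v_{i}))\le C\rho$ by the Lipschitz bounds~\eqref{eq:logest2} on $d_{2}\log$, while the second factor equals $|\sum_{i}\lambda_{i}d_{2}\log_{v(x)}v_{i}(d^{\alpha}v(x))|\le C|d^{\alpha}v(x)|$, so (i) is $\le C\rho|d^{\alpha}v(x)|$. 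In (ii), $|\partial_{\alpha}\lambda_{i}|\le Ch^{-1}$ on $T_{h}$ by the scaling of $F_{h}$, and $|\log_{v(x)}v_{i}-\log_{u(x)}u_{i}|\le C(d(u(x),v(x))+\max_{i}d(u_{i},v_{i}))$ by Proposition~\ref{prop:logest}; integrating over $T_{h}$ and using, exactly as in the preceding lemma, that $h^{d/p}\max_{i}d(u_{i},v_{i})\le C\,d_{L^{p}(T_{h},M)}(u,v)$ (norm equivalence on the reference element, scaled to $T_{h}$), (ii) contributes $\le Ch^{-1}d_{L^{p}(T_{h},M)}(u,v)$. Altogether $\|d^{\alpha}v-d^{\alpha}u\|_{L^{p}(T_{h})}\le C\rho\,\|d^{\alpha}v\|_{L^{p}(T_{h})}+Ch^{-1}d_{L^{p}(T_{h},M)}(u,v)$; writing $\|d^{\alpha}v\|\le\|d^{\alpha}u\|+\|d^{\alpha}v-d^{\alpha}u\|$ and absorbing the $C\rho$-term for $\rho$ small yields $\|d^{\alpha}v\|_{L^{p}(T_{h})}\le C\dot\theta_{1,p,T_{h}}(u)+Ch^{-1}d_{L^{p}(T_{h},M)}(u,v)$, hence the claim. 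Summing $p$-th powers over $T_{h}\in G$ finishes the proof (with the obvious modification for $p=\infty$).

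The step I expect to be delicate is harvesting the cancellation in (i)--(ii): the only term carrying the full inverse factor $h^{-1}$ must be weighted by the nodal distances $d(u_{i},v_{i})$, so that norm equivalence turns it into $h^{-1}d_{L^{p}}(u,v)$, while every remaining term must carry either a genuine $d^{\alpha}u$ (feeding $\dot\theta_{1,p}(u)$) or a small factor $\rho$ (absorbed). Exploiting the first-order conditions is essential here: they make the sums $\sum_{i}(\partial_{\alpha}\lambda_{i})\log_{v(x)}v_{i}$ and $\sum_{i}(\partial_{\alpha}\lambda_{i})\log_{u(x)}u_{i}$ comparable to the true derivatives $d^{\alpha}v$ and $d^{\alpha}u$ rather than to the crude bound $Ch^{-1}\rho$, which would destroy both the $\dot\theta_{1,p}(u)$-sharpness and the $h^{-1}d_{L^{p}}$-scaling.
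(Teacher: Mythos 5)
Your argument is correct and follows essentially the same route as the paper's: both proofs differentiate the first-order optimality condition \eqref{eq:firstorderinterpol} for $u$ and for $v$, transport one identity to the other's tangent space, subtract, control the resulting operator and difference terms via Proposition~\ref{prop:logest}, convert the nodal differences into $h^{-1}d_{L^{p}}$ by finite-dimensional norm equivalence, and close with an absorption argument for $\rho$ small (the paper absorbs a $C\rho^{2}|\nabla^{\alpha}\log_{u}v|$ term directly, you absorb a $C\rho|d^{\alpha}v|$ term and then feed the result into the chain rule for $\nabla^{\alpha}\log_{u}v$). The one point you gloss over is that $d^{\alpha}v(x)\in T_{v(x)}M$ and $d^{\alpha}u(x)\in T_{u(x)}M$ must be identified before subtracting; the paper does this with the map $A=d\log_{u(x)}v(x)$, which by \eqref{eq:logest1} differs from parallel transport only by $O(\rho^{2})$, so your estimates go through unchanged.
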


\begin{proof}
	First derivatives of geodesic finite elements fulfill the identity
	\begin{align*}
	0&=\sum_{i=1}^{l}\lambda_{i}(x)d_{2}\log_{u(x)}u_{i}(d^{\alpha}u(x)) + \sum_{i=1}^{l}\partial_{\alpha}\lambda_{i}(x)\log_{u(x)}u_{i}
	\end{align*}
	in $T_{u(x)}M$. We set 
	\begin{align*}
	A&\colonequals d\log_{u(x)}v(x),
	\end{align*}
	and transport the corresponding identity for first derivatives of $v$ from $T_{v(x)}M$ to  $T_{u(x)}M$ using the linear map $A$.
	Note that $A$ is invertible if $\rho$ is small enough. Further, the norms of $A$ and $A^{-1}$ are bounded by a constant depending on the curvature of $M$.
	Writing
	\begin{align*}
	\nabla^{\alpha}&\log_{u(x)}v(x)
	= d\log_{u(x)}v(x)(d^{\alpha}v(x)) + 
	d_{2}\log_{u(x)}v(x)(d^{\alpha}u(x)),
	\end{align*}
	we add the identity for $v$ and substract the one for $u$ to obtain
	\begin{align*}
	\nabla^{\alpha}&\log_{u(x)}v(x)
	= \sum_{i=1}^{l}\lambda_{i}(x)A(I+d_{2}\log_{v(x)}v_{i})A^{-1}\left(\nabla^{\alpha}\log_{u(x)}v(x)\right)\\
	& - \sum_{i=1}^{l}\lambda_{i}(x)\left(A(Id+ d_{2}\log_{v(x)}v_{i})A^{-1}d_{2}\log_{u(x)}v(x) - d_{2}\log_{u(x)}u_{i}\right)(d^{\alpha} u(x))\\
	& + \sum_{i=1}^{l}\partial_{\alpha}\lambda_{i}(x)\left(d\log_{u(x)}v(x)(\log_{v(x)}v_{i}) - \log_{u(x)}u_{i} + \log_{u(x)}v(x) - d\log_{u(x)}u_{i}(\log_{u_{i}}v_{i})\right)\\
	& -  \sum_{i=1}^{l}\partial_{\alpha}\lambda_{i}(x)\log_{u(x)}v(x)
	+\sum_{i=1}^{l}\partial_{\alpha}\lambda_{i}(x) d\log_{u(x)}u_{i}(\log_{u_{i}}v_{i}).
	\end{align*}
	Thus, we can estimate
	\begin{align*}
	\left|\nabla^{\alpha}\log_{u(x)}v(x)\right|
	&\leq C\rho^{2} \left|\nabla^{\alpha}\log_{u(x)}v(x)\right|+
	C|d^{\alpha} u(x)| + C(1+\rho^{2})\;h^{-1}d(u(x),v(x)).
	\end{align*}
	For $\rho$ small enough this yields the assertion after integration and summation over the elements of the grid.
\end{proof}

\begin{lemma}
	Let $u,v\in\gfe$ with $d_{L^{\infty}}(u,v)\leq \rho$ small enough. Then we have
	\begin{multline*}
	\|\nabla^{2}\log_{u}{v}\|_{L^{p}(T_{h},u^{-1}TM)}\leq C\dot\theta_{2,p,T_{h}}(u) +C\rho\dot\theta_{1,2p,T_{h}}^{2}(v)\\
	+ C h^{-1-d\max\left\{0,\frac{1}{p}-\frac{1}{o}\right\}}D_{1,o,T_{h}}(u,v) + C\rho^{2} h^{-2} d_{L^{p}(T_{h})}(u,v)
	\end{multline*}
	if $h$ is small enough such that $u(T_{h}), v(T_{h})\subset B_{\rho}$. 
	If $m=1$, the $O(h^{-2})$-term does not appear.
\end{lemma}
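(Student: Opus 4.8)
The plan is to run the argument of Lemma~\ref{L:invDiff1} one differentiation order higher. As there, one does not estimate $\nabla^{2}\log_{u}v$ directly but derives a pointwise algebraic identity for it by twice covariantly differentiating the first-order optimality condition~\eqref{eq:firstorderinterpol} --- once for $u$ and once for $v$ --- then transporting the $v$-identity into $u^{-1}TM$ by means of the linear isomorphism $A:=d\log_{u(x)}v(x):T_{v(x)}M\to T_{u(x)}M$ (which, for $d_{L^{\infty}}(u,v)\le\rho$ with $\rho$ small, is invertible with $\|A\|,\|A^{-1}\|$ bounded in terms of $|\Rm|_{\infty}$ by Proposition~\ref{prop:logest}), subtracting, and finally estimating every term with Proposition~\ref{prop:logest}. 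Everything is done first on the reference element $T$ and then transferred to $T_{h}$ by the scaling results of Section~\ref{sec:ch1}; this is where the negative powers of $h$ enter, the order-$2$ scaling of $F_{h}^{-1}$ turning $\partial_{\alpha}\lambda_{i}$ into $O(h^{-1})$ and $\partial_{\alpha}\partial_{\beta}\lambda_{i}$ into $O(h^{-2})$.

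Concretely, I would first record the conversion identity obtained by differentiating $\nabla^{\alpha}\log_{u}v=A\,d^{\alpha}v+d_{2}\log_{u}v(d^{\alpha}u)$ once more, which expresses $\nabla^{2}\log_{u}v$ through $A\nabla^{2}v$, $\nabla^{2}u$, the first derivatives $d^{\alpha}u,d^{\alpha}v$, and first and second covariant derivatives of $\log$ along $u$ and $v$. Next, differentiating~\eqref{eq:firstorderinterpol} twice yields, schematically, $\big(\sum_{i}\lambda_{i}\,d_{2}\log(\cdot)_{i}\big)(\nabla^{2}\cdot)=-\sum_{i}\lambda_{i}\,d_{2}^{2}\log(\cdot)_{i}(d\cdot,d\cdot)-\sum_{i}\partial\lambda_{i}\,d_{2}\log(\cdot)_{i}(d\cdot)-\sum_{i}\partial^{2}\lambda_{i}\log(\cdot)_{i}$ for both $u$ and $v$, where by~\eqref{eq:logest1} the operator on the left equals $-\,Id$ up to $O(\rho^{2})$, so that $\nabla^{2}v$ and $\nabla^{2}u$ can be solved for modulo a self-referential $O(\rho^{2})$ term that is absorbed. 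Transporting the $v$-version by $A$ and subtracting then isolates $A\nabla^{2}v-\nabla^{2}u$, hence $\nabla^{2}\log_{u}v$, up to a contribution $C\rho^{2}|\nabla^{2}\log_{u}v|$, which is moved to the left-hand side for $\rho$ small.

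The resulting right-hand side is estimated term by term, using $\sum_{i}\lambda_{i}\equiv1$, $\sum_{i}\partial_{\alpha}\lambda_{i}\equiv0$, $\sum_{i}\partial_{\alpha}\partial_{\beta}\lambda_{i}\equiv0$ together with Proposition~\ref{prop:logest} and its three-point bound. The $d_{2}^{2}\log$-contributions carry a coefficient of size $O(d(\cdot,\cdot))=O(\rho)$ by~\eqref{eq:logest2} and produce $C\,(\|\nabla^{2}u\|_{L^{p}(T_{h})}+\|d u\|_{L^{2p}(T_{h})}^{2}+\rho\,\|d v\|_{L^{2p}(T_{h})}^{2})$, i.e.\ $C\dot{\theta}_{2,p,T_{h}}(u)+C\rho\,\dot{\theta}_{1,2p,T_{h}}^{2}(v)$ after invoking Proposition~\ref{prop:SDExtraTerms} for the $u$-part; the $d_{2}\log(\nabla^{2}u)$-contribution is $\le C\dot{\theta}_{2,p,T_{h}}(u)$. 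In the $\partial\lambda_{i}$- and $\partial^{2}\lambda_{i}$-contributions the $O(1)$-sized leading coefficients multiply $i$-independent tensors (essentially $\nabla^{\alpha}\log_{u}v$, $d^{\alpha}u$, and $\log_{u(x)}v(x)$) and are killed by $\sum_{i}\partial\lambda_{i}=\sum_{i}\partial^{2}\lambda_{i}=0$; the surviving $i$-dependent residuals are, by Proposition~\ref{prop:logest}, of order $\rho^{2}$ and are controlled by $\nabla\log_{u}v$ and by the pointwise distance $d(u(x),v(x))$, giving --- after the $L^p$--$L^o$ inverse estimate for $\log$-differences proved just above --- the $h^{-1-d\max\{0,1/p-1/o\}}D_{1,o,T_{h}}(u,v)$ and $\rho^{2}h^{-2}d_{L^{p}(T_{h})}(u,v)$ terms. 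For $m=1$ the shape functions $\lambda_{i}$ are affine, so $\partial_{\alpha}\partial_{\beta}\lambda_{i}\equiv0$ and the $O(h^{-2})$-term disappears, which is the stated special case.

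I expect the main obstacle to be precisely this last bookkeeping: after the two differentiations and the $A$-transport one must verify that every genuinely $O(1)$-sized coefficient indeed sits in front of an $i$-independent tensor, so that the shape-function cancellations apply, and that the leftover $i$-dependence of the various derivatives of $\log$ decays with exactly the advertised powers of $\rho$ --- in particular squeezing the factor $\rho^{2}$ out of the $h^{-2}$-term. No single step is deep; the difficulty is that the algebra is long and the assignment of each residual to the correct one of the four terms on the right-hand side is delicate.
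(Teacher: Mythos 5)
Your proposal follows the paper's proof essentially verbatim: the same conversion identity for $\nabla^{\beta}\nabla^{\alpha}\log_{u}v$, the same transport of the twice-differentiated optimality condition for $v$ into $T_{u(x)}M$ via $A=d\log_{u(x)}v(x)$ followed by adding and subtracting the $u$-identity, absorption of the $O(\rho^{2})$ self-referential term, term-by-term estimates via Proposition~\ref{prop:logest} and the shape-function cancellations, and the observation that the $O(h^{-2})$-term stems from $\partial_{\alpha}\partial_{\beta}\lambda_{i}$ and hence vanishes for $m=1$. The paper compresses the bookkeeping into ``after technical estimates,'' so your more explicit assignment of each residual to the four right-hand-side terms (including the $L^{p}$--$L^{o}$ inverse estimate needed to reach $D_{1,o,T_{h}}$) is a faithful, slightly more detailed rendering of the same argument.
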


\begin{proof}
	As in the proof of Lemma~\ref{L:invDiff1} we now consider identities for the second derivatives of $u,v\in \gfe$ and use the linear map $A= d\log_{u(x)}v(x)$ to transport the identity for $v$ from 
	$T_{v(x)}M$ to $T_{u(x)}M$.
	We write
	\begin{align*}
	\nabla^{\beta}\nabla^{\alpha}&\log_{u(x)}v(x)
	= d\log_{u(x)}v(x)(\nabla^{\beta}d^{\alpha}v(x))
	+	 d_{2}\log_{u(x)}v(x)(\nabla^{\beta}d^{\alpha}u(x)) \\
	&+ 
	d^{2}\log_{u(x)}v(x)(d^{\alpha}v(x),d^{\beta}v(x))
	+ d_{2}d\log_{u(x)}v(x)(d^{\alpha}v(x),d^{\beta}u(x))\\
	&
	+ dd_{2}\log_{u(x)}v(x)(d^{\alpha}u(x),d^{\beta}v(x))
	+ d_{2}^{2}\log_{u(x)}v(x)(d^{\alpha}u(x),d^{\beta}u(x)),
	\end{align*}
	and note that the leading term $A \nabla^{\beta}d^{\alpha}v(x)$ is the one we want to absorb into the right hand side. Analogously to Lemma~\ref{L:invDiff1} we add and substract the identities for second derivatives of $v$ and $u$, respectively. 
	After technical estimates we obtain for $\rho$ small enough
	\begin{align*}
	\left|\nabla^{2}\log_{u(x)}v(x)\right|
	&\leq C\;\left(\left|\nabla^{2}u(x)\right| + \rho|du|^{2}\right) + C\;\rho|dv|^{2}\\
	&\qquad
	+ C\;h^{-1} \left(|\nabla\log_{u(x)}v(x)| + \rho\; d(v(x),u(x))\right)  + C\;\rho^{2}\;h^{-2} d(u(x),v(x)),
	\end{align*}
	where the $O(h^{-2})$-term comes from second derivatives of the $\lambda_{i}$ and thus vanishes for $m=1$.
	Integration and summation over the elements of the grid then yields the assertion.
\end{proof}

\begin{corollary}
	Geodesic finite elements fulfill Condition~\ref{cond:4}.
\end{corollary}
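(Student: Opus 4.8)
The plan is to assemble Condition~\ref{cond:4}, localised to a single element $T_{h}$, from the lemmas just proved for differences of geodesic finite elements, after identifying $\|\log_{v_{h}}w_{h}\|_{L^{p}(T_{h},v_{h}^{-1}TM)}=d_{L^{p}(T_{h})}(v_{h},w_{h})$ and $\big|\log_{v_{h}}w_{h}\big|_{W^{2,p}(T_{h})}=\|\nabla^{2}\log_{v_{h}}w_{h}\|_{L^{p}(T_{h})}$. As emphasised above, $\log_{v_{h}}w_{h}$ is genuinely not an element of $IV(\Omega,v_{h}^{-1}TM)$, so these difference-specific estimates are what is needed rather than the inverse estimates for discrete vector fields. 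Since $F_{h}^{-1}\colon T\to T_{h}$ is assumed to scale with order $2$ and the lemmas already carry the corresponding powers of $h$, no further scaling step is required; one only invokes $v_{h}|_{T_{h}},w_{h}|_{T_{h}}\in P_{m}(T_{h},M)$ together with $v(T_{h}),w(T_{h})\subset B_{\rho}$ for $\rho$ small, which is precisely the hypothesis of Condition~\ref{cond:4}. The first inequality of Condition~\ref{cond:4} is then immediate: for $q\leq p$ it is the $L^{p}$--$L^{q}$ inverse estimate for differences of geodesic finite elements read elementwise, and for $q>p$ (where $d\max\{0,\tfrac1q-\tfrac1p\}=0$) it is H\"older's inequality on $T_{h}$, whose factor $|T_{h}|^{1/p-1/q}\sim h^{d(1/p-1/q)}$ is bounded by $1$ for $h\leq1$.

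For the second inequality of Condition~\ref{cond:4} I would start from the elementwise bound on $\|\nabla^{2}\log_{u}v\|_{L^{p}(T_{h})}$ proved above, applied with $u=v_{h}$, $v=w_{h}$, choosing the free parameter $o$ according to the sign of $\tfrac1q-\tfrac1p$. If $q\leq p$, take $o=q$: then $\max\{0,\tfrac1p-\tfrac1o\}=0$, the term involving $D_{1,o}$ reads $C\,h^{-1}D_{1,q,T_{h}}(v_{h},w_{h})$, and since $D_{1,q,T_{h}}\leq\|\log_{v_{h}}w_{h}\|_{W^{1,q}(T_{h})}$ by Definition~\ref{def:SobolevForC1} and $h\leq1$, this is $\leq C\,h^{-1-d(1/q-1/p)}\|\log_{v_{h}}w_{h}\|_{W^{1,q}(T_{h})}$. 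If $q>p$, take $o=p$, so again $\max\{0,\tfrac1p-\tfrac1o\}=0$, and H\"older on $T_{h}$ gives $D_{1,p,T_{h}}(v_{h},w_{h})\leq C\,h^{d(1/p-1/q)}D_{1,q,T_{h}}(v_{h},w_{h})\leq C\,\|\log_{v_{h}}w_{h}\|_{W^{1,q}(T_{h})}$, which matches the exponent $-1-d\max\{0,\tfrac1q-\tfrac1p\}=-1$. The remaining terms transcribe verbatim: $\dot\theta_{2,p,T_{h}}(v_{h})$ with coefficient $\Cr{c:vecInv2}$, $\rho\,\dot\theta_{1,2p,T_{h}}^{2}(w_{h})$ with coefficient $\Cr{c:vecInv2}$ after absorbing $\rho\leq1$, and $\rho^{2}h^{-2}d_{L^{p}(T_{h})}(v_{h},w_{h})$ with $\Cr{c:vecInvm}\,h^{-2}\|\log_{v_{h}}w_{h}\|_{L^{p}(T_{h})}$ after absorbing $\rho^{2}\leq1$. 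Because the $\nabla^{2}$-estimate above asserts that the $O(h^{-2})$ contribution is absent for $m=1$, this forces $\Cr{c:vecInvm}=0$ when $m=1$, which is the last requirement of Condition~\ref{cond:4}.

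I do not anticipate a deep obstacle: the corollary is essentially a transcription of the preceding lemmas, with $h\leq1$ and $\rho\leq1$ used to absorb numerical factors. The only point requiring genuine care is reconciling the two $h$-exponents that appear with opposite-signed differences, $d\max\{0,\tfrac1q-\tfrac1p\}$ in Condition~\ref{cond:4} and $d\max\{0,\tfrac1p-\tfrac1o\}$ in the $\nabla^{2}$-lemma, which is what the case split on $o$ (together with a H\"older estimate on the element in the regime $q>p$) is for; and, more conceptually, keeping track that all the estimates here are inverse estimates valid only for the discrete difference $\log_{v_{h}}w_{h}$, so that the smallness hypothesis $v(T_{h}),w(T_{h})\subset B_{\rho}$ cannot be removed.
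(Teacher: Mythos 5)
Your proposal is correct and follows essentially the same route as the paper, which states the corollary as an immediate consequence of the three preceding lemmas on differences of geodesic finite elements; you simply make explicit the bookkeeping the paper leaves implicit (the case split on the sign of $\tfrac{1}{q}-\tfrac{1}{p}$, the choice of the free parameter $o$, the elementwise H\"older step for $q>p$, and the absorption of $\rho$ and $h\leq 1$ into the constants). The observation that the vanishing of the $O(h^{-2})$ term for $m=1$ is exactly what forces $\Cr{c:vecInvm}=0$ is also the intended reading.
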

% % % % % % % % % % % % % % % % % %
\section{Example: The Harmonic Energy}\label{sec:ch4}
In Section~\ref{sec:ch3}
we have made some fairly strong assumptions on the energy, namely that it is elliptic and predominantly quadratic.
In this section we show that these assumptions are satisfied by 
the harmonic energy $\Energy: W^{1,2}(\Omega,M)\to \RR$ defined by
\begin{equation}\label{eq:harmonicEnergy}
\Energy(v)\colonequals \frac{1}{2}\int_{\Omega}|\nabla u(x)|^{2}_{g(u(x))}\;dx = \frac{1}{2}\sum_{\alpha=1}^{d} \int_{\Omega}g_{ij}(u(x)) \frac{\partial u^{i}}{\partial x^{\alpha}}(x) \frac{\partial u^{j}}{\partial x^{\alpha}}(x) \;dx.
\end{equation}
The harmonic energy for functions into $\RR^{n}$ is the prototypic example of an elliptic quadratic energy.
For functions into a Riemannian manifold it has also served as the typical elliptic example in \cite{grohsSanderH}. The theory of stationary points of this energy, so-called harmonic maps, 
is well-developed (see, e.g., \cite{report1,report2,Jost11}). 

We will show that for smooth manifolds with bounds on the sectional curvature, the harmonic energy fulfills the assumptions of Theorem~\ref{T:L2err}.
In combination with Section~\ref{sec:ch2} this provides a priori $L^2$-discretization error estimates for geodesic finite element approximation of harmonic maps.
Corresponding numerical studies can be found in \cite{sander12} and \cite{sander13}.
Other discretization methods have been employed in \cite{bartels_prohl:2007,bartels2010numerical,alouges1997,lin1989relaxation}.

We continue to assume that $\Omega\subset \RR^{d}$ is a bounded domain with $\partial\Omega$ be in $C^{2}$.
For simplicity we assume that the given boundary and homotopy data $\phi\in C(\overline{\Omega},M)$, can be attained on the Dirichlet boundary exactly by $m$-th order geodesic finite elements on a given grid $G$ of width $h$ and order $m$.
This last restriction can be waived by suitable approximation arguments \cite{grohsSanderH}.

As the assumptions of Theorem~\ref{T:H1err} are included in the assumptions of Theorem~\ref{T:L2err}, they will also be discussed there.
\begin{lemma}\label{L:harmElliptic}
	Let $q>\max\{2,d\}$, $W^{1,q}_{K;\phi}$ be defined as in Definition~\ref{def:HKL} with boundary and homotopy data $\phi$, and let $\Energy: W^{1,2}(\Omega,M)\to \RR$ be defined by~\eqref{eq:harmonicEnergy}.
	Assume that either $M$ has nonpositive sectional curvature, or that
	\begin{align}\label{eq:curvBound}
	1-K^2\|\Rm\|_g  \Cl{c:sobolev}(q,\Omega)^{2}>0
	\end{align}
	holds, where $\Cr{c:sobolev}(q,\Omega)$ denotes the Sobolev constant for the embedding $W^{1,2}(\Omega)\subset L^{\frac{2q}{q-2}}(\Omega)$.\\
	Then $\Energy$ is elliptic in the sense of Definition~\ref{def:ellipticEnergy} along geodesic homotopies starting in any $u \in W^{1,q}_{K;\phi}$.
\end{lemma}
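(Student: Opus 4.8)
The plan is to verify the two ellipticity conditions of Definition~\ref{def:ellipticEnergy} directly, by computing the first and second variations of the harmonic energy along deformations $s\mapsto \exp_{v}(sV)$ and along mixed deformations $\exp_{v}(sV+rW)$, and then controlling the curvature terms that appear. First I would recall the standard formula for the second variation of the harmonic energy: for $v\in W^{1,q}_{K;\phi}$ and $V\in W^{1,2}_{0}(\Omega,v^{-1}TM)$,
\begin{align*}
\frac{d^{2}}{ds^{2}}\bigg|_{s=0}\Energy(\exp_{v}(sV))
= \sum_{\alpha=1}^{d}\int_{\Omega}\Big(|\nabla_{\alpha}V|_{g}^{2} - \langle \Rm(V,d^{\alpha}v)d^{\alpha}v,V\rangle_{g}\Big)\;dx,
\end{align*}
which follows from differentiating the energy integrand twice and using that the track curves are geodesics (so the second covariant derivative of $s\mapsto\exp_{v}(sV)$ in $s$ vanishes) together with the definition of the curvature tensor to exchange the order of covariant differentiation in $s$ and $x$. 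I would present this computation only in outline, citing the Jacobi-field/second-variation computation that is standard for harmonic maps.

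Next, for coercivity, I would estimate the curvature term from below. If $M$ has nonpositive sectional curvature, then $\langle \Rm(V,d^{\alpha}v)d^{\alpha}v,V\rangle_{g}\le 0$ pointwise, so the second variation is bounded below by $\sum_\alpha\int_\Omega|\nabla_\alpha V|^2\,dx = |V|^2_{W^{1,2}(\Omega,v^{-1}TM)} - \|V\|^2_{L^2}$; combining with the Poincaré inequality (valid since $V$ has zero boundary values) gives \eqref{eq:ellipticbelow} with some $\lambda>0$. In the general case I would estimate
\begin{align*}
\sum_{\alpha}\int_{\Omega}|\langle \Rm(V,d^{\alpha}v)d^{\alpha}v,V\rangle_{g}|\;dx
\le \|\Rm\|_{g}\int_{\Omega}|V|^{2}|dv|^{2}\;dx
\le \|\Rm\|_{g}\,\|V\|_{L^{\frac{2q}{q-2}}}^{2}\,\|dv\|_{L^{q}}^{2},
\end{align*}
using Hölder with exponents $\frac{q}{q-2}$ and $\frac{q}{2}$ (so that $|V|^2$ sits in $L^{q/(q-2)}$, i.e.\ $V\in L^{2q/(q-2)}$) and then the Sobolev embedding $W^{1,2}(\Omega)\hookrightarrow L^{2q/(q-2)}(\Omega)$ with constant $\Cr{c:sobolev}(q,\Omega)$, together with $\|dv\|_{L^{q}}\le\theta_{1,q,\Omega}(v)\le K$. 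This yields a lower bound $(1-K^{2}\|\Rm\|_{g}\Cr{c:sobolev}(q,\Omega)^{2})\,\|V\|^{2}_{W^{1,2}}$ modulo the lower-order $L^2$ term, which is positive by assumption \eqref{eq:curvBound}; absorbing the $L^2$ term via Poincaré again produces $\lambda>0$.

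For boundedness, I would compute the mixed second variation
\begin{align*}
\frac{d^{2}}{dr\,ds}\bigg|_{(0,0)}\Energy(\exp_{v}(sV+rW))
= \sum_{\alpha}\int_{\Omega}\Big(\langle\nabla_{\alpha}V,\nabla_{\alpha}W\rangle_{g} - \langle \Rm(V,d^{\alpha}v)d^{\alpha}v,W\rangle_{g}\Big)\;dx,
\end{align*}
and bound it by Cauchy--Schwarz on the first term and the same Hölder--Sobolev chain on the curvature term, giving \eqref{eq:ellipticabove} with $\Lambda = 1 + K^{2}\|\Rm\|_{g}\Cr{c:sobolev}(q,\Omega)^{2}$ (or just $\Lambda=1$ in the nonpositively-curved case, up to a harmless constant). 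The one genuinely delicate point is that these variational formulas require enough regularity of $\exp$ and of the track maps for the differentiations and the interchange of $\partial_s,\partial_r$ with $\nabla_x$ to be justified for $v$ only in $W^{1,q}$ with $q>\max\{2,d\}$; here I would invoke that such $v$ are continuous with image in a fixed compact ball (Remark~\ref{R:defByDeform}) so that all geometric quantities are uniformly controlled, and that the second variation is a continuous quadratic form on $W^{1,2}_0(\Omega,v^{-1}TM)$ — exactly the hypothesis ``$\Energy$ twice continuously differentiable along geodesic homotopies'' in Definition~\ref{def:ellipticEnergy} — so it suffices to verify the inequalities on a dense set of smooth sections. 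Thus the main obstacle is not the analysis but keeping track of the curvature constant so that it matches \eqref{eq:curvBound}; everything else is the standard second-variation computation plus a Hölder/Sobolev estimate.
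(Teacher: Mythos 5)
Your proposal is correct and follows essentially the same route as the paper: compute the second variation $\delta^{2}\Energy(u)(V,W)=\int_{\Omega}\langle\nabla_{x}V,\nabla_{x}W\rangle_{g}\,dx-\int_{\Omega}\langle\nabla_{x}u,\Rm(\nabla_{x}u,W)V\rangle_{g}\,dx$, treat the nonpositive-curvature case by dropping the curvature term and applying Poincar\'e, and otherwise absorb the curvature term via exactly the H\"older split $\frac{q-2}{q}+\frac{2}{q}=1$ and the Sobolev embedding $W^{1,2}\subset L^{2q/(q-2)}$ with the $W^{1,q}_{K}$-bound on $du$, yielding the same constants $\Lambda$ and $\lambda$. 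The paper leaves these estimates as ``easily seen''; your write-up simply makes them explicit.
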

\begin{proof}
	The proof is based on direct calculation of the second variation (see, e.g., \cite{report1,Jost11}). Let $u\in W^{1,q}_{K;\phi}$, and 
	$V,W \in W_{0}^{1,2}(\Omega,u^{-1}TM)$.
	Then
	\begin{align*}
	\delta^{2} \Energy (u)(V,W)
	&= \int_{\Omega}\left\langle \nabla_{x} W, \nabla_{x} V \right\rangle_{g}\;dx - \int_{\Omega}\left\langle \nabla_{x} u ,R\left(\nabla_{x}u, W \right) V \right\rangle_{g}\;dx.\qedhere
	\end{align*}
	Using H\"older's inequality and the Sobolev embedding theorem, we can easily see that $\Energy$ fulfills \eqref{eq:ellipticabove} with $\Lambda= 1+K^2\|\Rm\|_g  \Cr{c:sobolev}(q,\Omega)^{2}$.
	If $M$ has nonpositive sectional curvature, then
	\begin{align*}
	\left\langle \nabla_{x} u ,R\left(\nabla_{x}u, V \right) V \right\rangle_{g}\leq 0,
	\end{align*}
	so that we obtain \eqref{eq:ellipticbelow} with $\lambda=\frac{1}{\Cr{c:poincare}^{2}}$, where $\Cl{c:poincare}$ denotes the Poincar\'e constant of the domain $\Omega$.
	Otherwise, we obtain \eqref{eq:ellipticbelow} analogously to \eqref{eq:ellipticabove} with
	\begin{align*}
	\lambda=\frac{1}{\Cr{c:poincare}^{2}}\left(1-K^2\|\Rm\|_g  \Cr{c:poincare}^{2}\Cr{c:sobolev}(q,\Omega)^{2}\right)>0.
	\end{align*}
\end{proof}
\begin{remark}
	Note that under the curvature assumptions of Lemma~\ref{L:harmElliptic}, stationary points of $\Energy$ are indeed stable critical points \cite[Corollary 8.2.2]{Jost11}.
\end{remark}
The discretization error bounds presented in Section~\ref{sec:ch3} always assume existence of solutions with a certain regularity.
The topic of existence and regularity of harmonic maps is extensively studied in the literature.
For an overview see for example \cite{report1, report2, Wood}.
In particular, we have the following.

\begin{lemma}\label{L:harmonicSmooth}
	A harmonic map $u: \Omega \to M$ with continuous boundary data $\phi$ is in $C^\infty$,
	if either $M$ has nonpositive sectional curvature, or if $d\in \{1,2\}$, or if the image of $\phi$ is contained in a geodesically convex ball.
\end{lemma}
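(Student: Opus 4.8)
The plan is to treat the three regimes separately, in each case reducing to interior $C^\infty$-regularity; regularity up to the continuous boundary datum $\phi$ then follows from the classical boundary bootstrap once interior H\"older continuity is in hand (see \cite{report1,Wood}). The common mechanism is this: once $u$ is known to be H\"older continuous on a ball compactly contained in $\Omega$, with the accompanying Morrey-type energy decay $\int_{B_r}|\nabla u|^2\leq C\,r^{d-2+2\alpha}$, the image of that ball lies in a single coordinate chart of $M$, in which the harmonic map equation becomes the semilinear elliptic system
\begin{align*}
\Delta u^k=-\Gamma^k_{ij}(u)\,\langle\nabla u^i,\nabla u^j\rangle,\qquad k=1,\dots,n,
\end{align*}
with right-hand side quadratic in $\nabla u$ and smooth in $u$. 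The energy decay places this right-hand side in a Morrey space close to $L^1$, so Morrey--Campanato estimates for the Laplacian give $u\in C^{1,\beta}_{\mathrm{loc}}$; the right-hand side is then H\"older continuous, Schauder theory yields $C^{2,\gamma}_{\mathrm{loc}}$, and an obvious induction gives $u\in C^\infty$. Thus the only nontrivial point in each case is the initial H\"older continuity with energy decay.

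For $d=1$ there is nothing to do beyond ODE theory: the equation reads $\nabla_{\dot u}\dot u=0$, so $u$ is a geodesic and the existence--uniqueness theorem for ODEs with smooth coefficients gives $u\in C^\infty$ directly. For $d=2$ I would invoke Morrey's interior regularity theorem for two-dimensional minimizers of regular variational integrals: minimality forces the local energy of $u$ on small balls to decay, which yields a Dirichlet-growth estimate and hence H\"older continuity, after which the bootstrap above applies (see \cite{report1,Jost11}).

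For the two remaining cases I would appeal to the small-energy regularity theory of Schoen--Uhlenbeck \cite{Uhlenbeck}: there is $\varepsilon_0>0$ such that whenever $r^{2-d}\int_{B_r(x_0)}|\nabla u|^2<\varepsilon_0$ the minimizer is smooth on $B_{r/2}(x_0)$, and the monotonicity formula then bounds the dimension of the singular set by $d-3$, so it is empty for $d\leq 2$ and at worst a locally finite point set in the borderline dimensions, which the remaining hypotheses remove. When $u(\Omega)$ lies in a geodesically convex ball $B_\rho(p)$ with $\rho$ small, composing $u$ with $x\mapsto d^2(x,p)$ and with normal coordinates on $B_\rho(p)$ produces sub- and supersolutions of uniformly elliptic equations; the Hildebrandt--Kaul--Widman comparison argument turns this into the required energy decay, with empty singular set on all of $\Omega$. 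When $M$ has nonpositive sectional curvature the curvature term in $\delta^2\Energy$ has a favourable sign---exactly as exploited in Lemma~\ref{L:harmElliptic}---which both prevents energy concentration and, via the Bochner--Eells--Sampson identity, supplies the a priori gradient bounds needed to run the bootstrap globally (see \cite{Uhlenbeck,Jost11}).

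The main obstacle is precisely this first step, H\"older continuity with energy decay, and it rests on a genuinely different tool in each regime: compensated-compactness / Wente estimates for $d=2$, $\varepsilon$-regularity together with the monotonicity formula for nonpositive curvature, and the maximum-principle comparison of Hildebrandt--Kaul--Widman for the convex ball. Since all of these are classical, and the role of this lemma is only to record sufficient conditions under which Theorems~\ref{T:H1err} and~\ref{T:L2err} apply, I would cite the literature \cite{report1,report2,Wood,Uhlenbeck,Jost11} for this step and write out explicitly only the elementary coordinate bootstrap that promotes H\"older continuity to $C^\infty$.
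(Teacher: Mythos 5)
The paper does not actually prove this lemma: it is stated as a summary of known results, with the preceding sentence deferring entirely to the surveys \cite{report1,report2,Wood}, so there is no internal argument to compare yours against. Your outline is a faithful and essentially correct roadmap through that classical theory --- the coordinate bootstrap from H\"older continuity with Morrey decay to $C^\infty$, Morrey's Dirichlet-growth argument for two-dimensional minimizers, the Schoen--Uhlenbeck $\varepsilon$-regularity and singular-set dimension bound, the Hildebrandt--Kaul--Widman comparison for ranges in convex balls, and the Eells--Sampson Bochner identity in nonpositive curvature --- and it is considerably more informative than what the paper records. Two small elisions are worth fixing. First, the hypothesis in the convex-ball case is that the image of the \emph{boundary data} $\phi$ lies in a geodesically convex ball, whereas you argue from ``$u(\Omega)$ lies in a geodesically convex ball''; you must first confine the range of the minimizer, e.g.\ by noting that the nearest-point projection onto the convex ball is distance-decreasing and hence energy-decreasing while preserving the boundary values, or by applying the maximum principle to $x\mapsto d^2(u(x),p)$ --- the very composition you already use. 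Second, with boundary data that is merely continuous one cannot run a ``boundary bootstrap'': the conclusion is interior smoothness $u\in C^\infty(\Omega)$ together with continuity up to $\partial\Omega$, which is all the lemma (and its use in Theorems~\ref{T:H1err} and~\ref{T:L2err}) requires. With those two remarks your sketch is a sound expansion of the citation the paper relies on.
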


Ellipticity and the existence of smooth solutions is enough to obtain a priori bounds for the $H^1$-discretization error (cf. Theorem~\ref{T:H1err} and \cite{grohsSanderH}).
In order to apply Theorem~\ref{T:L2err}, i.e., obtain $L^2$-error bounds as well, we need to show that $\Energy$ is also predominantly quadratic.
\begin{lemma}\label{L:almostLinearHarm}
	Let $q$, $W^{1,q}_{K;\phi}$, and  $\Energy$ be as in Lemma~\ref{L:harmElliptic}, and assume that $\Rm$ and $\nabla \Rm$ of $M$ are bounded.
	Then $\Energy$ is predominantly quadratic in the sense of Definition~\ref{def:almostLinear} at any $u\in W^{1,q}_{K;\phi}$.
\end{lemma}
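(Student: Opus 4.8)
The plan is to derive an explicit formula for the third variation of $\Energy$ along geodesic homotopies and then read off the claimed bound by H\"older's inequality and the Sobolev embedding theorem. First I would differentiate the second-variation formula of Lemma~\ref{L:harmElliptic} along the geodesic variation $s\mapsto\exp_{u}(sU)$, with the two copies of $V$ (and the velocity field) parallel-transported along the variation curves. Using that the Levi--Civita connection is torsion-free and metric, the curvature identity $\nabla_{s}\nabla_{x}=\nabla_{x}\nabla_{s}+R(du(\partial_{s}),du(\partial_{x}))$ for the pullback connection, and the self-adjointness $\langle R(A,V)V,B\rangle=\langle R(B,V)V,A\rangle$ of the Jacobi operator, one obtains
\begin{align*}
\delta^{3}\Energy(u)(U,V,V) ={}& 2\int_{\Omega}\langle R(U,\nabla_{x}u)V,\nabla_{x}V\rangle_{g}\,dx\\
&{}- 2\int_{\Omega}\langle R(\nabla_{x}U,V)V,\nabla_{x}u\rangle_{g}\,dx\\
&{}- \int_{\Omega}\langle(\nabla_{U}R)(\nabla_{x}u,V)V,\nabla_{x}u\rangle_{g}\,dx .
\end{align*}
Since $q>\max\{2,d\}$ forces $u$ to be continuous, and $\Rm$, $\nabla\Rm$ are bounded and depend smoothly on the base point, this formula --- together with the analogous, more elementary ones for $\delta\Energy$ and $\delta^{2}\Energy$ --- also shows that $\Energy$ is $C^{3}$ along geodesic homotopies, so that only the quantitative estimate remains.

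Next I would estimate the three integrals one at a time. In each of them I pull the single curvature factor out in $L^{\infty}$ using the bounds $\|\Rm\|_{g}$ and $\|\nabla\Rm\|_{g}$, absorb the one or two factors $\nabla_{x}u$ into $\dot\theta_{1,q,\Omega}(u)\le K$ (which dominates $\|du\|_{L^{q}}$ by Proposition~\ref{prop:SDExtraTerms}), and distribute the remaining factors --- at most one derivative on $U$, and two copies of $V$ of which at most one is differentiated --- by H\"older's inequality, so that $U$ (or $\nabla U$) lands in an $L^{a}$-space controlled by $\|U\|_{W^{2,2}(\Omega,u^{-1}TM)}$, one copy of $V$ in an $L^{c_{1}}$-space controlled by $\|V\|_{W^{1,2}(\Omega,u^{-1}TM)}$, and the last copy (or $\nabla V$) in the space $\|V\|_{W^{o,r}(\Omega,u^{-1}TM)}$. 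For $d\le 4$ this closes with $(o,r)=(1,2)$; for $d\ge 5$ one instead takes $o=0$ and $r=d$, and then in each of the three terms the sum of the reciprocal H\"older exponents is strictly below $1$ precisely because $q>d$. In all cases the Sobolev embeddings invoked ($W^{2,2}\hookrightarrow L^{a}$, $W^{2,2}\hookrightarrow W^{1,b}$, $W^{1,2}\hookrightarrow L^{c}$) are admissible for the exponents produced, and one arrives at the bound of Definition~\ref{def:almostLinear} with a constant depending only on $K$, $d$, $\Omega$ and the curvature bounds of $M$.

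The main obstacle is exactly this exponent bookkeeping in higher dimensions: the $W^{2,2}$-control of $U$ only gives $U\in L^{2d/(d-4)}$ and $\nabla U\in L^{2d/(d-2)}$, while the $W^{1,2}$-control of $V$ only gives $V\in L^{2d/(d-2)}$, so for $d\ge 5$ every one of the three terms fails the naive $(o,r)=(1,2)$ split and one is forced into the $o=0$, $r\le d$ branch; one then has to check each term separately --- the relevant ranges of $d$ essentially matching the case distinction in the proof of Theorem~\ref{T:L2err} --- that taking $r=d$ exactly compensates the deficit. The derivation of the $\delta^{3}\Energy$ formula itself is routine but must be carried out with care, since the covariant derivatives in $s$ and $x$ do not commute; alternatively one may simply quote the classical third-variation formula for the Dirichlet energy and verify only that it has the stated structure.
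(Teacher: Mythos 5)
Your proposal is correct and follows essentially the same route as the paper: the paper likewise bounds $|\delta^{3}\Energy(u)(U,V,V)|$ by the three integrals $\int|du|^{2}|U||V|^{2}$, $\int|du||\nabla U||V|^{2}$, $\int|du||\nabla V||U||V|$ (which is exactly what your explicit curvature formula yields) and then closes with H\"older, the $W^{1,q}$-bound on $u$, and Sobolev embedding, ending with $(o,r)=(1,2)$ for $d\le 4$ and $o=0$, $r\le d$ for $d>4$. The only cosmetic differences are that the paper states the pointwise bound without writing out the third-variation formula, and it records the sharper exponent $\tfrac{1}{r}=\tfrac{2}{d}-\tfrac{1}{q}$ where you take $r=d$; both choices satisfy $r\le d$ precisely because $q>d$, so your exponent bookkeeping is sound.
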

\begin{proof}
	We need to consider third variations of $\Energy$ at $u\in W^{1,q}_{K;\phi}$ in directions $U\in W_{0}^{2,2}(\Omega,u^{-1}TM)$ and $V\in W_{0}^{1,2}\cap L^{d}(\Omega,u^{-1}TM)$.
	We calculate
	\begin{multline*}
	\left| \delta^{3} \Energy (u) (U,V,V) \right|
	\leq C\;\Big(\int_{\Omega} |du|^{2}\;|U|\;|V|^{2}\;dx + \int_{\Omega} |du|\;|\nabla U|\;|V|^{2}\;dx\\
	+ \int_{\Omega} |du|\;|\nabla V|\;|U|\;|V|\;dx\Big).
	\end{multline*}
	Using H\"older's inequality, the $W^{1,q}$-bound on $u$, and the Sobolev Embedding Theorem we estimate
	\begin{align*}
	|\delta^{3}\Energy (u)(U,V,V)|\leq C \|U\|_{W^{2,2}} \|V\|_{W^{1,2}}\|V\|_{L^{r}},
	\end{align*}
	where
	\begin{align*}
	\frac{1}{r}\colonequals\left\{
	\begin{array}{ll}
	\frac{1}{2}-\frac{1}{q}\;,&\ \textrm{if}\ d<4,\\
	\frac{1}{2}-\frac{1}{q}-\epsilon\;,&\ \textrm{if}\ d=4,\\
	\frac{2}{d}-\frac{1}{q}\;,&\ \textrm{if}\ d>4,
	\end{array}\right.
	\end{align*}
	with $\epsilon>0$ arbitrarily small. The condition $q>d$ implies $r\leq d$ for $d>4$.
	For $d\leq 4$, we can use the Sobolev embedding theorem to estimate $\|V\|_{L^{r}}\leq C\;\|V\|_{W^{1,2}}$. 
\end{proof}
Lastly, we need to show $H^{2}$-regularity of the solution $W\in W_{0}^{1,2}(\Omega,u^{-1}TM)$ of the deformation problem
\begin{multline}\label{eq:harmDual}
\int_{\Omega}\langle \nabla W, \nabla V \rangle \;dx -\int_{\Omega}\Rm(du,W,V,du) = \langle V,U\rangle\;dx \quad \forall V\in W_{0}^{1,2}(\Omega,u^{-1}TM),
\end{multline}
where $U\in L^{\infty}(\Omega,u^{-1}TM)$.

\begin{lemma}\label{L:dualH2regHarm}
	Let $q$ and $W^{1,q}_{K;\phi}$ be as in Lemma~\ref{L:harmElliptic}, and $b$ as in Definition~\ref{def:nonlinSmoothVfield}.
	Assume that $\Rm$ of $M$ is bounded, and let $u\in W_{K;\phi}^{1,q}\cap W^{2,b}(\Omega,M)$ be a harmonic map.
	Then the deformation problem \eqref{eq:harmDual} is $H^{2}$-regular in the sense of \eqref{eq:H2reg}.
\end{lemma}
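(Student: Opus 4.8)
The plan is to read \eqref{eq:harmDual} as a linear second‑order elliptic system for the section $W$ of $u^{-1}TM$ whose principal part is the Euclidean Laplacian on the domain, and to obtain $W^{2,2}$‑regularity by a Calder\'on--Zygmund bootstrap; the only non‑routine point is that the a priori most dangerous term is killed by the harmonic map equation for $u$. First I would settle existence and uniqueness: the bilinear form on the left of \eqref{eq:harmDual} is exactly $\delta^{2}\Energy(u)(W,V)$, which by the curvature hypothesis inherited from Lemma~\ref{L:harmElliptic} is bounded and coercive on $W^{1,2}_{0}(\Omega,u^{-1}TM)$ with constant $\lambda>0$; since the right‑hand side is $V\mapsto(V,U)_{L^{2}}$ and $U\in L^{\infty}(\Omega)\subset L^{2}(\Omega)$, Lax--Milgram gives a unique $W\in W^{1,2}_{0}(\Omega,u^{-1}TM)$ with $\|W\|_{W^{1,2}}\leq C\|U\|_{L^{2}}$.

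Next I would pass to the strong form. Covering the compact set $\overline{u(\Omega)}$ by finitely many coordinate balls and pulling back by the continuous map $u$ to a finite open cover of $\overline{\Omega}$ with a subordinate partition of unity, on each piece $W$ is represented by $n$ real functions; the chart‑transition maps, composed with $u\in W^{2,b}$, are lower‑order multipliers and contribute nothing essential. Integrating the weak form by parts, the strong equation is $\nabla^{*}\nabla W-\mathcal R(W)=U$, where $\nabla^{*}\nabla=-\sum_{\alpha}\nabla_{\alpha}\nabla_{\alpha}$ is the connection Laplacian along $u$ and $\mathcal R$ is the zero‑order operator $\Rm(du,\cdot\,,\cdot\,,du)$. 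Expanding $\nabla_{\alpha}\nabla_{\alpha}W$ in coordinates produces $-\Delta W^{k}$, a first‑order term with coefficients $\sim\Gamma(u)\,du\in L^{q}(\Omega)$, a genuine second‑order term $\Gamma^{k}_{ij}(u)(\Delta u^{i})W^{j}$, and zero‑order terms with coefficients $\sim(\partial\Gamma+\Gamma\!\cdot\!\Gamma)(u)\,|du|^{2}\in L^{q/2}(\Omega)$. The second‑order term is the obstruction, and here I use that $u$ is harmonic, i.e.\ $\Delta u^{i}=-\Gamma^{i}_{jk}(u)\,\partial_{\alpha}u^{j}\partial_{\alpha}u^{k}$ a.e.\ (legitimate since $u\in W^{2,b}$), which converts it into one more zero‑order term with $|du|^{2}$‑coefficient. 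Together with $\mathcal R(W)$ this yields
\begin{align*}
-\Delta W^{k}=U^{k}-b^{k}_{\alpha j}\,\partial_{\alpha}W^{j}-c^{k}_{j}\,W^{j},
\end{align*}
with $\|b\|_{L^{q}(\Omega)}$ and $\|c\|_{L^{q/2}(\Omega)}$ bounded by constants depending only on $M$ and $\theta_{1,q,\Omega}(u)$; recall $q>d$ and $q/2>d/2$.

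Finally the bootstrap. Starting from $W\in W^{1,2}\hookrightarrow L^{2^{*}}$ with $\partial W\in L^{2}$ and $U\in L^{\infty}$, H\"older's inequality (using $q\geq d$) gives $b\,\partial W,\ c\,W\in L^{p_{0}}$ with $1/p_{0}=1/q+1/2$, so the right‑hand side lies in $L^{p_{0}}$, $p_{0}<d$, and the classical $L^{p}$‑estimate for the Dirichlet problem of $-\Delta$ on the $C^{2}$‑domain $\Omega$ gives $W\in W^{2,p_{0}}$ with $\|W\|_{W^{2,p_{0}}}\leq C\big(\|U\|_{L^{2}}+(\|b\|_{L^{q}}+\|c\|_{L^{q/2}})\|W\|_{W^{1,2}}\big)$. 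Iterating, after $i$ steps $\partial W\in L^{p_{i}}$ and the right‑hand side lies in $L^{t_{i}}$ with $1/t_{i}=1/2+(i+1)/q-i/d$; since $q>d$ this reaches $1/2$ after finitely many steps (and $t_{i}$ may exceed $d$ even sooner, terminating faster), at which point $W\in W^{2,2}$, and chaining the inequalities along the iteration yields $\|W\|_{W^{2,2}(\Omega,u^{-1}TM)}\leq C\,\|U\|_{L^{2}(\Omega)}=C\,\|\log_{u}u_{h}\|_{L^{2}(\Omega,u^{-1}TM)}$ with $C=C(M,K)$, which is \eqref{eq:H2reg}. I expect the main obstacle to be the bookkeeping behind the cancellation of the $\Delta u$ term together with tracking the dependence of the constants through the finite iteration; the elliptic estimates themselves are entirely classical.
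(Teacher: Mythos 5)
Your proof is correct, and its skeleton coincides with the paper's: both pass to local coordinates, view \eqref{eq:harmDual} as a linear second-order elliptic system for the coordinate vector of $W$ with leading part the flat Laplacian, invoke $L^{p}$-elliptic regularity, and only at the very end use $u\in W^{2,b}$ to convert the coordinate $W^{2,2}$-bound back into a bound on the covariant norm $\|W\|_{W^{2,2}(\Omega,u^{-1}TM)}$ (your closing step is as terse as the paper's here, but the needed estimate $\|\Gamma(u)\,\nabla^{2}u\cdot W\|_{L^{2}}\lesssim \dot\theta_{2,b}(u)\|W\|_{L^{a}}$ is exactly what the exponents $a,b$ of Definition~\ref{def:nonlinSmoothVfield} are designed for). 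The genuine difference is in how the middle step is executed. The paper keeps the system in \emph{divergence} form, so that no second derivatives of $u$ ever enter the coefficients ($A\sim g(u)$, lower-order coefficients in $L^{q}$ and $L^{q/2}$), and then cites the standard theory for such systems from \cite{lady} as a black box. You instead pass to the \emph{non-divergence} strong form, where a term $\Gamma^{k}_{ij}(u)(\Delta u^{i})W^{j}$ appears, and you eliminate it by substituting the harmonic map equation $\Delta u^{i}=-\Gamma^{i}_{jk}(u)\partial_{\alpha}u^{j}\partial_{\alpha}u^{k}$ (legitimate a.e.\ since $u\in W^{2,b}$), after which all coefficients depend only on $du\in L^{q}$ and you run an explicit Calder\'on--Zygmund bootstrap whose exponent gain $1/d-1/q>0$ per step terminates in finitely many iterations. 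What your route buys is transparency and explicit constants depending only on $M$ and $K$ for the coordinate estimate, plus independence from the black-box citation; what it costs is the extra bookkeeping of the bootstrap and the reliance on the harmonicity of $u$ in the interior of the argument (the paper's divergence-form route never needs the Euler--Lagrange equation, only that the bilinear form is the second variation). Both are valid; yours is essentially a self-contained re-derivation of the "standard theory" the paper cites, sharpened by the observation that harmonicity removes the only troublesome coefficient.
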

\begin{proof}
	The deformation problem \eqref{eq:harmDual} is essentially a linear system of elliptic equations in divergence form. One can use coordinates to write the covariant derivatives in terms of ordinary derivatives and Christoffel symbols to see this. As $u\in W_{K}^{1,q}$, the coefficients are smooth enough to apply standard theory for linear systems of elliptic equations (see, e.g., \cite{lady}) to obtain an estimate of the form
	\begin{align*}
	\|\hat{W}\|_{W^{2,2}(\Omega,\RR^{n})}\leq C\|U\|_{L^{2}},
	\end{align*}
	where $\hat{W}$ is the coordinate vector for $W$. As $u\in W^{2,b}(\Omega, M)$, we can estimate the covariant norm by the coordinate one and obtain
	\begin{align*}
	\|W\|_{W^{2,2}(\Omega,u^{-1}TM)}\leq  C\|U\|_{L^{2}}.
	\end{align*}
	An alternative proof directly shows $H^{2}$-regularity as the standard theory does, using parallel transport to define difference quotients for vector fields $V$ along $u$.
	Details can be found in the appendix of \cite{diss}.
\end{proof}
Summing up, the harmonic energy fulfills all assumptions of Theorem~\ref{T:L2err}, and we obtain the following discretization error bound.
\begin{theorem}\label{T:L2harm}
	Let $m\geq 1$ with $2(m+1)>d$.
	Assume that $G$ is a conforming grid of width $h$ and order $m$, such that $F^{-1}_{h}:T\to T_{h}$ scales with order $2$.
	
	Let $u$ be a local minimizer of the harmonic energy $\Energy$ on $W^{1,2}_{\phi}(\Omega,M)$, 
	where $M$ has either nonpositive sectional curvature, or \eqref{eq:curvBound} holds.
	If $M$ does not have nonpositive sectional curvature, we additionally assume that either $d \in \{1,2\}$, or that $\phi(\Omega)$ is contained in a geodesically convex ball.
	
	If $h$ is small enough, there exists a local minimizer $u_{h}$ in the set of geodesic finite elements $\gfe$ such that
	\begin{align}\label{eq:L2harmIntr}
	d_{L^{2}}(u,u_{h})\leq C\;h^{m+1}\theta^{2}_{m+1,2,\Omega}(u).
	\end{align}
\end{theorem}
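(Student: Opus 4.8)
The plan is to recognise Theorem~\ref{T:L2harm} as the specialisation of the abstract result Theorem~\ref{T:gfe} (hence of Theorems~\ref{T:H1err} and~\ref{T:L2err}) to the harmonic energy, so that the proof reduces to checking that every hypothesis of Theorem~\ref{T:gfe} is satisfied in this setting, and then to simplifying the resulting bound. First I would fix the data: under the curvature/dimension assumptions, Lemma~\ref{L:harmonicSmooth} shows that the local minimizer $u$ is smooth, and since $\Omega$ is bounded with $C^2$-boundary and $\phi$ is continuous we get $u\in W^{m+1,2}_{\phi}(\Omega,M)$, in particular $u\in W^{2,b}(\Omega,M)$ for the exponent $b$ of Definition~\ref{def:nonlinSmoothVfield} with $p=k=2$. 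Next, choose $q>\max\{4,d\}$ as in Lemma~\ref{L:chooseq} and a constant $K$ large enough that $K\geq \Cr{c:m+1Deriv}\,\theta_{1,q,\Omega}(u)$; under the curvature hypothesis (nonpositive sectional curvature, or \eqref{eq:curvBound} for this $K$), Lemma~\ref{L:harmElliptic} gives that $\Energy$ is elliptic along geodesic homotopies starting in $u$, Lemma~\ref{L:almostLinearHarm} gives that $\Energy$ is predominantly quadratic on $W^{1,q}_{K;\phi}$, and Lemma~\ref{L:dualH2regHarm} provides the $H^2$-regularity \eqref{eq:H2reg} of the deformation problem \eqref{eq:deformedContVec}.

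Then I would invoke the verification carried out in Section~\ref{sec:ch2}: geodesic finite elements on a conforming grid of width $h$ and order $m$ for which each $F^{-1}_h:T\to T_h$ scales with order $2$ satisfy Conditions~\ref{cond:1},~\ref{cond:2c} (tantamount to Condition~\ref{cond:2b}),~\ref{cond:3}, and~\ref{cond:4}. Condition~\ref{cond:1} together with Theorem~\ref{T:H1err} yields the $W^{1,2}$-error bound $D_{1,2}(u,u_h)\le C h^{m}\theta_{m+1,2,\Omega}(u)$ for the constrained minimizer $u_h\in V_h\cap H_{K,L}$, and Lemma~\ref{L:chooseq} then shows that $u_h$ is in fact a local minimizer in $\gfe$, i.e. it stays away from the $K$- and $L$-bounds. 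Proposition~\ref{prop:discreteH2Reg}, using Conditions~\ref{cond:1},~\ref{cond:3},~\ref{cond:4} and the $W^{1,2}$-estimate just obtained, supplies the discrete regularity bound \eqref{eq:2bBound}, namely $\theta_{2,b,G}(u_h)\le K_2$ with $K_2$ depending only on $u$ and $K$. At this point all hypotheses of Theorem~\ref{T:L2err} (equivalently of Theorem~\ref{T:gfe}) are met, so
\begin{align*}
d_{L^{2}}(u,u_h)\le C\,h^{m+1}\bigl(\theta_{m+1,2,\Omega}(u)+\theta^{2}_{m+1,2,\Omega}(u)\bigr).
\end{align*}

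It remains to simplify this to \eqref{eq:L2harmIntr}. If $u$ is constant then $\phi$ is constant, $u_h=u_I=u$, and the estimate is trivial; otherwise $|du|\not\equiv 0$, so $\theta_{m+1,2,\Omega}(u)\ge \dot\theta_{1,2,\Omega}(u)>0$ is a fixed positive quantity depending on $u$, and since the constant is already allowed to depend on $u$ we may absorb $\theta_{m+1,2,\Omega}(u)\le C(u)\,\theta^{2}_{m+1,2,\Omega}(u)$ to obtain \eqref{eq:L2harmIntr}.

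The main difficulty is not a new argument but the bookkeeping: one must line up all parameter constraints simultaneously---$2(m+1)>d$, the admissible choice of $q$ in Lemma~\ref{L:chooseq}, the requirement $2b\le q$ of Proposition~\ref{prop:discreteH2Reg}, the order-$2$ scaling of $F^{-1}_h$, and the smoothness $u\in W^{2,b}$ needed even to state \eqref{eq:2bBound}---and check that each of Lemmas~\ref{L:harmElliptic}--\ref{L:dualH2regHarm} and each GFE condition of Section~\ref{sec:ch2} is genuinely available under exactly the hypotheses assumed here (in particular that the geodesically-convex-ball assumption both forces $u$ to be smooth and keeps the relevant images inside a ball on which $\log$ and $\exp$ are well behaved). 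The one energy-specific computation, the curvature-dependent bound on $\delta^{3}\Energy$, has already been done in Lemma~\ref{L:almostLinearHarm}; everything else is assembly.
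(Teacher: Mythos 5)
Your proposal is correct and follows essentially the same route as the paper: Theorem~\ref{T:L2harm} is obtained by verifying the hypotheses of Theorem~\ref{T:L2err} (via Lemmas~\ref{L:harmElliptic}, \ref{L:harmonicSmooth}, \ref{L:almostLinearHarm}, \ref{L:dualH2regHarm}, the GFE conditions of Section~\ref{sec:ch2}, and Proposition~\ref{prop:discreteH2Reg} for the bound \eqref{eq:2bBound}), exactly as the paper's ``summing up'' argument does. Your explicit absorption of the linear term $\theta_{m+1,2,\Omega}(u)$ into $\theta^{2}_{m+1,2,\Omega}(u)$ via a $u$-dependent constant is a reasonable way to reconcile the bound of Theorem~\ref{T:L2err} with the stated form \eqref{eq:L2harmIntr}, a step the paper leaves implicit.
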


\begin{remark}
	Numerical experiments in \cite{sander13, sander12} confirm the result of Theorem~\ref{T:L2harm} for a test case with $M=S^{2}$.
\end{remark}

\bibliographystyle{abbrv}     
\bibliography{literatur}

%\listoftodos

\end{document}